\newif\ifpdf
\begin{document}

\ifpdf
	\DeclareGraphicsExtensions{.pdf,.jpg,.png}
\else
	\DeclareGraphicsExtensions{.eps}
\fi

\pagestyle{headings}


\title{Uniform $\lambda-$Adjustment and $\mu-$Approximation in Banach Spaces}
\author{Boris Burshteyn}
\date{April 14, 2008} 
\maketitle
\[
boris997@astound.net
\]
\[
Walnut\ Creek\ CA,\ USA
\]
\[
\ \ \ 
\]
\[
\ \ \ 
\]
\[
\\
\emph{To\ the\ memory\ of\ my\ teacher,\ Mikhail\ Abramovi\^{c}\ Gol'dman}\\
\\
\]
\[
\ \ \ 
\]
\[
\ \ \ 
\]

\begin{abstract}
We introduce a new concept of perturbation of closed linear subspaces and operators 
in Banach spaces called \emph{uniform $\lambda-$adjustment} which is weaker than perturbations by small gap, operator norm, $q-$norm, and $K_{2}-$approximation. In arbitrary Banach spaces some of the classical Fredholm stability theorems remain true under uniform $\lambda-$adjustment, while other fail. However, uniformly $\lambda-$adjusted subspaces and linear operators retain their (semi--)Fredholm properties in a Banach space which dual is Fr\'{e}chet-Urysohn in ${weak^{*}}$ topology.

We also introduce another concept of perturbation called \emph{uniform\linebreak[4] $\mu-$approximation} which is weaker than perturbations by small gap, norm, and compact convergence, yet stronger than uniform $\lambda-$adjustment. We present Fredholm stability theorems for uniform $\mu-$approximation in arbitrary Banach spaces and a theorem on stability of Riesz kernels and ranges for commuting closed essentially Kato operators.
 
Finally, we define the new concepts of a \emph{tuple of subspaces} and of a \emph{complex of subspaces} in Banach spaces, and present stability theorems for index and defect numbers of Fredholm tuples and complexes under uniform $\lambda-$adjustment and uniform  $\mu-$approximation. 
%
%
\end{abstract}


\newpage

\tableofcontents 

\newpage


\pagestyle{headings} 


\section[Introduction]{Introduction}\label{S:intro}
The goal of this paper is to research the foundation of the Fredholm stability theory in Banach spaces. We consider perturbations of linear subspaces of a general kind based on metric proximity of the sequence of differences between elements of two subspaces to a fixed vector from the ambient Banach space. This approach subsumes several already known classes of perturbations and allows for cases that do not fit into any of the existing class. We prove and present Fredholm style stability theorems -- some of them new, others strengthening classical results. 

The following subsection recaps several current Fredholm stability approaches in their relation to each other.

\subsection[Overview of the Stability Theory]{Overview of the Stability Theory}\label{S:overview}
The concept of a gap between two linear subspaces of a Banach space has been introduced 60 years ago in \cite{krein_krasnoselskii_milman}. Starting from that foundational paper the gap as a measure of proximity between two subspaces or between two operators (encompassing operators with small norm difference) has been used in the research of stability of properties of linear equations with Fredholm and semi--Fredholm operators -- see \cite{krein_krasnoselskii_milman,gohberg_markus,gohberg_krein,gohberg_markus_feldman,neubauer1,neubauer2,kato}.

Soon after \cite{krein_krasnoselskii_milman} it was established in \cite{atkinson} that the property of being a Fredholm operator is stable under perturbation by a compact operator. In the subsequent papers \cite{goldenstein_gohberg_markus,goldenstein_markus,sedaev,nussbaum,sadovskii} a more general theory of stability of Fredholm and semi--Fredholm operators has been developed that considers perturbations by operators with a small measure of non-compactness.

In parallel with the above two stability theories yet another approach has been developed in \cite{sobolev,anselone_palmer1,anselone_palmer2,chandler-wide_zhang,Rakovschik,vladimirski,vainikko1,vainikko2} as a theory of collectively compact operators or, equivalently, as a theory of compact approximation. Results in this direction are closely related to computational methods. A relation between compact operators, compact approximation and measures of non-compactness have been established. For example, in \cite{vladimirski} it has been proved that a sequence of linear operators ${(A_{n})_{\mathbb{N}}}$ mapping a Banach space $X$ onto a Banach Space $Y$ ${K_{2}-}$converges to a null operator (i.e. the ${K_{2}-}$condition holds: if ${(x_{n})_{\mathbb{N}} \subset X}$ is bounded, then ${(A_{n}x_{n})_{\mathbb{N}} \subset Y}$ is relatively compact) if and only if ${\lim_{n \rightarrow \infty} q(\cup_{i \geq n} A_{i}(B_{1})) = 0}$ where ${B_{1}}$ is a unit ball in X, ${q(M)}$ is the Hausdorff measure of non-compactness of a set ${M \subset Y}$. 

Another area of stability research that generalizes the theory of compact perturbations involves perturbations of linear operators by inessential, $\Omega_{+}$ and $\Omega_{-}$ operators (\cite{aiena0,aiena}), strictly singular (\cite{kato0}) and strictly co--singular (\cite{pelczynski,pietsch}) operators -- the work \cite{aiena} presents a thorough overview of the result in this direction. In addition, recently another direction has emerged in stability theory for Fredholm and semi-Fredholm pairs of subspaces of a Banach space in work \cite{gonzales} -- it employs the concepts of strictly singular and strictly cosingular operators in the context of pairs of subspaces.

All of the above approaches proved powerful stability results in Banach spaces, and not only for (semi--)Fredholm operators -- in the cited works it had been proved that they allow for stability of spectral properties. Also, Fredholm properties of operator complexes have been discovered to be stable under small gap, compact or inessential perturbations in the works \cite{fainshtein,fainshtein_shulman,vasilescu,ambrozie,gleason}.\\

What can be said about the relations between these stability theories? It appears that compact paerturbations and compact approximation are related via Hausdorff measure of noncompactness. At the same time compact perturbations are extended by inessential, $\Omega_{+}$ and $\Omega_{-}$ operators, strictly singular and strictly co--singular operators. 

However, we do not know of any so far established relations between small gap and compact approximation, or between small gap and perturbations by compact operators. Another notable difference between these approaches is that only the gap theory and the newly developed concepts of strictly singular and strictly co-singular pairs deal with the subspaces of a Banach space -- the rest is concerned only with linear operators. 

Note that a general theory of multi--valued linear operators with meaningful applications has been extensively developed (see \cite{cross}) and some stability theorems have been obtained there as well. Thus, it would be of certain interest to consider if similar results can be obtained for subspaces of Banach spaces, not just for single-- or multi-- valued operators.\\

The disparity between the gap concept and the rest of the above mentioned approaches brings up a question if there is a small set of coherent underlying principles on which a unified stability theory can be founded. Ideally, its theorems would strengthen and generalize the facts for subspaces and operators from all of the existing theories of gap, compact perturbations, compact approximation,  inessential operators and singular/co-singular perturbations. 
\subsection[New Concepts: $\lambda-$Adjustment and $\mu-$Approximation]{New Concepts: $\lambda-$Adjustment and $\mu-$Approximation}\label{nsclaama}
In the course of this paper we attempt to build a theory from which most of the known (semi--)Fredholm stability results for small gap, compact perturbations and compact approximation follow. It is based on the following simple observation. 

The gap distance  depends on the norm ${\left\|x - y\right\|}$ of a difference between two vectors $x$ and $y$ from different subspaces of a Banach space $X$. Obviously that value is the same as ${\left\|x - y - \theta\right\|}$ where ${\theta \in X}$ is the null vector. By replacing $\theta$ with an arbitrary vector ${z \in X}$, and by considering the sequences of norms of differences ${\left\|x_{n} - y_{n} - z\right\|}$ instead of a single difference ${\left\|x - y\right\|}$, it is possible to define proximity of the sequences of Banach subspaces that extends the concept of gap and that parallels perturbations by operators with the small measure of non-compactness.

Following this idea, in Section \ref{S:ulabs} we define the concept of lower and upper $\lambda-$adjusted sequences of closed subspaces of a Banach space. That concept is rather general as it encompasses the concepts of gap, perturbations by operator with the small measure of non-compactness and $K_{2}-$approximation. Apparently due to this generality an upper semi--Fredholm pair of subspaces and upper semi--Fredholm operators may become unstable under upper $\lambda-$adjustment in an arbitrary Banach space as shown in Section \ref{S:eoi}. Still, in Section \ref{S:sp} we show that any lower semi--Fredholm pair of subspaces remains stable under lower  $\lambda-$adjustment. Also, lower $\lambda-$adjustment of continuous operators preserves Fredholm and lower semi--Fredholm properties.

In the next Section \ref{sibfus} we introduce $BFU-$spaces -- those are Banach spaces which dual is a Fr\'{e}chet-Urysohn topological space in ${weak^{*}}$ topology. The class of $BFU-$spaces is rather broad as it contains all separable and reflexive spaces and their finite products. It appears that in $BFU-$spaces upper semi--Fredholm pairs of subspaces and upper semi--Fredholm operators remain stable under upper $\lambda-$adjustment.

In the following Section \ref{S:umabs} we introduce another perturbation concept -- $\mu-$approximation. In essence, two sequences of subspaces from a Banach space $\mu-$approximate each other if they are $\lambda-$adjusted and if for any vector of the encompassing Banach space its close proximity to the subspaces from the first sequence means that it is also close enough to the subspaces from the second sequence. It appears that any (semi--)Fredholm property of a pair of subspaces from any Banach space is stable under $\mu-$approximation. Furthermore, properties of the essentially Kato operators remain stable under $\mu-$approximation of commuting operators as well. Note that essentially Kato stability has been previously proved only for perturbations by operators with the small norm (see \cite{goldman_krachkovskii2,goldman_krachkovskii22,goldman_krachkovskii3,forster_muller}).

In the last Section \ref{S:tccs} we apply the previously established results to what seems to be the new concepts of tuples of subspaces and of complexes of tuples of subspaces from Banach spaces. We present and discuss definitions of a tuple of subspaces, of a pair complex, of a tuple complex and of an inclusion complex. For each of them we present stability results under $\lambda-$adjustment and under $\mu-$approximation.

\subsection[Research Dateline]{Research Dateline}
A large number of the important results from Sections \ref{S:ulabs}, \ref{sibfus} and \ref{S:umabs} had been obtained (in a slightly different form) in the period of years 1975--1980. Some of the results from Section \ref{S:umabs} (together with the spectral stability theorems omitted from the current paper) where presented in the author's 1977 graduation thesis at the University of Latvia in Riga, Latvia. 

Other material from Sections \ref{S:ulabs}, \ref{sibfus} and \ref{S:umabs}, as well as the entire body of Section \ref{S:tccs} has been developed in 2007--2008 time period.
 
\subsection[Acknowledgments]{Acknowledgments}
The work between 1975--1980 has been conducted under the guidance of Mikhail Abramovi\v{c} Gol'dman to whom I am indebted for teaching me Mathematics.

Current research has been fully supported by my family -- I am  grateful to my wife and children for their kind patience and understanding while I was working on this paper during long late off--work hours.

\newpage
\subsection[Notational Conventions]{Notational Conventions}\label{S:conv}
$\mathbb{N}$ is a set of natural numbers, $\mathbb{N}^{'}$ is an infinite subset of $\mathbb{N}$, $\mathbb{N}^{''}$ is an infinite subset of $\mathbb{N}^{'}$, etc. When the number of nested subsets becomes high, we denote the subset at depth $n$ as $\mathbb{N}^{'n}$
\[
\mathbb{N}\ \supset\ \mathbb{N}^{'}\ \supset\ \mathbb{N}^{''}\ \supset\ \mathbb{N}^{'''}\ \supset\  \mathbb{N}^{'4}\ \supset\ \mathbb{N}^{'5}\ \supset\ ...
\]
When several lines of subsets are considered at the same time, we distinguish them by lower indices as follows:
\begin{multline}
\notag
\begin{aligned}
\mathbb{N}\ \supset\ \mathbb{N}^{'}\ \supset\ \mathbb{N}^{''}\ \supset\ ...\ \mathbb{N}^{'k}\ \supset\ \mathbb{N}^{'k+1}\ \supset\ ...\\
\mathbb{N}\ \supset\ \mathbb{N}_{0}^{'}\ \supset\ \mathbb{N}_{0}^{''}\ \supset\ ...\ \mathbb{N}_{0}^{'k}\ \supset\ \mathbb{N}_{0}^{'k+1}\ \supset\ ...\\
\mathbb{N}\ \supset\ \mathbb{N}_{1}^{'}\ \supset\ \mathbb{N}_{1}^{''}\ \supset\ ...\ \mathbb{N}_{1}^{'k}\ \supset\ \mathbb{N}_{1}^{'k+1}\ \supset\ ...\\
\end{aligned}
\end{multline}
A sequence of elements enumerated by elements ${n \in \mathbb{N}^{'}}$ is denoted by $(e_{n})_{\mathbb{N}^{'}}$. When more than one index is involved the running index from set ${\mathbb{N}^{'}}$ is specified with that set like $(e_{n_{m}})_{m \in \mathbb{N}^{'}}$ or  $(e^{k}_{m})_{m \in \mathbb{N}^{'}}$, etc.

If $(M_{n})_{\mathbb{N}^{'}}$ is a sequence of non-empty sets and $(x_{n})_{\mathbb{N}^{'}}$ is a sequence of elements such that $x_{n} \in M_{n}$ for $n \in \mathbb{N}^{'}$, then we say that $(x_{n})_{\mathbb{N}^{'}}\ is from\ (M_{n})_{\mathbb{N}^{'}}$ and write $(x_{n})_{\mathbb{N}^{'}} \triangleleft (M_{n})_{\mathbb{N}^{'}}$.

A vector $x$ from a unit sphere of a Banach space $X$ is called \emph{a unit vector}; a sequence of unit vectors $(x_{n})_{\mathbb{N}^{'}}$ is called \emph{a unit sequence}.

All \emph{subspaces} and \emph{operators} in Banach spaces are meant to be \emph{linear}.

\emph{A null element} of a Banach space is denoted by $\theta$; \emph{a null subspace} of a Banach space -- the one that consists of a single element $\theta$ -- is denoted by $\{\theta\}$; \emph{a null operator} from a Banach space $X$ to a Banach space $Y$ -- the one that maps every vector from $X$ into $\theta$ from $Y$ -- is also denoted by $\theta$.

If ${X^{1},...,X^{k}}$ are ${k\geq2}$ Banach spaces, then their direct product\linebreak[4] ${\prod_{i=1}^{k}X^{i}\ =\ X^{1} \times \cdots \times X^{k}}$ is a Banach space of ordered $k-$tuples ${(x^{1},...,x^{k})}$ such that ${x^{i} \in X^{i}}$ for each ${i=1,...,k}$; the norm on ${\prod_{i=1}^{k}X^{i}}$ is defined as ${\max\{\left\|x^{1}\right\|,...,\left\|x^{k}\right\|\}}$.

If $X$ and $Y$ are two Banach spaces, then $\mathcal{C}(X,Y)$ is a set of \emph{closed linear operators} -- those which graphs are closed in the product space $X \times Y$; domain $dom(A)$ of a closed operator ${A \in \mathcal{C}(X,Y)}$ may be \emph{a proper subspace} of $X$. 

The set of all \emph{continuous operators} from $\mathcal{C}(X,Y)$ is denoted by $\mathcal{BC}(X,Y)$. Note that operators from $\mathcal{BC}(X,Y)$ \emph{may not} be defined on the whole $X$.

By ${\mathcal{B}(X,Y)}$ we denote a Banach space, furnished with operator norm, of \emph{continuous (i.e. bounded) operators} defined \emph{on all} $X$ and acting into $Y$; ${\mathcal{K}(X,Y)}$ is a space of all \emph{compact operators}. Note inclusions
\[
\mathcal{K}(X,Y)\ \subset\ \mathcal{B}(X,Y)\ \subset\ \mathcal{BC}(X,Y)\ \subset\ \mathcal{C}(X,Y).
\]
\emph{Dimension} of a Banach space $X$, denoted by ${\dim X}$, is the power of a maximal set of linearly independent vectors; if it is not finite, then we write ${\dim X = \infty}$. When dimension numbers are finite, their addition and subtraction  follow usual rules of arithmetics; when at least one of the dimensions is ${\infty}$, then by definition the result of any addition or subtraction is ${\infty}$ as well.

\newtheorem{theorem}{Theorem}[subsection]
\newtheorem{lemma}[theorem]{Lemma}
\newtheorem{proposition}[theorem]{Proposition}
\newtheorem{definition}[theorem]{Definition}
\newtheorem{example}[theorem]{Example}

\newpage
\section[Uniform $\lambda-$Adjustment]{Uniform $\lambda-$Adjustment in Banach Spaces}\label{S:ulabs}

In this section we define lower and upper uniform $\lambda-$adjustment between sequences of closed subspaces of Banach spaces. Uniform $\lambda-$adjustment between sequences of closed linear operators is then defined as uniform $\lambda-$adjustment between their graph subspaces in the product space. Then we show that perturbations by small gap, small norm, small $q-$norm and ${K_{2}-}$approximation become just particular cases of uniform $\lambda-$adjustment, while some other perturbations previously not fit into any of the existing cases still satisfy definition of $\lambda-$adjustment. 

After that we present several examples when properties used to be stable under previous perturbations break down under uniform $\lambda-$adjustment. At the end we prove that lower uniform $\lambda-$adjustment with small $\lambda$ still preserves lower semi--Fredholm properties of a pair of subspaces and that lower $0-$adjustment preserves any lower or upper (semi--)Fredholm properties of continuous linear operators.

\subsection[Uniform $\lambda-$Adjustment]{Uniform $\lambda-$Adjustment}\label{S:ula}

\begin{definition}[Uniform $\lambda-$Adjustment of Sequences of Subspaces]\label{D:ulass} 
Let $(M_{n})_{\mathbb{N}^{'}}$ and $(P_{n})_{\mathbb{N}^{'}}$ be a pair of sequences of closed subspaces from a Banach space $X$, ${M_{n} \neq \{\theta\}}$ for all ${n \in \mathbb{N}^{'}}$ and $\lambda \geq 0$. We say that $(M_{n})_{\mathbb{N}^{'}}$ is lower $uniformly\ \lambda-adjusted$ with $(P_{n})_{\mathbb{N}^{'}}$ (equivalently -- $(P_{n})_{\mathbb{N}^{'}}$ is upper $uniformly\ \lambda-adjusted$ with $(M_{n})_{\mathbb{N}^{'}}$) if for any $\eta > 0$ and for any unit subsequence $(x_{n})_{\mathbb{N}^{''}}$ from $(M_{n})_{\mathbb{N}^{''}}$ there exists a subsequence $(y_{n})_{\mathbb{N}^{'''}}$ from $(P_{n})_{\mathbb{N}^{'''}}$ and a vector $z \in X$ such that 
\[
\varlimsup_{n \in \mathbb{N}^{'''}} \left\|x_{n} - y_{n} - z \right\|\ \leq\ \lambda + \eta. 
\]
The uniform $\lambda-$adjustment between $(M_{n})_{\mathbb{N}^{'}}$ and $(P_{n})_{\mathbb{N}^{'}}$ is a non-negative real number defined as
\[
\lambda_{\mathbb{N}^{'}}[M_{n}, P_{n}]\ :=\ \inf \{ \lambda \in \mathbb{R} \mid (M_{n})_{\mathbb{N}^{'}}\ \text{is lower uniformly $\lambda-$adjusted with}\ (P_{n})_{\mathbb{N}^{'}}\}.
\]
\end{definition}

\begin{definition}[Uniform $\lambda-$Adjustment Between a Subspace and a Sequence of Subspaces]\label{D:ulasss}
Similarly to the uniform adjustment of a pair of sequences of subspaces, we define uniform  adjustment between a subspace $M$ and a sequence of subspaces $(P_{n})_{\mathbb{N}^{'}}$, as well as uniform adjustment between a sequence of subspaces $(M_{n})_{\mathbb{N}^{'}}$ and a subspace $P$, denoting
\[
\begin{aligned}
&\lambda_{\mathbb{N}^{'}}[M, P_{n}]\ :=\ \lambda_{\mathbb{N}^{'}}[M_{n}, P_{n}]\ \ where\ \ M = M_{n}\ for\ n \in {N}^{'},\\
&\lambda_{\mathbb{N}^{'}}[M_{n}, P]\ :=\ \lambda_{\mathbb{N}^{'}}[M_{n}, P_{n}]\ \ where\ \ P = P_{n}\ for\ n \in {N}^{'}.
\end{aligned}
\]
\end{definition}

The uniform $\lambda-$adjustment is well defined between any two sequences of subspaces  $(M_{n})_{\mathbb{N}^{'}}$ and $(P_{n})_{\mathbb{N}^{'}}$ since for any unit subsequence $(x_{n})_{\mathbb{N}^{'}} \triangleleft (M_{n})_{\mathbb{N}^{'}}$ one can choose $y_{n} = \theta$ for $n \in \mathbb{N}^{'}$ and $z = \theta \in X$ so that $(y_{n})_{\mathbb{N}^{'}} \triangleleft (P_{n})_{\mathbb{N}^{'}}$ and
\[
\varlimsup_{n \in \mathbb{N}^{'}} \left\|x_{n} - y_{n} - z \right\| = \varlimsup_{n \in \mathbb{N}^{'}} \left\|x_{n} - \theta - \theta \right\| = 1. 
\]
Thus $\lambda_{\mathbb{N}^{'}}[M_{n}, P_{n}] \leq 1$ for any two sequences of subspaces $(M_{n})_{\mathbb{N}^{'}}$ and $(P_{n})_{\mathbb{N}^{'}}$.
\\
\\
Recall that if $x$ is a vector from a Banach space $X$, and $P$ is a closed subspace of $X$, then \emph{distance from x to P} is defined as 
\[
dist(x, P)\ :=\ \inf \{ \left\| x - y \right\| \mid y \in P \};
\]
if $M$, $P$ are two closed subspaces of a Banach space $X$ then the \emph{gap between M and P} is a real non-negative number defined as
\[ 
\delta(M, P)\ :=\ \sup \{ dist(x, P) \mid x \in M ,\ \left\| x \right\| = 1 \}.
\]

The concept of uniform $\lambda-$adjustment generalizes the concept of gap distance as shown in the following lemma:
\begin{lemma}[Uniform $\lambda-$Adjustment is Weaker than Gap]\label{L:uawg}
Small gap means small adjustment: 
\[
	\lambda_{\mathbb{N}^{'}}[M_{n}, P_{n}]\ \le\ \varlimsup_{n \in \mathbb{N}^{'}}  \delta(M_{n}, P_{n}).
\]
\end{lemma}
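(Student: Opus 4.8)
The plan is to show that any $\lambda$ strictly larger than $\varlimsup_{n \in \mathbb{N}'} \delta(M_n, P_n)$ witnesses lower uniform $\lambda$-adjustment, so that the infimum defining $\lambda_{\mathbb{N}'}[M_n,P_n]$ cannot exceed $\varlimsup_{n \in \mathbb{N}'} \delta(M_n, P_n)$. Set $g := \varlimsup_{n \in \mathbb{N}'} \delta(M_n, P_n)$ and fix an arbitrary $\eta > 0$. It suffices to verify that $(M_n)_{\mathbb{N}'}$ is lower uniformly $(g+\eta)$-adjusted with $(P_n)_{\mathbb{N}'}$; letting $\eta \downarrow 0$ then gives the inequality.

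First I would take an arbitrary unit subsequence $(x_n)_{\mathbb{N}''} \triangleleft (M_n)_{\mathbb{N}''}$, as required by Definition \ref{D:ulass}. Since $\varlimsup_{n \in \mathbb{N}'} \delta(M_n, P_n) = g$, for all sufficiently large $n \in \mathbb{N}''$ we have $\delta(M_n, P_n) \le g + \eta/2$, hence $\mathrm{dist}(x_n, P_n) \le \delta(M_n, P_n) \le g + \eta/2$ because $x_n \in M_n$ is a unit vector. By the definition of distance to a closed subspace as an infimum, for each such $n$ I can choose $y_n \in P_n$ with $\|x_n - y_n\| \le g + \eta$; for the finitely many remaining indices $n \in \mathbb{N}''$ I set $y_n = \theta \in P_n$, so that $(y_n)_{\mathbb{N}''} \triangleleft (P_n)_{\mathbb{N}''}$. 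Now take $z = \theta \in X$. Then $\|x_n - y_n - z\| = \|x_n - y_n\| \le g + \eta$ for all large $n \in \mathbb{N}''$, so
\[
\varlimsup_{n \in \mathbb{N}''} \|x_n - y_n - z\| \ \le\ g + \eta \ =\ \varlimsup_{n \in \mathbb{N}'} \delta(M_n, P_n) + \eta,
\]
which is exactly the inequality demanded in Definition \ref{D:ulass} with $\lambda = g$ and the given $\eta$ (taking the subsequence $\mathbb{N}''' = \mathbb{N}''$). Hence $(M_n)_{\mathbb{N}'}$ is lower uniformly $g$-adjusted with $(P_n)_{\mathbb{N}'}$ in the sense that every $\eta>0$ is accommodated, so $\lambda_{\mathbb{N}'}[M_n,P_n] \le g$.

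I do not expect any serious obstacle here: the result is essentially a bookkeeping exercise matching the infimum in the definition of gap with the infimum defining $\lambda$-adjustment, with the free vector $z$ simply chosen to be $\theta$ (recovering the classical situation, as the paper's motivating discussion already indicates). The only points requiring a little care are (i) that $\delta(M_n,P_n) \le g + \eta/2$ only for $n$ beyond some threshold, so the finitely many exceptional indices must be handled separately (choosing $y_n = \theta$ there is harmless since the $\varlimsup$ ignores finite prefixes), and (ii) passing from the $\inf$ in $\mathrm{dist}(x_n,P_n)$ to an actual choice of $y_n$ within $\eta$ rather than $\eta/2$ — trivial, but worth stating cleanly. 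One should also note the degenerate case where $g$ could in principle exceed $1$: by the remark preceding the lemma $\lambda_{\mathbb{N}'}[M_n,P_n] \le 1$ always, and $\delta(M_n,P_n)\le 1$ as well for unit vectors, so no inconsistency arises.
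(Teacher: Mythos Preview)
Your proposal is correct and follows essentially the same approach as the paper: choose $y_n \in P_n$ within $\delta(M_n,P_n)+\eta$ of the unit vector $x_n$, take $z=\theta$, and pass to the $\varlimsup$. The paper's version is slightly more streamlined in that it picks $y_n$ with $\|x_n-y_n\|<\delta(M_n,P_n)+\eta$ for every $n$ and then takes the $\varlimsup$ directly, avoiding your split into large and small indices, but the idea is identical.
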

\begin{proof}
Consider a unit subsequence $(x_{n})_{\mathbb{N}^{''}} \triangleleft (M_{n})_{\mathbb{N}^{''}}$ and a number ${\eta > 0}$. Definition of ${\delta(M_{n}, P_{n})}$ implies that for every vector ${x_{n} \in M_{n}}$ one can find a vector ${y_{n} \in P_{n}}$ such that ${\left\|x_{n} - y_{n}\right\| < \delta(M_{n}, P_{n}) + \eta}$; hence, setting ${z = \theta}$, we see that 
\begin{multline}
\notag
\begin{aligned}
\varlimsup_{n \in \mathbb{N}^{''}} \left\| x_{n} - y_{n} - z \right\| &\ =\ \varlimsup_{n \in \mathbb{N}^{''}} \left\| x_{n} - y_{n} - \theta \right\|\\
                                                                      &\ =\ \varlimsup_{n \in \mathbb{N}^{''}} \left\| x_{n} - y_{n} \right\|\\
                                                                      &\ \le\ \varlimsup_{n \in \mathbb{N}^{''}} \delta(M_{n}, P_{n}) + \eta. 
\end{aligned}
\end{multline}
Therefore, according to definition of $\lambda-$adjustment we shall conclude that
\[
\lambda_{\mathbb{N}^{'}}[M_{n}, P_{n}]\ \le\ \varlimsup_{n \in \mathbb{N}^{'}}  \delta(M_{n}, P_{n}).\qedhere
\]
\end{proof}

\subsection[The Small Uniform $\lambda-$Adjustment Theorem]{The Small Uniform $\lambda-$Adjustment Theorem}\label{S:sulat}
Throughout this article we will see a significant interplay between the concepts of gap and uniform  $\lambda-$adjustment; in particular, we will be using the following well known theorem (see \cite{krein_krasnoselskii_milman,gohberg_krein,kato,ostrovskii}:

\begin{theorem}[The Small Gap Theorem]\label{T:sgp}
Suppose that ${M}$ and ${P}$ are two closed subspaces from a Banach space ${X}$. Then
\begin{enumerate}
  \item If ${\dim M < \infty}$ and ${\dim M < \dim P}$, then there exists a unit vector ${x \in P}$ such that ${dist(x, M) = 1}$.
  \item	If ${\delta(M, P) < 1}$, then ${\dim M \leq \dim P}$.
  \item If ${\delta(M, P) < 1}$, then ${\dim X/P \leq \dim X/M}$.
  \item If ${\delta(X, P) < 1}$, then ${\delta(X, P) = 0}$ and ${X = P}$.
\end{enumerate}
\end{theorem}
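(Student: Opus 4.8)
The plan is to prove the four parts essentially in the order given, since each later part leans on the machinery of the earlier ones, and all four ultimately rest on a single device: a Riesz-type almost-orthogonality lemma that produces a unit vector in one subspace that stays a fixed distance away from another. For part (1), I would proceed by contradiction: assume $\mathrm{dist}(x,M)<1$ for every unit $x\in P$. Since $\dim M<\infty$, pick a basis $e_1,\dots,e_k$ of $M$ and consider the quotient map $\pi\colon X\to X/M$; on the finite-dimensional space $M\oplus\mathrm{span}(v)$ for any $v\in P\setminus M$ one can compare the norm with the quotient norm. The clean way is: because $\dim M<\dim P$, the restriction $\pi|_P\colon P\to X/M$ has nontrivial kernel intersection only if $P\cap M\neq\{0\}$, but even when it is injective on a complement, a compactness argument on the unit sphere of a suitable finite-dimensional subspace of $P$ gives a unit $x\in P$ attaining $\sup\{\mathrm{dist}(x,M)\}$, and a separate argument (using $\dim M<\dim P$ to find a direction in $P$ not "seen" by finitely many functionals norming $M$) forces that supremum to equal $1$. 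I expect part (1) to be the main obstacle, precisely because in a general Banach space there is no inner-product orthogonality and one must juggle the finite-dimensionality of $M$ against the strictly larger dimension of $P$ via a Hahn–Banach/compactness combination rather than a projection.

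For part (2), argue contrapositively: if $\dim P<\dim M$, then in particular $\dim P<\infty$ is impossible to combine directly, so split into cases. If $\dim P<\infty$, apply part (1) with the roles of $M$ and $P$ swapped to get a unit $x\in M$ with $\mathrm{dist}(x,P)=1$, whence $\delta(M,P)\geq 1$, contradicting $\delta(M,P)<1$. If $\dim P<\infty$ fails but still $\dim P<\dim M=\infty$ is vacuous, so the only substantive case is $\dim P$ finite; thus part (2) reduces cleanly to part (1).

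For part (3), the plan is to pass to adjoints: the annihilators $M^{\perp},P^{\perp}\subset X^{*}$ satisfy $\dim X/M=\dim M^{\perp}$ and $\dim X/P=\dim P^{\perp}$ (as dimensions, finite or infinite), and there is the standard gap inequality $\delta(P^{\perp},M^{\perp})\leq\delta(M,P)$ — or one can avoid duality by a direct argument: if $\dim X/P<\dim X/M$ fails, i.e. $\dim X/M\leq\dim X/P-1$ in the relevant finite range, build a unit vector witnessing the failure using part (1) applied to a cleverly chosen pair of finite-codimensional subspaces. I would take the duality route: $\delta(M,P)<1$ forces $\delta(P^{\perp},M^{\perp})<1$, then part (2) applied in $X^{*}$ gives $\dim P^{\perp}\leq\dim M^{\perp}$, which is exactly $\dim X/P\leq\dim X/M$. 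Finally part (4) is immediate from the others: taking $M=X$ in part (3) gives $\dim X/P\leq\dim X/X=0$, so $X/P=\{0\}$ and $P=X$, whence $\delta(X,P)=\delta(X,X)=0$. The only care needed is the standard fact that $\dim X/P=0$ genuinely implies $P=X$ for a \emph{closed} subspace, which is built into the definition of the quotient. Throughout, I would cite the classical references already invoked in the text rather than reconstruct the almost-orthogonality lemma from scratch, flagging part (1) as the place where the real work sits.
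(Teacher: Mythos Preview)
The paper does not prove this theorem at all: it is stated as a ``well known theorem'' with citations to \cite{krein_krasnoselskii_milman,gohberg_krein,kato,ostrovskii} and then immediately used as a black box. Your closing remark---that you would cite the classical references rather than reconstruct the argument---is therefore exactly what the paper does, and is the right call here.

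That said, one comment on your sketch of part~(1), which you correctly flag as the crux. The Hahn--Banach/compactness combination you outline is not known to suffice on its own: the classical proofs (going back to Kre\u{\i}n--Krasnosel'ski\u{\i}--Mil'man) invoke the Borsuk antipodal theorem applied to a continuous odd map from the unit sphere of a $(\dim M+1)$-dimensional subspace of $P$ into $M\cong\mathbb{R}^{\dim M}$. Compactness of that sphere gives you that the supremum of $\mathrm{dist}(x,M)$ is \emph{attained}, but showing the attained value equals $1$ (rather than some $c<1$) is the genuinely topological step, and your phrase ``a direction in $P$ not seen by finitely many functionals norming $M$'' does not obviously produce it. Your reductions for parts~(2)--(4) are clean and standard: (2) follows from (1) by contraposition, (3) from (2) via the duality $\delta(M,P)=\delta(P^{\perp},M^{\perp})$ (which the paper itself quotes later), and (4) is immediate from (3).
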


Let us use the Small Gap Theorem in order to prove a similar statement for uniform $\lambda-$adjustment:

\begin{theorem}[The Small Uniform Adjustment Theorem]\label{T:suat}
Let ${({M}_n)_{\mathbb{N}^{'}}}$ and ${({P}_n)_{\mathbb{N}^{'}}}$ be two sequences of closed subspaces, $P$ be a closed subspace from a Banach space $X$. Then
\begin{enumerate}
  \item If ${\lambda_{\mathbb{N}^{'}}[M_{n}, P_{n}] < 1/2}$, then there exist ${K \in \mathbb{N}^{'}}$, ${L \in \mathbb{N}}$ such that for ${m > K}$
\[
\dim M_{m}\ \leq\ L + \dim P_{m}.
\]
  \item For every ${\epsilon > 0}$ there exists a natural number ${K_{\epsilon} \in \mathbb{N}}$ and a\linebreak[4] finite-dimensional subspace ${Q_{\epsilon} \subset \sum_{i=1}^{K_{\epsilon}} M_{i}}$ such that
\begin{equation}\label{1910}
\lambda_{\mathbb{N}^{'}}[M_{n}, P_{n}] + \epsilon\ \geq\ \frac{1}{2} \times \varlimsup_{n \in \mathbb{N}^{'}} \delta(M_{n}, Q_{\epsilon} + \sum_{j=1}^{n} P_{j}).
\end{equation}
  \item If ${\lambda_{\mathbb{N}^{'}}[M_{n}, P] < 1/2}$ then there exist ${K \in \mathbb{N}^{'}}$, ${L \in \mathbb{N}}$ such that for ${m > K}$
\[
\dim X/P\ +\ L\ \leq\ \dim X/M_{m}.
\]
  \item If ${M_{n} = M \subset P_{n}}$ for ${n \in \mathbb{N^{'}}}$ and ${\dim P_{n} \rightarrow \infty}$, then
\[
0 = \lambda_{\mathbb{N}^{'}}[M, P_{n}],\ \ \ \lambda_{\mathbb{N}^{'}}[P_{n}, M]\ \geq\  \frac{1}{2}
\]
  \item If ${\lambda_{\mathbb{N}^{'}}[M_{n}, \{\theta\}] < 1/2}$ then there exists a closed subspace ${Q \subset X}$ with a finite co-dimension ${\dim X/Q < \infty}$ and a natural number ${K \in \mathbb{N}^{'}}$ such that ${M_{m} \cap Q = \{\theta\}}$ for all ${m > K}$ and all norms of natural projectors ${Pr_{m} : M_{m} \oplus Q \rightarrow Q}$ are bounded from above. 
\end{enumerate}
\end{theorem}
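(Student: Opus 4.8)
The plan is to make item~2 the engine of the whole statement: items~3 and~5 come out of it, item~1 needs a separate but similar contradiction argument, and item~4 is then essentially free.

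\medskip

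\noindent\emph{Item~2.} Fix $\epsilon>0$ and argue by contradiction, assuming that for every $K\in\mathbb{N}$ and every finite-dimensional $Q\subset\sum_{i=1}^{K}M_i$ one has $\lambda_{\mathbb{N}'}[M_n,P_n]+\epsilon<\tfrac12\varlimsup_{n\in\mathbb{N}'}\delta(M_n,Q+\sum_{j=1}^{n}P_j)$. Build a unit sequence $(x_{n_k})_{k}$ with $x_{n_k}\in M_{n_k}$ recursively: given $x_{n_1},\dots,x_{n_k}$, apply the failure hypothesis to $Q:=\operatorname{span}(x_{n_1},\dots,x_{n_k})\subset\sum_{i=1}^{n_k}M_i$ to pick $n_{k+1}>n_k$ and a unit $x_{n_{k+1}}\in M_{n_{k+1}}$ with
\[
\operatorname{dist}\Bigl(x_{n_{k+1}},\ \operatorname{span}(x_{n_1},\dots,x_{n_k})+\textstyle\sum_{j=1}^{n_{k+1}}P_j\Bigr)>2\lambda_{\mathbb{N}'}[M_n,P_n]+2\epsilon .
\]
Now apply Definition~\ref{D:ulass} to the unit sequence $(x_{n_k})_k$ with $\lambda$ just above $\lambda_{\mathbb{N}'}[M_n,P_n]$ and a small $\eta$: there are a subsequence, vectors $y_{n_k}\in P_{n_k}$ and a single $z\in X$ with $\varlimsup_k\|x_{n_k}-y_{n_k}-z\|\le\lambda_{\mathbb{N}'}[M_n,P_n]+\epsilon$. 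For two large indices $k<l$ in that subsequence, the estimates for $k$ and $l$ cancel the common $z$, giving $\|x_{n_l}-x_{n_k}-(y_{n_l}-y_{n_k})\|\le 2\lambda_{\mathbb{N}'}[M_n,P_n]+2\epsilon$; since $x_{n_k}\in\operatorname{span}(x_{n_1},\dots,x_{n_{l-1}})$ and $y_{n_k}\in P_{n_k}\subset\sum_{j=1}^{n_l}P_j$, this contradicts the recursive choice of $x_{n_l}$. The factor $\tfrac12$ is exactly the price of trading the single correction vector $z$ for the finite-dimensional $Q_\epsilon$ via one triangle inequality between two members of the sequence, so I do not expect it to be removable.

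\medskip

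\noindent\emph{Items~1, 3, 4.} For item~3, specialize $P_j=P$, so $\sum_{j\le n}P_j=P$; item~2 produces a finite-dimensional $Q_\epsilon$ with $\varlimsup_n\delta(M_n,Q_\epsilon+P)<1$ as soon as $\lambda_{\mathbb{N}'}[M_n,P]<1/2$, and feeding this into item~3 of Theorem~\ref{T:sgp} gives $\dim X/(Q_\epsilon+P)\le\dim X/M_m$ eventually, whence the codimension comparison with $L:=\dim Q_\epsilon$ (using $\dim X/(Q_\epsilon+P)\ge\dim X/P-\dim Q_\epsilon$). Item~1 does \emph{not} follow this way (item~2 only yields $\dim M_m\le\dim Q_\epsilon+\dim\sum_{j\le m}P_j$), so I would prove it directly: if the conclusion fails there is a subsequence along which $\dim M_m-\dim P_m\to\infty$ (so $\dim P_m<\infty$ there); choose finite-dimensional $W_m\subset M_m$ with $\dim W_m-\dim P_m\to\infty$, set $V:=\overline{\sum_m(W_m+P_m)}$ (separable), fix a countable dense $\{v_1,v_2,\dots\}\subset V$, and by item~1 of Theorem~\ref{T:sgp} pick unit $x_{m_k}\in W_{m_k}$ with $\operatorname{dist}(x_{m_k},P_{m_k}+\operatorname{span}(v_1,\dots,v_k))=1$. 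Then $\liminf_m\operatorname{dist}(x_m-v,P_m)\ge1$ for every $v\in V$; given any $z\in X$, either $\|x_m-y_m-z\|\ge\operatorname{dist}(z,V)\ge 1/2$, or $z$ lies within $1/2$ of some $v\in V$ and $\varlimsup_m\|x_m-y_m-z\|\ge 1-1/2=1/2$, so $\lambda_{\mathbb{N}'}[M_n,P_n]\ge1/2$, a contradiction. Item~4 is then immediate: its first half holds by taking $y_n:=x_n\in M\subset P_n$ and $z:=\theta$, and its second half is the contrapositive of item~1 with $(P_n)$ in the role of $(M_n)$ and the constant $M$ in the role of $(P_n)$, since $\dim P_n\to\infty$ forces $\dim M<\infty$ and $\dim P_m>L+\dim M$ eventually for every $L$.

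\medskip

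\noindent\emph{Item~5.} Apply item~2 with $P_j=\{\theta\}$: since $\lambda_{\mathbb{N}'}[M_n,\{\theta\}]<1/2$, there is a finite-dimensional $Q_\epsilon\subset\sum_{i\le K_\epsilon}M_i$ with $\delta(M_m,Q_\epsilon)<1$ for all $m>K$ (hence $\dim M_m\le\dim Q_\epsilon$, as in item~1), and take $Q$ to be a closed complement of $Q_\epsilon$ in $X$ — available because $Q_\epsilon$ is finite-dimensional — so $\dim X/Q=\dim Q_\epsilon<\infty$. It remains to see that $M_m\cap Q=\{\theta\}$ for $m>K$ and that the projectors $Pr_m$ are uniformly bounded, which both follow from a uniform lower bound on the inclination $\inf\{\operatorname{dist}(u,Q):u\in M_m,\ \|u\|=1\}$. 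This is the step I expect to be the main obstacle: the naive estimate $\operatorname{dist}(u,Q)\ge\bigl(1-c\,\|I-\Pi_{Q_\epsilon}\|\bigr)\gamma(Q_\epsilon,Q)$, where $c:=\varlimsup_m\delta(M_m,Q_\epsilon)$ and $\Pi_{Q_\epsilon}$ is the projection onto $Q_\epsilon$ along $Q$, is useful only when $c\,\|I-\Pi_{Q_\epsilon}\|<1$, so the complement $Q$ must be chosen with care — or the gap estimate of item~2 sharpened (for instance by confining the correction vectors $z$ supplied by the adjustment to a small set once $\lambda_{\mathbb{N}'}[M_n,\{\theta\}]<1/2$, which would replace the factor $2$ above by $1$) — in order to push the relevant constant below $1$ uniformly in $m$.
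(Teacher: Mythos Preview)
Your arguments for items~2 and~3 coincide with the paper's, and your item~1 argument—passing through a separable auxiliary space $V$ and a countable dense set—is correct though more elaborate than the paper's route, which simply chooses indices recursively so that $\dim M_{n_{k+1}}>\dim\bigl(\sum_{i\le k}M_{n_i}+\sum_{i\le k+1}P_{n_i}\bigr)$ and then applies Theorem~\ref{T:sgp}(1) at each step, exactly as in your own item~2 proof. One slip in item~4: the inclusion $M\subset P_n$ together with $\dim P_n\to\infty$ does not force $\dim M<\infty$; the paper instead applies item~2 (with the roles $(P_n,M)$) and passes to the quotient $X/M$, using that $\dim P_n/M\to\infty$.

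Item~5 is where there is a genuine gap. Going through item~2 yields a finite-dimensional $Q_\epsilon$ with $\delta(M_m,Q_\epsilon)\le 2(\lambda+\epsilon)<1$, and you then want a closed complement $Q$ of $Q_\epsilon$ with $M_m\cap Q=\{\theta\}$ and uniformly bounded projectors. But a unit $u\in M_m\cap Q$ satisfies only $\operatorname{dist}(u,Q_\epsilon)\ge 1/\|I-\Pi_{Q_\epsilon}\|$, and in a general Banach space no complement of $Q_\epsilon$ need have $\|I-\Pi_{Q_\epsilon}\|$ close enough to~$1$ to contradict $\delta(M_m,Q_\epsilon)<1$; that gap may be arbitrarily close to~$1$ when $\lambda$ is near~$1/2$, while $\|I-\Pi_{Q_\epsilon}\|$ is at best controlled (e.g.\ by Kadec--Snobar) in terms of $\dim Q_\epsilon$, over which you have no control. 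Your alternative of sharpening the factor~$2$ in item~2 cannot succeed either: Example~\ref{Ex:1} exhibits $\lambda=1/2$ with $\dim M_n\to\infty$, so the threshold $1/2$ in item~1 is sharp and no improvement of item~2 can deliver $\delta(M_m,Q_\epsilon)$ below a constant independent of~$\lambda$. The paper bypasses item~2 entirely here and runs a separate contradiction: assuming every finite-codimensional $Q$ admits arbitrarily late $M_m$ with $\delta(M_m,Q)$ small, it uses Hahn--Banach repeatedly to build a nested chain $X=Q_{n_0}\supset Q_{n_1}\supset\cdots$ of codimension-one drops together with unit $x_{n_k}\in M_{n_k}$ and $q_{n_k}\in Q_{n_k}$ satisfying $\|x_{n_k}-q_{n_k}\|<2^{-k}$ and $\operatorname{dist}(q_{n_i},Q_{n_j})=\|q_{n_i}\|$ for $i<j$; the $\lambda$-adjustment hypothesis then forces $\|q_{n_k}\|$ to be simultaneously close to~$1$ and bounded above by $2(\lambda+\epsilon)+o(1)<1$, a contradiction. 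This manufactures a finite-codimensional $Q$ directly, after which an iteration on $M_m\cap Q$ (using item~1 to bound $\dim M_m$) handles the trivial-intersection and projector claims.
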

\begin{proof}
If for large enough $n$ all ${\dim P_{n}}$ are infinite then proposition 1 is trivially true. Thus we shall only consider a case when infinitely many numbers ${\dim P_{n}}$ are finite, and suppose to the contrary that there exists a subsequence ${\mathbb{N}^{''}}$ such that
\[
\lim_{n \in \mathbb{N}^{''}} ( \dim M_{n} - \dim P_{n}) \rightarrow \infty.
\]
We may assume that all dimensions ${\dim M_{n}}$ are finite as well -- otherwise we could consider finite-dimensional subspaces ${M^{'}_{n} \subset M_{n}}$ with the same property. Now let us build inductively a subsequence ${\mathbb{N}^{'''}}$ such that for all ${n \in \mathbb{N}^{'''}}$
\[
\dim M_{n}\ >\ \dim\ ( \sum_{i \in \mathbb{N}^{'''}, i < n} M_{i}\ +\ \sum_{i \in \mathbb{N}^{'''}, i \leq n} P_{i} ):
\]
\begin{itemize}
  \item According to our assumption there exists ${n_{1} \in \mathbb{N}^{'''}}$ such that\linebreak[4] ${\dim M_{n_{1}} - \dim P_{n_{1}} > 0}$;
  \item suppose that we have already built such numbers ${n_{1}, n_{2}, ..., n_{k}}$ up until $k$. Since all ${M_{n_{i}}}$ and ${P_{n_{i}}}$ are finite-dimensional, the sum of their dimensions ${S_{k} \in \mathbb{N}}$ is finite. According to our assumption there exists some ${r > n_{k}, r \in \mathbb{N}^{'''}}$ such that ${\dim M_{r} - \dim P_{r} > S_{k}}$. If we set ${n_{k+1} = r}$ then it is easy to see that
\begin{multline}
\notag
\begin{aligned}
\dim M_{n_{k+1}} &\ >\ \dim P_{n_{k+1}} + S_{k}\\
                 &\ =\ \dim P_{n_{k+1}}\ +\ ( \sum_{i \leq k} \dim M_{n_{i}}\ +\ \sum_{i \leq k} \dim P_{n_{i}} )\\
                 &\ \geq\ \dim\ ( \sum_{i < k+1} M_{n_{i}}\ +\ \sum_{i \leq k+1} P_{n_{i}} ).
\end{aligned}
\end{multline}
\end{itemize}
According to our construction, we can apply proposition 1 from the Small Gap Theorem to each pair of subspaces
\[
( M_{n_{k+1}}, \sum_{i < k+1} M_{n_{i}}\ +\ \sum_{i \leq k+1} P_{n_{i}} )
\]
choosing a unit subsequence ${(x_{n})_{n \in \mathbb{N}^{'''}} \triangleleft (M_{n})_{n \in \mathbb{N}^{'''}}}$ such that
\begin{equation}\label{Ex:23}
dist(x_{n}, \sum_{i \in \mathbb{N}^{'''}, i < n} M_{i}\ +\ \sum_{i \in \mathbb{N}^{'''}, i \leq n} P_{i} )\ =\ 1.
\end{equation}
Now choose ${\eta > 0}$ so that ${\lambda_{\mathbb{N}^{'}}[M_{n}, P_{n}] + \eta\ <\ \frac{1}{2}}$, also choose a subsequence ${(y_{n})_{\mathbb{N}^{'4}} \triangleleft (P_{n})_{\mathbb{N}^{'4}}}$ and ${z \in X}$ such that
\[
\varlimsup_{n \in \mathbb{N}^{'4}} \left\|x_{n} - y_{n} - z\right\| < \lambda_{\mathbb{N}^{'}}[M_{n}, P_{n}] + \eta.
\]
Hence, applying the triangle inequality for large enough ${n > m}$, ${n, m \in \mathbb{N}^{'4}}$ we obtain
\begin{multline}
\notag
\begin{aligned}
\left\|x_{n} - (x_{m} - y_{m} + y_{n})\right\| &\ =\ \left\|x_{n} - y_{n} - x_{m} + y_{m}\right\|\\
                                               &\ =\ \left\|(x_{n} - y_{n} - z) - (x_{m} - y_{m} - z)\right\|\\
                                               &\ \leq\ \left\|(x_{n} - y_{n} - z)\right\| + \left\|(x_{m} - y_{m} - z)\right\|\\
                                               &\ <\ 2 \times (\lambda_{\mathbb{N}^{'}}[M_{n}, P_{n}] + \eta)\ <\ 1.
\end{aligned}
\end{multline}
However, 
\[
x_{m} - y_{m} + y_{n} \in \sum_{i \in \mathbb{N}^{'''}, i < n} M_{i}\ +\ \sum_{i \in \mathbb{N}^{'''}, i \leq n} P_{i},
\]
Therefore
\[
dist(x_{n}, \sum_{i \in \mathbb{N}^{'''}, i < n} M_{i}\ +\ \sum_{i \in \mathbb{N}^{'''}, i \leq n} P_{i} )\ <\ 1.
\]
which directly contradicts \eqref{Ex:23}. Therefore our assumption is incorrect and we shall conclude that there exist ${K \in \mathbb{N}^{'}}$ and ${L \in \mathbb{N}}$ such that ${\dim M_{m}\ \leq\ L + P_{m}}$ for ${m > K}$ which concludes the proof of proposition 1.\\

Suppose that proposition 2 is not true for some ${\epsilon > 0}$ and choose any unit vector ${x_{1}}$ from ${M_{1}}$, set ${n_{1} = 1}$, ${Q_{1} = sp(\{x_{n_{1}}\})}$ -- a linear space spanned on vector ${x_{n_{1}}}$. Then, since ${\dim Q_{1} < \infty}$, there shall exist some ${n_{2} > n_{1}}$ and a unit vector ${x_{n_{2}} \in M_{n_{2}}}$ such that
\[
	dist(x_{n_{2}},\ Q_{1} + \sum_{j=1}^{n_{2}} P_{j})\ >\ 2 \times (\lambda_{\mathbb{N}^{'}}[M_{n}, P_{n}] + \epsilon).
\]
Next set ${Q_{2} = sp(\{x_{n_{1}}, x_{n_{2}}\})}$ -- a linear space spanned on both ${x_{n_{1}}}$ and ${x_{n_{2}}}$. Again, following our assumption there shall exist some ${n_{3} > n_{2}}$ such that
\[
	dist(x_{n_{3}},\ Q_{2} + \sum_{j=1}^{n_{3}} P_{j})\ >\ 2 \times (\lambda_{\mathbb{N}^{'}}[M_{n}, P_{n}] + \epsilon).
\]
Continuing this process, one can build sequences of numbers ${\mathbb{N^{''}} = (n_{k})_{k \in \mathbb{N}}}$ and unit vectors ${(x_{n_{k}})_{k \in \mathbb{N}} \triangleleft (M_{n_{k}})_{k \in \mathbb{N}}}$ and a sequence of finite-dimensional subspaces ${Q_{k} = sp(\{x_{n_{1}}, x_{n_{2}}, ..., x_{n_{k}}\}) \subset \sum_{j=i}^{n_{k}} M_{j}}$ such that
\begin{equation}\label{E:20}
	dist(x_{n_{k+1}},\ Q_{k} + \sum_{j=1}^{n_{k+1}} P_{j})\ >\ 2 \times (\lambda_{\mathbb{N}^{'}}[M_{n}, P_{n}] + \epsilon).
\end{equation}
Now recall that by definition of uniform $\lambda-$adjustment, for the just built unit subsequence ${(x_{n})_{\mathbb{N}^{''}}}$ and for ${\epsilon > 0}$ one can find a subsequence ${(y_{n})_{\mathbb{N}^{'''}} \triangleleft (P_{n})_{\mathbb{N}^{'''}}}$ and a vector ${z \in X}$ such that
\[
\varlimsup_{n \in \mathbb{N}^{'''}} \left\|x_{n} - y_{n} - z\right\|\ \leq\ \lambda_{\mathbb{N}^{'}}[M_{n}, P_{n}] + \epsilon.
\]
Therefore, there exists ${K \in \mathbb{N}^{'''}}$ such that for all ${n > K}$, ${n \in \mathbb{N}^{'''}}$:
\begin{multline}
\notag
\begin{aligned}
\varlimsup_{n \in \mathbb{N}^{'''}} \left\|x_{n} - ( x_{K} + y_{n} - y_{K})\right\| 
              &\ =\ \varlimsup_{n \in \mathbb{N}^{'''}} \left\|x_{K} - y_{K} - x_{n} + y_{n})\right\|\\
              &\ =\ \varlimsup_{n \in \mathbb{N}^{'''}} \left\|(x_{K} - y_{K} - z) - (x_{n} - y_{n} - z)\right\|\\
              &\ \leq\ \varlimsup_{n \in \mathbb{N}^{'''}} \left\|x_{K} - y_{K} - z\right\| + \varlimsup_{n \in \mathbb{N}^{'''}} \left\|x_{n} - y_{n} - z\right\|\\
              &\ \leq\ 2 \times (\lambda_{\mathbb{N}^{'}}[M_{n}, P_{n}] + \epsilon).
\end{aligned}
\end{multline}
Since ${x_{n} \in M_{n}}$ is a unit vector, ${x_{K} \in Q_{K}}$, and ${y_{n} - y_{K} \in P_{n} + P_{K}}$, we observe that for ${n > K}$
\[
dist(x_{n},\ Q_{K} + P_{n} + P_{K})\ \leq\ 2 \times (\lambda_{\mathbb{N}^{'}}[M_{n}, P_{n}] + \epsilon).
\]
However, if ${n > K}$, then ${Q_{m} \supseteq Q_{K}}$ for all ${m \in [K, n)}$ and ${\sum_{j=1}^{n} P_{j} \supset (P_{n} + P_{K})}$; therefore, for the same $m$ and $n$
\begin{equation}\label{E:21}
dist(x_{n},\ Q_{m} + \sum_{j=1}^{n} P_{j})\ \leq\ dist(x_{n},\ Q_{K} + P_{n} + P_{K})\ \leq\ 2 \times (\lambda_{\mathbb{N}^{'}}[M_{n}, P_{n}] + \epsilon).
\end{equation}
Now combining \eqref{E:20} and \eqref{E:21} and recalling that ${\epsilon > 0}$ we obtain a contradiction:
\[
2 \times (\lambda_{\mathbb{N}^{'}}[M_{n}, P_{n}] + \epsilon)\ <\ 2 \times (\lambda_{\mathbb{N}^{'}}[M_{n}, P_{n}] + \epsilon).
\]
Thus our assumption shall be invalid, hence proposition 2 is true.\\

In order to prove proposition 3, choose numbers
${\epsilon}$ and ${\eta}$ so that
\[
\lambda_{\mathbb{N}^{'}}[M_{n}, P] < \eta < \frac{1}{2},\ \  0 < \epsilon < \frac{1}{2} - \eta. 
\]
Since all ${P_{n}}$ are equal to ${P}$, we may rewrite \eqref{E:21}:
\begin{multline}
\notag
\begin{aligned}
\varlimsup_{n \in \mathbb{N}^{'}} \delta(M_{n},\ Q_{\epsilon} + \sum P) &\ =\ \varlimsup_{n \in \mathbb{N}^{'}} \delta(M_{n},\ Q_{\epsilon} + P)\\
                                                                        &\ \leq\ 2 \times (\lambda_{\mathbb{N}^{'}}[M_{n}, P] + \epsilon)\\
                                                                        &\ <\ 2 \times (\eta + \epsilon)\ <\ 2 \times \frac{1}{2}\ =\ 1.
\end{aligned}
\end{multline}
Applying proposition 3 from the Small Gap Theorem to the pairs of subspaces ${(M_{n},\ Q_{\epsilon} + P)}$ for large enough ${n \in \mathbb{N}^{'}}$, and noticing that ${\dim Q_{\epsilon} < \infty}$ we obtain
\[
\dim X/M_{n}\ \leq\ \dim X/(P + Q_{\epsilon})\ \leq\ \dim X/P - \dim Q_{\epsilon},
\]
\[
\dim X/M_{n}\ +\ \dim Q_{\epsilon}\ \leq\ \dim X/P,
\]
which concludes the proof of the proposition 3.\\

In order to prove proposition 4 note that equality ${0 = \lambda_{\mathbb{N}^{'}}[M, P_{n}]}$ trivially follows from the definition of $\lambda-$adjustment. For the second relation rewrite \eqref{1910}
\[
\lambda_{\mathbb{N}^{'}}[P_{n}, M] + \epsilon\ \geq\ \frac{1}{2} \times \varlimsup_{n \in \mathbb{N}^{'}} \delta(P_{n}, Q_{\epsilon} + \sum M) \ =\ \frac{1}{2} \times \varlimsup_{n \in \mathbb{N}^{'}} \delta(P_{n}, Q_{\epsilon} + M).
\]
Now consider quotient subspaces ${P_{n}/M}$ and ${(Q_{\epsilon} + M)/M}$ in the quotient space ${X/M}$. Since ${\dim (Q_{\epsilon} + M)/M < \infty}$ by construction and since ${\dim P_{n}/M \rightarrow \infty}$ by assumption, we may apply proposition 2 from the Small Gap Theorem
obtaining equality 
\[
1\ =\ \lim_{n \in \mathbb{N}^{'}}\delta(P_{n}/M,\ (Q_{\epsilon} + M)/M)\ =\ \lim_{n \in \mathbb{N}^{'}}\delta(P_{n},\ Q_{\epsilon} + M); 
\]
hence ${\lambda_{\mathbb{N}^{'}}[P_{n}, M] + \epsilon\ \geq\ \frac{1}{2}}$. Since ${\epsilon}$ can be arbitrarily small we conclude that ${\lambda_{\mathbb{N}^{'}}[P_{n}, M]\ \geq\ \frac{1}{2}.}$\\

In order to prove proposition 5 we first establish that there exists a number ${\eta > 0}$, a closed subspace ${Q \subset X}$, and a number ${K \in \mathbb{N}^{'}}$ such that ${\dim X/Q < \infty}$ and ${\delta(M_{m}, Q) > \eta}$ for all ${m > K, m \in \mathbb{N}^{'}}$. Assume the opposite -- then for any number ${2^{-n}}$, ${n \in \mathbb{N}}$, for any closed subspace ${Q \subset X}$ such that ${\dim X/Q < \infty}$ and for any number ${K \in \mathbb{N}^{'}}$ there exists a number ${m > K, m \in \mathbb{N}^{'}}$ such that  ${\delta(M_{m}, Q) \leq 2^{-n}}$. In this case build two sequences of vectors ${(x_{n})_{\mathbb{N}^{''}} \triangleleft (M_{n})_{\mathbb{N}^{''}}}$ and ${(t_{n})_{\mathbb{N}^{''}} \subset X}$ following the inductive process:
\begin{itemize}
  \item According to Hahn-Banach theorem the dual space ${X^{*}}$ contains a functional $f$ with the norm $1$. Since ${\dim X/Ker(f) = 1 < \infty}$, due to our assumption there exists some ${n_{1} \in \mathbb{N}^{'}}$ such that ${\delta(M_{n_{1}}, Ker(f)) < 2^{-1}}$.  Denote ${f_{n_{1}} = f}$, ${Q_{n_{1}} = Ker(f_{n_{1}})}$, ${n_{0} = 0}$ and ${Q_{n_{0}} = X}$. Due to the gap inequality there can be found a unit vector ${x_{n_{1}} \in M_{n_{1}}}$ and a vector ${q_{n_{1}} \in Q_{n_{1}}}$ such that ${\left\|x_{n_{1}} - q_{n_{1}}\right\| < 2^{-1}}$. Also define ${q_{n_{0}} = q}$ where $q$ is a vector such that ${fq = 1}$ -- it is obvious that ${X = Q_{n_{0}} = sp(\{q_{n_{0}}\}) \oplus Q_{n_{1}}}$ and\linebreak[4] ${dist(q_{n{_{0}}}, Q_{n_{1}}) = \left\|q_{0}\right\| = 1}$. 
  \item Suppose that for ${k \geq 1}$ we have already found $k$ numbers ${\{n_{1}, ..., n_{k}\} \subset \mathbb{N}^{'}}$, $k$ closed subspaces ${ X = Q_{n{0}} \supset Q_{n_{1}} \supset ... \supset Q_{n_{k}}}$, $k$ unit vectors ${x_{n_{i}} \in M_{n_{i}}}$, $k$ non-null vectors ${q_{n_{i}} \in Q_{n_{i}}}$ such that ${Q_{n_{i-1}} = sp(\{q_{n_{i-1}}\}) \oplus Q_{n_{i}}}$,\linebreak[4] ${\left\|x_{n_{i}} - q_{n_{i}}\right\| < 2^{-n_{i}}}$ for all ${i \in \{1, ..., k\}}$, and ${dist(q_{n{_i}}, Q_{n_{j}}) = \left\|q_{n_{i}}\right\|}$ for any ${i,j \in \{1, ..., k\}, i < j}$. Since ${q_{n_{k}}}$ is a non-null vector, by the Hahn-Banach theorem there exists a unit functional ${f \in X^{*}}$ such that ${fq_{n_{k}} = \left\|q_{n_{k}}\right\|}$. Denote ${Q = Q_{n_{k}} \cap Ker(f)}$. Obviously ${Q_{n_{k}} \supset Q}$, ${Q_{n_{k}} = sp(\{q_{n_{k}}\}) \oplus Q}$.\linebreak[4] Since ${\dim X/Q = k+1 < \infty}$, according to our assumption we can find a number ${n_{k+1} \in \mathbb{N}^{'}}$, ${n_{k+1} > n_{k}}$, a unit vector ${x_{n_{k+1}} \in M_{n_{k+1}}}$ and a vector ${q_{n_{k+1}} \in Q}$ such that ${\left\|x_{n_{k+1}} - q_{n_{k+1}}\right\| < 2^{-(k+1)}}$. Now rename ${Q_{n_{k+1}} = Q}$;\linebreak[4] obviously ${dist(q_{n{_i}}, Q_{n_{k+1}}) = \left\|q_{n_{i}}\right\|}$ for any ${i < k+1, i \in \{1, ..., k\}}$.
\end{itemize}

Denote ${\mathbb{N}^{''} = \{n{1} < ... < n_{k} < ... \}}$, consider the sequence ${(x_{n})_{\mathbb{N}^{''}} \triangleleft (M_{n})_{\mathbb{N}^{''}}}$ and choose a real number ${\epsilon \in (0, \frac{1}{2} - \lambda_{\mathbb{N}^{'}}[M_{n}, \{\theta\}])}$. According to definition of uniform $\lambda-$adjustment there exist a subsequence ${\mathbb{N}^{'''}}$ and a vector ${z \in X}$ such that for large enough ${n \in \mathbb{N}^{'''}}$
\[
\left\| x_{n} - z \right\|\ \leq\ \lambda_{\mathbb{N}^{'}}[M_{n}, \{\theta\}] + \epsilon.
\]
Then, taking this into account, as well as the inequality ${\left\|q_{n} - q_{m}\right\| \geq \left\|q_{n}\right\|}$ from our construction, and using the triangle inequality we can estimate for the same ${n,m \in \mathbb{N}^{'''}, n > m}$:
\begin{multline}
\notag
\begin{aligned}
\left\|q_{n}\right\| \leq \left\|q_{n} - q_{m}\right\| &\ =\ \left\|q_{n} - x_{n} + x_{n} - q_{m}\right\|\\
                                                       &\ \leq\ \left\|q_{n} - x_{n}\right\| + \left\|x_{n} - q_{m}\right\|\\
                                                       &\ \leq\ 2^{-n} + \left\|x_{n} - q_{m}\right\|\\
                                                       &\ =\ 2^{-n} + \left\|x_{n} - x_{m} + x_{m} - q_{m}\right\|\\
                                                       &\ \leq\ 2^{-n} + \left\|x_{n} - x_{m}\right\| + \left\|x_{m} - q_{m}\right\|\\
                                                       &\ \leq\ 2^{-n} + \left\|x_{n} - x_{m}\right\| + 2^{-m}\\
                                                       &\ =\ 2^{-n} + 2^{-m} + \left\|x_{n} - x_{m}\right\|\\
                                                       &\ <\ 2^{-m+1}\ +\ \left\|x_{n} - z + z - x_{m}\right\|\\
                                                       &\ \leq\ 2^{-m+1}\ +\ \left\|x_{n} - z\left\| + \right\|z - x_{m}\right\|\\
                                                       &\ \leq\ 2^{-m+1}\ +\ \lambda_{\mathbb{N}^{'}}[M_{n}, \{\theta\}] + \epsilon\ +\  \lambda_{\mathbb{N}^{'}}[M_{n}, \{\theta\}] + \epsilon\\
                                                       &\ =\ 2^{-m+1}\ +\ 2 \times (\lambda_{\mathbb{N}^{'}}[M_{n}, \{\theta\}] + \epsilon).
\end{aligned}
\end{multline} 
Now recall that by construction ${\left\|x_{n} - q_{n}\right\| < 2^{-n}}$, therefore, since each ${x_{n}}$ is a unit vector
\[
1\ =\ \left\|x_{n}\right\|\ =\ \left\|x_{n} - q_{n} + q_{n}\right\|\ \leq\ \left\|x_{n} - q_{n}\right\| + \left\|q_{n}\right\|\ <\ 2^{-n} + \left\|q_{n}\right\|.
\]
We may now rewrite the previous two inequalities like this:
\[
1 - 2^{-n}\ <\ \left\|q_{n}\right\|\ <\ 2^{-m+1}\ +\ 2 \times (\lambda_{\mathbb{N}^{'}}[M_{n}, \{\theta\}] + \epsilon).
\]
Following simple arithmetic calculation we obtain:
\[
\frac{1}{2}\ -\ \lambda_{\mathbb{N}^{'}}[M_{n}, \{\theta\}]\ -\ \epsilon\ <\ 2^{-n-1} + 2^{-m}.
\]
However, the left part is a fixed positive number according to our choice of ${\epsilon}$, while the right part converges to $0$ as $n$ and $m$ approach infinity which is an obvious contradiction. Hence, we must conclude that our assumption is incorrect, and, therefore there exists ${L \in \mathbb{N}^{'}}$, a closed subspace ${Q \subset X}$ and a number ${\nu > 0}$ such that ${\dim X/Q = K < \infty}$ and ${\delta(M_{m}, Q) \geq \nu}$ for any number ${m > K, m \in \mathbb{N}^{'}}$.

Now suppose that ${d_{m} = \dim M_{m} \cap Q > 0}$ for infinite number of indices $m$. In this case we know that ${d_{m} < \dim M_{m}}$ since ${\delta(M_{m}, Q) \geq \nu > 0}$. Obviously ${\lambda_{\mathbb{N}^{'}}[M_{n} \cap Q, \{\theta\}] \leq \lambda_{\mathbb{N}^{'}}[M_{n}, \{\theta\}] < 1/2}$; therefore applying the previous considerations we can find another ${Q_{1} \subset Q}$ such that ${\delta(M_{m}\cap Q, Q_{1}) \geq \nu_{1} > 0}$ for large enough $m$. 

However, from proposition 1 it follows that dimensions of all ${M_{m}}$ are limited for large enough $m$. Therefore, we can continue this process only a finite number of times $l$ until we reach some ${Q_{l} \subset X}$ such that ${\dim X/Q_{l} < \infty}$, ${M_{m} \cap Q_{l} = \{\theta\}}$ and ${\delta(M_{m}, Q_{l}) \geq \eta = \min\{\nu_{1}, ..., \nu_{l}\} > 0}$. Therefore, for the same $m$ we may define natural projectors ${Pr_{m} : M_{m} \oplus Q_{l} \rightarrow Q_{l}}$ -- obviously their norms are bounded from above by ${1 + \frac{1}{\eta}}$. This concludes the proof of the proposition 5 and of the entire theorem
\end{proof}

Notice that estimates in the Small Uniform Adjustment Theorem involve constant $1/2$ while estimates in the Small Gap Theorem involve constant $1$. It appears that the estimate of $1/2$ is precise when no limitations are placed on the Banach space $X$ -- examples of that will be shown in the next Subsection \ref{S:eoi}. However, we will see further that these estimates can be improved from $1/2$ to $1$ for a Banach space which dual is Fr\'{e}chet-Urysohn space in ${weak^{*}}$ topology.

It is also worth noticing that study of uniform adjustment is interesting only for subspaces from \emph{infinite-dimensional} Banach spaces because for finite-dimensional Banach spaces it is trivial:

\begin{lemma}\label{L:fdt}
Any two sequences ${(M_{n})_{\mathbb{N}^{'}}}$ and ${(P_{n})_{\mathbb{N}^{'}}}$ of subspaces from a finite-dimensional Banach space ${X}$ are always uniformly $0-$adjusted.
\end{lemma}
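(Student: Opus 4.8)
The plan is to verify the defining inequality of lower uniform $0$-adjustment head-on, using the compactness of the unit sphere of a finite-dimensional normed space. Recall that, by Definition~\ref{D:ulass}, it is enough to show that $(M_{n})_{\mathbb{N}^{'}}$ is lower uniformly $0$-adjusted with $(P_{n})_{\mathbb{N}^{'}}$: that is, for every $\eta > 0$ and every unit subsequence $(x_{n})_{\mathbb{N}^{''}} \triangleleft (M_{n})_{\mathbb{N}^{''}}$ one must exhibit a subsequence $(y_{n})_{\mathbb{N}^{'''}} \triangleleft (P_{n})_{\mathbb{N}^{'''}}$ and a vector $z \in X$ with $\varlimsup_{n \in \mathbb{N}^{'''}} \left\|x_{n} - y_{n} - z\right\| \le \eta$. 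Once this is done, $\lambda_{\mathbb{N}^{'}}[M_{n}, P_{n}] = 0$, and exchanging the roles of $M$ and $P$ yields $\lambda_{\mathbb{N}^{'}}[P_{n}, M_{n}] = 0$ as well.

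First I would fix $\eta > 0$ and an arbitrary unit subsequence $(x_{n})_{\mathbb{N}^{''}} \triangleleft (M_{n})_{\mathbb{N}^{''}}$. Since $\dim X < \infty$, the unit sphere of $X$ is compact, hence sequentially compact; so I can extract a subsequence $\mathbb{N}^{'''} \subset \mathbb{N}^{''}$ along which $(x_{n})_{\mathbb{N}^{'''}}$ converges in norm to some unit vector $x \in X$.

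Next I would simply take $z := x$ and $y_{n} := \theta$ for each $n \in \mathbb{N}^{'''}$. Because $\theta \in P_{n}$ for every $n$, the choice $(y_{n})_{\mathbb{N}^{'''}}$ is from $(P_{n})_{\mathbb{N}^{'''}}$, and
\[
\varlimsup_{n \in \mathbb{N}^{'''}} \left\|x_{n} - y_{n} - z\right\|\ =\ \varlimsup_{n \in \mathbb{N}^{'''}} \left\|x_{n} - x\right\|\ =\ 0\ \le\ \eta .
\]
Since $\eta$ was arbitrary, this establishes the claim, and with it the lemma.

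I do not expect any genuine obstacle here; the only subtlety worth noting is that the limit vector $x$ need not belong to any $M_{n}$ (nor to any $P_{n}$), but Definition~\ref{D:ulass} only demands $z \in X$, so this is harmless. One could even dispense with the standing hypothesis $M_{n} \neq \{\theta\}$: without it there may be no admissible unit subsequences at all, in which case the adjustment condition is vacuously satisfied.
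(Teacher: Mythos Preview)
Your proof is correct and follows essentially the same approach as the paper: extract a convergent subsequence of the unit sequence using finite-dimensionality, set $z$ equal to its limit, and take $y_n = \theta \in P_n$. Your additional remarks (on symmetry and on the hypothesis $M_n \neq \{\theta\}$) are sound but go slightly beyond what the paper records.
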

\begin{proof}
Indeed, recall that if ${\dim X < \infty}$, then any unit sequence ${(x_{n})_{\mathbb{N}^{''}}}$ contains a convergent subsequence ${(x_{n})_{\mathbb{N}^{'''}} \rightarrow z \in X}$. Hence, for a subsequence ${(x_{n})_{\mathbb{N}^{''}} \triangleleft (M_{n})_{\mathbb{N}^{''}}}$ we may set ${y_{n} = \theta \in P_{n}}$ for ${n \in \mathbb{N}^{'''}}$ and obtain
\[
\varlimsup_{n \in \mathbb{N}^{'''}} \left\|x_{n} - y_{n} - z \right\|\ =\ \varlimsup_{n \in \mathbb{N}^{'''}} \left\|x_{n} - \theta - z \right\|\ =\ \varlimsup_{n \in \mathbb{N}^{'''}} \left\|x_{n} - z \right\|\ =\ 0.
\]
Therefore, ${\lambda_{\mathbb{N}^{'}}[M_{n}, P_{n}] = 0}$.
\end{proof}

\subsection[Uniform $\lambda-$Adjustment of Operators]{Uniform $\lambda-$Adjustment for Operators}\label{S:ulaflo}
We now define uniform $\lambda-$adjustment for linear closed operators. 

\begin{definition}[Uniform $\lambda-$Adjustment of Sequences of Closed Linear Operators]\label{D:ulasclo} 
Let ${X}$ and ${Y}$ are two Banach spaces. Consider $(A_{n})_{\mathbb{N}^{'}}$ and $(
B_{n})_{\mathbb{N}^{'}}$ -- a pair of sequences of operators from $\mathcal{C}(X,Y)$, as well as $(G_{A_{n}})_{\mathbb{N}^{'}}$ and $(G_{B_{n}})_{\mathbb{N}^{'}}$ -- sequences of their respective graphs in the product space ${X \times Y}$ (recall that a graph of a closed operator $A \in \mathcal{C}(X,Y)$ is a closed subspace of ${X \times Y}$ defined as set of ordered pairs ${\{(x, Ax) \mid x \in Dom(A) \subset X\}}$). 

We say that $(A_{n})_{\mathbb{N}^{'}}$ is lower $uniformly\ \lambda-adjusted$ with $(B_{n})_{\mathbb{N}^{'}}$ (equivalently -- $(B_{n})_{\mathbb{N}^{'}}$ is upper $uniformly\ \lambda-adjusted$ with $(A_{n})_{\mathbb{N}^{'}}$) if the sequence of graphs $(G_{A_{n}})_{\mathbb{N}^{'}}$ is lower $uniformly\ \lambda-adjusted$ with the sequence of graphs $(G_{B_{n}})_{\mathbb{N}^{'}}$ in the product space ${X \times Y}$.

The uniform $\lambda-$adjustment between $(A_{n})_{\mathbb{N}^{'}}$ and $(B_{n})_{\mathbb{N}^{'}}$ is defined as uniform $\lambda-$adjustment between their sequences of graphs:
\[
\lambda_{\mathbb{N}^{'}}[A_{n}, B_{n}]\ :=\ \lambda_{\mathbb{N}^{'}}[G_{A_{n}}, G_{B_{n}}].
\]
\end{definition}

\begin{definition}[Uniform $\lambda-$Adjustment between an Operator and a Sequence of Operators]\label{D:ulaoso}
Similarly to the uniform adjustment of a pair of sequences of operators from $\mathcal{C}(X,Y)$, we define uniform  adjustment between an operator $A \in \mathcal{C}(X,Y)$ and a sequence of operators ${(B_{n})_{\mathbb{N}^{'}} \subset \mathcal{C}(X,Y)}$, as well as uniform adjustment between a sequence of operators ${(A_{n})_{\mathbb{N}^{'}} \subset \mathcal{C}(X,Y)}$ and an operator $B \in \mathcal{C}(X,Y)$, denoting
\[
\begin{aligned}
&\lambda_{\mathbb{N}^{'}}[A, B_{n}]\ :=\ \lambda_{\mathbb{N}^{'}}[A_{n}, B_{n}]\ \ where\ \ A = A_{n}\ for\ n \in {N}^{'},\\
&\lambda_{\mathbb{N}^{'}}[A_{n}, B]\ :=\ \lambda_{\mathbb{N}^{'}}[A_{n}, B_{n}]\ \ where\ \ B = B_{n}\ for\ n \in {N}^{'}.
\end{aligned}
\]
\end{definition}

Uniform adjustment of operators is non-trivial only in case when Y has an infinite dimension

\begin{lemma}\label{L:fdto}
If ${X}$ and ${Y}$ are two Banach spaces and $\dim Y < \infty$, then any sequence of operators $(A_{n})_{\mathbb{N}^{'}}$ from $\mathcal{C}(X, Y)$ is lower uniformly $0-$adjusted with a null operator $\theta$ from $\mathcal{B}(X, Y)$. 
\end{lemma}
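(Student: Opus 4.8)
The plan is to unwind Definition~\ref{D:ulasclo} into a statement about graphs in $X \times Y$ and then exploit local compactness of the finite-dimensional space $Y$ to supply the required vector $z$ almost for free.

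First I would recall that the graph of the null operator $\theta \in \mathcal{B}(X,Y)$ is $G_{\theta} = X \times \{\theta\}$, a closed subspace of $X \times Y$, and that by definition $\lambda_{\mathbb{N}^{'}}[A_n, \theta] = \lambda_{\mathbb{N}^{'}}[G_{A_n}, G_{\theta}]$. So it suffices to verify that $(G_{A_n})_{\mathbb{N}^{'}}$ is lower uniformly $0$-adjusted with $(G_{\theta})_{\mathbb{N}^{'}}$; that is, given $\eta > 0$ and a unit subsequence $(x_n)_{\mathbb{N}^{''}} \triangleleft (G_{A_n})_{\mathbb{N}^{''}}$, I must produce a subsequence $(y_n)_{\mathbb{N}^{'''}} \triangleleft (G_{\theta})_{\mathbb{N}^{'''}}$ and a vector $z \in X \times Y$ with $\varlimsup_{n \in \mathbb{N}^{'''}} \left\| x_n - y_n - z \right\| \le \eta$.

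I would write each unit vector as $x_n = (u_n, A_n u_n)$ with $u_n \in Dom(A_n)$ and $\max\{\left\|u_n\right\|, \left\|A_n u_n\right\|\} = 1$; in particular $\left\|A_n u_n\right\| \le 1$ for all $n \in \mathbb{N}^{''}$. Since $\dim Y < \infty$, the bounded sequence $(A_n u_n)_{\mathbb{N}^{''}}$ in $Y$ has a convergent subsequence $(A_n u_n)_{\mathbb{N}^{'''}} \to w$ for some $w \in Y$ (exactly as in Lemma~\ref{L:fdt}). Then, setting $y_n := (u_n, \theta) \in G_{\theta}$ for $n \in \mathbb{N}^{'''}$ and $z := (\theta, w) \in X \times Y$, one computes $x_n - y_n - z = (\theta,\, A_n u_n - w)$, so that $\left\|x_n - y_n - z\right\| = \left\|A_n u_n - w\right\| \to 0$ along $\mathbb{N}^{'''}$; hence the $\varlimsup$ equals $0 \le \eta$. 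As $\eta > 0$ is arbitrary, $(A_n)_{\mathbb{N}^{'}}$ is lower uniformly $0$-adjusted with $\theta$, i.e. $\lambda_{\mathbb{N}^{'}}[A_n, \theta] = 0$.

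I expect no genuine obstacle: the only point requiring care is the product-space bookkeeping, namely the observation that the $X$-coordinate of $x_n - y_n - z$ can be annihilated exactly by taking the first coordinate of $y_n$ to be $u_n$ and the first coordinate of $z$ to be $\theta$, which reduces the whole estimate to the behaviour of the bounded, hence relatively compact, $Y$-valued sequence $(A_n u_n)$. (If $Dom(A_n) = \{\theta\}$ for cofinitely many $n$ then there are no unit subsequences indexed by an infinite $\mathbb{N}^{''}$ and the adjustment condition holds vacuously, so this degenerate case needs no separate treatment.)
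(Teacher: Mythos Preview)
Your proof is correct and follows essentially the same approach as the paper: extract a convergent subsequence of $(A_n u_n)$ in the finite-dimensional space $Y$, then choose $y_n=(u_n,\theta)\in G_\theta$ and $z=(\theta,w)$ so that the $X$-coordinate cancels exactly and the $Y$-coordinate tends to zero. The only differences are notational (you write $u_n,w$ where the paper writes $x_n,t$) and your added remark on the degenerate case $Dom(A_n)=\{\theta\}$, which the paper omits.
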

\begin{proof}
Consider a bounded sequence ${(x_{n},A_{n}x_{n})_{\mathbb{N}^{''}} \triangleleft (G_{A_{n}})_{\mathbb{N}^{''}}}$. Obviously the sequence  $(A_{n}x_{n})_{\mathbb{N}^{''}}$ is bounded in $Y$. Therefore, since $\dim Y < \infty$, there exists a subsequence $(A_{n}x_{n})_{\mathbb{N}^{'''}}$ convergent to some $t \in Y$. Now set
\[
y_{n}\ =\ (x_{n}, \theta)\ \in\ G_{\theta}\ \subset\ X \times Y
\]
for ${n \in \mathbb{N}^{'''}}$ and ${z = (\theta, t) \in X \times Y}$. Then
\begin{multline}\label{E:3}
\notag
\begin{aligned}
\varlimsup_{n \in \mathbb{N}^{'''}} \left\|(x_{n}, A_{n}x_{n}) - y_{n} - z \right\| &\ =\ \varlimsup_{n \in \mathbb{N}^{'''}} \left\|(x_{n}, A_{n}x_{n}) - (x_{n}, \theta) - (\theta, t) \right\|\\
                                                                                    &\ =\ \varlimsup_{n \in \mathbb{N}^{'''}} \left\|(x_{n} - x_{n} - \theta, A_{n}x_{n} - \theta - t) \right\|\\
                                                                                    &\ =\  \varlimsup_{n \in \mathbb{N}^{'''}} \left\|A_{n}x_{n} - t \right\| = 0,
\end{aligned}
\end{multline}
which proves that ${\lambda_{\mathbb{N}^{'}}[A_{n}, \theta] = 0}$.
\end{proof}
\begin{lemma}\label{L:wc0ua}
Any sequence of functionals $(f_{n})_{\mathbb{N}}$ from $X^{*}$ is lower uniformly $0-$adjusted with a null functional from $X^{*}$.
\end{lemma}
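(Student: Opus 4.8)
The plan is to recognize this statement as the special case $Y = \mathbb{R}$ of Lemma \ref{L:fdto}. A functional $f \in X^{*}$ is, by definition, a continuous linear operator from $X$ into the field of scalars $\mathbb{R}$, which, regarded as a Banach space, is one-dimensional and in particular finite-dimensional. Hence $(f_{n})_{\mathbb{N}} \subset \mathcal{B}(X,\mathbb{R}) \subset \mathcal{C}(X,\mathbb{R})$ is a sequence of closed operators into a finite-dimensional Banach space, and Lemma \ref{L:fdto} (with $\mathbb{N}^{'} = \mathbb{N}$ and $Y = \mathbb{R}$) applies verbatim, yielding that $(f_{n})_{\mathbb{N}}$ is lower uniformly $0$-adjusted with the null functional $\theta \in \mathcal{B}(X,\mathbb{R})$.

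For completeness I would also unwind the (very short) argument directly, since it makes transparent where finite-dimensionality enters. Fix $\eta > 0$ and a unit subsequence $(x_{n}, f_{n}x_{n})_{\mathbb{N}^{''}} \triangleleft (G_{f_{n}})_{\mathbb{N}^{''}}$ in $X \times \mathbb{R}$. The scalar sequence $(f_{n}x_{n})_{\mathbb{N}^{''}}$ is bounded, being a coordinate sequence of a unit sequence in $X \times \mathbb{R}$, so by Bolzano--Weierstrass it has a subsequence $(f_{n}x_{n})_{\mathbb{N}^{'''}}$ converging to some $t \in \mathbb{R}$. Setting $y_{n} = (x_{n}, \theta) \in G_{\theta}$ for $n \in \mathbb{N}^{'''}$ and $z = (\theta, t) \in X \times \mathbb{R}$, one computes $\varlimsup_{n \in \mathbb{N}^{'''}} \left\| (x_{n}, f_{n}x_{n}) - y_{n} - z \right\| = \varlimsup_{n \in \mathbb{N}^{'''}} \left| f_{n}x_{n} - t \right| = 0 \le \eta$, whence $\lambda_{\mathbb{N}}[f_{n}, \theta] = 0$.

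There is essentially no obstacle here: the only points worth a line of care are (i) that the graph $G_{f_{n}}$ of a functional is a genuine closed subspace of $X \times \mathbb{R}$, nonzero whenever $X \neq \{\theta\}$, so that Definition \ref{D:ulass} applies (and if $X = \{\theta\}$ the adjustment condition is vacuous), and (ii) that passing to the nested subsequence $\mathbb{N}^{'''}$ is exactly what the definition of lower uniform adjustment permits. Both are immediate, and the statement is isolated as a separate lemma only because functionals recur often enough in the sequel that it is convenient to cite this fact by name.
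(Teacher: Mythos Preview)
Your proposal is correct and matches the paper's own proof almost verbatim: the paper simply sets $Y = \mathbb{K}$ (the scalar field of $X$) and invokes Lemma~\ref{L:fdto}. The only cosmetic point is that you write $\mathbb{R}$ where the paper writes $\mathbb{K}$, so your version covers only real Banach spaces; replacing $\mathbb{R}$ by the scalar field $\mathbb{K}$ throughout handles the complex case as well with no change to the argument.
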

\begin{proof}
Set $Y = \mathbb{K}$ where $\mathbb{K}$ is the field of $X$ and apply the previous lemma.
\end{proof}

We will now explain how uniform adjustment generalizes perturbations of linear operators by small gap, norm, $q-$norm and by ${K_{2}}$ convergent operators. For that we need the following two definitions:

\begin{definition}[$\lambda-$adjustment of Subspaces]\label{D:as}
Let $X$ be a Banach space, $M$ and $P$ are two closed subspaces of $X$, ${M \neq\{\theta\}}$. Define two sequences of subspaces from $X$: ${(M_{n})_{\mathbb{N}^{'}}}$ and ${(P_{n})_{\mathbb{N}^{'}}}$ such that ${M_{n} = M}$ and ${P_{n} = P}$ for ${n \in \mathbb{N}^{'}}$. We say that subspace ${M}$ is lower $\lambda-adjusted$ with ${P}$ (equivalently -- ${P}$ is upper $\lambda-$adjusted with ${M}$) if ${(M_{n})_{\mathbb{N}^{'}}}$ is lower uniformly $\lambda-$adjusted with ${(P_{n})_{\mathbb{N}^{'}}}$.

The $\lambda-$adjustment between $M$ and $P$ is defined as
\[
\lambda[M, P]\ :=\ \lambda_{\mathbb{N}^{'}}[M_{n}, P_{n}].
\]
\end{definition}
\begin{definition}[$\lambda-$adjustment of Operators]\label{D:ao}
Let $X$ and $Y$ be Banach spaces, ${A, B \in \mathcal{C}(X,Y)}$ are two closed linear operators;  ${G_{A}}$ and ${G_{B}}$ are their respective graphs -- closed subspaces from the product space $X \times Y$. We say that operator ${A}$ is lower $\lambda-adjusted$ with ${B}$ (equivalently -- ${B}$ is upper $\lambda-$adjusted with ${A}$) if ${G_{A}}$ is lower $\lambda-$adjusted with ${G_{B}}$.

The $\lambda-$adjustment between $A$ and $B$ is defined as
\[
\lambda[A, B]\ :=\ \lambda[G_{A}, G_{B}].
\]
\end{definition}

\begin{theorem}\label{aoggnsk}
Let $X$ and $Y$ are two Banach spaces. Then the following propositions are true:
\begin{enumerate}
  \item Recall that for two closed operators from $\mathcal{C}(X,Y)$ their gap distance is defined as the gap distance of their graphs $G_{A}$ and $G_{B}$ in the product space $X \times Y$:
\[
\delta(A, B)\ :=\ \delta(G_{A}, G_{B});
\]
Then, for any two sequences $(A_{n})_{\mathbb{N}^{'}}$ and $(B_{n})_{\mathbb{N}^{'}}$ from $\mathcal{C}(X, Y)$
\begin{equation}\label{E:4}
\lambda_{\mathbb{N}^{'}}[A_{n}, B_{n}]\ \leq\ \varlimsup_{n \in \mathbb{N}^{'}} \delta(A_{n}, B_{n}).
\end{equation}

  \item Consider a linear closed operator ${A \in \mathcal{C}(X,Y)}$ and a sequence of continuous operators $(C_{n})_{\mathbb{N}^{'}}$ from $\mathcal{B}(X,Y)$ such that ${\lim_{n \in \mathbb{N}^{'}} \left\|C_{n}\right\| = 0}$. Then
\[
\lambda_{\mathbb{N}^{'}}[A + C_{n}, A] = \lambda_{\mathbb{N}^{'}}[A, A + C_{n}] = 0.
\]

  \item Recall that for ${A, C \in \mathcal{C}(X, Y)}$, operator $C$ is called $A-compact$ if the set ${\{Cx_{n} \mid n \in \mathbb{N}\}}$ is relatively compact in $Y$ for every sequence ${(x_{n})_{\mathbb{N}} \subset X}$ whenever the sequence of pairs ${(x_{n}, Ax_{n})_{\mathbb{N}}}$ is bounded in the product space ${X \times Y}$. Now, if $C$ is $A-$compact, then
\[
\lambda[A, A + C] = 0.
\]

  \item Recall that for every bounded subset ${M \subset X}$ its Hausdorff measure of non-compactness is defined as
\[
q(M) := \inf \{ e \in \mathbb{R} \mid M\ is\ covered\ with\ a\ finite\ number\ of\ balls\ radius\ e \};
\]
also recall that if $C$ is a continuous operator from $\mathcal{B}(X, Y)$, then its $q-$norm is defined as ${\left\|C\right\|_{q} := q(C(S))}$, where $S$ is a unit ball from $X$. Now, if ${A \in \mathcal{C}(X, Y)}$ and ${C \in \mathcal{B}(X, Y)}$, then
\[
\lambda[A, A + C] \leq \left\|C\right\|_{q}.
\]

  \item If ${A \in \mathcal{C}(X, Y)}$ and ${C \in \mathcal{K}(X, Y)}$, then
\[
\lambda[A, A + C] = \lambda[A + C, A] = 0.
\]

  \item Recall that a sequence of continuous operators ${(C_{n})_{\mathbb{N^{'}}} \subset \mathcal{BC}(X, Y)}$\linebreak[4] is said to $K_{2}-$approximate null operator if for every bounded sequence ${(x_{n})_{\mathbb{N^{'}}} \triangleleft (dom(C_{n}))_{\mathbb{N}^{'}}}$ the sequence ${(C_{n}x_{n})_{\mathbb{N^{'}}} \subset Y}$ is relatively compact. 
  
  Now, if ${A \in \mathcal{C}(X, Y)}$ and every ${C_{n}}$ is defined on the whole space $X$, i.e. ${(C_{n})_{\mathbb{N'}} \subset \mathcal{B}(X, Y)}$, and ${(C_{n})_{\mathbb{N'}}}$ $K_{2}-$approximates null operator, then  
\[  
\lambda_{\mathbb{N}^{'}}[A + C_{n}, A] = \lambda_{\mathbb{N}^{'}}[A, A + C_{n}] = 0.
\]
\end{enumerate}
\end{theorem}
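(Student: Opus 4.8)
The plan is to prove the six propositions in groups, organized by the technique each one needs.

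Proposition~1 is immediate: unwinding Definition~\ref{D:ao} and the definitions of $\delta(A_n,B_n)$ and $\lambda_{\mathbb{N}'}[A_n,B_n]$, the claimed inequality reads $\lambda_{\mathbb{N}'}[G_{A_n},G_{B_n}]\le\varlimsup_{n}\delta(G_{A_n},G_{B_n})$, which is exactly Lemma~\ref{L:uawg} applied to the graph sequences $(G_{A_n})_{\mathbb{N}'}$ and $(G_{B_n})_{\mathbb{N}'}$ in the product space $X\times Y$.

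Propositions~2, 3 and 6 (and the half $\lambda[A,A+C]=0$ of Proposition~5) all rest on one observation: when the perturbing operator is defined on all of $X$, or more generally when $\mathrm{dom}(A)\subseteq\mathrm{dom}(C)$, the operator $A+C$ (resp.\ $A+C_n$) is a closed operator with the \emph{same} domain as $A$, so its graph lies ``over'' the same domain as $G_A$. Hence, given any unit sequence $(x_n,Ax_n)_{\mathbb{N}''}\triangleleft(G_A)_{\mathbb{N}''}$, the natural competitor in the graph of the perturbed operator is $(x_n,(A+C)x_n)$, and the difference is the purely vertical vector $(\theta,-Cx_n)$. I would then split into cases: in Proposition~2, $\|C_nx_n\|\le\|C_n\|\to0$, so take $z=\theta$; in Proposition~6, a unit graph vector has $\|x_n\|\le1$, so $(x_n)$ is bounded and $K_2$-approximation yields a subsequence along which $C_nx_n\to t$ for some $t\in Y$, so take $z=(\theta,-t)$; in Proposition~3, boundedness of $(x_n,Ax_n)$ together with $A$-compactness of $C$ yields a subsequence along which $Cx_n\to t$, so take $z=(\theta,-t)$. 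In each case $\varlimsup\|(x_n,Ax_n)-(x_n,(A+C)x_n)-z\|=\varlimsup\|Cx_n-t\|=0$, giving the relevant lower adjustment equal to $0$; the reverse statements $\lambda_{\mathbb{N}'}[A+C_n,A]=0$ are obtained identically, starting from a unit sequence in the perturbed graph, choosing $y_n=(x_n,Ax_n)$, and translating by $z=(\theta,+t)$.

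Proposition~4 is the quantitative version of the same computation: from a unit sequence $(x_n,Ax_n)_{\mathbb{N}''}\triangleleft(G_A)_{\mathbb{N}''}$ one has $x_n\in S$, hence $Cx_n\in C(S)$; given $\eta>0$, cover $C(S)$ by finitely many balls of radius $\|C\|_q+\eta$, pick one that contains $Cx_n$ for infinitely many $n$, centred at some $t$, pass to that subsequence, and translate by $z=(\theta,-t)$ to get $\varlimsup\|(x_n,Ax_n)-(x_n,(A+C)x_n)-z\|=\varlimsup\|Cx_n-t\|\le\|C\|_q+\eta$; since $\eta>0$ is arbitrary this shows $G_A$ is lower uniformly $\|C\|_q$-adjusted with $G_{A+C}$, i.e.\ $\lambda[A,A+C]\le\|C\|_q$. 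Proposition~5 then follows by combining Proposition~3 with a symmetry trick: a compact $C$ is $A$-compact for every $A$ (boundedness of $(x_n,Ax_n)$ forces $(x_n)$ bounded, and $C$ carries bounded sets to relatively compact sets), so Proposition~3 gives $\lambda[A,A+C]=0$; for the other direction write $A=(A+C)+(-C)$, note $-C$ is compact hence $(A+C)$-compact, and apply Proposition~3 with $A+C$ in the role of $A$ and $-C$ in the role of $C$ to conclude $\lambda[A+C,A]=\lambda[A+C,(A+C)+(-C)]=0$ (alternatively invoke Proposition~4 with $\|{-C}\|_q=\|C\|_q=0$).

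I do not expect a genuine obstacle here; the only point requiring care is the domain bookkeeping in the passage from $A$ to $A+C$ --- one must record that $A+C$ (resp.\ $A+C_n$) is closed with $\mathrm{dom}(A+C)=\mathrm{dom}(A)$ (standard, since a bounded $C$, or an $A$-compact $C$, which automatically has $A$-bound $0$, is a ``small'' perturbation of $A$), because it is precisely this that licenses the choice $y_n=x_n$ and reduces every assertion to the trivial estimate in $Y$ supplied by the hypothesis.
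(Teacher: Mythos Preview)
Your proposal is correct and follows essentially the same approach as the paper: Proposition~1 via Lemma~\ref{L:uawg}, Propositions~3, 4 and~6 via the ``vertical difference'' $(\theta,-Cx_n)$ and a choice of $z=(\theta,-t)$ (or $(\theta,t)$), and Proposition~5 as a consequence. The only minor deviations are that for Proposition~2 the paper instead quotes the classical fact $\delta(A+C_n,A),\,\delta(A,A+C_n)\to0$ and then invokes Proposition~1, whereas you argue directly (which is slightly more self-contained); and for Proposition~5 the paper appeals to Proposition~4 with $\|C\|_q=0$, while you derive it from Proposition~3 plus the symmetry $A=(A+C)+(-C)$ --- both routes are equally short. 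Your explicit note on domain bookkeeping (that $A+C$ is closed with $\mathrm{dom}(A+C)=\mathrm{dom}(A)$ when $C$ is bounded or $A$-compact) is a point the paper leaves implicit.
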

\begin{proof}
Proposition 1 immediately follows from Lemma \ref{L:uawg}.\\

In order to prove proposition 2 recall a well known fact (\cite{kato}) that if\linebreak[4] ${\lim_{n \in \mathbb{N}^{'}} \left\|C_{n}\right\| = 0}$, then
\begin{equation}\label{E:5}
\lim_{n \in \mathbb{N}^{'}} \delta(A + C_{n}, A) = \lim_{n \in \mathbb{N}^{'}} \delta(A, A + C_{n}) = 0.
\end{equation}
Thus, combining formulas \eqref{E:5} and \eqref{E:4} we see that:
\[
\lambda_{\mathbb{N}^{'}}[A + C_{n}, A] = \lambda_{\mathbb{N}^{'}}[A, A + C_{n}] = 0.
\]

For proposition 3 consider a unit sequence ${(x_{n}, Ax_{n})_{\mathbb{N}^{'}} \in G_{A}}$. Since $C$ is $A-$compact, there exists a subsequence ${(x_{n})_{\mathbb{N}^{''}}}$ that converges to some $t \in Y$. Now put
\[(y_{n}) = (x_{n}, Ax_{n} + Cx_{n})_{\mathbb{N}^{''}} \subset G_{A + C},\ \ \ \ \ z = (\theta, -t) \in X \times Y.
\]
Then
\begin{multline}
\notag
\begin{aligned}
\lim_{n \in \mathbb{N}^{''}} \left\|(x_{n}, Ax_{n}) - y_{n} - z\right\| &\ =\ \lim_{n \in \mathbb{N}^{''}} \left\|(x_{n}, Ax_{n}) - (x_{n}, Ax_{n} + Cx_{n}) - (\theta, -t)\right\|\\
                                                                        &\ =\ \lim_{n \in \mathbb{N}^{''}} \left\|(x_{n} - x_{n} - \theta,\ Ax_{n} - Ax_{n} - Cx_{n} + t)\right\|\\
                                                                        &\ =\ \lim_{n \in \mathbb{N}^{''}} \left\|(\theta, -Cx_{n} + t)\right\|\\
                                                                        &\ =\ \lim_{n \in \mathbb{N}^{''}} \left\|Cx_{n} - t\right\| = 0,
\end{aligned}
\end{multline}
which proves proposition 3: ${\lambda[A, A + C] = 0}$.\\

For proposition 4 consider a unit sequence ${(x_{n}, Ax_{n})_{\mathbb{N}^{'}}}$ -- clearly ${(x_{n})_{\mathbb{N}^{'}}}$ belongs to a unit ball $S \subset X$. According to definition of ${\left\|C\right\|_{q}}$, for any ${\eta > 0}$ there exists a finite cover of $A(S)$ with the balls radius ${\left\|C\right\|_{q}} + \eta$. Therefore, we can find a subsequence ${\mathbb{N}^{''}}$ such that ${(Ax_{n})_{\mathbb{N}^{''}}}$ is contained within one of the balls with center $t \in Y$. Now put
\[
(y_{n}) = (x_{n}, Ax_{n} + Cx_{n})_{\mathbb{N}^{''}} \subset G_{A + C},\ \ \ \ \ z = (\theta, -t) \in X \times Y.
\]
Obviously
\begin{multline}
\notag
\begin{aligned}
\varlimsup_{n \in \mathbb{N}^{''}} \left\|(x_{n}, Ax_{n}) - y_{n} - z\right\| &\ =\ \varlimsup_{n \in \mathbb{N}^{''}} \left\|(x_{n}, Ax_{n}) - (x_{n}, Ax_{n} + Cx_{n}) - (\theta, -t)\right\|\\
                                                                              &\ =\ \varlimsup_{n \in \mathbb{N}^{''}} \left\|(x_{n} - x_{n} - \theta,\ Ax_{n} - Ax_{n} - Cx_{n} + t)\right\|\\
                                                                              &\ =\ \varlimsup_{n \in \mathbb{N}^{''}} \left\|(\theta, -Cx_{n} + t)\right\|\\
                                                                              &\ =\ \varlimsup_{n \in \mathbb{N}^{''}} \left\|Cx_{n} - t\right\| \leq {\left\|C\right\|_{q}} + \eta.
\end{aligned}
\end{multline}
Therefore, ${\lambda[A, A + C] \leq \left\|C\right\|_{q}}$.\\

Proposition 5 follows from proposition 4 once we notice that ${\left\|C\right\|_{q} = 0}$ for a compact operator $C$.\\

In order to prove proposition 6 consider a bounded sequence\linebreak[4] ${(x_{n}, Ax_{n})_{\mathbb{N}^{'}} \subset G_{A}}$. Obviously sequence ${(x_{n})_{\mathbb{N}^{'}} \subset X}$ is bounded, hence, since ${(C_{n})_{\mathbb{N}^{'}}}$\ $K_{2}-$approximates null operator, one can choose a subsequence ${(x_{n})_{\mathbb{N}^{''}}}$ such that ${(C_{n}x_{n})_{\mathbb{N}^{''}}}$ converges to some ${t \in Y}$. Now put 
\[
(y_{n})_{\mathbb{N}^{''}} = (x_{n}, Ax_{n} + C_{n}x_{n})_{\mathbb{N}^{''}} \triangleleft (G_{A + C_{n}})_{\mathbb{N}^{''}},\ \ \ \ z = (\theta, -t) \in X \times Y.
\]
Then
\begin{multline}
\notag
\begin{aligned}
\lim_{n \in \mathbb{N}^{''}} \left\|(x_{n}, Ax_{n}) - y_{n} - z\right\| &\ =\ \lim_{n \in \mathbb{N}^{''}} \left\|(x_{n}, Ax_{n}) - (x_{n}, Ax_{n} + C_{n}x_{n}) - (\theta, -t)\right\|\\
                                                                        &\ =\ \lim_{n \in \mathbb{N}^{''}} \left\|(x_{n} - x_{n} - \theta,\ Ax_{n} - Ax_{n} - C_{n}x_{n} + t)\right\|\\
                                                                        &\ =\ \lim_{n \in \mathbb{N}^{''}} \left\|(\theta, -C_{n}x_{n} + t)\right\|\\
                                                                        &\ =\ \lim_{n \in \mathbb{N}^{''}} \left\|C_{n}x_{n} - t\right\| = 0.
\end{aligned}
\end{multline}
Therefore, ${\lambda_{\mathbb{N}^{'}}[A, A + C_{n}] = 0}$. 

Now consider a bounded sequence ${(x_{n}, Ax_{n} +C_{n}x_{n})_{\mathbb{N}^{'}} \triangleleft (G_{A + C_{n}})_{\mathbb{N}^{''}}}$. Obviously sequence ${(x_{n})_{\mathbb{N}^{'}} \subset X}$ is bounded, hence, since ${(C_{n})_{\mathbb{N}^{'}}}$\ $K_{2}-$approximates null operator, one can choose a subsequence ${(x_{n})_{\mathbb{N}^{''}}}$ such that ${(C_{n}x_{n})_{\mathbb{N}^{''}}}$ converges to some ${t \in Y}$. Now put
\[(y_{n})_{\mathbb{N}^{''}} = (x_{n}, Ax_{n})_{\mathbb{N}^{''}} \subset G_{A},\ \ \ \ z = (\theta, t) \in X \times Y.
\]
Then
\begin{multline}
\notag
\begin{aligned}
\lim_{n \in \mathbb{N}^{''}} \left\|(x_{n}, Ax_{n} + C_{n}x_{n}) - y_{n} - z\right\| &\ =\ \lim_{n \in \mathbb{N}^{''}} \left\|(x_{n}, Ax_{n} + C_{n}x_{n}) - (x_{n}, Ax_{n}) - (\theta, t)\right\|\\
                                                                                     &\ =\ \lim_{n \in \mathbb{N}^{''}} \left\|(x_{n} - x_{n} - \theta,\ Ax_{n} + C_{n}x_{n} - Ax_{n} - t)\right\|\\
                                                                                     &\ =\ \lim_{n \in \mathbb{N}^{''}} \left\|(\theta, C_{n}x_{n} - t)\right\|\\
                                                                                     &\ =\ \lim_{n \in \mathbb{N}^{''}} \left\|C_{n}x_{n} - t\right\| = 0.
\end{aligned}
\end{multline}
Thus ${\lambda_{\mathbb{N}^{'}}[A + C_{n}, A] = 0}$ which proves proposition 6.
\end{proof}

While the previous theorem showed how the known concepts of convergence end up being particular cases of uniform adjustment, the next theorem provides means of constructing non-trivial uniformly adjusted operators that do not degenerate into any of the known cases.

\begin{theorem}\label{nua}
Let $X$ and $Y$ are two Banach spaces, ${(A_{n})_{\mathbb{N}^{'}} \subset \mathcal{C}(X,Y)}$ and ${(B_{n})_{\mathbb{N}^{'}} \subset \mathcal{C}(X,Y)}$. Then the following propositions are true:
\begin{enumerate}
  \item Suppose that ${\lambda_{\mathbb{N}^{'}}[A_{n}, B_{n}] = 0}$, ${C \in \mathcal{B}(X,Y)}$, ${(C_{n})_{\mathbb{N}^{'}} \subset \mathcal{B}(X,Y)}$ and\linebreak[4] ${(C_{n} - C)_{\mathbb{N}^{'}}}$ $K_{2}-$approximates the null operator ${\theta \in \mathcal{B}(X,Y)}$, then\linebreak[4] ${\lambda_{\mathbb{N}^{'}}[A_{n} + C_{n}, B_{n} + C] = 0}$ and  ${\lambda_{\mathbb{N}^{'}}[A_{n} + C, B_{n} + C_{n}] = 0}$.
  \item If ${C \in \mathcal{B}(X,Y)}$ and ${\lambda_{\mathbb{N}^{'}}[\theta, B_{n}] = 0}$, then ${\lambda_{\mathbb{N}^{'}}[C, C + B_{n}] = 0}$.
  \item If ${C \in \mathcal{B}(X,Y)}$ and ${\lambda_{\mathbb{N}^{'}}[B_{n}, \theta] = 0}$, then ${\lambda_{\mathbb{N}^{'}}[C + B_{n}, C] = 0}$.
  \item Suppose that $X$, $Y$, $Z$ are Hilbert spaces and ${K}$ is a compact operator from ${\mathcal{K}(X,Y)}$. Then for any bounded sequence of continuous operators ${(C_{n})_{\mathbb{N}^{'}} \subset \mathcal{B}(Y,Z)}$
\[
\lambda_{\mathbb{N}^{'}}[\theta, C_{n} \circ K] = 0.
\]
\end{enumerate}
\end{theorem}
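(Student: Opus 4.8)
The four propositions split into the essentially formal group (1)--(3) and the substantive assertion (4). I would first record two elementary facts. (a) If a sequence $(D_n)_{\mathbb{N}^{'}}\subset\mathcal{B}(X,Y)$ $K_{2}$-approximates the null operator, then $\sup_n\|D_n\|<\infty$: otherwise, picking unit vectors $x_n$ with $\|D_nx_n\|\to\infty$ along a subsequence and completing arbitrarily produces a bounded sequence whose image under $(D_n)$ is not even bounded, hence not relatively compact. (b) The relation $\lambda_{\mathbb{N}^{'}}[M_n,P_n]=0$ automatically upgrades from unit sequences to bounded sequences from $(M_n)$: given bounded $(x_n)_{\mathbb{N}^{''}}\triangleleft(M_n)$, pass to a subsequence on which $\|x_n\|\to c$; if $c=0$ take $y_n=\theta$ and $z=\theta$, while if $c>0$ apply the definition to the unit sequence $(x_n/\|x_n\|)$ and rescale the resulting $(y_n)$ by $\|x_n\|$ (legitimate since each $P_n$ is a subspace) and $z$ by $c$.

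For (1), I would fix $\eta>0$ and a unit sequence $(x_n,(A_n+C_n)x_n)_{\mathbb{N}^{''}}\triangleleft(G_{A_n+C_n})$, so $\|x_n\|\le 1$; by (a) applied to $D_n=C_n-C$ we get $\sup_n\|C_n\|<\infty$, hence $(x_n,A_nx_n)_{\mathbb{N}^{''}}$ is bounded in $(G_{A_n})$. Passing to a subsequence on which $(C_n-C)x_n\to t$ and invoking (b) with the hypothesis $\lambda_{\mathbb{N}^{'}}[A_n,B_n]=0$ produces $(p_n,B_np_n)\triangleleft(G_{B_n})$ and $w=(w_1,w_2)$ with $\varlimsup\|x_n-p_n-w_1\|$ and $\varlimsup\|A_nx_n-B_np_n-w_2\|$ as small as desired. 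Taking $y_n=(p_n,\,B_np_n+Cp_n)\in G_{B_n+C}$ and $z=(w_1,\,w_2+Cw_1+t)$, the identity $C_nx_n-Cp_n-Cw_1-t=\bigl((C_n-C)x_n-t\bigr)+C(x_n-p_n-w_1)$ and continuity of $C$ show $\varlimsup\|(x_n,(A_n+C_n)x_n)-y_n-z\|\le(1+\|C\|)$ times that small quantity, hence $\le\eta$; the companion identity $\lambda_{\mathbb{N}^{'}}[A_n+C,B_n+C_n]=0$ is the same computation, additionally passing to a subsequence on which the relatively compact sequence $\bigl((C_n-C)w_1\bigr)$ converges. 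Then (2) is the case $A_n\equiv\theta$, $C_n\equiv C$ of (1) (so $C_n-C\equiv\theta$ trivially $K_{2}$-approximates the null operator, and the conclusion reads $\lambda_{\mathbb{N}^{'}}[C,B_n+C]=0$) and (3) is the case $B_n\equiv\theta$, $C_n\equiv C$ (conclusion $\lambda_{\mathbb{N}^{'}}[B_n+C,C]=0$).

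For (4), I would fix $\eta>0$ and a unit sequence in $G_{\theta}=X\times\{\theta\}$, i.e. $(x_n,\theta)_{\mathbb{N}^{''}}$ with $\|x_n\|=1$. The obvious candidate $(x_n,\,C_nKx_n)\in G_{C_nK}$ is useless because $(C_nKx_n)$ need not converge (only $\sup_m\|C_m\|<\infty$ is given); the point is to exploit the Hilbert-space (hence reflexive) structure of $X$, so that along a subsequence $\mathbb{N}^{'''}$ one has $x_n\rightharpoonup x$ in $X$, whence compactness of $K$ forces $Kx_n\to Kx$ in norm. Setting $p_n=x_n-x$, $v_n=(p_n,\,C_nKp_n)\in G_{C_nK}$ and $w=(x,\theta)$ gives $(x_n,\theta)-v_n-w=(\theta,\,-C_n(Kx_n-Kx))$, of norm at most $\bigl(\sup_m\|C_m\|\bigr)\|Kx_n-Kx\|\to 0$, so $\varlimsup_{n\in\mathbb{N}^{'''}}\|(x_n,\theta)-v_n-w\|=0\le\eta$, which is precisely $\lambda_{\mathbb{N}^{'}}[\theta,C_n\circ K]=0$.

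The main obstacle is exactly this last point: the mere norm-boundedness of $(C_n)$ cannot control $(C_nKx_n)$, and the remedy — subtracting the weak limit of $(x_n)$ so that the residual $x_n-x$ is pushed through $K$ and annihilated by compactness — is what genuinely needs the reflexivity (Hilbert) hypothesis on $X$. Parts (1)--(3) are, by contrast, routine triangle-inequality bookkeeping once observations (a) and (b) are established, the only care being the order in which subsequences are extracted.
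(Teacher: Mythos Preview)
Your proof is correct. Parts (1)--(3) follow essentially the paper's route: the paper also derives (2) and (3) as the special case $C_n\equiv C$ of (1), and its proof of (1) is the same triangle-inequality bookkeeping, though the paper normalizes the bounded graph sequence $(x_n,A_nx_n)$ to a unit sequence and rescales at the end, whereas your observation (b) packages this once and for all. (Your remark on the companion identity is right: ``the same computation'' already extracts a subsequence on which $(C_n-C)x_n$ converges, and you additionally need $(C_n-C)w_1$ to converge, exactly as you say.)

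Part (4) is where you genuinely diverge. The paper invokes a structural result (from \cite{sedaev}) that for compact $K$ and any $\varepsilon>0$ there is a closed finite-codimensional $L\subset X$ with $\|K|_L\|\le\varepsilon$; then, using the Hilbert decomposition $X=L^\perp\oplus L$, it writes $x_n=u_n+v_n$, extracts a convergent subsequence of $(u_n)\subset L^\perp$ by finite-dimensionality, and takes $y_n=(v_n,C_nKv_n)$, $z=(u,\theta)$, so that the error is $\|C_nKv_n\|\le(\sup_m\|C_m\|)\,\|K|_L\|$. Your argument instead uses only that $X$ is reflexive: pass to a weakly convergent subsequence $x_n\rightharpoonup x$, use complete continuity of $K$ to get $Kx_n\to Kx$ in norm, and take $y_n=(x_n-x,\,C_nK(x_n-x))$, $z=(x,\theta)$. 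This is shorter, avoids the external citation, and actually proves the stronger statement with ``$X$ Hilbert'' relaxed to ``$X$ reflexive'' (no hypothesis on $Y$ or $Z$ is used in either proof). The paper's approach, on the other hand, makes the finite-dimensional ``hidden compactness'' explicit and yields a uniform $\eta$-estimate along the whole subsequence rather than only in the limit.
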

\begin{proof}
  It is not hard to see that the sequence of operators ${(C_{n})_{\mathbb{N}^{'}}}$ is bounded in $\mathcal{B}(X,Y)$: otherwise there would be a bounded sequence of vectors ${(c_{n})_{\mathbb{N}^{'}} \subset X}$ such that ${\varlimsup_{n \in \mathbb{N}^{'}} \left\|C_{n}c_{n}\right\| = \infty}$; but since ${(C_{n})_{\mathbb{N}^{'}}}$\ $K_{2}-$approximates  ${C}$, the sequence ${(C_{n}c_{n} - Cc_{n})_{\mathbb{N}^{'}}}$ is relatively compact and therefore bounded; yet ${(Cc_{n})_{\mathbb{N}^{'}}}$ is bounded since $C$ is continuous, therefore ${(Cc_{n})_{\mathbb{N}^{'}}}$ is bounded which contradicts our assumption. Therefore, there exists a constant ${K}$ such that
\begin{equation}\label{ck2cb}
\varlimsup_{n \in \mathbb{N}^{'}} \left\|C_{n}\right\| \leq K < \infty.
\end{equation}
  Now, in order to prove proposition 1 consider a unit sequence 
\[
(x_{n}, A_{n}x_{n} + C_{n}x_{n})_{\mathbb{N}^{''}} \triangleleft (G_{A_{N} + C_{n}})_{\mathbb{N}^{''}}.
\]
  From \eqref{ck2cb} we can estimate the norms of ${A_{n}x_{n}}$ using the triangle inequality:
\begin{multline}\label{E:9}
\notag
\begin{aligned}
\varlimsup_{n \in \mathbb{N}^{''}} \left\|A_{n}x_{n}\right\|&\ =\ \varlimsup_{n \in \mathbb{N}^{'}} \left\|A_{n}x_{n} + C_{n}x_{n} - C_{n}x_{n}\right\|\\
                                                            &\ \leq\ \varlimsup_{n \in \mathbb{N}^{''}} \left\|A_{n}x_{n} + C_{n}x_{n}\right\| + \varlimsup_{n \in \mathbb{N}^{''}} \left\|C_{n}x_{n}\right\|\ \leq\ 1 + K\ <\ \infty.
\end{aligned}
\end{multline}
  Thus, setting ${\alpha_{n} = 1 / \left\|(x_{n}, A_{n}x_{n})\right\|}$ for ${n \in {\mathbb{N}^{''}}}$ we obtain
\[
\varliminf_{n \in \mathbb{N}^{''}} \alpha_{n} = \varliminf_{n \in \mathbb{N}^{''}} \frac{1}{\left\|(x_{n},A_{n}x_{n})\right\|} \geq \frac{1}{\max\{1, 1 + K\}} = \frac{1}{1 + K}.
\]
  Now suppose that ${\varlimsup_{n \in \mathbb{N}^{''}} \alpha_{n} = \infty}$. Then both ${(x_{n})_{\mathbb{N}^{''}}}$ and ${(A_{n}x_{n})_{\mathbb{N}^{''}}}$ converge to ${\theta}$; thus, according to \eqref{ck2cb}, ${(C_{n}x_{n})_{\mathbb{N}^{''}}}$ also converges to ${\theta}$ -- but this contradicts our choice of unit sequence ${(x_{n}, A_{n}x_{n} + C_{n}x_{n})_{\mathbb{N}^{''}}}$. Therefore, for some ${P < \infty}$
\begin{equation}\label{E:14}
P \geq \varlimsup_{n \in \mathbb{N}^{''}} \alpha_{n} \geq \varliminf_{n \in \mathbb{N}^{''}} \alpha_{n} \geq \frac{1}{1 + K}.
\end{equation}
  Now consider a unit sequence ${(\alpha_{n}x_{n}, A_{n}(\alpha_{n}x_{n}))_{\mathbb{N}^{''}} \triangleleft (G_{A_{N}})_{\mathbb{N}^{''}}}$ and a number ${\eta > 0}$, then put 
\begin{equation}\label{Ex:24}
\nu = \frac{\eta}{( 1 + \left\|C\right\| ) \times ( 1 + K )};
\end{equation}
  since ${\lambda_{\mathbb{N}^{'}}[A_{n}, B_{n}] = 0}$, there exists a subsequence ${(y_{n})_{\mathbb{N}^{'''}} \subset Y}$ and a vector ${(t,v) \in X \times Y}$ such that
\begin{multline}\label{E:6}
\varlimsup_{n \in \mathbb{N}^{'''}} \left\|(\alpha_{n}x_{n} - y_{n},\ A_{n}(\alpha_{n}x_{n}) - B_{n}y_{n}) - (t,v)\right\| \\
= \varlimsup_{n \in \mathbb{N}^{'''}} \left\|(\alpha_{n}x_{n} - y_{n} - t,\ A_{n}(\alpha_{n}x_{n}) - B_{n}y_{n} - v)\right\| \leq \nu.
\end{multline}
  Since $C$ is continuous, for ${n \in \mathbb{N}^{'''}}$
\begin{equation}\label{E:7}
\left\|C(\alpha_{n}x_{n}) - Cy_{n} - Ct \right\|\ \leq\ \left\|C\right\| \times \left\|\alpha_{n}x_{n} - Cy_{n} - t\right\|\ \leq\ \left\|C\right\| \times \nu.
\end{equation}
  Since ${(C_{n})_{\mathbb{N}^{'}}}$\ $K_{2}-$approximates  ${C}$ and ${(\alpha_{n}x_{n})_{\mathbb{N}^{'''}}}$ is bounded there exists a subsequence ${\mathbb{N}^{'4}}$ and a vector ${u \in Y}$ such that
\begin{equation}\label{E:8}
\varlimsup_{n \in \mathbb{N}^{'4}} \left\|C_{n}(\alpha_{n}x_{n}) - C(\alpha_{n}x_{n}) - u \right\| = 0.
\end{equation}
  Define
\begin{multline}
\notag
\begin{aligned}
  \Delta &\ =\ \varlimsup_{n \in \mathbb{N}^{'4}} \left\|(A_{n} + C_{n})(\alpha_{n}x_{n}) - (B_{n} + C_{n})y_{n} - v - Ct - u \right\|\\
         &\ =\ \varlimsup_{n \in \mathbb{N}^{'4}} \left\|A_{n}(\alpha_{n}x_{n}) + C_{n}(\alpha_{n}x_{n}) - B_{n}y_{n} - C_{n}y_{n} - v - Ct - u \right\|\\
         &\ =\ \varlimsup_{n \in \mathbb{N}^{'4}} \left\| (A_{n}(\alpha_{n}x_{n}) - B_{n}y_{n} - v) + (C(\alpha_{n}x_{n}) - Cy_{n} - Ct) + (C_{n}(\alpha_{n}x_{n}) - C(\alpha_{n}x_{n}) - u)\right\|.
\end{aligned}
\end{multline}
  Combining \eqref{E:6}, \eqref{E:7} and \eqref{E:8} together and using the triangle inequality we get
\begin{multline}\label{E:10}
\begin{aligned}
  \Delta &\ \leq\ \varlimsup_{n \in \mathbb{N}^{'4}} \left\| (A_{n}(\alpha_{n}x_{n}) - B_{n}y_{n} - v \right\|\\
         &\ \ \ \ \ \ \ \ \ \ \ \ \ \ \ \ \ \ \ \ \ \ \ \ \ \ \ \ +\ \varlimsup_{n \in \mathbb{N}^{'4}} \left\| C(\alpha_{n}x_{n}) - Cy_{n} - Ct \right\|\\
         &\ \ \ \ \ \ \ \ \ \ \ \ \ \ \ \ \ \ \ \ \ \ \ \ \ \ \ \ +\ \varlimsup_{n \in \mathbb{N}^{'4}} \left\| C_{n}(\alpha_{n}x_{n}) - C(\alpha_{n}x_{n}) - u \right\|\\
         &\ \leq\ \nu + \left\|C\right\| \times \nu + 0\ =\ \nu \times (1 + \left\|C\right\|).
\end{aligned}
\end{multline}
  Taking into account \eqref{E:14}, we see that the sequence of numbers ${(\alpha_{n})_{\mathbb{N}^{''}}}$ is contained within a compact interval ${[\frac{1}{1 + K},\ P] \subset \mathbb{R}}$, so we can choose a convergent subsequence ${(\alpha_{n})_{\mathbb{N}^{'5}}}$ such that
\[
\lim _{n \in \mathbb{N}^{'5}} \alpha_{n} \rightarrow \alpha\ \in\ [\frac{1}{1 + K},\ P],
\]
  then rewrite \eqref{E:10} (using \eqref{Ex:24}):
\begin{multline}\label{E:11}
\begin{aligned}
\frac{1}{\alpha} \times \Delta &\ =\ \varlimsup_{n \in \mathbb{N}^{'5}} \left\|(A_{n} + C_{n})(x_{n})\ -\ (B_{n} + C_{n})(\frac{1}{\alpha}y_{n})\ -\ \frac{1}{\alpha}(v + Ct + u) \right\|\\
                               &\ \leq\ \frac{1}{\alpha} \times \nu \times ( 1 + \left\|C\right\| )\\
                               &\ \leq\ \nu  \times ( 1 + K ) \times ( 1 + \left\|C\right\| )\\
                               &\ =\ \frac{\eta}{( 1 + \left\|C\right\| ) \times ( 1 + K )}  \times ( 1 + K ) \times ( 1 + \left\|C\right\| )\ =\ \eta.
\end{aligned}
\end{multline}
  We can also rewrite the first component from \eqref{E:6}:
\begin{multline}\label{E:12}
\begin{aligned}
\varlimsup_{n \in \mathbb{N}^{'''}} \left\|x_{n}\ -\ \frac{1}{\alpha}y_{n}\ -\ \frac{1}{\alpha} t \right\| &\ \leq\  \frac{\nu}{\alpha}\\
                                                                                                           &\ \leq\ \nu \times (1 + K)\\
                                                                                                           &\ = \frac{\eta}{( 1 + \left\|C\right\| ) \times ( 1 + K )} \times (1 + K)\ \leq\ \eta.
\end{aligned}
\end{multline}
  Therefore, from \eqref{E:11} and \eqref{E:12} we conclude that given a unit sequence
\[
(x_{n},\ A_{n}x_{n} + C_{n}x_{n})_{\mathbb{N}^{''}} \triangleleft (G_{A_{n} + C_{n}})_{\mathbb{N}^{''}}
\]
  and a number ${\eta > 0}$ we found a subsequence
\[
(\frac{1}{\alpha} y_{n},\ (B_{n} + C_{n})(\frac{1}{\alpha} y_{n}))_{\mathbb{N}^{'5}} \triangleleft (G_{B_{n} + C_{n}})_{\mathbb{N}^{'5}}
\]
  and a vector
\[
(z, k)\ =\ (\frac{1}{\alpha}t,\ \frac{1}{\alpha}(v + Ct + u))\ \in\ X \times Y
\]
  such that
\begin{multline}
\notag
\begin{aligned}
\varlimsup_{n \in \mathbb{N}^{'5}} & \left\|(x_{n},\ A_{n}x_{n} + C_{n}x_{n})\ -\ (\frac{1}{\alpha}y_{n},\ (B_{n} + C_{n})(\frac{1}{\alpha}y_{n})) - (z,\ k) \right\|\\
                                      & =\ \varlimsup_{n \in \mathbb{N}^{'5}} \left\|(x_{n} - \frac{1}{\alpha}y_{n} - \frac{1}{\alpha}t,\ (A_{n} + C_{n})x_{n} - (B_{n} + C_{n})(\frac{1}{\alpha}y_{n}) - \frac{1}{\alpha}(v + Ct + u))\right\|\\
                                      & \leq \ \max \{\eta, \frac{1}{\alpha} \times \Delta\}\\
                                      & \leq \ \max \{\eta, \eta\} = \eta,
\end{aligned}
\end{multline}
  which proves that ${\lambda_{\mathbb{N}^{'}}[A_{n} + C_{n}, B_{n} + C] = 0}$.

We skip the proof of the second equality as it is similar to the proof of the first one.\\

Proposition 2 directly follows from Proposition 1 since a sequence of operators all equal to the same operator $K_{2}$ approximates that operator. Proposition 3 is established by exactly the same argument.\\

Now let us prove proposition 4. Let ${\sup\{\left\|C_{n}\right\|\} = T < \infty}$. Consider a unit sequence ${(x_{n}, \theta)_{\mathbb{N}^{'}} \subset X \times Y}$ and a number ${\eta > 0}$. According to \cite{sedaev} there exists a closed subspace ${L \subset X}$ such that ${\dim X/L < \infty}$ and ${\left\|K|_{L}\right\| \leq \frac{\eta}{T}}$. Decompose Hilbert space ${X = L^{\bot} \oplus L}$ and the sequence ${x_{n} = u_{n} + v_{n}}$ where ${u_{n} \in L^{\bot}, v_{n} \in L}$ and ${\left\|u_{n}\right\| \leq 1}$, ${\left\|v_{n}\right\| \leq 1}$ for ${n \in \mathbb{N}^{'}}$. Since
\[
\dim L^{\bot} = \dim X/L < \infty,
\]
we can choose a convergent subsequence ${(u_{n})_{\mathbb{N}^{''}} \rightarrow u \in L^{\bot}}$. Now set
\[
(y_{n})_{\mathbb{N}^{''}} = (v_{n}, (C_{n} \circ K)v_{n})_{\mathbb{N}^{''}} \triangleleft (G_{C_{n} \circ K})_{\mathbb{N}^{''}},\ \ \ \ z = (u, \theta) \in X \times Y.
\]
Then
\begin{multline}
\notag
\begin{aligned}
\varlimsup_{n \in \mathbb{N}^{''}} \left\|((x_{n}, \theta) - y_{n} - z)\right\| &\ =\ \varlimsup_{n \in \mathbb{N}^{''}} \left\|((x_{n}, \theta)\ -\ (v_{n}, (C_{n} \circ K)v_{n})\ -\ (u, \theta))\right\|\\
                                                                                &\ =\ \varlimsup_{n \in \mathbb{N}^{''}} \left\|((x_{n} - v_{n} - u,\ (C_{n} \circ K)v_{n} - \theta)\right\|\\
                                                                                &\ =\ \varlimsup_{n \in \mathbb{N}^{''}} \left\|((u_{n} - u,\ (C_{n} \circ K)v_{n})\right\|\\
                                                                                &\ =\ \varlimsup_{n \in \mathbb{N}^{''}} \max \{\left\|u_{n} - u\right\|,\ \left\|(C_{n} \circ (K|_{L}))v_{n}\right\|\}\\
                                                                                &\ \leq\ \varlimsup_{n \in \mathbb{N}^{''}} \max \{0,\ \left\|(C_{n} \circ (K|_{L}))\right\| \times \left\|v_{n}\right\|\}\\
                                                                                &\ \leq\ \varlimsup_{n \in \mathbb{N}^{''}} \left\|C_{n}\right\| \times \left\|(K|_{L})\right\|\ \leq\ T \times \frac{\eta}{T}\ =\ \eta.
\end{aligned}
\end{multline}
Therefore, ${\lambda_{\mathbb{N}^{'}}[\theta, C_{n} \circ K] = 0}$.
\end{proof}

Using the last result we now construct a sequence of operators from ${L_{2}[0,1]}$ that are $0-$adjusted with the null operator $\theta$ but that do not $K_{2}-$approximate $\theta$. Let ${(\varphi_{n})_{\mathbb{N}} \subset L_{2}[0,1]}$ be an orthonormal basis, and ${K(x,y) \in L_{2}([0,1] \times [0,1])}$. Consider a sequence of operators ${(S_{n})_{\mathbb{N}} \subset \mathcal{B}(L_{2}[0,1],L_{2}[0,1])}$ defined like this:
\[
(S_{n}f)(x) = \varphi_{n}(x) \times \int_{0}^{1}K(x,y) \times f(y)dy.
\]
Each ${S_{n}}$ is a composition of the multiplication operator ${\varphi_{n}(x) \times f(x)}$ with the compact  integral operator ${\int_{0}^{1}K(x,y) \times f(y)dy}$; therefore, ${\lambda_{\mathbb{N}}[\theta, S_{n}] = 0}$ according to proposition 3 from Theorem \ref{nua}. But because ${(\varphi_{n})_{\mathbb{N}}  \subset L_{2}[0,1]}$ is an orthonormal basis, ${(S_{n})_{\mathbb{N}}}$ does not $K_{2}-$approximate ${\theta}$ for any non-trivial kernel ${K(x,y)}$.
\\

The following theorem shows that if a pair of sequences of operators degenerates into just a pair of continuous operators, then close adjustment between them becomes equivalent to their difference having a small q-norm:
\begin{theorem}
Suppose $Y$ and $Y$ are two Banach spaces and ${A, C \in \mathcal{B}(X,Y)}$. Then
\begin{enumerate}
  \item ${\left\|C\right\|_{q} \leq 2 \times \max \{1,\ \left\|A\right\|\} \times (\left\|C\right\| + \left\|A\right\| + 1) \times \lambda[A, A + C]}$.

  \item C is a compact operator, i.e. ${\left\|C\right\|_{q} = 0}$ if and only if ${\lambda[A, A + C] = 0}$.
\end{enumerate}
\end{theorem}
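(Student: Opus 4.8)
The plan is to prove part~2 as a quick consequence of part~1 together with Theorem~\ref{aoggnsk}, so the real content is part~1. If $\|C\|_q=0$ both claims are trivial --- part~1 because its right-hand side is non-negative, and part~2 because proposition~4 of Theorem~\ref{aoggnsk} already gives $\lambda[A,A+C]\le\|C\|_q=0$ --- so I would assume $\|C\|_q>0$ (hence $C\ne\theta$), write $\lambda:=\lambda[A,A+C]$ and $R:=\max\{1,\|A\|\}$, and let $S$ be the closed unit ball of $X$.

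For part~1 I would fix an arbitrary $r<\|C\|_q=q(C(S))$ and, since $C(S)$ is not covered by finitely many balls of radius $r$, extract by the usual greedy argument vectors $x_n\in S$ with $\|Cx_n-Cx_m\|>r$ for all $n\ne m$. At most one index can have $\|x_n\|<r/(2\|C\|)$, since two such indices would force $\|Cx_n-Cx_m\|<r$; after discarding it I may assume $r/(2\|C\|)\le\|x_n\|\le1$ for all $n$. Putting $w_n:=(x_n,Ax_n)\in G_A$ we get $r/(2\|C\|)\le\|w_n\|\le R$, so after passing to a subsequence $\|w_n\|\to s$ for some $s>0$. The decisive step is normalization: the vectors $\hat w_n:=w_n/\|w_n\|=(\hat x_n,A\hat x_n)$, with $\hat x_n:=x_n/\|w_n\|$, form a genuine \emph{unit} sequence in $G_A$, so Definition~\ref{D:ao} applies to it directly.

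Next, given $\eta>0$, the definition of $\lambda[G_A,G_{A+C}]$ yields a subsequence $(y_n)$ with $(y_n,(A+C)y_n)\in G_{A+C}$ and a vector $(z,w)\in X\times Y$ such that
\[
\varlimsup_{n}\ \max\bigl\{\,\|\hat x_n-y_n-z\|,\ \|A\hat x_n-(A+C)y_n-w\|\,\bigr\}\ \le\ \lambda+\eta .
\]
Subtracting these estimates for two large indices $n\ne m$ eliminates $z$ and $w$: the first coordinate gives $\|(y_n-y_m)-(\hat x_n-\hat x_m)\|<2(\lambda+\eta)$, and feeding this into the second coordinate makes the $A(\hat x_n-\hat x_m)$ terms cancel, leaving $\|C(\hat x_n-\hat x_m)\|<2(\lambda+\eta)(1+\|A\|+\|C\|)$. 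Then I would undo the normalization through $C\hat x_n=Cx_n/\|w_n\|$, writing $Cx_n-Cx_m=\|w_n\|(C\hat x_n-C\hat x_m)+(\|w_n\|-\|w_m\|)C\hat x_m$; using $\|w_n\|\le R$, $\|C\hat x_m\|\le\|C\|$ and $\|w_n\|-\|w_m\|\to0$, this gives for large $n\ne m$
\[
r\ <\ \|Cx_n-Cx_m\|\ <\ 2R(\lambda+\eta)(1+\|A\|+\|C\|)+\eta .
\]
Letting $\eta\to0$ gives $r\le 2R\lambda(1+\|A\|+\|C\|)$, and since $r<\|C\|_q$ was arbitrary, $\|C\|_q\le 2\max\{1,\|A\|\}(\|C\|+\|A\|+1)\lambda[A,A+C]$, which is part~1. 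Part~2 then follows at once: if $\lambda[A,A+C]=0$ then part~1 forces $\|C\|_q=0$, i.e.\ $C$ is compact, while the converse is exactly proposition~4 (or~5) of Theorem~\ref{aoggnsk}.

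I expect the main obstacle to be the shuttle between the unit ball of $X$ and the unit sphere of the graph $G_A\subset X\times Y$: uniform $\lambda$-adjustment is only defined through unit sequences, so the $x_n\in S$ must be rescaled into unit vectors of $G_A$ and then rescaled back at the very end, and making that reversible is precisely what forces the two preparatory reductions --- discarding the index with $\|x_n\|$ too small, so that $\|w_n\|$ stays away from $0$, and passing to a subsequence on which $\|w_n\|$ converges, so that the two rescaling factors cancel in the limit. Everything else --- the greedy extraction of the $r$-separated sequence, the cancellation of $z$, $w$ and of the $A$-terms when differencing, and tracking the constant $2R(1+\|A\|+\|C\|)$ --- should be routine.
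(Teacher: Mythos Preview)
Your proof is correct and follows essentially the same route as the paper: normalize vectors from the unit ball into unit vectors of $G_A$, invoke the definition of $\lambda[A,A+C]$, use the algebraic cancellation between the two coordinate estimates to control $C$, then undo the normalization via a convergent subsequence of the scaling factors. The only cosmetic difference is that the paper keeps the vector $(z,t)$ and exhibits an explicit center $u=\alpha(Cz+Az-t)$ around which a subsequence of $(Cx_n)$ clusters, whereas you difference over two indices to eliminate $(z,w)$ and bound $\|Cx_n-Cx_m\|$ directly; the paper's final contradiction step (pairwise distance $\le 2q(\eta)$) is exactly your differencing, just postponed.
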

\begin{proof}
In order to prove proposition 1 consider a sequence of non-null vectors ${(x_{n})_{\mathbb{N}}}$ from a unit ball of $X$, a sequence of real numbers
\[
(\alpha_{n})_{\mathbb{N}} = (\max\{\left\|x_{n}\right\|, \left\|Ax_{n}\right\|\})_{\mathbb{N}}
\]
and a sequence of vectors
\begin{equation}\label{E:19}
(x^{'}_{n})_{\mathbb{N}} = (\frac{1}{\alpha_{n}}x_{n})_{\mathbb{N}}.
\end{equation}
Obviously ${(x^{'}_{n}, Ax^{'}_{n})_{\mathbb{N}} \subset X \times Y}$ is a unit sequence. Therefore, for any ${\eta > 0}$ one can find a subsequence ${(y_{n}, (A + C)y_{n})_{\mathbb{N^{'}}} \triangleleft G_{A + C}}$ and some ${(z, t) \in X \times Y}$ such that
\[
\varlimsup_{n \in \mathbb{N}^{'}} \left\|(x^{'}_{n}, Ax^{'}_{n})\ -\ (y_{n}, (A + C)y_{n})\ -\ (z, t) \right\|\ \leq\ \lambda[A, A + C] + \eta.
\]
Let us rewrite this inequality for each component:
\begin{equation}\label{E:15}
\begin{aligned}
\varlimsup_{n \in \mathbb{N}^{'}} \left\|x^{'}_{n} - y_{n} - z \right\|\ \leq\ \lambda[A, A + C] + \eta,
\end{aligned}
\end{equation}
\begin{equation}\label{E:16}
\begin{aligned}
\varlimsup_{n \in \mathbb{N}^{'}} \left\|Ax^{'}_{n} - (A + C)y_{n} - t \right\|\ \leq\ \lambda[A, A + C] + \eta.
\end{aligned}
\end{equation}
Recall that both $A$ and $C$ are continuous; thus application of first $A$ then $C$ to the left side of \eqref{E:15} results in
\begin{equation}\label{E:17}
\begin{aligned}
\varlimsup_{n \in \mathbb{N}^{'}} \left\|Ax^{'}_{n} - Ay_{n} - Az \right\| \leq \left\|A\right\| \times (\lambda[A, A + C] + \eta),
\end{aligned}
\end{equation}
\begin{equation}\label{E:18}
\begin{aligned}
\varlimsup_{n \in \mathbb{N}^{'}} \left\|Cx^{'}_{n} - Cy_{n} - Cz \right\| \leq \left\|C\right\| \times (\lambda[A, A + C] + \eta).
\end{aligned}
\end{equation}
Now adding \eqref{E:18} and \eqref{E:17}, subtracting \eqref{E:16}, using the triangle inequality, and, finally, applying \eqref{E:15} we observe
\begin{multline}
\notag
\begin{aligned}
\varlimsup_{n \in \mathbb{N}^{'}} & \left\|Cx^{'}_{n} - Cz - Az + t\right\|\\
                                  & =\ \varlimsup_{n \in \mathbb{N}^{'}} \left\|(Cx^{'}_{n} - Cy_{n} - Cz) + (Ax^{'}_{n} - Ay_{n} - Az) - (Ax^{'}_{n} - Ay_{n} - Cy_{n} - t)\right\|\\
                                  & \leq\ \varlimsup_{n \in \mathbb{N}^{'}} (\left\|(Cx^{'}_{n} - Cy_{n} - Cz)\right\| + \left\|(Ax^{'}_{n} - Ay_{n} - Az)\right\| + \left\|(Ax^{'}_{n} - Ay_{n} - Cy_{n} - t)\right\|)\\
                                  & \leq\ (\left\|C\right\| + \left\|A\right\|) \times \varlimsup_{n \in \mathbb{N}^{'}} \left\|x^{'}_{n} - y_{n} - z\right\|\ +\ \left\|(Ax^{'}_{n} - Ay_{n} - Cy_{n} - t)\right\|)\\
                                  & \leq\ (\left\|C\right\| + \left\|A\right\|) \times \varlimsup_{n \in \mathbb{N}^{'}} \left\|x^{'}_{n} - y_{n} - z\right\|\ +\ (\lambda[A, A + C] + \eta)\\
                                  & \leq\ (\left\|C\right\| + \left\|A\right\|) \times (\lambda[A, A + C] + \eta)\ +\ (\lambda[A, A + C] + \eta)\\
                                  & \leq\ (\left\|C\right\| + \left\|A\right\| + 1) \times (\lambda[A, A + C] + \eta).
\end{aligned}
\end{multline}
From this and from \eqref{E:19} we conclude that for any unit sequence ${(x_{n})_{\mathbb{N}}}$ we have found a vector ${Cz - Az + t \in Y}$ such that for some subsequence ${\mathbb{N'}}$ 
\begin{multline}
\notag
\varlimsup_{n \in \mathbb{N}^{'}} \left\|Cx_{n} - \alpha_{n}Cz - \alpha_{n}Az + \alpha_{n}t\right\|\\
\leq |\alpha_{n}| \times (\left\|C\right\| + \left\|A\right\| + 1) \times (\lambda[A, A + C] + \eta).
\end{multline}
Obviously ${|\alpha_{n}| \leq \max\{1, \left\|A\right\|\}}$; thus for some ${\mathbb{N}^{''}}$ the sequence ${(\alpha_{n})_{\mathbb{N}^{''}}}$ converges to some ${\alpha \in [0, \max\{1, \left\|A\right\|\}]}$. Therefore, the previous inequality can be rewritten:
\begin{multline}
\notag
\varlimsup_{n \in \mathbb{N}^{'}} \left\|Cx_{n} - \alpha Cz - \alpha Az + \alpha t \right\|\\
\leq \max\{1, \left\|A\right\|\} \times (\left\|C\right\| + \left\|A\right\| + 1) \times (\lambda[A, A + C] + \eta).
\end{multline}
Denoting
\[
q(\eta)\ :=\ \max\{1, \left\|A\right\|\} \times (\left\|C\right\| + \left\|A\right\| + 1) \times (\lambda[A, A + C] + \eta),
\]
we see that for every non-null sequence ${(x_{n})_{\mathbb{N}}}$ from a unit ball of $X$ we can find a subsequence ${(x_{n})_{\mathbb{N^{''}}}}$ and a vector ${u = \alpha Cz + \alpha Az - \alpha t \in Y}$ such that ${\left\|Cx_{n} -u\right\| \leq q(\eta)}$  for all ${n \in \mathbb{N^{''}}}$. 

Now it is easy to prove that if $S$ is a unit ball from $X$, then Hausdorff measure of non-compactness ${\mu(C(S)) \leq 2 \times q(\eta)}$. Suppose this is not true -- then one can choose a sequence ${(x_{n})_{\mathbb{N}} \subset S}$ such that ${\left\|Cx_{n} - Cx_{m}\right\| > 2 \times q(\eta)}$ for any two different $n$ and $m$. But we have just proved that there exists ${\mathbb{N}^{'}}$ such that ${\left\|Cx_{n} - u\right\| \leq q(\eta)}$ for all ${n \in \mathbb{N}^{'}}$; thus, applying the triangle inequality, we see that for different ${n, m \in \mathbb{N}^{'}}$  
\begin{multline}
\notag
\begin{aligned}
\left\|Cx_{n} - Cx_{m}\right\| & \ =\ \left\|Cx_{n} - u - (Cx_{m} - u)\right\|\\
                               &\ \leq\ \left\|Cx_{n} - u\right\| + \left\|(Cx_{m} - u)\right\| \leq 2 \times q(\eta)
\end{aligned}
\end{multline}
which contradicts our choice of ${(Cx_{n})_{\mathbb{N}}}$. Therefore, ${\mu(C(S)) = \left\|C\right\|_{q} \leq 2 \times q(\eta)}$. Since ${\eta}$ can be arbitrarily small, we conclude that proposition 1 is true.\\

In order to prove proposition 2 recall that, according to Theorem \ref{aoggnsk} proposition 4, ${\lambda[A, A + C] \leq \left\|C\right\|_{q}}$. Therefore, if ${\left\|C\right\|_{q} = 0}$, then ${\lambda[A, A + C] = 0}$. According to proposition 1
\[
\left\|C\right\|_{q}\ \leq\ 2 \times \max\{1, \left\|A\right\|\} \times (\left\|C\right\| + \left\|A\right\| + 1) \times \lambda[A, A + C].
\]
Therefore, if ${\lambda[A, A + C] = 0}$, then ${\left\|C\right\|_{q} = 0}$.
\end{proof}
\subsection[Examples of Instability]{Examples of Instability}\label{S:eoi}

Since uniform adjustment is a non-trivial extension of both gap topology and compact perturbations, it is interesting to find to which extent the nice stability properties of the latter two remain true for the former. Indeed, we shall see that in general stability breaks.

For example, the small gap theorem states that if ${\delta(M, P) < 1}$ and\linebreak[4] ${\dim P < \infty}$ then ${\dim M \leq \dim P}$, i.e. that finite dimension is stable under gap perturbations smaller than $1$. Yet we shall see immediately that in ${l_{\infty}}$ (the space of limited sequences of real numbers with $sup-$norm) a similar statement about ${\lambda_{\mathbb{N}^{'}}[M_{n}, P_{n}]}$ is not true -- smaller than $1$ lower uniform $\lambda-$adjustment may still increase finite dimensions up to infinity. Then we will observe that although ${\delta(X, P) = 1}$ for any proper closed subspace ${P \subset X}$, a similar statement about ${\lambda_{\mathbb{N}^{'}}[l_{\infty}, P_{n}]}$ is not true -- some subspaces ${P_{n}}$ may be significantly 'smaller' than ${l_{\infty}}$, yet ${\lambda_{\mathbb{N}^{'}}[l_{\infty}, P_{n}] = 0}$.

\begin{example}[Non-Trivial Lower $1/2-$Adjustment with Subspace ${\{\theta\}}$]\label{Ex:1}
There exists a sequence of closed subspaces ${(M_{n})_{\mathbb{N}}}$ from ${l_{\infty}}$ such that\linebreak[4] ${\lambda_{\mathbb{N}^{'}}[M_{n}, \{\theta\}] = 1/2}$ and ${\dim M_{n} \rightarrow \infty}$. This is in contrast to the fact that there can be no subspace ${M \subset l_{\infty}}$ with ${\dim M > 0}$ and ${\delta (M, \{\theta\}) < 1}$.
\end{example}
\begin{proof}
Partition ${\mathbb{N}}$ into an infinite sequence of disjoint non-empty subsets:
\[
\mathbb{N} = \bigcup_{n \in \mathbb{N}} B_{n},\ \ B_{n} \neq \emptyset\ for\ all\ n \in \mathbb{N},\ \ B_{i} \bigcap B_{j} = \emptyset\ for\ i\ \neq j.
\]
Define closed subspaces ${M_{n} \subset l_{\infty}}$ consisting of sequences ${(\alpha^{n}_{i})_{i \in \mathbb{N}}}$ which components with indices from ${\mathbb{N} \backslash B{n}}$ are null:
\[
M_{n} := \{ (\alpha^{n}_{i})_{i \in \mathbb{N}} \in l_{\infty} \mid i \notin B_{n} \Rightarrow \alpha^{n}_{i} = 0 \}.
\]
Let us show that ${\lambda_{\mathbb{N}}[M_{n}, {\theta}] = 1/2}$. Consider a unit sequence of vectors\linebreak[4] ${(x_{n})_{\mathbb{N}^{'}} \triangleleft (M_{n})_{\mathbb{N}^{'}}}$, ${x_{n} = (\alpha^{n}_{i})_{i \in \mathbb{N}}}$. Define a vector ${z = (\beta_{i})_{i \in \mathbb{N}}}$:
\begin{itemize}
  \item if ${i \notin \bigcup_{n \in \mathbb{N^{'}}} B_{n}}$ then set ${\beta_{i} = 0}$;
  \item otherwise $i$ belongs to a single ${B_{n}}$, so set ${\beta_{i} = 1/2 \times \alpha^{n}_{i}}$.
\end{itemize}
From the construction of $z$ we can calculate components ${\gamma^{n}_{i}}$ of the difference ${(\gamma^{n}_{i})_{i \in \mathbb{N}} = y_{n} = x_{n} - z}$ for all ${n \in \mathbb{N}^{'}}$:
\begin{itemize}
  \item if ${i \notin \bigcup_{n \in \mathbb{N^{'}}} B_{n}}$ then ${\gamma^{n}_{i} = \alpha^{n}_{i} - \beta_{i} = 0 - 0 = 0}$;
  \item otherwise $i$ belongs to a single ${B_{n}}$, so
\[
\gamma^{n}_{i}\ =\ \alpha^{n}_{i} - \beta_{i}\ =\ \alpha^{n}_{i} - \frac{1}{2} \times \alpha^{n}_{i}\ =\ \frac{1}{2} \times \alpha^{n}_{i}.
\]
\end{itemize}
From this, taking into account that each ${x_{n} = (\alpha^{n}_{i})_{i \in \mathbb{N}}}$ is a unit vector, we can calculate the norms of ${(\gamma^{n}_{i})_{i \in \mathbb{N}} = y_{n}}$ for all ${n \in \mathbb{N}}$:
\[
\left\|y_{n}\right\|\ =\ \sup_{i \in \mathbb{N}} |\gamma^{n}_{i}|\ =\ \sup_{i \in \mathbb{N}} |\frac{1}{2} \alpha^{n}_{i}|\ =\ \frac{1}{2} \times \sup_{i \in \mathbb{N}} |\alpha^{n}_{i}|\ =\ \frac{1}{2}.
\]
Hence
\[
\varlimsup_{n \in \mathbb{N}^{'}} \left\|x_{n} - z\right\|\ =\ \varlimsup_{n \in \mathbb{N}^{'}} \left\|y_{n}\right\|\ =\  \frac{1}{2};
\]
therefore ${\lambda_{\mathbb{N}^{'}}[M_{n}, \{\theta\}] = 1/2 < 1}$. However, obviously ${card\ B_{n} = \dim M_{n}}$ where ${card\ B_{n}}$ is the number of elements in ${B_{n}}$; therefore, if we partition ${\mathbb{N}}$ so that ${card\ B_{n} \rightarrow \infty}$, then ${\dim M_{n} \rightarrow \infty}$.
\end{proof}

\begin{example}[Non-Trivial Upper $0-$Adjustment in $l_{\infty}$]\label{Ex:2}
There exists a sequence of closed subspaces ${(P_{n})_{\mathbb{N}}}$ from ${l_{\infty}}$ such that ${\lambda_{\mathbb{N}}[l_{\infty}, P_{n}] = 0}$ and ${\dim X/P_{n} \rightarrow \infty}$. This is in contrast to the fact that ${\delta (X, P) = 1}$ for any closed subspace ${P \subset l_{\infty}}$ with ${\dim X/P > 0}$.
\end{example}
\begin{proof}
Partition ${\mathbb{N}}$ into an infinite sequence of disjoint non-empty subsets:
\[
\mathbb{N} = \bigcup_{n \in \mathbb{N}} B_{n},\ \ B_{n} \neq \emptyset\ for\ all\ n \in \mathbb{N},\ \ B_{i} \cap B_{j} = \emptyset\ for\ i\ \neq j.
\]
Define closed subspaces ${P_{n} \subset l_{\infty}}$ consisting of sequences ${(\alpha^{n}_{i})_{i \in \mathbb{N}}}$ which components which indices from ${B_{n}}$ are null:
\[
P_{n} := \{ (\alpha^{n}_{i})_{i \in \mathbb{N}} \in l_{\infty} \mid i \in B_{n} \Rightarrow \alpha^{n}_{i} = 0 \}.
\]
First we prove that ${\lambda_{\mathbb{N}}[l_{\infty}, P_{n}] = 0}$. Consider a unit sequence of vectors\linebreak[4] ${(x_{n})_{\mathbb{N}^{'}} \subset l_{\infty}}$, ${x_{n} = (\alpha^{n}_{i})_{i \in \mathbb{N}}}$. Define a vector ${z = (\beta_{i})_{\mathbb{N}}}$:
\begin{itemize}
  \item if ${i \notin \bigcup_{n \notin \mathbb{N^{'}}} B_{n}}$ then set ${\beta_{i} = 0}$;
  \item otherwise $i$ belongs to a single ${B_{n}}$, so set ${\beta_{i} = \alpha^{n}_{i}}$. 
\end{itemize}
Vector $z$ belongs to ${l_{\infty}}$ since ${(x_{n})_{\mathbb{N}^{'}}}$ is a unit sequence from ${l_{\infty}}$. Also, by our construction each vector ${y_{n} = x_{n} - z}$ belongs to ${P_{n}}$. Obviously ${\left\|x_{n} - y_{n} - z\right\| = 0}$ for all ${n \in \mathbb{N^{'}}}$. Therefore we conclude that ${\lambda_{\mathbb{N}}[l_{\infty}, P_{n}] = 0}$. However,\linebreak[4] ${\dim X/P_{n} = card\ B_{n}}$, thus if we choose ${B_{n}}$ so that ${card\ B_{n} \rightarrow \infty}$ then\linebreak[4] ${\dim X/P_{n} \rightarrow \infty}$. This is in contrast to the fact that there can be no closed subspace ${P \subset l_{\infty}}$ with ${\dim X/P > 0}$ and ${\delta(X, P) < 1}$.
\end{proof}

It is a well known fact (\cite{kato}) that the gap between a pair of closed subspaces ${(M, P)}$ from a Banach space $X$ is equal to the reverse gap between their annihilators ${(P^{\bot}, M^{\bot})}$ from the dual space ${X^{*}}$:
\[
	\delta(M, P) = \delta(P^{\bot}, M^{\bot}).
\]
However, the next example shows that a similar statement is not true for uniform $\lambda-$adjustment.
\begin{example}[Subspaces and their Annihilators are not Equally Adjusted]\label{Ex:3}
The sequence of subspaces ${(M_{n})_{\mathbb{N}}}$ from Example \ref{Ex:1} is such that
\[
\frac{1}{2}\ =\ \lambda_{\mathbb{N}}[M_{n}, \{\theta\}]\ \neq\ \lambda_{\mathbb{N}} [\{\theta\}^{\bot}, M_{n}^{\bot}]\ =\ \lambda_{\mathbb{N}} [l_{\infty}^{*}, M_{n}^{\bot}]\ =\ 1.
\]
\end{example}
\begin{proof}
It has already been proved that ${\lambda_{\mathbb{N}}[M_{n}, \{\theta\}] = 1/2}$. Now let us estimate ${\lambda_{\mathbb{N}} [l_{\infty}^{*}, M_{n}^{\bot}]}$. Let ${\mathbb{N}^{'}}$ be a subsequence of ${\mathbb{N}}$ such that each ${n_{k} \in \mathbb{N}^{'}}$ belongs to ${B_{k}}$, and ${x_{n_{k}}}$ is a unite vector from ${M_{n_{k}}}$ which only non-null coordinate is a unit coordinate ${n_{k}}$. Define a unit sequence of functionals from ${l_{\infty}^{*}}$ as ${e_{n_{k}}(x) = \omega_{n_{k}} \in \mathbb{K}}$ where ${\mathbb{K}}$ is the field of $X$ and ${\omega_{n_{k}}}$ is the ${n_{k}}$-th coordinate of $x$ projected on ${x_{n_{k}}}$. Obviously each ${e_{n_{k}} \in l_{\infty}^{*}}$ is well defined in this manner.

Now suppose that, contrary to our assertion,  ${\lambda_{\mathbb{N}} [l_{\infty}^{*}, M_{n}^{\bot}] = \nu < 1}$. Then, for any ${\eta}$ from the open interval ${(\nu, 1)}$ there exists a subsequence of functionals ${(f_{n})_{\mathbb{N}^{''}} \triangleleft (M_{n}^{\bot})_{\mathbb{N}^{''}}}$ and a functional $g$ from ${l_{\infty}}$ such that for all ${n \in \mathbb{N}^{''}}$
\[
\left\| e_{n} - f_{n} - g \right\| = \eta  < 1.
\]
Consider how each functional ${h_{n} = e_{n} - f_{n} - g}$ acts on the previously chosen unit vectors ${x_{n}}$ for all ${n \in \mathbb{N}^{''}}$:
\[
h_{n}x_{n}\ =\ (e_{n} - f_{n} - g)x_{n}\ =\ e_{n}x_{n} - 0 - gx_{n}\ =\ 1 - gx_{n}\ \leq\ \eta\ <\ 1.
\]
Therefore, the following inequality holds for all ${n \in \mathbb{N}^{''}}$:
\[
1 - \eta\ \leq\ gx_{n}.
\]
Now choose a natural number ${K > \left\|g\right\|/(1 - \eta)}$ and add together the first $K$ of the above inequalities:
\[
K \times (1 - \eta)\ \leq\ \sum_{i = 1}^{K} (gx_{n_{i}})\ =\ g(\sum_{i = 1}^{K} x_{n_{i}})\ \leq\  \left\|g\right\| \times (\sum_{i = 1}^{K} x_{n_{i}}).
\]
Recall that vector ${\sum_{i = 1}^{K} x_{n_{i}}}$ is from ${l_{\infty}}$ and that all its components ${x_{n_{i}}}$ are unit vectors from ${l_{\infty}}$ each having a single non-null coordinate at position $n_{i}$, therefore all coordinates up to $K$ of ${\sum_{i = 1}^{K} x_{n_{i}}}$ are either $0$ or $1$, so the norm of ${\sum_{i = 1}^{K} x_{n_{i}}}$ is 1 and the previous inequality transforms to: 
\[
K \times (1 - \eta)\ \leq\  \left\|g\right\| \times (\sum_{i = 1}^{K} x_{n_{i}})\ =\ \left\|g\right\|,
\]
\[
K \leq \frac{\left\|g\right\|}{1 - \eta}.
\]
However, the last inequality contradicts our choice of ${K > \left\|g\right\|/(1 - \eta)}$. Therefore, we shall conclude that 
\[
\lambda_{\mathbb{N}} [l_{\infty}^{*}, M_{n}^{\bot}] = 1.
\]
\end{proof}
Recall that an operator ${A \in \mathcal{C}(X, Y)}$ with a closed range ${R(A) \subset Y}$ is called \emph{semi--Fredholm} if one of its \emph{defect numbers} ${\alpha(A) = \dim Ker(A)}$\linebreak[4] or  ${\beta(A) = \dim Y/R(A)}$ is finite; it is called ${Fredholm}$ if both defect numbers ${\alpha(A)}$ and ${\beta(A)}$ are finite. According to the known results from \cite{krein_krasnoselskii_milman,gohberg_markus,gohberg_krein,gohberg_markus_feldman,neubauer1,neubauer2,kato,goldenstein_gohberg_markus,goldenstein_markus,sedaev,nussbaum,sadovskii}, semi--Fredholm and Fredholm operators remain such, with the stable defect numbers, under perturbations by small gap, norm or $q-$norm. However, our next example shows that Fredholm and semi--Fredholm operators are not stable under upper uniform $0-$adjustment.

\begin{example}[Upper Uniformly $0-$Adjustment Breaks Fredholm Properties]\label{Ex:4}
Let ${I: l_{\infty} \rightarrow l_{\infty}}$ is an identity operator; obviously $I$ is Fredholm with ${\alpha(I)}$ $=$ ${\beta(A) = 0}$. There exist a sequence of closed operators${(A_{n})_{\mathbb{N}} \subset \mathcal{C}(X,Y)}$ such that ${\lambda_{\mathbb{N}} [I, A_{n}] = 0}$, yet none of ${A_{n}}$ is either Fredholm or semi--Fredholm.
\end{example}
\begin{proof}
Consider a sequence of subspaces ${(P_{n})_{\mathbb{N}}}$ from ${l_{\infty}}$ built in the previous Example \ref{Ex:2} -- we have proved there that ${\lambda_{\mathbb{N}} [l_{\infty}, P_{n}] = 0}$. Therefore, if ${(A_{n})_{\mathbb{N}}}$ is a sequence of closed operators ${A_{n}: l_{\infty} \rightarrow l_{\infty}}$ each being constant on ${P_{n}}$, then it is easy to show that  ${\lambda_{\mathbb{N}} [I, A_{n}] = 0}$. But we have already seen that, depending on the choice of partitioning ${\mathbb{N} = \bigcup B_{n}}$ it is possible to have values of ${\dim l_{\infty}/P_{n}}$ being any arbitrary large natural numbers or even infinity. Therefore, one can easily construct ${A_{n}}$ to have any defect numbers of any finite or infinite values, and/or having images to be closed or not closed (when ${\dim l_{\infty}/P_{n} = \infty}$), thus making each or some of ${A_{n}}$ not a Fredholm, not an upper semi--Fredholm or not a lower semi--Fredholm operator.
\end{proof}

In the final example of this subsection we show that uniform adjustment of a sequence of subspaces does not mean that individual subspaces from that sequence are adjusted. This situation differs from perturbations by small gap, norm or $q-$norm where convergence of sequences of subspaces or operators means that individual members of the sequence are also becoming close to the target. 

\begin{example}[Uniform Adjustment and Adjustment Differ]\label{Ex:5}
There exists a sequence of closed subspaces ${(P_{n})_{\mathbb{N}}}$ from ${l_{\infty}}$ such that ${\lambda_{\mathbb{N}}[l_{\infty}, P_{n}] = 0}$, yet for all ${n \in \mathbb{N}}$ ${\lambda[l_{\infty}, P_{n}] \geq 1/2}$.
\end{example}
\begin{proof}
Consider the sequence ${(P_{n})_{\mathbb{N}}}$ from ${l_{\infty}}$ built in Example \ref{Ex:2}. As we have noted before, if ${\mathbb{N}}$ is partitioned into infinite subsets ${B_{n}}$, then ${\dim l_{\infty}/P_{n} = \infty}$. Therefore, applying proposition 4 from the Small Uniform Adjustment Theorem, we get ${\lambda[l_{\infty}, P_{n}] \geq 1/2}$ for any ${n \in \mathbb{N}}$. However, it has been proved in Example \ref{Ex:2} that ${\lambda_{\mathbb{N}}[l_{\infty}, P_{n}] = 0}$. 
\end{proof}

\subsection[Lower Semi--Fredholm Stability of Subspaces]{Lower Semi--Fredholm Stability of Subspaces}\label{S:sp} 

Our previous Example \ref{Ex:4} had shown that there is no stability of any of the Fredholm properties under upper uniform $\lambda-$adjustment even if ${\lambda = 0}$. However, our next theorem shows that lower semi--Fredholm pair of closed subspaces is stable under lower $\lambda-$adjustment for small $\lambda$.

\ 

In order to formulate that theorem we shall consider some preliminary concepts. Let $M$, $N$ be two closed subspaces in a Banach space $X$ such that\linebreak[3] ${M + N = \overline{M + N}}$, ${M+N = (M \cap N) \oplus S}$. Consider a natural mapping
\[
\Phi\ :\ \Pi\ =\ M \cap S\ \times\ M \cap N\ \times\ N \cap S\ \rightarrow\ M + N\\
\]
\[
\Phi\ :\ (u, t, v)\ \rightarrow\ u + t + v.
\]
Define a complete norm on ${\Pi}$
\[
\left\|(u, t, v)\right\| = \max \{\left\|u\right\|,\left\|t\right\|,\left\|v\right\|\}.
\]
Then, since
\[
\overline{M + N}\ =\ M + N\ =\ (M \cap N) \oplus S\ =\ M \cap S\ \oplus\ M \cap N\ \oplus\ N \cap S,
\]
it is clear that $\Phi$ is a continuous bijection from the Banach space ${\Pi}$ onto the Banach space ${M + N}$. Therefore, according to the open mapping theorem operator ${\Phi^{-1}}$ is a continuous operator. Let us denote ${\varphi_{S}(M, N)\ :=\ \left\|\Phi^{-1}\right\|}$.

\ 

Also recall that a pair of closed subspaces $(M,N)$ from a Banach space $X$ is called \emph{lower semi--Fredholm} if its \emph{lower defect number} ${\alpha(M,N) := \dim(M \cap N)}$ is finite; it is called \emph{upper semi--Fredholm} if its \emph{upper defect number}\linebreak[4] ${\beta(M,N) := \dim X/(M+N)}$ is finite; it is called a \emph{Fredholm pair} if both defect numbers ${\alpha(M,N)}$ and ${\beta(M,N)}$ are finite, an \emph{index} of a (semi--)Fredholm pair is defined as
\[
ind(M,N)\ :=\ \alpha(M,N) - \beta(M,N). 
\]
Note that for a lower semi--Fredholm pair there always exist many closed subspaces $S$ such that ${M+N = (M \cap N) \oplus S}$ if ${\alpha(M,N) = \dim(M \cap N) < \infty}$. Therefore, denote
\[
\varphi(M, N)\ :=\ \inf \{ \varphi_{S}(M, N)\ \mid\ M+N = (M \cap N) \oplus S\}.
\]
We are now ready to formulate our first stability theorem: 
\begin{theorem}[Lower Semi--Fredholm Pairs are Stable]\label{lsfps}
Let $M$, $N$ be two closed subspaces in a Banach space $X$ such that ${M + N = \overline{M + N}}$. Let ${(M_{n})_{\mathbb{N}^{'}}}$, ${(N_{n})_{\mathbb{N}^{'}}}$ are two sequences of closed subspaces from $X$, and set
\[
\lambda_{M} = \lambda_{\mathbb{N}^{'}}[M_{n}, M],\ \ \ \lambda_{N} = \lambda_{\mathbb{N}^{'}}[N_{n}, N];
\]
then the following propositions are true
\begin{enumerate}
  \item Suppose that pair ${(M,N)}$ is lower semi--Fredholm and define a real number
\[
\omega\ =\ 2 \times (\min(\lambda_{M}, \lambda_{N}) + \varphi(M, N) \times (\lambda_{M} + \lambda_{N})).
\]  
If ${\omega < 1/2}$, then for large enough ${n \in \mathbb{N}^{'}}$ pairs ${(M_{n},N_{n})}$ are also lower semi--Fredholm and
\[
\varlimsup_{n \in\mathbb{N}^{'}} \alpha (M_{n}, N_{n})\ <\ \infty.
\]
  \item Suppose that ${M+N = (M \cap N) \oplus S}$ and define a real number
\[
\omega_{S}(M) = 2 \times (\lambda_{M} + \varphi_{S}(M, N) \times (\lambda_{M} + \lambda_{N})).
\]  
If ${(H_{n})_{\mathbb{N^{''}}}}$ is a sequence of closed subspaces, ${H_{n} \subset M_{n}}$ for all ${n \in \mathbb{N}^{''}}$ and ${\delta(H_{n}, N_{n}) \rightarrow 0}$, then
\[
\lambda_{\mathbb{N}^{''}}[H_{n}, M \cap N]\ \leq\ \omega_{S}(M).
\]
\end{enumerate}
\end{theorem}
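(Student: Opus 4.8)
The two propositions are linked: Proposition 2 carries the analytic content, while Proposition 1 follows formally from it together with the Small Uniform Adjustment Theorem; I would write the proof in that order but describe the deduction of Proposition 1 first as motivation. For Proposition 1, note that the hypotheses and the conclusion are symmetric under interchanging $(M,(M_n)_{\mathbb{N}^{'}})$ with $(N,(N_n)_{\mathbb{N}^{'}})$ (since $M\cap N=N\cap M$, $\alpha(M_n,N_n)=\alpha(N_n,M_n)$ and $\varphi_S(M,N)=\varphi_S(N,M)$), so I may assume $\min(\lambda_M,\lambda_N)=\lambda_M$. Given $\omega<1/2$ and $\varphi(M,N)=\inf_S\varphi_S(M,N)$, I fix a closed complement $S$ of $M\cap N$ in $M+N$ (one exists because $\alpha(M,N)=\dim(M\cap N)<\infty$) with $\varphi_S(M,N)$ so close to $\varphi(M,N)$ that $\omega_S(M)=2(\lambda_M+\varphi_S(M,N)(\lambda_M+\lambda_N))<1/2$ still. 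Now I apply Proposition 2 with $H_n:=M_n\cap N_n$: here $H_n\subset M_n$ and $H_n\subset N_n$, so $\delta(H_n,N_n)=0$ for every $n$, and Proposition 2 yields $\lambda_{\mathbb{N}^{'}}[M_n\cap N_n,\ M\cap N]\le\omega_S(M)<1/2$ (indices with $M_n\cap N_n=\{\theta\}$ carry $\alpha=0$ and are set aside). Feeding this into the first proposition of the Small Uniform Adjustment Theorem (Theorem \ref{T:suat}) with the constant sequence $P_n=M\cap N$ produces $K\in\mathbb{N}^{'}$ and $L\in\mathbb{N}$ with $\dim(M_m\cap N_m)\le L+\dim(M\cap N)=L+\alpha(M,N)<\infty$ for all $m>K$, which is exactly lower semi--Fredholmness of $(M_m,N_m)$ for large $m$ together with $\varlimsup_n\alpha(M_n,N_n)<\infty$.

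For Proposition 2, fix $\eta>0$, a unit subsequence $(x_n)_{\mathbb{N}^{''}}\triangleleft(H_n)_{\mathbb{N}^{''}}$, and a small $\epsilon>0$ to be tuned to $\eta$ at the end. Since $x_n\in H_n\subset M_n$, lower $\lambda_M$-adjustment of $(M_n)$ with $M$ supplies, on a subsequence, vectors $m_n\in M$ and $z_1\in X$ with $\varlimsup_n\|x_n-m_n-z_1\|\le\lambda_M+\epsilon$. Because $\delta(H_n,N_n)\to0$ there are $v_n\in N_n$ with $\|x_n-v_n\|\to0$, whence $\|v_n\|\to1$, so the normalized vectors $v_n/\|v_n\|$ form a unit sequence in $N_n$ whose distance to $x_n$ tends to $0$; lower $\lambda_N$-adjustment of $(N_n)$ with $N$ then supplies, on a further subsequence, vectors $p_n\in N$ and $z_2\in X$ with $\varlimsup_n\|x_n-p_n-z_2\|\le\lambda_N+\epsilon$. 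Subtracting the two estimates gives
\[
\varlimsup_n\|m_n-p_n-(z_2-z_1)\|\ \le\ \lambda_M+\lambda_N+2\epsilon\ =:\ \rho .
\]

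The crux is that $m_n-p_n$ always lies in the closed subspace $M+N$, whereas the translate $z_2-z_1$ need not. To cure this I choose one large index $n_0$ with $\|m_{n_0}-p_{n_0}-(z_2-z_1)\|<\rho+\epsilon$ and replace $z_2-z_1$ by $r:=m_{n_0}-p_{n_0}\in M+N$, at the price of doubling the bound: $\varlimsup_n\|m_n-p_n-r\|\le2\rho+\epsilon$ — this doubling is precisely what produces the factor $2$ in $\omega_S(M)$. Using the topological direct sum $M+N=(M\cap S)\oplus(M\cap N)\oplus(N\cap S)$ and the bounded inverse $\Phi^{-1}$ (with $\|\Phi^{-1}\|=\varphi_S(M,N)$), I write $m_n=t_n+u_n$ with $t_n\in M\cap N$, $u_n\in M\cap S$, split $p_n$ and $r$ in the same way, and let $\mu\in M\cap S$ be the $(M\cap S)$-component of $r$; then the $(M\cap S)$-component of $m_n-p_n-r$ equals $u_n-\mu$, so $\|u_n-\mu\|\le\varphi_S(M,N)\|m_n-p_n-r\|$ and hence $\varlimsup_n\|u_n-\mu\|\le\varphi_S(M,N)(2\rho+\epsilon)$. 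Finally I set $w_n:=t_n\in M\cap N$ and $z:=z_1+\mu\in X$; since $x_n-w_n-z=(x_n-m_n-z_1)+(u_n-\mu)$,
\[
\varlimsup_n\|x_n-w_n-z\|\ \le\ (\lambda_M+\epsilon)+\varphi_S(M,N)(2\rho+\epsilon),
\]
whose right side is $\le\lambda_M+2\varphi_S(M,N)(\lambda_M+\lambda_N)+C\epsilon\le\omega_S(M)+\eta$ once $\epsilon$ is chosen small (with $C$ depending only on $\varphi_S(M,N)$, and using $\lambda_M\le 2\lambda_M$). As this holds for every $\eta>0$ and every admissible unit subsequence, $(H_n)_{\mathbb{N}^{''}}$ is lower $\omega_S(M)$-adjusted with $M\cap N$, i.e. $\lambda_{\mathbb{N}^{''}}[H_n,\ M\cap N]\le\omega_S(M)$.

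The hard part, which also pins down the constants, is exactly this handling of the floating translation vector: $\lambda$-adjustment controls $m_n-p_n$ only up to a vector of $X$ that may be far from the closed subspace $M+N$, and the only way to push it back inside $M+N$ — which is what makes $\Phi^{-1}$ applicable, so as to separate the $M\cap N$ part of $m_n$ from its $M\cap S$ part — costs a second copy of $\rho$. Everything else (verifying $\|v_n\|\to1$, the two subsequence extractions, and the triangle-inequality bookkeeping in the displays) is routine.
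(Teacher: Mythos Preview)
Your argument for Proposition~2 is correct and follows the paper's strategy almost verbatim: obtain $M$- and $N$-approximants of $x_n$ via $\lambda$-adjustment, subtract, anchor at a fixed index $n_0$ to replace the floating translate by a vector of $M+N$, and then project onto $M\cap S$ via $\Phi^{-1}$. Your variant is in fact slightly sharper than the paper's: you keep the original translate $z_1$ for the estimate $\|x_n-m_n-z_1\|\le\lambda_M+\epsilon$ and only anchor the \emph{difference} $m_n-p_n$, which yields the bound $\lambda_M+2\varphi_S(\lambda_M+\lambda_N)$ before invoking $\lambda_M\le 2\lambda_M$; the paper anchors $x_n-u_n$ itself at the fixed index (taking $z=x_m-b_m$), which produces $2\lambda_M+2\varphi_S(\lambda_M+\lambda_N)=\omega_S(M)$ directly.

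There is, however, a genuine omission in your Proposition~1. You deduce $\varlimsup\alpha(M_n,N_n)<\infty$ from Proposition~2 applied to $H_n=M_n\cap N_n$, but you never verify that $M_n+N_n$ is closed for large $n$. The paper devotes a separate paragraph to this: if closedness failed along a subsequence, then by Kato's Theorem~IV.4.19 each such $M_n$ would contain an \emph{infinite}-dimensional closed subspace $H_n$ with $\delta(H_n,N_n)\to 0$; Proposition~2 then gives $\lambda_{\mathbb{N}''}[H_n,M\cap N]\le\omega_S(M)<1/2$, and the Small Uniform Adjustment Theorem forces $\dim H_n<\infty$ eventually, a contradiction. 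This second use of Proposition~2 (with infinite-dimensional $H_n$) is exactly why the hypothesis of Proposition~2 is phrased as $\delta(H_n,N_n)\to 0$ rather than simply $H_n\subset N_n$. Without this step your conclusion that the pairs $(M_n,N_n)$ are lower semi--Fredholm is incomplete, since closedness of the sum is part of what is being asserted (it is an explicit hypothesis on $(M,N)$ in the theorem and is established for $(M_n,N_n)$ in the paper's proof).
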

\begin{proof}
Proposition 2 will be the key to the proof of the proposition 1, so we start with 2 first. Consider ${\eta > 0}$ and a unit sequence ${(x_{n})_{\mathbb{N}^{''}} \triangleleft (H_{n})_{\mathbb{N}^{''}}}$. According to the definition of $\lambda-$adjustment, our goal is to find a sequence ${(y_{n})_{\mathbb{N}^{'''}} \subset M \cap N}$ and a vector ${z \in X}$ such that
\begin{equation}\label{E:50}
\varlimsup_{\mathbb{N}^{'''}} \left\|x_{n} - y_{n} - z\right\|\ \leq\ \omega_{S}(M) + \eta.
\end{equation}
In order to do that first choose a number $\epsilon$ from an open interval
\begin{equation}\label{E:57}
\epsilon \in (0,\ \frac{\eta}{4 \times (1 + 2 \times \varphi_{S})}).
\end{equation}
Then, since ${\delta(H_{n}, N_{n}) \rightarrow 0}$ we can find ${(t_{n})_{\mathbb{N}^{''}} \triangleleft (N_{n})_{\mathbb{N}^{''}}}$ such that
\begin{equation}\label{E:51}
\lim_{\mathbb{N}^{''}} \left\|x_{n} - t_{n}\right\| \rightarrow 0.
\end{equation}
According to definition of $\lambda-$adjustment, there exist subsequences ${(u_{n})_{\mathbb{N}^{'''}} \subset M}$, ${(v_{n})_{\mathbb{N}^{'''}} \subset N}$ and vectors ${u, v \in X}$ such that
\begin{multline}\label{E:52}
\begin{aligned}
& \varlimsup_{\mathbb{N}^{'''}} \left\|x_{n} - u_{n} - u\right\|\ \leq\ \lambda_{M} + \epsilon,\\
& \varlimsup_{\mathbb{N}^{'''}} \left\|t_{n} - v_{n} - v\right\|\ \leq\ \lambda_{N} + \epsilon.
\end{aligned}
\end{multline}
Now choose a number $\gamma$ from an open interval 
\begin{equation}\label{E:56}
\gamma \in (0,\ \frac{\eta}{2 \times (2 + \varphi_{S})}).
\end{equation}
Then, from \eqref{E:52} for large enough ${m \in \mathbb{N}^{'''}}$ we obtain
\[
\left\|x_{m} - u_{m} - u\right\|\ \leq\ \lambda_{M} + \epsilon + \gamma;
\]
therefore, applying the triangle inequality, for the same large enough $m$ and $n$
\begin{multline}\label{E:55}
\begin{aligned}
\varlimsup_{n \in \mathbb{N}^{'''}} \left\|x_{n} - u_{n} - x_{m} + u_{m}\right\| &\ =\  \varlimsup_{n \in \mathbb{N}^{'''}} \left\|(x_{n} - u_{n} - u) - (x_{m} - u_{m} - u)\right\|\\
                                             &\ \leq\ \varlimsup_{n \in \mathbb{N}^{'''}} \left\|x_{n} - u_{n} - u\right\| + \left\|x_{m} - u_{m} - u\right\|\\
                                             &\ \leq\ \lambda_{M} + \epsilon + \gamma\ +\ \lambda_{M} + \epsilon + \gamma\\
                                             &\ =\ 2 \times (\lambda_{M} + \epsilon + \gamma).
\end{aligned}
\end{multline}
At the same time, combining \eqref{E:51} and \eqref{E:52} and again using the triangle inequality observe
\begin{multline}
\notag
\begin{aligned}
\varlimsup_{\mathbb{N}^{'''}} \left\|u_{n} - v_{n} + u - v\right\| &\ =\ \varlimsup_{\mathbb{N}^{'''}} \left\|u_{n} - v_{n} + u - v - (x_{n} - t_{n}) + (x_{n} - t_{n})\right\|\\
                                                                  &\ \leq \  \varlimsup_{\mathbb{N}^{'''}} \left\|u_{n} - v_{n} + u - v - (x_{n} - t_{n})\right\| + \varlimsup_{\mathbb{N}^{'''}} \left\|(x_{n} - t_{n})\right\|\\
                                                                  &\ \leq \  \varlimsup_{\mathbb{N}^{'''}} \left\|(t_{n} - v_{n} - v) - (x_{n} - u_{n} - u)\right\| + 0\\
                                                                  &\ \leq \ \varlimsup_{\mathbb{N}^{'''}} \left\|t_{n} - v_{n} - v\right\| + \varlimsup_{\mathbb{N}^{'''}} \left\|x_{n} - u_{n} - u\right\|\\
                                                                  &\ \leq \ \lambda_{M} + \epsilon\ +\ \lambda_{N} + \epsilon\ =\ \lambda_{M} + \lambda_{N} + 2 \times \epsilon.
\end{aligned}
\end{multline}
Thus, for large enough ${m \in \mathbb{N}^{'''}}$
\[
\left\|u_{m} - v_{m} + u - v\right\|\ \leq\ \lambda_{M} + \lambda_{N} + 2 \times \epsilon + \gamma;
\]
therefore, applying the triangle inequality, for the same $m$
\begin{multline}\label{E:53}
\begin{aligned}
\varlimsup_{n \in \mathbb{N}^{'''}} \left\|u_{n} - v_{n} - u_{m} + v_{m}\right\| &\ =\  \varlimsup_{n \in \mathbb{N}^{'''}} \left\|(u_{n} - v_{n} + u - v) - (u_{m} - v_{m} + u - v)\right\|\\
                                             &\ \leq\ \varlimsup_{n \in \mathbb{N}^{'''}} \left\|u_{n} - v_{n} + u - v\right\| + \left\|u_{m} - v_{m} + u - v\right\|\\
                                             &\ \leq\ (\lambda_{M} + \lambda_{N} + 2 \times \epsilon)\ +\ (\lambda_{M} + \lambda_{N} + 2 \times \epsilon + \gamma)\\
                                             &\ =\ 2 \times (\lambda_{M} + \lambda_{N} + 2 \times \epsilon) + \gamma.
\end{aligned}
\end{multline}
Since ${u_{n} \in M}$  and ${v_{n} \in N}$ for ${n \in \mathbb{N}^{'''}}$, we can decompose vectors ${u_{n}}$ and ${v_{n}}$:
\begin{multline}
\notag
\begin{aligned}
& u_{n} = a_{n} + b_{n}:\ \ a_{n} \in M \cap S,\ b_{n} \in M \cap N,\\
& v_{n} = d_{n} + e_{n}:\ \ d_{n} \in N \cap S,\ e_{n} \in M \cap N.
\end{aligned}
\end{multline}
Therefore, we can represent
\begin{multline}
\notag
\begin{aligned}
u_{n} - v_{n} - u_{m} + v_{m} &\ =\ a_{n} + b_{n} - d_{n} - e_{n} - a_{m} - b_{m} + d_{m} + e_{m}\\
                              &\ =\ (a_{n} - a_{m})\ +\ (b_{n} - e_{n} - b_{m} + e_{m})\ -\ (d_{n} - d_{m}).
\end{aligned}
\end{multline}
Note that ${a_{n} - a_{m} \in M \cap S}$,\ ${b_{n} - e_{n} - b_{m} + e_{m} \in M \cap N}$\ and\linebreak[4] ${d_{n} - d_{m} \in N \cap S}$. Therefore, taking into account \eqref{E:53}, we obtain for large enough ${m \in \mathbb{N}^{'''}}$:
\begin{multline}\label{E:54}
\begin{aligned}
\varlimsup_{n \in \mathbb{N}^{'''}} \left\|a_{n} - a_{m}\right\| &\ \leq\ \varlimsup_{n \in \mathbb{N}^{'''}} \varphi_{S} \times \left\|u_{n} - v_{n} - u_{m} + v_{m}\right\|\\
                                                                &\ =\ \varphi_{S} \times \varlimsup_{n \in \mathbb{N}^{'''}} \left\|u_{n} - v_{n} - u_{m} + v_{m}\right\|\\
                                                                &\ \leq\ \varphi_{S} \times (2 \times (\lambda_{M} + \lambda_{N} + 2 \times \epsilon) + \gamma).
\end{aligned}
\end{multline}
We are now ready to prove that the sequence ${(y_{n})_{\mathbb{N}^{'''}}}$ and the vector ${z \in X}$ we are looking for can be chosen by setting ${y_{n} = b_{n}}$ for all ${n \in \mathbb{N}^{'''}}$ and ${z = x_{m} - b_{m}}$ for some large enough ${m \in \mathbb{N}^{'''}}$. Indeed, let us estimate using the triangle inequality and taking into account inequalities \eqref{E:55} and \eqref{E:54} together with estimates for $\epsilon$ from \eqref{E:57} and $\gamma$ from \eqref{E:56}:
\begin{multline}
\notag
\begin{aligned}
\varlimsup_{n \in \mathbb{N}^{'''}} \left\|x_{n} - y_{n} -z\right\| &\ =\ \varlimsup_{n \in \mathbb{N}^{'''}} \left\|x_{n} - b_{n} - ( x_{m} - b_{m})\right\|\\
                                                                    &\ =\ \varlimsup_{n \in \mathbb{N}^{'''}} \left\|x_{n} - b_{n} - (x_{m} - b_{m}) - (a_{n} - a_{m}) + (a_{n} - a_{m})\right\|\\
                                                                    &\ \leq\ \varlimsup_{n \in \mathbb{N}^{'''}} \left\|x_{n} - b_{n} - (x_{m} - b_{m}) - (a_{n} - a_{m})\right\| + \varlimsup_{n \in \mathbb{N}^{'''}} \left\|(a_{n} - a_{m})\right\|\\
                                                                    &\ =\ \varlimsup_{n \in \mathbb{N}^{'''}} \left\|x_{n} - a_{n} - b_{n} - (x_{m} - a_{m} - b_{m})\right\| + \varlimsup_{n \in \mathbb{N}^{'''}} \left\|(a_{n} - a_{m})\right\|\\
                                                                    &\ =\ \varlimsup_{n \in \mathbb{N}^{'''}} \left\|x_{n} - u_{n} - (x_{m} - u_{m})\right\| + \varlimsup_{n \in \mathbb{N}^{'''}} \left\|(a_{n} - a_{m})\right\|\\
                                                                    &\ \leq\ 2 \times (\lambda_{M} + \epsilon + \gamma)\ +\ \varphi_{S} \times (2 \times (\lambda_{M} + \lambda_{N} + 2 \times \epsilon) + \gamma)\\
                                                                    &\ =\ 2 \times (\lambda_{M} + \varphi_{S} \times (\lambda_{M} + \lambda_{N}))\\
                                                                    &\ \ \ \ \ \ \ \ \ \ \ \ \ + 2 \times (1 + 2 \times \varphi_{S}) \times \epsilon\\
                                                                    &\ \ \ \ \ \ \ \ \ \ \ \ \ + (2 + \varphi_{S}) \times \gamma\\
                                                                    &\ <\ 2 \times (\lambda_{M} + \varphi_{S} \times (\lambda_{M} + \lambda_{N})) + \frac{\eta}{2} + \frac{\eta}{2}\\
                                                                    &\ =\ \omega_{S}(M) + \eta. 
\end{aligned}
\end{multline}
This concludes the proof of the proposition 2.\\

In order to prove proposition 1 let us first assume that ${\lambda_{M} \leq \lambda_{N}}$ and that at least one of ${\lambda_{M}}$ or ${\lambda_{N}}$ is not null. Then, since ${\omega < 1/2}$, a simple arithmetic calculation shows that
\[
\varphi(M, N) < \frac{\frac{1}{4} - \lambda_{M}}{\lambda_{M} +\lambda_{N}}
\]
Therefore, according to our definition of ${\varphi(M, N)}$, and since ${\alpha(M,N) < \infty}$, we can decompose ${M+N = (M \cap N) \oplus S}$ so that ${\varphi_{S}(M,N)}$ satisfies the same inequality
\[
\varphi_{S}(M, N)\ <\ \frac{\frac{1}{4} - \lambda_{M}}{\lambda_{M} +\lambda_{N}}
\]
so that simple reverse arithmetic calculation leads to the inequality
\begin{equation}\label{E:59}
\omega_{S}(M)\ =\ 2 \times (\lambda_{M} + \varphi_{S}(M, N) \times (\lambda_{M} + \lambda_{N}))\ <\ \frac{1}{2}. 
\end{equation}
In case when both ${\lambda_{M}}$ and ${\lambda_{N}}$ are equal to $0$,  choose any decomposition\linebreak[4] ${M+N = (M \cap N) \oplus S}$ -- in this case the calculation yields ${\omega_{S}(M) = 0 < 1/2}$. Thus, we can always choose decomposition ${M+N = (M \cap N) \oplus S}$ so that \eqref{E:59} is true.
 
Now we are ready to establish that ${M_{n} + N_{n} = \overline{M_{n} + N_{n}}}$ for large enough ${n \in \mathbb{N}^{'}}$. Assuming the opposite, we can find a subsequence ${\mathbb{N}^{''}}$ such that for all ${n \in \mathbb{N}^{''}}$
\[
M_{n} + N_{n} \neq \overline{M_{n} + N_{n}}. 
\]
Then, according to the well known fact from (\cite{kato}, theorem 4.19, p. 226), for all ${n \in \mathbb{N}^{''}}$ there would be an infinite-dimensional closed subspace ${H_{n} \subset M_{n}}$ such that ${\delta(H_{n}, N_{n}) < 1/n}$, in other words ${\delta(H_{n}, N_{n}) \rightarrow \infty}$. However, due to \eqref{E:59} we may apply proposition 2 thus establishing that ${\lambda_{\mathbb{N}^{''}}[H_{n}, M \cap N]\ \leq\ \omega_{S}(M) < 1/2}$. Then, according to the Small Uniform Adjustment Theorem applied to the sequence ${(H_{n})_{\mathbb{N}^{''}}}$ and the finite-dimensional space ${M \cap N}$ it follows that ${\dim H_{n} < \infty}$ for large enough ${n \in \mathbb{N}^{''}}$ which contradicts our choice of ${H_{n}}$. Thus we conclude that ${M_{n} + N_{N} = \overline{M_{n} + N_{n}}}$ for large enough ${n \in  \mathbb{N}^{'}}$.

Now consider subspaces ${H_{n} = M_{n} \cap N_{n}}$ for all ${n \in \mathbb{N}^{'}}$. Obviously ${H_{n} \subset M_{n}}$ and ${\delta(H_{n}, N_{n}) = 0}$ for all ${n \in \mathbb{N}^{'}}$. Therefore, we can apply proposition 2 thus establishing ${\lambda_{\mathbb{N}^{''}}[H_{n}, M \cap N]\ \leq\ \omega_{S}(M) < 1/2}$. Then, according to the Small Uniform Adjustment Theorem applied to the sequence ${(H_{n})_{\mathbb{N}^{''}}}$ and the finite-dimensional space ${M \cap N}$ it follows that there exists ${L \in \mathbb{N}}$ such that\linebreak[4] ${\dim H_{n} < L}$ for large enough ${n \in \mathbb{N}^{'}}$. Therefore
\[
\varlimsup_{n \in \mathbb{N}^{'}} \alpha(M_{n},N_{n})\ =\ \varlimsup_{n \in \mathbb{N}^{'}} \dim M_{n} \cap N_{n}\ =\ \varlimsup_{n \in \mathbb{N}^{'}} \dim H_{n}\ <\ L\ <\ \infty.
\]
Thus we have established that for large enough ${n \in \mathbb{N}}$ pairs ${(M_{n},N_{n})}$ are lower semi--Fredholm. This concludes our proof of the proposition 1 in case when ${\lambda_{M} \leq \lambda_{N}}$. In case when ${\lambda_{N} < \lambda_{M}}$ the same proof remains valid by swapping $M$ and $N$.
\end{proof}

\subsection[Lower Stability of Index of Continuous Operators]{Lower Stability of Index of Continuous Operators}\label{soioclo}
It is easy to see that index of Fredholm pair of subspaces is not stable under lower uniform  $\lambda-$adjustment -- any two pairs of subspaces in a finite-dimensional space are uniformly $0-$adjusted yet their indices may obviously differ. However, for linear operators their index is stable under perturbation by lower uniformly $0-$adjusted continuous operators. In order to prove that we first consider the following theorem that establishes the structure of the continuous operators lower uniformly $\lambda-$adjusted with the null operator. 

\begin{theorem}[The Structure of Lower Uniform $\lambda-$Adjustment]\label{solua}
Let $X$ and $Y$ be two Banach spaces, ${(A_{n})_{\mathbb{N}^{'}} \subset \mathcal{C}(X,Y)}$. Then the following propositions are true:
\begin{enumerate}
  \item If ${\lambda_{\mathbb{N}^{'}}[A_{n}, \theta] < 1/2}$, then for each ${\epsilon \in (0,\ 1/2 - \lambda_{\mathbb{N}^{'}}[A_{n}, \theta])}$ there exists a closed subspace ${Y_{\epsilon} \subset Y}$ such that ${\dim Y/Y_{\epsilon} < \infty}$ and for large enough $n$
${A_{n} \in \mathcal{BC}(X, Y)}$ with the norm estimate
\[
\left\|A_{n} \mid_{A_{n}^{-1}(Y_{\epsilon})}\right\|\ <\ 2 \times \lambda_{\mathbb{N}^{'}}[A_{n}, \theta] + \epsilon.
\]
  \item If ${\lambda_{\mathbb{N}^{'}}[A_{n}, \theta] = 0}$ and sequence ${(x_{n}, A_{n}x_{n})_{\mathbb{N}^{''}} \subset X \times Y}$ is bounded, then sequence ${(A_{n}x_{n})_{\mathbb{N}^{''}} \subset Y}$ is relatively compact.
  \item If ${\lambda_{\mathbb{N}^{'}}[A_{n}, \theta] = 0}$, then for large enough ${n \in \mathbb{N}^{'}}$ each ${A_{n}}$ can be decomposed into ${A_{n} = C_{n} + F_{n}}$ where both ${C_{n}, F_{n} \in \mathcal{B}(dom(A_{n}), Y)}$, the sequence ${(C_{n})_{\mathbb{N}^{'}}}$ ${K_{2}-}$approximates $\theta$, and ${\varlimsup_{\mathbb{N}^{'}} \dim F_{n}(X) < \infty}$.
  \item If ${\lambda_{\mathbb{N}^{'}}[A_{n}, \theta] = 0}$ and ${(A_{n})_{\mathbb{N}^{'}} \subset \mathcal{C}(X,Y)}$, then ${\left\|A_{n}\right\|_{q} \rightarrow 0}$.
\end{enumerate}
\end{theorem}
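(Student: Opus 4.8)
The four parts will be proved in turn, each leaning on the earlier ones, and the constant tool is the observation that $\lambda$--adjustment of a sequence of graphs with the null operator is really a clustering statement about the image vectors. Since $X\times\{\theta\}$ is the graph of $\theta$ and the first coordinate in the defining inequality for $\lambda_{\mathbb{N}'}[A_n,\theta]$ can always be annihilated by taking the approximating terms to be $(x_n,\theta)$ and $z$ of the form $(\theta,z_2)$, the number $\lambda_{\mathbb{N}'}[A_n,\theta]$ equals the infimum of those $\lambda$ for which every unit subsequence $(x_n,A_nx_n)_{\mathbb{N}''}\triangleleft(G_{A_n})_{\mathbb{N}''}$ admits, for each $\eta>0$, a further subsequence $\mathbb{N}'''$ and a vector $z_2\in Y$ with $\varlimsup_{n\in\mathbb{N}'''}\|A_nx_n-z_2\|\le\lambda+\eta$. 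For part 1 I would argue by contradiction, transposing the Hahn--Banach chain construction from the proof of part 5 of Theorem \ref{T:suat} to the space $Y$. Fix $\epsilon\in(0,\,1/2-\lambda_{\mathbb{N}'}[A_n,\theta])$ and assume that no closed finite--codimensional $W\subset Y$ and no $K$ make $A_n\in\mathcal{BC}(X,Y)$ with $\|A_n|_{A_n^{-1}(W)}\|<2\lambda_{\mathbb{N}'}[A_n,\theta]+\epsilon$ for all $n>K$. Using the negated hypothesis repeatedly, one builds a decreasing chain $Y\supset W_1\supset W_2\supset\cdots$ of closed finite--codimensional subspaces, with $W_k=W_{k-1}\cap\ker f_k$ for a norming functional $f_k$ of the image chosen at the previous stage, indices $n_1<n_2<\cdots$, and unit graph elements $(x_{n_k},A_{n_k}x_{n_k})$ whose images $v_k:=A_{n_k}x_{n_k}$ lie in $W_{k-1}$ and satisfy $\|v_k\|\ge 2\lambda_{\mathbb{N}'}[A_n,\theta]+\epsilon-2^{-k}$ (one normalizes $(x_{n_k},A_{n_k}x_{n_k})$ by $\max\{1,\|A_{n_k}x_{n_k}\|\}$). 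The chain gives $\|v_i-v_j\|\ge|f_i(v_i)|=\|v_i\|$ for $i<j$, whereas applying the reduced form of $\lambda$--adjustment to $(x_{n_k},A_{n_k}x_{n_k})$ with $\eta<\epsilon/4$ yields a $z_2$ with $\|v_i-v_j\|\le2(\lambda_{\mathbb{N}'}[A_n,\theta]+\eta)$ for large $i,j$, contradicting the lower bound since $2\lambda_{\mathbb{N}'}[A_n,\theta]<1$. The same bookkeeping, exactly as in Theorem \ref{T:suat}(5), also produces for large $n$ a complement of $A_n^{-1}(Y_\epsilon)$ in $dom(A_n)$ carrying projections whose norms are bounded uniformly in $n$; I would record this for use in part 3.

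For part 2 I would run a normalization and diagonal argument on top of the case $\lambda=0$. Let $(x_n,A_nx_n)_{\mathbb{N}''}\triangleleft(G_{A_n})_{\mathbb{N}''}$ be bounded; to see that $(A_nx_n)_{\mathbb{N}''}$ is relatively compact it suffices to show every subsequence has a convergent further subsequence. Along a subsequence on which the graph norms $\alpha_n=\|(x_n,A_nx_n)\|$ tend to $0$ we get $A_nx_n\to\theta$; otherwise $\alpha_n$ stays in some $[c,R]$ with $c>0$, so $(\alpha_n^{-1}x_n,A_n(\alpha_n^{-1}x_n))$ is a unit graph sequence, and $0$--adjustment gives, for each $\eta>0$, a further subsequence and $z_2\in Y$ with $\varlimsup\|\alpha_n^{-1}A_nx_n-z_2\|\le\eta$; combining this with the convergence of a subsequence of $(\alpha_n)$ inside $[c,R]$ produces $\varlimsup_{n,m}\|A_nx_n-A_mx_m\|\le2\eta R$, and diagonalizing over $\eta=1/k$ extracts a Cauchy, hence convergent, subsequence.

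For part 3 I would invoke part 1 with $\lambda=0$ to fix a closed finite--codimensional $Y_0\subset Y$ with $\|A_n|_{A_n^{-1}(Y_0)}\|<1$ for large $n$, together with the uniformly bounded projectors $p_n:dom(A_n)\to A_n^{-1}(Y_0)$ just mentioned, and set $C_n:=A_np_n$ and $F_n:=A_n(I-p_n)$. Then $\dim F_n(X)\le\dim Y/Y_0<\infty$, both $C_n,F_n$ are bounded, $\|C_n\|\le\sup_n\|p_n\|<\infty$, and $(C_n)$ $K_{2}-$approximates $\theta$: for bounded $(x_n)\triangleleft(dom(C_n))$ the pairs $(p_nx_n,A_np_nx_n)$ remain bounded in the graph, so part 2 makes $(C_nx_n)$ relatively compact. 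Part 4 is then immediate from subadditivity of the Hausdorff measure of non-compactness: with $S$ the unit ball one has $A_n(S)\subset C_n(S)+F_n(S)$, the set $F_n(S)$ is a bounded subset of a finite--dimensional space so $q(F_n(S))=0$, and $q(C_n(S))\to0$ because $(C_n)$ $K_{2}-$approximates $\theta$, which by the characterization recalled in the introduction (cf.\ \cite{vladimirski}) is equivalent to $q(\bigcup_{i\ge n}C_i(S))\to0$; hence $\|A_n\|_q=q(A_n(S))\le q(C_n(S))+q(F_n(S))\to0$.

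The main obstacle is the uniform control of the auxiliary projectors $p_n$ on which parts 3 and 4 rest: without $\sup_n\|p_n\|<\infty$ the operators $C_n$ need not be bounded and the $K_{2}$ step breaks down. I expect it to come from precisely the mechanism already present in part 5 of Theorem \ref{T:suat} --- the norming functionals assembled along the Hahn--Banach chain are what keep the pertinent gaps, hence the projector norms, bounded --- so the real labour in part 1 is transcribing that bookkeeping from a single subspace of $X$ to the sequence of graphs in $X\times Y$ and to the quotient factor $Y$, and checking that the threshold $1/2$ (rather than $1$, as in the small gap theorem) is exactly what the argument can afford. A second, smaller point to watch in part 1 is the possibility that some $A_n$ fails to be continuous: such an $A_n$ must then have its unboundedness confined to the finite--dimensional quotient $Y/Y_\epsilon$, and one checks that this case feeds into the very same contradiction.
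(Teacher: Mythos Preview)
Your approach is essentially the same as the paper's: the contradiction via a Hahn--Banach chain in $Y$ for part~1, the diagonal Cauchy extraction for part~2, the decomposition $A_n=A_np_n+A_n(I-p_n)$ for part~3, and the $q$--norm subadditivity for part~4 all match the paper's proof.

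The one place where you diverge is the point you flag as ``the main obstacle'': the uniform bound on the projector norms $\|p_n\|$. You propose to extract this from the gap bookkeeping of Theorem~\ref{T:suat}(5), which can be made to work but is unnecessary. The paper obtains it for free: once part~1 produces a single finite--codimensional $Y_1\subset Y$ (say $\dim Y/Y_1=K$) with $\|A_n|_{A_n^{-1}(Y_1)}\|<1$ for large $n$, one has $\dim\bigl(dom(A_n)/A_n^{-1}(Y_1)\bigr)\le K$ for those $n$, and any closed subspace of codimension $\le K$ admits a bounded projection of norm $\le K+1$ by the standard Auerbach/Hahn--Banach construction. So the uniform bound is $K+1$, independent of $n$, with no further analysis needed. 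With this simplification your parts~3 and~4 go through exactly as you wrote.
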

\begin{proof}
Suppose that we have established the norm estimate from proposition 1, as well as the finite dimension estimate for large enough $n$. Then ${A_{n}}$ is a finite-dimensional extension of a continuous operator defined on a closed subspace of ${dom(A_{n})}$ and thus is a continuous operator itself, i.e. ${A_{n} \in \mathcal{BC}(X, Y)}$. Now let us prove the norm and dimension estimates. 

Suppose to the contrary that the norm estimate is not true for large enough $n$. Then choose any ${\epsilon \in (0,\ 1/2 - \lambda_{\mathbb{N}^{'}}[A_{n}, \theta])}$ and construct inductively a unit sequence of vectors ${(y_{n})_{\mathbb{N}^{''}} \subset Y}$ and a sequence ${(Y_{n})_{\mathbb{N}^{''}}}$ of closed subspaces of $Y$:
\begin{itemize}
  \item According to our assumption there exists a unit vector ${y \in Y}$ such that for some ${n_{1} \in \mathbb{N}}$
\[
\left\|A_{n_{1}}^{-1}y\right\|\ \leq\ 2 \times \lambda_{\mathbb{N}^{'}}[A_{n}, \theta] + \epsilon;
\]
Using Hahn-Banach Theorem find a unit functional ${f \in Y^{*}}$ such that ${fy = 1}$. Now put ${y_{n_{1}} = y}$, ${Y_{n_{1}} = Ker(f)}$; obviously ${dist(y_{n_{1}}, Y_{n_{1}}) = 1}$.
  \item Suppose that for some ${k > 1}$ we have built unit vectors ${y_{n_{1}},...,y_{n_{k}}}$ and closed subspaces ${Y_{n_{1}} \supset ... \supset Y_{n_{k}}}$ such that
\[
y_{n_{i+1}} \in Y_{n_{i}},\ \ {Y_{n_{i}} = sp(\{y_{n_{i+1}}\}) \oplus Y_{n_{i+1}}}
\]
  and ${dist(y_{n_{i}}, Y_{n_{i}}) = 1}$ for all ${i=1,...,k}$. Obviously ${\dim Y/Y_{n_{k}} = k < \infty}$; therefore, according to our assumption there exists a unit vector ${y \in Y_{n_{k}}}$ and a number ${m > n_{k}}$ such that
\[
\left\|A_{m}^{-1}y\right\|\ \leq\ 2 \times \lambda_{\mathbb{N}^{'}}[A_{n}, \theta] + \epsilon;
\]
Now put ${n_{k+1} = m}$, ${y_{n_{k+1}} = y}$, then applying Hahn-Banach Theorem\linebreak[4] choose a unit functional ${f \in Y_{n_{k}}^{*}}$ such that ${fy_{n_{k+1}} = 1}$ and set ${Y_{n_{k+1}} = Ker(f)}$. Obviously ${Y_{n_{k}} = y_{n_{k+1}} \oplus Y_{n_{k+1}}}$ and ${dist(y_{n_{k+1}}, Y_{n_{k+1}}) = 1}$. 
\end{itemize}
Now define a sequence of numbers ${\mathbb{N}^{''} = \{n_{1},...,n_{k},...\}}$. From the construction it is clear that ${Y_{n} \supset Y_{m}}$ and ${dist(y_{n}, Y_{m}) = 1}$ whenever ${n > m}$. Define a sequence of vectors ${(x_{n})_{\mathbb{N}^{''}}}$ such that ${x_{n} = A_{n}^{-1}y_{n}}$ for all ${n \in \mathbb{N}^{''}}$. Obviously ${(x_{n}, A_{n}x_{n})_{\mathbb{N}^{''}} \subset X \times Y}$ is a unit sequence since ${\left\|x_{n}\right\| \leq 2 \times \lambda_{\mathbb{N}^{'}}[A_{n}, \theta] + \epsilon < 1}$ and ${\left\|A_{n}x_{n}\right\| = \left\|y_{n}\right\| = 1}$ for all ${n \in \mathbb{N}^{''}}$. Therefore, for any number\linebreak[4] ${\nu \in (0,\ 1/2 - \lambda_{\mathbb{N}^{'}}[A_{n}, \theta])}$ there exists a sequence ${(t_{n})_{\mathbb{N}^{'''}} \subset X}$ and a vector\linebreak[4] ${(u,v) \in X \times Y}$ such that
\[
\varlimsup_{\mathbb{N}^{'''}}\left\|(x_{n},y_{n}) - (t_{n},\theta) - (u,v)\right\|\ \leq\  \lambda_{\mathbb{N}^{'}}[A_{n}, \theta] + \nu.
\]
Projecting onto Y obtain
\[
\varlimsup_{\mathbb{N}^{'''}}\left\|y_{n} - v\right\|\ \leq\ \lambda_{\mathbb{N}^{'}}[A_{n}, \theta] + \nu;
\]
Therefore, if ${\gamma \in (0,\ 1/2 - \lambda_{\mathbb{N}^{'}}[A_{n}, \theta] - \nu)}$, then applying the triangle inequality for large enough ${n > m}$ and taking into account the choice of $\nu$ and $\gamma$ obtain
\begin{multline}
\notag
\begin{aligned}
\left\|y_{n} - y_{m}\right\| & \ =\ \left\|y_{n} - v - y_{m} + v\right\|\\
                             &\ \leq\ \left\|y_{n} - v\right\| + \left\|y_{m} - v\right\|\\
                             &\ \leq\ \lambda_{\mathbb{N}^{'}}[A_{n}, \theta] + \nu + \gamma + \lambda_{\mathbb{N}^{'}}[A_{n}, \theta] + \nu + \gamma\\
                             &\ =\ 2 \times (\lambda_{\mathbb{N}^{'}}[A_{n}, \theta] + \nu + \gamma)\ <\ 1.
\end{aligned}
\end{multline}
However, according to our construction ${dist(y_{n}, Y_{m}) = 1}$ for any ${n >m}$; hence, since ${y_{m} \in Y_{m} \subset Y_{n}}$ we have
\[
1 \leq \left\| y_{n} - y_{m}\right\|.
\]
Now, combining the last two inequalities we come to a contradiction
\[
1 \leq \left\| y_{n} - y_{m}\right\| < 1.
\]
This concludes the proof of the proposition 1.\\

In order to prove proposition 2 recall that since ${\lambda_{\mathbb{N}^{'}}[A_{n}, \theta] = 0}$, for a unit sequence ${(x_{n}, A_{n}x_{n})_{\mathbb{N}^{''}} \subset X \times Y}$ there exists a subsequence ${(y_{1,n})_{\mathbb{N}^{'''}} \subset X}$ and a vector ${(u_{1},v_{1}) \in X \times Y}$ such that
\[
\varlimsup_{\mathbb{N}^{'''}}\left\|(x_{n},A_{n}x_{n}) - (y_{1,n},\theta) - (u_{1},v_{1})\right\|\ \leq\  2^{-3}.
\]
Projecting onto $Y$ we see that for large enough ${n \in \mathbb{N}^{'''}}$
\[
\left\|A_{n}x_{n} - v_{1}\right\|\ \leq\ 2^{-3} + 2^{-3}\ =\ 2^{-2};
\]
therefore, for large enough ${n, m \in \mathbb{N}^{'''}}$
\begin{multline}
\notag
\begin{aligned}
\left\|A_{n}x_{n} - A_{m}x_{m}\right\| &\ =\ \left\|A_{n}x_{n} - v_{1} - A_{m}x_{m} + v_{1}\right\|\\
                                       &\ \leq\ \left\|A_{n}x_{n} - v_{1}\right\| + \left\|A_{m}x_{m} - v_{1}\right\|\\
                                       &\ \leq\ 2^{-2} + 2^{-2}\ =\ 2^{-1}. 
\end{aligned}
\end{multline}
Let $m$ be such as above and define ${m_{1}=m}$, ${\mathbb{N}_{1} = \{n \in \mathbb{N}^{'''} \mid n > m_{1}\}}$. Again, since ${\lambda_{\mathbb{N}^{'}}[A_{n}, \theta] = 0}$, there exists a subsequence of numbers ${\mathbb{N}_{1}^{'''} \subset \mathbb{N}_{1}}$, a sequence of vectors ${(y_{2,n})_{\mathbb{N}_{1}^{'''}} \subset X}$ and a vector ${(u_{2},v_{2}) \in X \times Y}$ such that
\[
\varlimsup_{\mathbb{N}_{1}^{'''}}\left\|(x_{n},A_{n}x_{n}) - (y_{2,n},\theta) - (u_{2},v_{2})\right\|\ \leq\ 2^{-4}.
\]
Projecting onto $Y$ we see that for large enough ${n \in \mathbb{N}_{1}^{'''}}$
\[
\left\|A_{n}x_{n} - v_{2}\right\|\ \leq\ 2^{-4} + 2^{-4}\ =\ 2^{-3};
\]
therefore, for large enough ${n, m \in \mathbb{N}_{1}^{'''}}$
\begin{multline}
\notag
\begin{aligned}
\left\|A_{n}x_{n} - A_{m}x_{m}\right\| &\ =\ \left\|A_{n}x_{n} - v_{2} - A_{m}x_{m} + v_{2}\right\|\\
                                       &\ \leq\ \left\|A_{n}x_{n} - v_{2}\right\| + \left\|A_{m}x_{m} - v_{2}\right\|\\
                                       &\ \leq\ 2^{-3} + 2^{-3}\ =\ 2^{-2}. 
\end{aligned}
\end{multline}
Let $m$ be such as above and define ${m_{2}=m}$, ${\mathbb{N}_{2} = \{n \in \mathbb{N}_{1}^{'''} \mid n > m_{2}\}}$. Continuing this way we can find a series of subsequences
\begin{multline}
\notag
\begin{aligned}
& \mathbb{N}^{''} &\supset&\ \ \mathbb{N}_{1} &\supset&\ \ \mathbb{N}_{2} &\supset&\ \ ...\ &\supset&\ \ \mathbb{N}_{k} &\supset&\ \ ...\\
& (x_{n})_{\mathbb{N}^{''}} &\supset&\ \ (x_{n})_{\mathbb{N}_{1}} &\supset&\ \ (x_{n})_{\mathbb{N}_{2}} &\supset&\ \ ...\ &\supset&\ \  (x_{n})_{\mathbb{N}_{k}} &\supset&\ \ ...
\end{aligned}
\end{multline}
such that for each $k$ and for all ${n, m \in \mathbb{N}_{k}}$ ${\left\|A_{n}x_{n} - A_{m}x_{m}\right\| \leq 2^{-k}}$. Now build a new diagonal sequence ${\mathbb{N}_{d}}$ from all ${\mathbb{N}_{1}, \mathbb{N}_{2}, ..., \mathbb{N}_{k}, ...}$:
\[
\mathbb{N}_{d} = \{n_{i} \mid n_{i} = \min\{n \mid n \in \mathbb{N}_{i}\}\}.
\]
If ${i, j \in \mathbb{N}_{d}}$ and ${i > j}$ then both ${x_{i}}$ and ${x_{j}}$ belong to ${\mathbb{N}_{j}}$ by our construction. Therefore ${\left\|A_{i}x_{i} - A_{j}x_{j}\right\| \leq 2^{-j}}$ for any two ${i > j}$ from ${\mathbb{N}_{d}}$; hence ${(A_{n}x_{n})_{\mathbb{N}_{d}} \subset Y}$ is a Cauchy sequence. Thus, since $Y$ is a complete metric space, the sequence ${(A_{n}x_{n})_{\mathbb{N}_{d}}}$ converges to some vector from $Y$. So, for any subsequence\linebreak[4]  ${(A_{n}x_{n})_{\mathbb{N}^{''}} \subset (A_{n}x_{n})_{\mathbb{N}^{'}} \subset Y}$ we have found a convergent subsequence\linebreak[4] ${(A_{n}x_{n})_{\mathbb{N}_{d}} \subset (A_{n}x_{n})_{\mathbb{N}^{''}}}$ which means that the sequence ${(A_{n}x_{n})_{\mathbb{N}^{'}}}$ is relatively compact in $Y$.

We have just proved proposition 2 for a unit sequence ${(x_{n}, A_{n}x_{n})_{\mathbb{N}^{''}} \subset X \times Y}$. When ${(x_{n}, A_{n}x_{n})_{\mathbb{N}^{''}}}$ is not a unit sequence but just a bounded sequence, consider the value ${\epsilon = \inf\left\|(x_{n}, A_{n}x_{n})\right\|}$. If ${\epsilon = 0}$ then we can choose a subsequence ${(x_{n}, A_{n}x_{n})_{\mathbb{N}^{'''}} \subset X \times Y}$ that converges to ${(\theta, \theta) \in X \times Y}$ thus ${(A_{n}x_{n})_{\mathbb{N}^{''}} \subset Y}$ converges to $\theta \in Y$. 
Otherwise choose a subsequence ${(x_{n}, A_{n}x_{n})_{\mathbb{N}^{'''}}}$ such that norms of its elements ${\alpha_{n} = \left\|(x_{n}, A_{n}x_{n})\right\|}$ approach ${\epsilon > 0}$ and scale that subsequence by dividing its elements onto their norms ${\alpha_{n} \neq 0}$ so that it becomes a unit sequence -- by the preceding proof ${(\frac{1}{\alpha_{n}}A_{n}x_{n})_{\mathbb{N}^{'''}}}$ has a convergent subsequence. Now rescale that convergent subsequence back by multiplying each element on ${\alpha_{n}}$ -- it will still remain a convergent subsequence since ${\alpha_{n} \rightarrow \epsilon < \infty}$. Therefore, for any bounded sequence ${(x_{n}, A_{n}x_{n})_{\mathbb{N}^{''}} \subset X \times Y}$ the sequence ${(A_{n}x_{n})_{\mathbb{N}^{''}} \subset Y}$ has a convergent subsequence, therefore ${(A_{n}x_{n})_{\mathbb{N}^{''}}}$ is relatively compact. This concludes our proof of the proposition 2.\\

In order to prove proposition 3 we first note that given a closed subspace $Z$ of a Banach space $X$ with a finite co--dimension ${\dim X/Z = K < \infty}$ one can find a projection ${P : X \rightarrow Z}$ such that ${\left\|P\right\| \leq K + 1}$. Indeed, using Hahn-Banach theorem K times, it is possible to build $K$ unit vectors ${x_{i} \in X}$ and K unit functional ${f_{i} \in X^{*}}$ such that ${f_{i}x_{j} = \delta_{i}^{j}}$ and ${Z = \cap Ker(f_{i})}$. After that, $K$ one-dimensional projections ${Q_{i} : X \rightarrow sp(\{x_{i}\})}$ are defined as ${Q_{i}x = f_{i}(x)x_{i}}$. It is clear that ${Q = \sum Q_{i}}$ is a projection onto ${sp(\{x_{1},...,x_{K}\})}$ and that ${\left\|Q\right\| \leq K}$, also ${P = I - Q}$ is a projection onto $Z$ and ${\left\|P\right\| \leq K + 1}$.

Now consider a sequence of continuous operators ${(A_{n})_{\mathbb{N}^{'}} \subset \mathcal{BC}(X,Y)}$ such that ${\lambda_{\mathbb{N}^{'}}[A_{n}, \theta] = 0}$. According to proposition 1 there exists a closed subspace ${Y_{1} \subset Y}$ such that ${\dim Y/Y_{1} = K < \infty}$ and a number ${m \in \mathbb{N}^{'}}$ such that for ${n > m}$
\begin{equation}\label{E:70}
\left\|A_{n} \mid_{A_{n}^{-1}(Y_{1})}\right\|\ <\ 1.
\end{equation}
Obviously ${\dim dom(A_{n})/A_{n}^{-1}(Y_{1}) \leq K}$ for same $n$; then, as we noted before, there exist projections ${P_{n} : dom(A_{n}) \rightarrow A_{n}^{-1}(Y_{1})}$ with ${\left\|P_{n}\right\| \leq K + 1}$. 

Let us define ${\mathbb{N}^{''} = \{n \in \mathbb{N}^{'} \mid n > m\}}$ and prove that ${(A_{n} \circ P_{n})_{\mathbb{N}^{''}} \subset \mathcal{BC}(X, Y)}$ ${K_{2}}-$approximates the null operator $\theta$. For that we need to show that if\linebreak[4] ${(x_{n})_{\mathbb{N}^{''}} \triangleleft ((dom(A_{n}))_{\mathbb{N}^{''}}}$ is bounded then ${(A_{n}P_{n}x_{n})_{\mathbb{N}^{''}} \subset Y}$ is relatively compact. Since norms of all ${P_{n}}$ are bounded by ${K + 1}$, the sequence ${(P_{n}x_{n})_{\mathbb{N}^{''}}}$ is bounded if ${(x_{n})_{\mathbb{N}^{''}}}$ is bounded. Therefore, since ${P_{n}x_{n} \in A_{n}^{-1}(Y_{1})}$, we can apply inequality \eqref{E:70} and conclude that ${(A_{n}P_{n}x_{n})_{\mathbb{N}^{''}}}$ is also bounded. Therefore, the sequence ${(P_{n}x_{n}, A_{n}P_{n}x_{n})_{\mathbb{N}^{''}}}$ is bounded in ${X \times Y}$. Now, applying proposition 2 to this bounded sequence we conclude that ${(A_{n}P_{n}x_{n})_{\mathbb{N}^{''}} \subset Y}$ is relatively compact which means that ${(A_{n} \circ P_{n})_{\mathbb{N}^{''}}}$ ${K_{2}}-$approximates  $\theta$. Now for ${n \in \mathbb{N}^{''}}$ set
\[
C_{n} = A_{n} \circ P_{n},\\ \ \ F_{n} = A_{n} \circ (I - P_{n}).
\]
Obviously ${A_{n} = C_{n} + F_{n}}$. Also, we had proved that ${(C_{n})_{\mathbb{N}^{''}}}$ ${K_{2}}-$approximates  $\theta$. Thus, since
\begin{multline}
\notag
\begin{aligned}
\dim F_{n}(dom(A_{n})) &\ =\ \dim A_{n} \circ (I - P_{n})(dom(A_{n}))\\
                       &\ \leq\ \dim(I - P_{n})(dom(A_{n}))\\
                       &\ =\ \dim dom(A_{n})/A_{n}^{-1}(Y_{1})\ =\ K\ <\ \infty,
\end{aligned}
\end{multline}
the proof of the proposition 3 is finished.\\

In order to prove proposition 4 consider decomposition ${A_{n} = C_{n} + F_{n}}$ for ${n \in \mathbb{N}^{''}}$ from proposition 3. By our assumption ${dom(A_{n}) = X}$ for all ${n \in \mathbb{N}^{'}}$. Therefore, according to \cite{vladimirski}, ${\left\| C_{n}\right\|_{q} \rightarrow 0}$ since ${(C_{n})_{\mathbb{N}^{''}}}$\linebreak[4] ${K_{2}}-$approximates the null operator $\theta$. Using this after applying the triangle inequality for $q-$norm and noticing that ${\left\|F_{n} \right\|_{q} = 0}$ since dimensions of all ${F_{n}(X)}$ are finite, we obtain
\begin{multline}
\notag
\begin{aligned}
\varlimsup_{\mathbb{N}^{''}} \left\| A_{n}\right\|_{q}\ =\ \varlimsup_{\mathbb{N}^{''}} \left\|C_{n} + F_{n}\right\|_{q} &\ \leq\ \varlimsup_{\mathbb{N}^{''}} (\left\|C_{n}\right\|_{q}\ +\ \left\|F_{n} \right\|_{q})\\
             &\ \leq\ \varlimsup_{\mathbb{N}^{''}} (\left\|C_{n}\right\|_{q}\ +\ 0)\ =\ \varlimsup_{\mathbb{N}^{''}} \left\|C_{n}\right\|_{q}\ =\ 0.
\end{aligned}
\end{multline}
Finally, since ${\mathbb{N}^{'} \setminus \mathbb{N}^{''}}$ is a finite set, we infer that
\[
\lim_{\mathbb{N}^{'}} \left\| A_{n}\right\|_{q} = 0.
\]
This concludes our proof of the proposition 4.
\end{proof}

We are now ready to prove the following
\begin{theorem}[(Semi--)Fredholm Operators are Stable under Lower $0-$Adjustment]\label{foasucla}
Let $X$ and $Y$ be two Banach spaces, ${A \in \mathcal{B}(X,Y)}$ and ${(A_{n})_{\mathbb{N}^{'}} \subset \mathcal{B}(X,Y)}$. Suppose that ${\lambda_{\mathbb{N}^{'}}[A_{n}, \theta] = 0}$. Then the following propositions are true:
\begin{enumerate}
  \item If $A$ is a lower semi--Fredholm operator, then for large enough ${n \in \mathbb{N}^{'}}$ operators ${A + A_{n}}$ are also lower semi--Fredholm. Moreover, there exists ${L \in \mathbb{N}}$ such that ${\alpha(A + A_{n}) \leq L}$ for the same $n$.
  \item If $A$ is a Fredholm operator, then for large enough ${n \in \mathbb{N}^{'}}$ operators ${A + A_{n}}$ are also Fredholm and ${ind(A + A_{n}) = ind(A)}$. Moreover, there exists ${S \in \mathbb{N}}$ such that ${\beta(A + A_{n}) \leq S}$ for the same $n$.
\end{enumerate}
\end{theorem}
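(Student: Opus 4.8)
The plan is to reduce both propositions to the classical stability of (semi--)Fredholm operators under perturbations of small Hausdorff $q$-norm, with Theorem~\ref{solua} doing the work of converting the adjustment hypothesis into a $q$-norm estimate. Since $A_{n} \in \mathcal{B}(X,Y)$ we have $\mathrm{dom}(A_{n}) = X$, so Theorem~\ref{solua}, proposition~4 applies and yields $\left\|A_{n}\right\|_{q} \to 0$; equivalently, for every $\delta > 0$ one has $\left\|A_{n}\right\|_{q} < \delta$ for all large $n \in \mathbb{N}^{'}$. Then I would invoke the classical measure--of--non--compactness perturbation theorems (\cite{goldenstein_gohberg_markus,goldenstein_markus,sedaev,nussbaum,sadovskii}): for a lower semi--Fredholm $A$ there is a threshold $\delta_{A} > 0$ such that every $C \in \mathcal{B}(X,Y)$ with $\left\|C\right\|_{q} < \delta_{A}$ has $A + C$ lower semi--Fredholm with $\alpha(A + C) \leq \alpha(A)$, and for a Fredholm $A$ there is $\delta_{A} > 0$ such that $\left\|C\right\|_{q} < \delta_{A}$ forces $A + C$ Fredholm with $ind(A + C) = ind(A)$ and $\beta(A + C) \leq \beta(A)$. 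Applying these with $C = A_{n}$ for all $n$ so large that $\left\|A_{n}\right\|_{q} < \delta_{A}$ gives proposition~1 with $L := \alpha(A)$ and proposition~2 with $S := \beta(A)$, including the index identity.

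A more hands--on variant, which displays the structure of the perturbation and still produces uniform defect bounds, uses Theorem~\ref{solua}, proposition~3 directly: for large $n$ one writes $A_{n} = C_{n} + F_{n}$ with $(C_{n})_{\mathbb{N}^{'}}$ $K_{2}$-approximating $\theta$ and $\dim F_{n}(X) \leq L_{0}$ for a fixed $L_{0}$ and all large $n$. Here $\left\|C_{n}\right\|_{q} \to 0$ (by \cite{vladimirski}, or by proposition~4 of Theorem~\ref{solua} applied to $(C_{n})$), so classical $q$-norm stability gives, for large $n$, that $A + C_{n}$ is lower semi--Fredholm with $\alpha(A + C_{n}) \leq \alpha(A)$ --- respectively Fredholm with $ind(A + C_{n}) = ind(A)$ and $\beta(A + C_{n}) \leq \beta(A)$ --- and then $A + A_{n} = (A + C_{n}) + F_{n}$ is a perturbation by a finite--rank, hence compact, operator of rank at most $L_{0}$, so $A + A_{n}$ remains lower semi--Fredholm, respectively Fredholm with unchanged index, while each defect number moves by at most $L_{0}$, yielding $\alpha(A + A_{n}) \leq \alpha(A) + L_{0}$ and $\beta(A + A_{n}) \leq \beta(A) + L_{0}$. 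For proposition~1 alone there is also a route through Theorem~\ref{lsfps}: take $M = G_{A}$, $N = X \times \{\theta\}$, $M_{n} = G_{A + A_{n}}$, $N_{n} = N$; then $M + N = X \times R(A)$ is closed, $\dim(M \cap N) = \alpha(A) < \infty$, $\lambda_{N} = 0$, and $\lambda_{M} = \lambda_{\mathbb{N}^{'}}[A + A_{n}, A] = 0$ by Theorem~\ref{nua}, proposition~3, so $\omega = 0 < 1/2$ and Theorem~\ref{lsfps} (whose proof also establishes closedness of $M_{n} + N_{n}$) shows the pairs $(G_{A + A_{n}}, X \times \{\theta\})$ are lower semi--Fredholm with $\varlimsup_{n} \alpha(A + A_{n}) < \infty$, which is exactly proposition~1.

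The adjustment--theoretic content is fully absorbed into Theorem~\ref{solua}, so the only point needing attention is the bookkeeping on the classical side: one must use a perturbation threshold that depends on $A$ alone together with the monotonicity $\alpha(A + C) \leq \alpha(A)$, $\beta(A + C) \leq \beta(A)$ below it, so that the bounds $L$ and $S$ come out independent of $n$; in the decomposition variant the corresponding care is to record that $\mathrm{rank}(F_{n})$ is bounded uniformly in $n$ (the $\varlimsup$ in Theorem~\ref{solua}, proposition~3), that a finite--rank operator is compact so the index is not changed, and that a perturbation of rank at most $L_{0}$ shifts each defect number by at most $L_{0}$.
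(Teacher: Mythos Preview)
Your proposal is correct. In fact the paper's own proof is precisely a combination of two of the routes you list: for proposition~1 it uses the pair route through Theorem~\ref{lsfps} (with the roles $M=M_{n}=X\times\{\theta\}$, $N=G_{A}$, $N_{n}=G_{A+A_{n}}$, and $\lambda_{\mathbb{N}'}[N_{n},N]=0$ via Theorem~\ref{nua}, proposition~3), and for proposition~2 it uses your decomposition variant $A_{n}=C_{n}+F_{n}$ from Theorem~\ref{solua}, proposition~3, then the $K_{2}$-approximation stability result from \cite{vladimirski} for $A+C_{n}$ and the finite-rank perturbation result from \cite{atkinson} for $A+C_{n}+F_{n}$; the final bound on $\beta(A+A_{n})$ is obtained from the bound $L$ on $\alpha(A+A_{n})$ coming from proposition~1 together with the index identity and the rank bound on $F_{n}$.

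Your primary route --- going straight to $\left\|A_{n}\right\|_{q}\to 0$ via proposition~4 of Theorem~\ref{solua} and then invoking the classical $q$-norm stability theorems --- is more direct than what the paper does and yields the sharper constants $L=\alpha(A)$ and $S=\beta(A)$, whereas the paper's mixed approach only gives some unspecified $L$ from Theorem~\ref{lsfps} and then $S=L+R-ind(A)$ with $R$ the uniform rank bound on $F_{n}$. The trade-off is that your direct route leans entirely on the classical literature for the defect-number monotonicity under small $q$-norm, while the paper's argument keeps proposition~1 self-contained within its own framework (Theorem~\ref{lsfps}) and only outsources the index stability in proposition~2.
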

\begin{proof}
Proposition 1 is a consequence from the previous Theorem \ref{lsfps}. Indeed, let us apply a well known technique defining subspaces from ${X \times Y}$ (see \cite{kato})
\[
M\ =\ M_{n}\ =\ X \times \{\theta\},\ \ \ N\ =\ G_{A},\ \ \ N_{n}\ =\ G_{A + A_{n}};
\]
obviously ${(M, N)}$ is a lower semi--Fredholm pair of closed subspaces in ${X \times Y}$, also it is clear that ${\lambda_{\mathbb{N}^{'}}[M_{n}, M] = 0}$. Let us prove that ${\lambda_{\mathbb{N}^{'}}[N_{n}, N] = 0}$. By definition ${\lambda_{\mathbb{N}^{'}}[N_{n}, N] = \lambda_{\mathbb{N}^{'}}[A + A_{n}, A]}$. Since ${A \in \mathcal{B}(X, Y)}$ and ${\lambda_{\mathbb{N}^{'}}[A_{n}, \theta] = 0}$ it follows from Theorem \ref{nua} proposition 3 that ${\lambda_{\mathbb{N}^{'}}[A + A_{n}, A] = 0}$ . Therefore, ${\lambda_{\mathbb{N}^{'}}[N_{n}, N] = 0}$. Thus, by Theorem \ref{lsfps} there exists a number ${K_{1} \in \mathbb{N}}$ such that pairs ${}(M_{n}, N_{n})$ are lower semi--Fredholm and values of ${\beta(M_{n}, N_{n})}$ are limited by some finite number $L$ for ${n > K_{1}}$. However it is obvious that ${\alpha(M_{n}, N_{n}) = \alpha(A + A_{n})}$, so the values of ${\alpha(A + A_{n})}$ are limited by a finite number $L$ for the same $n$.

In order to prove proposition 2 observe that according to proposition 3 from the previous Theorem \ref{solua} we may assume that all ${A_{n}}$ can be decomposed into ${A_{n} = C_{n} + F_{n}}$ so that ${C_{n}, F_{n} \in \mathcal{B}(X, Y)}$, the sequence ${(C_{n})_{\mathbb{N}^{'}}}$ ${K_{2}-}$approximates $\theta$, and
\begin{equation}\label{E:1234}
\varlimsup_{\mathbb{N}^{'}} \dim F_{n}(X)\ \leq\ R
\end{equation}
for some ${R < \infty}$. Applying a well-known fact about ${K_{2}-}$approximation from \cite{vladimirski} we conclude that there exists ${K_{2} \in \mathbb{N}}$ such that for ${n > K_{2}, n \in \mathbb{N}^{'}}$ operators ${A + C_{n}}$ are also Fredholm and their indices are all equal to the index of $A$. Therefore, since all ${F_{n}}$ are finite, by the classical result from \cite{atkinson} operators ${A + C_{n} + F_{n}}$ are Fredholm and their indices are all equal to the index of $A$ for the same $n$. At the same time
\[
\alpha(A + C_{n} + F_{n})\ \leq\ \alpha(A + C_{n}) + \dim F_{n}(X)\ \leq\ \alpha(A + C_{n})\ +\ R
\]
due to \eqref{E:1234}. Hence, for all ${n > \max(K_{1}, K_{2})}$ values of ${\beta(A + A_{n})}$ are limited by ${S = L + R + ind(A)}$:
\begin{multline}
\notag
\begin{aligned}
\beta(A + A_{n})\ =\ \beta(A + C_{n} + F_{n}) & \ =\ \alpha(A + C_{n} + F_{n})\ -\ ind(A + C_{n} + F_{n})\\
                                              & \ =\ \alpha(A + C_{n} + F_{n})\ -\ ind(A)\\
                                              & \ \leq\ \alpha(A + C_{n})\ +\ R\ -\ ind(A)\\
                                              & \ \leq\ L\ +\ R\ -\ ind(A)\\
                                              & \ =\ S.
\end{aligned}
\end{multline}
\end{proof}

\newpage
\section[Stability in $BFU-$Spaces]{(Semi--)Fredholm Stability in $BFU-$Spaces}\label{sibfus}

In the previous section we presented examples when Fredholm stability properties break under uniform $\lambda-$adjustment. In particular, in the space ${l_{\infty}}$ the analog of the small gap theorem is valid only for constants ${< 1/2}$ while semi--Fredholm and Fredholm linear operators are not stable under uniform upper $\lambda-$adjustment for any ${\lambda \geq 0}$. Those examples prompt one to look for a special class of Banach spaces where the analog of the small gap theorem still holds for constants ${< 1}$ and where stability of Fredholm properties is preserved under $\lambda-$adjustment.

In this section we present one such class that we call \emph{the class of BFU-spaces} -- those are Banach spaces for which every norm bounded set of functionals is a Fr\'{e}chet-Urysohn space in ${weak^{*}}$ topology. After proving in topological Fr\'{e}chet--Urysohn spaces a general lemma about the convergence points of the sequence of subsets, we proceed with proving in $BFU-$spaces the analog of the small gap theorem with the constants ${< 1}$ and stability of semi--Fredholm and Fredholm linear operators and pairs of subspaces. The latter theorems show improved estimates for lower uniform $\lambda-$adjustment, as well as stability of (semi--)Fredholm properties under upper $\lambda-$adjustment.

\subsection[$BFU-$Spaces]{$BFU-$Spaces}

Recall that a Hausdorff topological space ${(T, \tau)}$ is called \emph{a Fr\'{e}chet--Urysohn space} if for every set ${G \subset T}$ and every element ${t \in \overline{G}}$ there is a sequence ${(t_{n})_{\mathbb{N}} \subset G}$ convergent to t (see  \cite{kakol_saxon,arhangelskii_ponomarev}). 
\begin{definition}[${BFU-}$Space]\label{D:bfus}
We call a Banach space X a ${BFU-}$space if every norm bounded set of functionals from its dual ${X^{*}}$ is a Fr\'{e}chet--Urysohn space in topology induced by the ${weak^{*}}$ topology ${w^{*} = \sigma(X^{*}, X)}$.
\end{definition}
As we shall see shortly, the class of ${BFU-}$spaces is rather broad. In order to establish its content we will need one preliminary definition and one result.

Recall from \cite{kantorovich_akilov} p. 284 that a subset $G$ from a topological space ${(T, \tau)}$ is called \emph{relatively countably compact} if any sequence ${(g_{n})_{\mathbb{N}} \subset G}$ has a limit point ${g \in T}$, i.e. there exists ${g \in T}$ such that for each open neighborhood ${U \in \tau}$ of $g$ and for each number ${n \in \mathbb{N}}$ there exists another number ${m > n}$ such that\linebreak[4] ${g_{m} \in U}$.

The following fundamental lemma can be found in \cite{kantorovich_akilov} p.285; its proof has first appeared in \cite{whitley}\footnote{See also \cite{cohen,kelley_namioka} where similar ideas are used for the proof of the Eberlein--\^{S}mulian theorem.}:
\begin{lemma}[Closure in the Second Dual]\label{citsd}
Let $E$ be a relatively weakly countably compact subset from a Banach space X. Let $E_{1}$ be ${\sigma(X^{**}, X^{*})-}$closure of the set ${\pi(E)}$ where ${\pi:X \rightarrow X^{**}}$ -- the canonical injection. Then ${E_{1}}$ is ${\sigma(X^{**}, X^{*})-}$compact and for every element ${F \in E_{1}}$ there is a sequence ${(x_{n})_{\mathbb{N}} \subset E}$ with a single limit point ${x \in X}$ such that ${\pi(x) = F}$.
\end{lemma}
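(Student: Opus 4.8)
The plan is to reproduce the combinatorial core of the Eberlein--\^{S}mulian theorem in the form due to Whitley.

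\emph{Step 1 (boundedness and $w^{*}$-compactness of $E_{1}$).} First I would note that a relatively weakly countably compact set is norm bounded: for each $f \in X^{*}$ the scalar set $\{f(e):e\in E\}$ must be bounded, since otherwise a sequence $e_{n}\in E$ with $|f(e_{n})|\ge n$ would possess a weak limit point $e\in X$ whose value $f(e)$ would be a limit point of the divergent sequence $(f(e_{n}))$. Thus $E$ is weakly bounded, hence norm bounded by the uniform boundedness principle applied to $\pi(E)$ viewed as a family of functionals on $X^{*}$. Consequently $\pi(E)$ is a norm-bounded subset of $X^{**}=(X^{*})^{*}$, so its $\sigma(X^{**},X^{*})$-closure $E_{1}$ is a norm-bounded $w^{*}$-closed subset of $(X^{*})^{*}$ and therefore $\sigma(X^{**},X^{*})$-compact by the Banach--Alaoglu theorem.

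\emph{Step 2 (construction of the sequence).} Fix $F\in E_{1}$. I would build inductively elements $x_{n}\in E$ together with a growing finite list of unit functionals $f_{1},f_{2},\dots\in X^{*}$. Start from any $x_{1}\in E$. Having produced $x_{1},\dots,x_{n}$, consider the finite-dimensional subspace $Z_{n}=\mathrm{sp}\{F,\pi(x_{1}),\dots,\pi(x_{n})\}\subset X^{**}$; pick a finite $\tfrac1n$-net of its unit sphere and, for each net point $H$, adjoin to the master list a functional $g\in X^{*}$ with $\left\|g\right\|=1$ and $|H(g)|>1-\tfrac1n$. If the list is now $f_{1},\dots,f_{m_{n+1}}$, then, since $F$ lies in the $w^{*}$-closure of $\pi(E)$, choose $x_{n+1}\in E$ with $|F(f_{j})-f_{j}(x_{n+1})|<\tfrac1{n+1}$ for all $j\le m_{n+1}$. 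Observe that $Z_{1}\subseteq Z_{2}\subseteq\cdots$ and that for each fixed $j$ one has $f_{j}(x_{n})\to F(f_{j})$ as $n\to\infty$.

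\emph{Step 3 (identifying the limit point).} By hypothesis the sequence $(x_{n})_{\mathbb{N}}\subset E$ has a weak limit point $x\in X$. For each $j$ the scalar $f_{j}(x)$ is a cluster point of the convergent sequence $(f_{j}(x_{n}))$, hence $f_{j}(x)=F(f_{j})$, i.e. $(F-\pi(x))(f_{j})=0$ for every $j$. On the other hand $x$ lies in the norm closure of $\mathrm{sp}\{x_{n}:n\}$ (a convex set has coinciding weak and norm closures), so $\pi(x)\in\overline{\mathrm{sp}}\{\pi(x_{n})\}\subseteq\overline{\bigcup_{n}Z_{n}}$, while $F\in Z_{1}$; therefore $F-\pi(x)\in\overline{\bigcup_{n}Z_{n}}$. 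Suppose $F\neq\pi(x)$. Choose $G\in Z_{n}$ for some $n$ with $\left\|G-(F-\pi(x))\right\|<\tfrac14\left\|F-\pi(x)\right\|$, so $G\neq0$ and $\left\|G-(F-\pi(x))\right\|<\tfrac13\left\|G\right\|$; for any sufficiently large $N\ge n$ the $\tfrac1N$-net of the unit sphere of $Z_{N}\supseteq Z_{n}$ contains a point within $\tfrac1N$ of $G/\left\|G\right\|$, which yields some $f_{j}$ with $|G(f_{j})|>\left\|G\right\|(1-\tfrac2N)\ge\tfrac12\left\|G\right\|$; but $(F-\pi(x))(f_{j})=0$ gives $|G(f_{j})|=|(G-(F-\pi(x)))(f_{j})|\le\left\|G-(F-\pi(x))\right\|<\tfrac13\left\|G\right\|$, a contradiction. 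Hence $\pi(x)=F$; repeating for any other weak limit point $x'$ of $(x_{n})$ gives $\pi(x')=F=\pi(x)$, so $x'=x$ by injectivity of $\pi$, and $x$ is the unique limit point.

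\emph{Main obstacle.} The only real care is the bookkeeping in Step 2 --- arranging that every functional ever placed on the master list is subsequently tested against the $x_{n}$ to accuracy $\tfrac1n$, and that the monotonicity $Z_{n}\subseteq Z_{N}$ lets a single late-stage net point of the unit sphere of $Z_{N}$ control a fixed $G\in Z_{n}$ --- together with fixing the order of the quantifiers in the final estimate (first the norm-approximation tolerance, then $N$ large). No idea beyond the finite-dimensional $\varepsilon$-net trick is needed.
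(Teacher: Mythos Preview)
The paper does not supply its own proof of this lemma; it simply cites Kantorovi\v{c}--Akilov and notes that the argument first appeared in Whitley's proof of the Eberlein--\^{S}mulian theorem. Your proposal is precisely the Whitley argument --- the boundedness-and-Alaoglu step, the inductive dovetailing of finite $\tfrac{1}{n}$-nets on the growing finite-dimensional spans $Z_{n}\subset X^{**}$ with a countable list of norming functionals, and the final contradiction showing $F-\pi(x)=0$ --- so it coincides with the approach the paper points to. The bookkeeping you flag as the ``main obstacle'' is handled correctly, and the uniqueness of the limit point via the injectivity of $\pi$ is exactly what is needed.
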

We are now ready to examine the content of the class of $BFU-$spaces in the following lemma:
\begin{lemma}[Examples of $BFU-$Spaces]\label{eobfus}
The class of $BFU-$Spaces includes
\begin{enumerate}
  \item separable Banach spaces,
  \item reflexive Banach spaces,
  \item products of separable and $BFU-$spaces,
  \item products of any finite number of reflexive and separable spaces,
  \item quotient spaces of ${BFU-}$spaces,
  \item complemented subspaces of ${BFU-}$spaces.
\end{enumerate}
\end{lemma}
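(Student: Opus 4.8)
The plan is to reduce the whole statement to a single assertion: the closed unit ball $B_{X^{*}}$ of $X^{*}$ is Fr\'{e}chet--Urysohn in $\sigma(X^{*},X)$. Indeed every norm bounded subset of $X^{*}$ lies in some $\alpha B_{X^{*}}$, the scaling $f\mapsto\alpha^{-1}f$ is a $w^{*}$-homeomorphism, and the Fr\'{e}chet--Urysohn property is hereditary; so $X$ is a $BFU$-space if and only if $(B_{X^{*}},w^{*})$ is Fr\'{e}chet--Urysohn. With this in hand, case (1) is immediate: for $X$ separable $(B_{X^{*}},w^{*})$ is metrizable, and metrizable spaces are Fr\'{e}chet--Urysohn. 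For case (2), when $X$ is reflexive $\sigma(X^{*},X)=\sigma(X^{*},X^{**})$ is the weak topology of the reflexive space $X^{*}$, so $B_{X^{*}}$ is weakly compact; I would then prove the general fact that a relatively weakly countably compact set $G$ in any Banach space $Z$ is Fr\'{e}chet--Urysohn in its weak topology. Given $g_{0}$ in the weak closure of $G$, continuity of the canonical injection $\pi$ from $(Z,\sigma(Z,Z^{*}))$ to $(Z^{**},\sigma(Z^{**},Z^{*}))$ puts $\pi(g_{0})$ in the $\sigma(Z^{**},Z^{*})$-closure of $\pi(G)$, so Lemma \ref{citsd} supplies a sequence $(g_{n})\subset G$ whose unique weak limit point $g'$ satisfies $\pi(g')=\pi(g_{0})$, hence $g'=g_{0}$; since $G$ is relatively weakly sequentially compact by Eberlein--\v{S}mulian, every subsequence of $(g_{n})$ has a weakly convergent further subsequence whose limit can only be $g_{0}$, so $g_{n}\to g_{0}$ weakly and $G$ is Fr\'{e}chet--Urysohn. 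Applying this to $Z=X^{*}$, $G=B_{X^{*}}$ settles (2).

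Cases (5) and (6) are then routine. For (5), $(X/M)^{*}$ is isometrically isomorphic to the annihilator $M^{\perp}\subset X^{*}$, and under this identification $\sigma((X/M)^{*},X/M)$ is precisely the topology induced on $M^{\perp}$ by $\sigma(X^{*},X)$; a bounded subset of $M^{\perp}$ is a bounded subset of $X^{*}$, hence Fr\'{e}chet--Urysohn because $X$ is $BFU$, and the property passes to subspaces. For (6), a complemented subspace $Z$ of $X$, say $X=Z\oplus W$, is linearly homeomorphic to the quotient $X/W$, and the $BFU$ property is an isomorphic invariant (an isomorphism $T$ induces a $w^{*}$-to-$w^{*}$ homeomorphism $T^{*}$ between the duals which carries bounded sets to bounded sets), so (6) follows from (5).

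For case (4), group the separable factors into one separable space $S$ and the reflexive factors into one reflexive space $R$, using that finite products of separable (resp. reflexive) spaces are again separable (resp. reflexive). Then $B_{(R\times S)^{*}}$ embeds into $B_{R^{*}}\times B_{S^{*}}$ equipped with the product $w^{*}$ topology; $B_{R^{*}}$ is weakly compact, hence an Eberlein compact, and $B_{S^{*}}$ is a compact metrizable, hence also Eberlein, space. Since the class of Eberlein compacta is closed under finite products and every Eberlein compact space is Fr\'{e}chet--Urysohn (which can itself be deduced from the general fact proved in case (2), an Eberlein compact being a weakly compact subset of some $c_{0}(\Gamma)$), the product $B_{R^{*}}\times B_{S^{*}}$ is Fr\'{e}chet--Urysohn, and so is its subspace $B_{(R\times S)^{*}}$; thus $R\times S$ is $BFU$.

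The remaining case (3) --- the product of a separable space $X$ with an arbitrary $BFU$-space $Y$ --- is the one I expect to be the main obstacle. Here $(X\times Y)^{*}=X^{*}\times Y^{*}$ carries the product $w^{*}$ topology, $B_{X^{*}}$ is $w^{*}$-metrizable with a compatible metric $d$, and $B_{Y^{*}}$ is $w^{*}$-Fr\'{e}chet--Urysohn. Given a bounded $G\subset X^{*}\times Y^{*}$ and $(f_{0},g_{0})$ in its $w^{*}$-closure, one checks that for each $k$ the second projection of $G\cap\bigl(\{f:d(f,f_{0})<1/k\}\times Y^{*}\bigr)$ still has $g_{0}$ in its $w^{*}$-closure, so the $BFU$ property of $Y$ yields, for every $k$, a sequence in that projection converging to $g_{0}$ together with lifts whose $X^{*}$-coordinates lie within $1/k$ of $f_{0}$. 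The task is to splice these into a single sequence in $G$ converging to $(f_{0},g_{0})$: the first coordinates are controlled by letting $k\to\infty$ along the diagonal, but making the second coordinates converge to $g_{0}$ along the diagonal is the delicate point, and the diagonal extraction must be organized very carefully (using the metric on the first factor to govern the choice of index at each stage). This is the step I would expect to need the most scrutiny, and it is plausibly where any hidden subtlety lies, since the Fr\'{e}chet--Urysohn property is not in general preserved by products even with a compact metrizable factor; any proof of (3) must therefore exploit the specific ball structure of $B_{Y^{*}}$ rather than invoke a purely topological product theorem.
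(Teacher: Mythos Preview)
Your arguments for (1), (2), (5), (6) are essentially the paper's; your (2) is actually organized a bit more cleanly, isolating the general statement that relatively weakly countably compact sets are weakly Fr\'{e}chet--Urysohn and then specializing to $B_{X^{*}}$.

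For (4) you take a genuinely different route via Eberlein compacta, which is valid and has the advantage of not depending on (3); the paper instead deduces (4) from (3) after grouping the separable and reflexive factors.

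The gap is in (3). You assert that ``any proof of (3) must exploit the specific ball structure of $B_{Y^{*}}$ rather than invoke a purely topological product theorem,'' on the grounds that Fr\'{e}chet--Urysohn need not be preserved under product with a compact metrizable factor. This overlooks the one extra hypothesis you have: by Banach--Alaoglu the Fr\'{e}chet--Urysohn factor $(B_{Y^{*}},w^{*})$ is itself \emph{compact}, and there \emph{is} a purely topological theorem covering this case --- the product of a compact Fr\'{e}chet--Urysohn space with a compact first-countable space is Fr\'{e}chet--Urysohn (Arhangel'ski\u{\i}, Olson). The paper simply cites this and is done.

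Your diagonal sketch is nevertheless on the right track. The ``delicate point'' you flag --- making the $Y^{*}$-coordinates converge along the diagonal --- is exactly the following statement: in a compact Fr\'{e}chet--Urysohn space, if a point $t$ lies in the closure of every member of a countable family $(G_{k})$, then some sequence $t_{k_{j}}\in G_{k_{j}}$ converges to $t$. Apply this to the projections $G_{k}=\mathrm{proj}_{Y^{*}}\bigl(G\cap(\{f:d(f,f_{0})<1/k\}\times Y^{*})\bigr)$ and lift back. The paper proves precisely this statement as Lemma~\ref{L:css} and notes its equivalence with the Arhangel'ski\u{\i}--Olson product theorem; so the missing idea in your (3) is exactly that lemma, and compactness of $B_{Y^{*}}$ is what makes it go through.
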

\begin{proof}
If $X$ is a separable Banach space, then $X^{*}$ is metrizable in ${w^{*}}$ topology. Therefore, ${X^{*}}$ is Fr\'{e}chet--Urysohn since every metrizable space is Fr\'{e}chet--Urysohn.

Let $X$ be a reflexive Banach space and consider a norm bounded set $G$ from its dual space ${X^{*}}$. By the Banach-Alaoglu theorem its closure ${\overline{G}^{w^{*}}}$ in ${weak^{*}}$ topology is compact. Since $X$ is reflexive, by the Eberlein--\^{S}mulian theorem ${\overline{G}^{w^{*}}}$ is countably compact in ${weak^{*}}$ topology. Since $X$ is reflexive, the weak topology ${w^{**} = \sigma(X^{*}, X^{**})}$ induced by ${X^{**}}$ coincides with the weak topology ${w^{*} = \sigma(X^{*}, X)}$. Therefore, $G$ is relatively countably compact in ${w^{**}}$ topology. Let ${\pi:X^{*} \rightarrow X^{***}}$ be a canonical injection. By the previous lemma \ref{citsd} every element $g$ from the closure of ${\pi(G) \subset X^{***}}$ in the ${\sigma(X^{***}, X^{**})}$ topology is a single limit point of some sequence ${(g_{n})_{\mathbb{N}} \subset \pi(G)}$. By the Eberlein--\^{S}mulian theorem there exists a  subsequence ${(g_{n})_{\mathbb{N}^{'}}}$ convergent in ${\sigma(X^{***}, X^{****})}$ topology. Since ${X}$ is reflexive, topologies ${\sigma(X^{***}, X^{****})}$ and ${\sigma(X^{***}, X^{**})}$ coincide. Therefore, ${(g_{n})_{\mathbb{N}^{'}}}$ is convergent in ${\sigma(X^{***}, X^{**})}$ topology. Since $g$ is the only limit point of ${(g_{n})_{\mathbb{N}}}$ in ${\sigma(X^{***}, X^{**})}$ topology, we shall conclude that ${(g_{n})_{\mathbb{N}^{'}}}$ converges to $g$ in that topology. Therefore, the set ${\pi(G) \subset X^{***}}$ is Fr\'{e}chet--Urysohn in ${\sigma(X^{***}, X^{**})}$ topology. Since $X$ is reflexive ${X^{} = X^{**}}$, ${X^{*} = X^{***}}$, also ${G = \pi(G) \subset X^{*}}$ and ${\sigma(X^{***}, X^{**})}$ topology coincides with ${\sigma(X^{*}, X)}$ topology. Therefore, the set ${G}$ is Fr\'{e}chet--Urysohn in ${\sigma(X^{*}, X)}$ topology.

A product of a ${BFU-}$space and a separable Banach space is again a\linebreak[4] ${BFU-}$space -- this follows from the Banach-Alaoglu theorem and from the fact that a product of a compact Fr\'{e}chet-Urysohn space onto a compact space with the first axiom of countability is Fr\'{e}chet--Urysohn (see \cite{archangelskii,olson}). 

From the last statement and from the fact that a finite product of separable spaces is separable, while a finite product of reflexive spaces is reflexive, it follows that a product of any finite number of reflexive and separable spaces is a ${BFU-}$space.

Due to a natural isomorphism between a dual to a quotient space of a Banach space over a closed subspace and the annihilator of that closed subspace it is clear that a quotient space of a ${BFU-}$space is a ${BFU-}$space. 

Also, one can easily establish that a closed complemented subspace of a ${BFU-}$space is still a ${BFU-}$space since it is isomorphic to a quotient taken over its complement.
\end{proof}

Now we present three technical lemmas. The first lemma is equivalent to a theorem about a product of compact spaces from \cite{archangelskii,olson}. Since it will be the key to the proofs of stability of (semi--)Fredholm properties, we lay out its independent and elementary proof. The second lemma is a seemingly obvious statement in general topological spaces which proof appears to be somewhat elusive unless presented rigorously. The third lemma establishes, with the help of the first and second lemmas, a connection between the Fr\'{e}chet-Urysohn property and uniform $\lambda-$adjustment.

\begin{lemma}[Closure of a Sequence of Subsets]\label{L:css}
Let ${(T, \tau)}$ be a compact Fr\'{e}chet-Urysohn topological space, ${(G_{n})_{\mathbb{N}}}$ is a sequence of non-empty subsets from $T$ and an element ${t \in T}$ belongs to a closure of every subset ${G_{n} \subset X}$ for ${n \in \mathbb{N}}$. Then there exists a subsequence ${\mathbb{N}^{'} \subset \mathbb{N}}$ and a sequence of elements ${(t_{n})_{\mathbb{N^{'}}} \triangleleft (G_{n})_{\mathbb{N^{'}}}}$ convergent to $t$. 
\end{lemma}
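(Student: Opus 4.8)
The plan is to reduce, using the Fr\'{e}chet--Urysohn property of $T$ twice, to a situation where the only accumulation point that can obstruct convergence to $t$ is separated off by compactness. The key elementary fact I will lean on is that in a compact space a sequence with a unique cluster point converges to that point.

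First I would dispose of trivial cases: if $t \in G_n$ for infinitely many $n$ we are done with $t_n = t$; and if some $G_n$ is finite then $t \in \overline{G_n} = G_n$. So assume $t \notin G_n$ and $G_n$ infinite for every $n$, so that in fact every neighbourhood of $t$ meets $G_n$ in infinitely many points. For each $n$, since $t \in \overline{G_n}$ and $T$ is Fr\'{e}chet--Urysohn, fix an injective sequence $(s_n(k))_{k \in \mathbb{N}} \subseteq G_n$ with $s_n(k) \to t$ as $k \to \infty$. Put $D_j := \bigcup_{n \ge j} \{s_n(k) : k \in \mathbb{N}\} \subseteq G_j$; this is a decreasing sequence of countable sets, each with $t$ in its closure, and $L := \bigcap_j \overline{D_j}$ is a compact subset of $T$ (closed in the compact space $T$) with $t \in L$.

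The main case is when $t$ is isolated in $L$ (in particular when $L = \{t\}$). Then $L \setminus \{t\}$ is compact and does not contain $t$, so by the Hausdorff property there is an open $U \ni t$ with $\overline{U} \cap (L \setminus \{t\}) = \emptyset$, hence $\overline{U} \cap L = \{t\}$. For each $n$, since $s_n(k) \to t \in U$, choose $k_n \ge n$ with $t_n := s_n(k_n) \in U$; then $t_n \in G_n$ and $t_n \in D_n$. Any cluster point $q$ of $(t_n)$ lies in $\overline{D_j}$ for every $j$ (because $t_m \in D_m \subseteq D_j$ whenever $m \ge j$), hence $q \in L$, and also $q \in \overline{U}$; therefore $q = t$. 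Since $T$ is compact and $(t_n)$ has $t$ as its only cluster point, $t_n \to t$, which even yields a full transversal (so $\mathbb{N}' = \mathbb{N}$ works).

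The remaining case, $t$ not isolated in $L$, is the real obstacle, and is in essence the Arhangelskii--Olson theorem on products of compact Fr\'{e}chet--Urysohn spaces cited above. Here I would use the Fr\'{e}chet--Urysohn property inside the compact Fr\'{e}chet--Urysohn subspace $L$ at $t$ to obtain $(y_i) \subseteq L \setminus \{t\}$ with $y_i \to t$, then, since $y_i \in L \subseteq \overline{D_i}$ and $T$ is Fr\'{e}chet--Urysohn, approximate each $y_i$ by a genuine point $z_i \in D_i \subseteq G_{m_i}$ (with $m_i \ge i \to \infty$) chosen in a suitable neighbourhood $W_i$ of $y_i$, and finally thin the indices $m_i$ to be strictly increasing. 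As in the main case, every cluster point of $(z_i)$ lies in $L$; the delicate point is to choose the $W_i$ small enough --- exploiting normality of the compact space $T$ and the fact that the terms of the convergent sequence $(y_i)$ form a family that is discrete away from $t$ --- that $t$ remains the only cluster point of $(z_i)$, so that compactness again forces $z_i \to t$. Pinning down this choice without a countable neighbourhood base at $t$ is where the work lies, and it may need to be organized as a recursion carried out inside the successively smaller compacta $\overline{D_j}$.
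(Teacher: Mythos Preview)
Your reduction and Case 1 are clean and correct: the observation that every cluster point of your diagonal sequence $(t_n)$ must lie in $L\cap\overline U=\{t\}$, together with compactness, is an elegant way to force convergence.

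The gap is exactly where you locate it, in Case 2, and it is genuine. The heuristic ``choose $W_i$ small enough, using normality and the fact that $\{y_i\}$ is discrete away from $t$'' does not go through without first countability at $t$. Pairwise disjointness of the $\overline{W_i}$ (which is all normality readily gives you after passing to a subsequence of the $y_i$) does \emph{not} imply local finiteness of $\{W_i\}$ in $T\setminus\{t\}$: a cluster point $q\in L\setminus\{t\}$ of $(z_i)$ need not lie in any $\overline{W_i}$, yet every neighbourhood of $q$ may still meet infinitely many $W_i$. So you cannot conclude that $t$ is the unique cluster point of $(z_i)$, and the argument stalls. Iterating inside the $\overline{D_j}$ does not help either: replacing $G_n$ by $D_n$ reproduces the same $L$. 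Since the lemma is essentially equivalent to the Arhangel'ski\u{\i}--Olson product theorem you allude to, invoking that result would be circular.

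The paper avoids this obstacle by a different organization. It first defines $G=\{g\in T:\text{some transversal }(t_n)_{\mathbb N'}\lhd(G_n)_{\mathbb N'}\text{ converges to }g\}$ and proves $t\in\overline G$ (this step uses compactness and the Fr\'echet--Urysohn property once, and is close in spirit to your Case~1). A second application of Fr\'echet--Urysohn yields $g_k\to t$ with $g_k\in G$, each $g_k$ the limit of some transversal $(t^k_m)_{m\in\mathbb N_k}$. The crux is then a purely combinatorial bookkeeping: re-index the $\mathbb N_k$ so that for every bound $L$ only finitely many elements $t^k_m$ have $m\le L$. With this in hand, a Fr\'echet--Urysohn sequence $z_n\to t$ drawn from $H_0=\bigcup_k\{t^k_m\}$ either contains a genuine transversal subsequence (done), or is eventually confined to finitely many values, forcing $t\in G_{j_1}$; iterating gives $t\in G_{j_k}$ for infinitely many $j_k$ and hence the trivial transversal. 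This finiteness-counting argument replaces the topological ``local finiteness'' you were trying to manufacture, and is the missing idea in your Case~2.
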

\begin{proof}
Define
\begin{equation}\label{E:81}
G = \{ g \in T \mid (t_{n})_{\mathbb{N^{'}}} \rightarrow g\ for\ some\ (t_{n})_{\mathbb{N^{'}}} \triangleleft (G_{n})_{\mathbb{N^{'}}} \}.
\end{equation}
Let us first prove that ${t \in \overline{G}}$. Consider any open neighborhood $V$ of $t$. Since ${(T, \tau)}$ is a Hausdorff compact space it is regular and, therefore, there exists a neighborhood $U$ of $t$ such that ${\overline{U} \subset V}$.
According to the lemma's condition there exists a sequence ${(t_{n})_{\mathbb{N^{'}}} \triangleleft (G_{n})_{\mathbb{N^{'}}}}$ such that ${t_{n} \in U}$ for all ${n \in \mathbb{N^{'}}}$. Therefore, denoting ${S = \{t_{n} \mid n \in \mathbb{N}^{'}\}}$, we have the inclusion 
${\overline{S} \subset \overline{U} \subset V}$. Let us prove that one can choose a subsequence ${(t_{n})_{\mathbb{N^{''}}} \subset S}$ convergent to some ${g \in \overline{U} \subset V}$.
 
Suppose that ${\overline{S} \setminus S \neq \emptyset}$. Then choose ${g \in \overline{S} \setminus S}$; since ${(T, \tau)}$ is Fr\'{e}chet-Urysohn, there exists a sequence from $S$ convergent to $g$. After a simple renumeration of that sequence, one constructs a subsequence ${(t_{n})_{\mathbb{N^{''}}} \rightarrow g}$. Now consider the case when ${\overline{S} \setminus S = \emptyset}$ and define ${S_{n_{1}} = S \setminus \{t_{n_{1}}\}}$. By the same reasoning we might find that ${\overline{S_{n_{1}}} \setminus S_{n_{1}} = \emptyset}$. In this case, since ${(T, \tau)}$  is regular, there exists a neighborhood ${U_{n_{1}}}$ of ${t_{n_{1}}}$ such that ${\overline{U_{n_{1}}} \cap S_{n_{1}} = \emptyset}$. Continuing this way we could build a sequence of closed sets ${S_{n_{k}}}$ and non-intersecting neighborhoods ${U_{n_{k}}}$ each neighborhood containing a single element ${t_{n_{k}}}$, for all of the infinite number of elements ${t_{n_{k}}}$ from $S$. As ${(T, \tau)}$ is compact and by assumption $S$ is closed, there exists a finite cover of $S$ by some non-intersecting finite number of neighborhoods ${U_{n_{k_{1}}}, ..., U_{n_{k_{j}}}}$ which is a contradiction.

Therefore, one can choose a subsequence ${(t_{n})_{\mathbb{N^{''}}} \subset S}$ convergent to some ${g \in \overline{U} \subset V}$. Therefore ${g \in G}$ according to \eqref{E:81} and ${g \in V}$ at the same time. As $V$ was an arbitrary neighborhood of $t$, we must conclude that ${t \in \overline{G}}$.\\

Now, since ${(T, \tau)}$ is Fr\'{e}chet-Urysohn, we can choose a sequence ${(g_{n})_{\mathbb{N}} \subset G}$, ${(g_{n})_{\mathbb{N}} \rightarrow t}$. According to \eqref{E:81} there exists a series of infinite sets of natural numbers ${\mathbb{N}_{1}, \mathbb{N}_{2}, ...}$ and a series of sequences ${(t_{m}^{1})_{m \in \mathbb{N}_{1}}, (t_{m}^{2})_{m \in \mathbb{N}_{2}}, ...}$ such that ${(t_{m}^{k})_{m \in \mathbb{N}_{k}} \rightarrow g_{k}}$ and ${(t_{m}^{k})_{m \in \mathbb{N}_{k}} \triangleleft (G_{m})_{m \in \mathbb{N}_{k}}}$ for each ${k \in \mathbb{N}}$. Let us build a new series of sets of natural numbers ${\mathbb{N}^{'}_{1}, \mathbb{N}^{'}_{2}, ...}$ by induction:
\begin{itemize}
  \item Define set ${\mathbb{N}^{'}_{1} = \mathbb{N}_{1}}$ and number ${min_{1} = \min \{n \mid n \in \mathbb{N}^{'}_{1}\}}$.
  \item If ${\mathbb{N}^{'}_{k}}$ and ${min_{k}}$ have already been defined, then declare
\begin{multline}
\notag
\begin{aligned}
& \mathbb{N}^{'}_{k+1} = \{n \in \mathbb{N}_{k+1} \mid n > min_{k}\},\\
& min_{k+1} = \min \{n \mid n \in \mathbb{N}^{'}_{k+1}\}.
\end{aligned}
\end{multline}
\end{itemize}
From this construction it is clear that
\begin{equation}\label{E:400}
min_{1}\ <\ min_{2}\ <\ ...\ <\ min_{k}\ <\ ...
\end{equation}
Also, the sequences of elements ${(t_{m}^{k})_{m \in \mathbb{N}^{'}_{k}} \triangleleft (G_{m})_{m \in \mathbb{N}^{'}_{k}}}$ enumerated by new sets ${\mathbb{N}^{'}_{k}}$ still converge to ${g_{k}}$ for each ${k \in \mathbb{N}}$. The important property of the just built series of sets is such that the total number of elements which lower indices are bounded from above is finite. That is, for any ${L \in \mathbb{N}}$
\begin{equation}\label{E:401}
card \{t^{k}_{m} \mid m \leq L, k \in \mathbb{N}\}\ <\ \infty.
\end{equation}
The proof is quite trivial -- by \eqref{E:400} there can be no more than ${L}$ elements in the first sequence, no more than ${L-1}$ elements in the second sequence, etc. -- totally the count does not exceed
\[
\sum_{i=0}^{L-1}L-i = \frac{L \times (L + 1)}{2}.
\]
Having established \eqref{E:401} it is also easy to see that
\begin{equation}\label{E:402}
\{t^{k}_{m} \mid m  \in \mathbb{N}^{'}_{k}, k \in \mathbb{N}\}\ \setminus\ \bigcup_{i > L}G_{i}\ \subset\ \{t^{k}_{m} \mid m \leq L, k \in \mathbb{N}\}
\end{equation}

Now for each ${i = 0, 1, 2, ...}$ define sets
\[
H_{i} = \bigcup_{k=i+1}^{\infty} \{t_{m}^{k} \mid m \in \mathbb{N}^{'}_{k}\}.
\]
Let us prove that for each ${i \in \mathbb{N}}$
\begin{equation}\label{E:404}
t \in \overline{H_{i}}. 
\end{equation}
Let $U$ be a neighborhood of $t$ and ${i \in \mathbb{N}}$. Since ${(g_{n})_{\mathbb{N}} \rightarrow t}$ there exists ${k > i}$ such that ${g_{k} \in U}$. Since ${(t_{m}^{k})_{m \in \mathbb{N}^{'}_{k}} \rightarrow g_{k}}$, there exists some ${{m} \in \mathbb{N}^{'}_{k}}$ such that ${t_{m}^{k} \in U}$. Yet ${t_{m}^{k} \in H_{i}}$ since ${k > i}$, therefore ${t \in \overline{H_{i}}}$.\\

Now suppose that lemma is not true. Since ${(T, \tau)}$ is Fr\'{e}chet-Urysohn and ${t \in \overline{H_{0}}}$, we can find a sequence ${(z_{n})_{\mathbb{N}} \subset H_{0}}$ such that ${(z_{n})_{\mathbb{N}} \rightarrow t}$. Suppose that for any ${L \in \mathbb{N}}$ we can find ${m > L}$ such that ${z_{m} \in \cup_{i > L}G_{i}}$. Then we could choose an infinite subsequence ${\mathbb{N}^{'}}$ such ${(z_{m})_{\mathbb{N}^{'}} \triangleleft (G_{m})_{\mathbb{N}^{'}}}$; at the same time ${(z_{m})_{\mathbb{N}^{'}} \rightarrow t}$ (since ${(z_{n})_{\mathbb{N}} \rightarrow t}$) which would prove our lemma contrary to the assumption that the lemma is false. Therefore,
\[
\{z_{n} \mid n \geq L\}\ \bigcap\ (\bigcup_{i > L}G_{i})\ =\ \emptyset
\]
for some ${L \in \mathbb{N}}$. Yet each ${z_{n}}$ is some ${t^{k}_{m}}$, thus by the previous equality
\[
\{z_{n} \mid n \geq L\}\ \subset\ \{t^{k}_{m} \mid m  \in \mathbb{N}^{'}_{k}, k \in \mathbb{N}\}\ \setminus\ \bigcup_{i > L}G_{i};
\]
therefore by \eqref{E:402}
\[
\{z_{n} \mid n \geq L\}\ \subset\ \{t^{k}_{m} \mid m \leq L, k \in \mathbb{N}\},
\]
thus by \eqref{E:401} the set ${\{z_{n} \mid n \in \mathbb{N}\}}$ is finite. However ${(z_{n})_{\mathbb{N}} \rightarrow t}$ and all ${z_{n}}$ belong to the finite number of sets ${G_{1}, ..., G_{L}}$ which means that for some ${j_{1} \leq L}$ we can choose a subsequence from ${\{z_{n} \mid n \geq L\} \cap G_{j_{1}}}$ which converges to t. Since ${\{z_{n} \mid n \in \mathbb{N}\}}$ is finite, we infer that elements from that subsequence must be a single constant element from ${G_{j_{1}}}$. Therefore, since ${(T, \tau)}$ is Hausdorff, we shall conclude that that element is ${t}$, which means that ${t \in G_{j_{1}}}$.\\

Repeat the previous argument for ${H_{j_{1}+1}}$ instead of ${H_{0}}$ -- since we have already proved in \eqref{E:404} that ${t \in \overline{H_{j_{1}}}}$, we will find that ${t \in G_{j_{2}}}$ for some ${j_{2} \geq j_{1}+1}$. Continuing this process we can prove that there exists an infinite series of numbers ${j_{1} < j_{2} < ...}$ such that ${t \in \cap_{k=1}^{\infty}G_{j_{k}}}$. Therefore we can simply choose ${z_{j_{k}} = t}$ for every ${k \in \mathbb{N}}$ so that ${(z_{j_{k}})_{k \in \mathbb{N}} \triangleleft (G_{j_{k}})_{k \in \mathbb{N}}}$ and ${(z_{j_{k}})_{k \in \mathbb{N}} \rightarrow t}$ which contradicts to our assumption that the lemma is false.
\end{proof}

\begin{lemma}[Closure of a Countable Union of Subsets]\label{L:ccus}
Let ${(T, \tau)}$ be a topological space, ${(G_{n})_{\mathbb{N}}}$ be a sequence of subsets from $T$. For each ${n \in \mathbb{N}}$ define sets
\[
H_{n} = \bigcup_{i = n}^{\infty} G_{n}.
\]
Suppose that there exists a subsequence ${(h_{n})_{\mathbb{N}_{1}} \triangleleft (H_{n})_{\mathbb{N}_{1}}}$ convergent to some element ${t \in T}$. Then there exists a subsequence ${(g_{n})_{\mathbb{N}_{2}} \triangleleft (G_{n})_{\mathbb{N}_{2}}}$ convergent to $t$.
\end{lemma}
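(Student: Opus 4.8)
The plan is to unwind the hypothesis into a choice of indices and then repackage it. For each $n \in \mathbb{N}_{1}$ the membership $h_{n} \in H_{n} = \bigcup_{i \ge n} G_{i}$ lets us fix an index $\phi(n) \in \mathbb{N}$ with $\phi(n) \ge n$ and $h_{n} \in G_{\phi(n)}$. Since $\phi(n) \ge n$, the map $\phi : \mathbb{N}_{1} \to \mathbb{N}$ is unbounded on the infinite set $\mathbb{N}_{1}$, so its image $\mathbb{N}_{2} := \phi(\mathbb{N}_{1})$ is an infinite subset of $\mathbb{N}$; moreover, for each $m$ the fibre $\phi^{-1}(\{m\}) \cap \mathbb{N}_{1}$ is contained in $\{1,\dots,m\}$ and is therefore finite.

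Next I would build the subsequence over $\mathbb{N}_{2}$. For every $m \in \mathbb{N}_{2}$ pick one preimage $\psi(m) \in \mathbb{N}_{1}$ with $\phi(\psi(m)) = m$, and set $g_{m} := h_{\psi(m)}$. Then $g_{m} = h_{\psi(m)} \in G_{\phi(\psi(m))} = G_{m}$, so indeed $(g_{m})_{\mathbb{N}_{2}} \triangleleft (G_{m})_{\mathbb{N}_{2}}$. The map $\psi$ is injective, since $\psi(m) = \psi(m')$ forces $m = \phi(\psi(m)) = \phi(\psi(m')) = m'$. Hence for every $N$ the set $\{ m \in \mathbb{N}_{2} : \psi(m) < N \}$ injects into the finite set $\{ n \in \mathbb{N}_{1} : n < N \}$ and is finite; that is, $\psi(m) \to \infty$ as $m \to \infty$ through $\mathbb{N}_{2}$.

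It remains to check $(g_{m})_{\mathbb{N}_{2}} \to t$. Given a neighbourhood $U$ of $t$, convergence of $(h_{n})_{\mathbb{N}_{1}}$ supplies $N$ with $h_{n} \in U$ for all $n \in \mathbb{N}_{1}$, $n \ge N$; by the previous paragraph there is $M$ with $\psi(m) \ge N$ for all $m \in \mathbb{N}_{2}$, $m \ge M$; hence $g_{m} = h_{\psi(m)} \in U$ for all such $m$. The only point that needs care — rather than the blunt guess $g_{\phi(n)} := h_{n}$, which fails because $\phi$ need not be injective — is exactly this passage through a right inverse $\psi$ of $\phi$: injectivity of $\psi$ is what both keeps $\mathbb{N}_{2}$ honest as an index set and drives the limit argument. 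Everything else is elementary index bookkeeping, valid in an arbitrary topological space.
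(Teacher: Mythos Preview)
Your proof is correct and follows essentially the same approach as the paper: both arguments pick for each $n \in \mathbb{N}_1$ an index $\phi(n) \ge n$ with $h_n \in G_{\phi(n)}$ and then reindex. The paper carries out an inductive construction that keeps both the new indices $r_k$ and the selected positions $p_k$ strictly increasing, whereas you take a right inverse $\psi$ of $\phi$ and observe that injectivity alone forces $\psi(m) \to \infty$, which is all that topological convergence needs; your packaging is a bit cleaner but the content is the same.
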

\begin{proof}
Since ${\mathbb{N}_{1}}$ is an infinite subset of ${\mathbb{N}}$, we can uniquely enumerate its elements as ${\mathbb{N}_{1} = \{n_{1} < n_{2} < ... < n_{k} < ...\}}$. We will use this enumeration to build a sequence ${(g_{n})_{\mathbb{N}_{2}} \triangleleft (G_{n})_{\mathbb{N}_{2}}}$ inductively:
\begin{itemize}
  \item Since ${h_{n_{1}} \in H_{n_{1}}}$, by definition of ${H_{n_{1}}}$ there exists some number ${r \geq n_{1}}$ such that ${h_{n_{1}} \in G_{r}}$. Denote ${r_{1} = l}$, ${p_{1} = 1}$ and ${g_{r_{1}} = h_{n_{1}} = h_{n_{p_{1}}}}$.
  \item Suppose we have built three finite sets of $k$ natural numbers ${\{r_{1} < ... < r_{k}\}}$, ${\{p_{1} < ... < p_{k}\}}$ and ${\{n_{p_{1}} < ... < n_{p_{k}}\}}$ such that ${r_{i} \geq n_{p_{i}}}$, ${g_{r_{i}} \in G_{r_{i}}}$ and ${g_{r_{i}} = h_{n_{p_{i}}}}$ for each ${i=1,...,k}$. Consider set ${H_{r_{k} + 1}}$. Since ${(h_{n})_{\mathbb{N}_{1}} \triangleleft (H_{n})_{\mathbb{N}_{1}}}$, there exists some ${n_{m} \in \mathbb{N}_{1}}$, ${n_{m} \geq r_{k} + 1}$ such that ${h_{n_{m}} \in H_{n_{m}}}$. Therefore, by definition of ${H_{n_{m}}}$ there is some ${j \geq n_{m}}$ such that ${h_{n_{m}} \in G_{j}}$. Denote ${r_{k+1} = j}$, ${p_{k+1} = m}$ and  ${g_{r_{k+1}} = h_{n_{m}} = h_{n_{p_{k+1}}}}$. Obviously, by our construction ${r_{k+1} = j \geq n_{m} \geq r_{k} + 1 > r_{k}}$, also ${r_{k+1} = j \geq n_{m} = n_{p_{k+1}}}$.
\end{itemize}
  Now consider the just built set of natural numbers ${\mathbb{N}_{2} = \{r_{1}, r_{2}, ..., r_{k}, ...\}}$ and the sequence ${(g_{r_{k}})_{k \in \mathbb{N}}}$. By construction ${(g_{r_{k}})_{k \in \mathbb{N}} \triangleleft (G_{r_{k}})_{k \in \mathbb{N}}}$. Let us prove that this sequence converges to $t$. Let $U$ be a neighborhood of $t$; since ${(h_{n_{i}})_{i \in \mathbb{N}}}$ converges to $t$, there exists some ${L \in \mathbb{N}}$ such that for any ${j > L}$ ${h_{n_{j}} \in U}$. By our construction numbers ${n_{p_{k}}}$ strictly increase, so there exists some ${q}$ such that ${n_{p_{q}} > L}$. Hence if ${j > q}$ then ${n_{p_{j}} > L}$ and therefore ${g_{r_{j}} = h_{n_{p_{j}}} \in U}$ which means that ${(g_{r_{k}})_{k \in \mathbb{N}}}$ converges to $t$.
\end{proof}

The next lemma illustrates the key technique of separating the null functional from the subsets of annihilators of the uniformly $\lambda-$adjusted subspaces. This lemma itself and its separation technique will be used further in a number of the stability results.
\begin{lemma}[The Null Functional Separation]\label{L:absnf}
Let ${(M_{n})_{\mathbb{N}^{'}}}$ and ${(P_{n})_{\mathbb{N}^{'}}}$ be two sequences of closed subspaces in a $BFU-$space $X$, ${M_{n} \neq \{\theta\}}$ for all ${n \in \mathbb{N}^{'}}$. Suppose that real positive numbers $r$ and $\eta$ are such that
\[
r \times (\lambda_{\mathbb{N}^{'}}[M_{n}, P_{n}] + \eta) < 1,
\]
and for every ${n \in \mathbb{N}^{'}}$ the sets of functionals ${T_{n} \in X^{*}}$ and ${V_{n} \in X^{*}}$ are defined like this
\begin{multline}\label{E:250}
\begin{aligned}
& T_{n} = \{f \in P_{n}^{\bot}\ \bigm|\ \left\|f\right\| \leq r\ and\ \sup \{|fx|\ \bigm|\ x \in M_{n}, \left\|x\right\| = 1\} \geq 1 \},\\
& V_{n} = \bigcup_{i \geq n, i \in \mathbb{N}^{'}} T_{i}.
\end{aligned}
\end{multline}
Suppose that ${T_{n} \neq \emptyset}$ for all ${n \in \mathbb{N}^{'}}$. Then for some ${k \in \mathbb{N}^{'}}$ the null functional ${\theta \in X^{*}}$ does not belong to the weak closure of ${V_{k}}$, i.e. ${\theta \notin \overline{V_{k}}^{w^{*}}}$.
\end{lemma}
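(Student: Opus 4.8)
The plan is to argue by contradiction: I would assume that $\theta \in \overline{V_{k}}^{w^{*}}$ for \emph{every} $k \in \mathbb{N}^{'}$ and derive a clash with the hypothesis $r \times (\lambda_{\mathbb{N}^{'}}[M_{n}, P_{n}] + \eta) < 1$. The crux is to promote the assumption ``$\theta$ lies in the $weak^{*}$-closure of each $V_{n}$'' into an actual sequence of functionals $g_{n} \in T_{n}$ that converges to $\theta$ in the $weak^{*}$ topology; the rest is a short estimate built on the fact that $g_{n}$ annihilates $P_{n}$.

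First I would fix the topological stage. Every $T_{n}$, and hence every $V_{n} = \bigcup_{i \geq n,\, i \in \mathbb{N}^{'}} T_{i}$, is contained in the ball $B = \{f \in X^{*} \mid \left\|f\right\| \leq r\}$, which is $weak^{*}$-compact by the Banach--Alaoglu theorem and, because $X$ is a $BFU-$space, is Fr\'{e}chet--Urysohn in the $weak^{*}$ topology. Each $V_{n}$ is non-empty since $T_{n} \neq \emptyset$, and $\theta \in B$ while $\theta \in \overline{V_{n}}^{w^{*}}$ for all $n$ by the contradiction hypothesis. I would then apply Lemma~\ref{L:css} to the compact Fr\'{e}chet--Urysohn space $(B, w^{*})$, the subsets $G_{n} = V_{n}$, and the point $t = \theta$, obtaining an infinite $\mathbb{N}^{''} \subseteq \mathbb{N}^{'}$ and elements $h_{n} \in V_{n}$ ($n \in \mathbb{N}^{''}$) with $h_{n} \to \theta$ in $w^{*}$. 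Since $V_{n} = \bigcup_{i \geq n} T_{i}$, Lemma~\ref{L:ccus} (with $G_{n} = T_{n}$ and $H_{n} = V_{n}$) would then upgrade this to an infinite $\mathbb{N}^{'''} \subseteq \mathbb{N}^{''}$ and functionals $g_{n} \in T_{n}$ ($n \in \mathbb{N}^{'''}$) with $g_{n} \to \theta$ in $w^{*}$.

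Next I would invoke the uniform $\lambda-$adjustment. For each $n \in \mathbb{N}^{'''}$, membership $g_{n} \in T_{n}$ gives $\sup\{|g_{n}x| \mid x \in M_{n},\ \left\|x\right\| = 1\} \geq 1$, so I can choose a unit vector $x_{n} \in M_{n}$ with $|g_{n}x_{n}| > 1 - \tfrac{1}{n}$. Feeding the unit subsequence $(x_{n})_{\mathbb{N}^{'''}} \triangleleft (M_{n})_{\mathbb{N}^{'''}}$ into the definition of $\lambda_{\mathbb{N}^{'}}[M_{n}, P_{n}]$ produces an infinite $\mathbb{N}^{'4} \subseteq \mathbb{N}^{'''}$, vectors $y_{n} \in P_{n}$ ($n \in \mathbb{N}^{'4}$) and a vector $z \in X$ with
\[
\varlimsup_{n \in \mathbb{N}^{'4}} \left\|x_{n} - y_{n} - z\right\|\ <\ \lambda_{\mathbb{N}^{'}}[M_{n}, P_{n}] + \eta ,
\]
the strict inequality being available because $\lambda_{\mathbb{N}^{'}}[M_{n}, P_{n}]$ is an infimum, so $(M_{n})_{\mathbb{N}^{'}}$ is lower uniformly $\lambda-$adjusted with $(P_{n})_{\mathbb{N}^{'}}$ for some $\lambda$ strictly below $\lambda_{\mathbb{N}^{'}}[M_{n}, P_{n}] + \eta$. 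Since $g_{n} \in P_{n}^{\bot}$ we have $g_{n}y_{n} = 0$, so for $n \in \mathbb{N}^{'4}$
\[
|g_{n}x_{n}|\ =\ |g_{n}(x_{n} - y_{n} - z) + g_{n}z|\ \leq\ r\left\|x_{n} - y_{n} - z\right\| + |g_{n}z| ,
\]
and since $g_{n} \to \theta$ in $w^{*}$ forces $g_{n}z \to 0$, passing to $\varlimsup$ over $\mathbb{N}^{'4}$ gives $\varlimsup_{n \in \mathbb{N}^{'4}} |g_{n}x_{n}| \leq r(\lambda_{\mathbb{N}^{'}}[M_{n}, P_{n}] + \eta) < 1$. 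But $|g_{n}x_{n}| > 1 - \tfrac{1}{n}$ forces $\varlimsup_{n \in \mathbb{N}^{'4}} |g_{n}x_{n}| \geq 1$, a contradiction; hence $\theta \notin \overline{V_{k}}^{w^{*}}$ for some $k \in \mathbb{N}^{'}$.

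The step I expect to be the main obstacle is precisely the one handled by Lemmas~\ref{L:css} and~\ref{L:ccus}: converting ``$\theta$ belongs to the $weak^{*}$-closure of every $V_{n}$'' into an honest sequence $g_{n} \in T_{n}$ that $weak^{*}$-converges to $\theta$. This is where the $BFU$ hypothesis is essential, and where the argument would break in a general Banach space, consistently with the $l_{\infty}$ counterexamples of Subsection~\ref{S:eoi}. Once that sequence is secured the remainder is only careful bookkeeping with the nested subsequences $\mathbb{N}^{'} \supseteq \mathbb{N}^{''} \supseteq \mathbb{N}^{'''} \supseteq \mathbb{N}^{'4}$ and the one-line estimate above, the only delicate point being to preserve the strict inequality $r(\lambda_{\mathbb{N}^{'}}[M_{n}, P_{n}] + \eta) < 1$ when unwinding the infimum defining the adjustment constant.
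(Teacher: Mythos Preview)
Your proposal is correct and follows essentially the same route as the paper's proof: contradict, use Banach--Alaoglu plus the $BFU$ hypothesis to sit inside a compact Fr\'{e}chet--Urysohn ball, apply Lemma~\ref{L:css} then Lemma~\ref{L:ccus} to extract $g_{n}\in T_{n}$ with $g_{n}\to\theta$ in $w^{*}$, and finish with the estimate $|g_{n}x_{n}|\leq r\|x_{n}-y_{n}-z\|+|g_{n}z|$ using $g_{n}\in P_{n}^{\bot}$. The only cosmetic differences are that the paper phrases the contradiction hypothesis over an infinite subset of $\mathbb{N}^{'}$ (equivalent, since the $V_{n}$ are decreasing) and does not separately justify the strict inequality in the $\lambda$-adjustment step, simply writing $\varlimsup\|x_{n}-y_{n}-z\|\leq\lambda_{\mathbb{N}^{'}}[M_{n},P_{n}]+\eta$ and relying on the strict hypothesis $r(\lambda+\eta)<1$.
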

\begin{proof}
Assume the opposite -- that for some infinite subset ${\mathbb{N}^{''} \subset \mathbb{N}^{'}}$ ${\theta \in \overline{V_{n}}^{w^{*}}}$ for all ${n \in \mathbb{N}^{''}}$. Obviously, all defined this way sets ${V_{n}}$ are contained within a closed ball ${B_{r} \subset X^{*}}$ radius ${r}$ centered at ${\theta \in X^{*}}$ which is a norm-bounded subset of ${X^{*}}$, therefore it is ${w^{*}}$-compact due to the Banach-Alaoglu theorem; in addition, the topological space ${(B_{r}, w^{*})}$ is Fr\'{e}chet-Urysohn since $X$ is a $BFU-$space.
Then we can apply Lemma \ref{L:css} to the compact Fr\'{e}chet-Urysohn space ${(B_{r}, w^{*})}$ and its sequence of subsets ${(V_{n})_{\mathbb{N}^{''}}}$ concluding that there exists a subsequence ${(f_{n})_{\mathbb{N}^{'''}} \triangleleft (V_{n})_{\mathbb{N}^{'''}}}$ such that 
\begin{equation}\label{E:255}
(f_{n})_{\mathbb{N}^{'''}} \xrightarrow[w^{*}]{} \theta \in X^{*}.
\end{equation} 
Taking into account that all ${V_{n}}$ are unions of ${T_{i}}$ with indices ${i \geq n}$, we can apply Lemma \ref{L:ccus} and conclude that there exists an infinite subset ${\mathbb{N}_{1}^{''} \subset \mathbb{N}^{''}}$ and a subsequence ${(f_{n})_{\mathbb{N}_{1}^{''}} \triangleleft (T_{n})_{\mathbb{N}_{1}^{''}}}$ such that ${(f_{n})_{\mathbb{N}_{1}^{''}} \rightarrow \theta}$ in ${weak^{*}}$ topology. 
Now recall that by \eqref{E:250} there exists a sequence of unit vectors ${(x_{n})_{\mathbb{N}_{1}^{''}} \triangleleft (M_{n})_{\mathbb{N}_{1}^{''}}}$ such that ${|f_{n}x_{n}| \rightarrow 1}$. Also, by definition of $\lambda-$adjustment there exists a subsequence ${(y_{n})_{\mathbb{N}_{1}^{'''}} \triangleleft (P_{n})_{\mathbb{N}_{1}^{'''}}}$ and a vector ${z \in X}$ such that
\[
\varlimsup_{\mathbb{N}_{1}^{'''}} \left\| x_{n} - y_{n} - z\right\|\ \leq\ \lambda_{\mathbb{N}^{'}}[M_{n}, P_{n}] + \eta
\] 
In addition ${f_{n}y_{n} = 0}$ since ${f_{n} \in T_{n} \subset P_{n}^{\bot}}$; also ${f_{n}z \rightarrow 0}$ by \eqref{E:255}. Therefore, using all that rewrite
\begin{multline}
\notag
\begin{aligned}
1 = \varlimsup_{\mathbb{N}_{1}^{'''}} |f_{n}x_{n}| &\ =\ \varlimsup_{\mathbb{N}_{1}^{'''}} |f_{n}x_{n} - f_{n}z + f_{n}z|\\
                                                   &\ \leq\ \varlimsup_{\mathbb{N}_{1}^{'''}} |f_{n}x_{n} - f_{n}z| + \varlimsup_{\mathbb{N}_{1}^{'''}} |f_{n}z|\\
                                                   &\ \leq\ \varlimsup_{\mathbb{N}_{1}^{'''}} |f_{n}x_{n} - f_{n}z| + 0\\
                                                   &\ =\ \varlimsup_{\mathbb{N}_{1}^{'''}} |f_{n}x_{n} - \theta - f_{n}z|\\
                                                   &\ =\ \varlimsup_{\mathbb{N}_{1}^{'''}} |f_{n}x_{n} - f_{n}y_{n} - f_{n}z|\\
                                                   &\ \leq\ \varlimsup_{\mathbb{N}_{1}^{'''}} \left\|f_{n}\right\| \times \left\|x_{n} - y_{n} - z\right\|\\
                                                   &\ \leq\ r \times \varlimsup_{\mathbb{N}_{1}^{'''}} \left\|x_{n} - y_{n} - z\right\|\\
                                                   &\ \leq\ r \times (\lambda_{\mathbb{N}^{'}}[M_{n}, P_{n}] + \eta).
\end{aligned}
\end{multline}
However, the above inequality is in direct contradiction with our choice of $r$ and $\eta$: ${r \times (\lambda_{\mathbb{N}^{'}}[M_{n}, P_{n}] + \eta) < 1}$.
\end{proof}

\subsection[The $BFU$ Small Uniform Adjustment Theorem]{The $BFU$ Small Uniform Adjustment Theorem}\label{bfusuat}

The following theorem is a stronger version of the Small Uniform Adjustment Theorem \ref{T:suat} from Subsection \ref{S:ula} -- all improvements are due to the $BFU$ property of the Banach space $X$. Note that constant $1$ is used in all conditions for uniform $\lambda-$adjustment; also proposition 1 contains single subspaces ${P_{n}}$ instead of their sums. Finally, the last proposition 4 establishes good structural properties under upper uniform adjustment of subspaces with the entire space.
\begin{theorem}[The $BFU$ Small Uniform Adjustment Theorem]\label{T:bfusuat}
Let ${({M}_n)_{\mathbb{N}^{'}}}$ and ${({P}_n)_{\mathbb{N}^{'}}}$ be two sequences of closed subspaces in a $BFU-$space $X$. Then
\begin{enumerate}
  \item For every ${\epsilon > 0}$ there exists a finite-dimensional subspace ${Q_{\epsilon} \subset X}$ such that
\begin{equation}\label{191}
\lambda_{\mathbb{N}^{'}}[M_{n}, P_{n}] + \epsilon\ \geq\ \varlimsup_{n \in \mathbb{N}^{'}} \delta(M_{n}, Q_{\epsilon} + P_{n}).
\end{equation}
  \item If ${\lambda_{\mathbb{N}^{'}}[M_{n}, P_{n}] < 1}$, then there exist ${K \in \mathbb{N}^{'}}$, ${L \in \mathbb{N}}$ such that for ${m > K}$
\[
\dim M_{m}\ \leq\ L\ +\ \dim P_{m}.
\]
  \item If ${\lambda_{\mathbb{N}^{'}}[M_{n}, P_{n}] < 1}$ then there exist ${K \in \mathbb{N}^{'}}$, ${L \in \mathbb{N}}$ such that for ${m > K}$
\[
\dim X/P_{m}\ \leq\ L + \dim X/M_{m}.
\]
  \item The following conditions are equivalent:
  \begin{enumerate}
    \item ${\lambda_{\mathbb{N}^{'}}[X, P_{n}] < 1}$. 
    \item ${\lambda_{\mathbb{N}^{'}}[X, P_{n}] = 0}$.
    \item If ${(x_{n})_{\mathbb{N}^{'}} \subset X}$ is a bounded sequence, then there exists a sequence ${(y_{n})_{\mathbb{N}^{'}} \triangleleft (P_{n})_{\mathbb{N}^{'}}}$ such that ${(x_
    {n} - y_{n})_{\mathbb{N}^{'}}}$ is relatively compact.
    \item There exists a finite dimensional subspace ${Q \subset X}$ and a number ${M \in \mathbb{N}}$ such that for ${n > M}$
\[
X = Q + P_{n};
\]
    also, the norms of the natural projections ${Pr_{n} : Q^{\bot} \oplus P_{n}^{\bot} \rightarrow Q^{\bot}}$ are all bounded from above.
  \end{enumerate}
\end{enumerate}
\end{theorem}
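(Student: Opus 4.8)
The plan is to derive all four parts from part 1, which carries the real content: parts 2 and 3 are then one-line consequences of part 1 together with the Small Gap Theorem~\ref{T:sgp}, and part 4 is a short cycle of implications resting again on part 1. The only genuinely new tool needed is the Null Functional Separation Lemma~\ref{L:absnf}, which is where the $BFU$ hypothesis enters; the rest is Hahn--Banach bookkeeping. The improvement of the constant from $1/2$ to $1$, and the replacement of $\sum_{j=1}^{n}P_j$ by the single $P_n$, both come precisely from the fact that Lemma~\ref{L:absnf} hands us one finite-dimensional subspace of $X$ rather than a growing union and costs no factor of $2$.

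For part 1 we fix $\epsilon>0$ and write $\lambda=\lambda_{\mathbb{N}^{'}}[M_n,P_n]$. If $\lambda+\epsilon\ge 1$, or if $\delta(M_n,P_n)\le\lambda+\epsilon$ for all but finitely many $n$, take $Q_{\epsilon}=\{\theta\}$ and \eqref{191} is immediate. Otherwise put $r=1/(\lambda+\epsilon)>1$, pick $\eta\in(0,\epsilon)$ so that $r(\lambda+\eta)<1$, and set $\mathbb{N}^{''}=\{\,n\in\mathbb{N}^{'}: \delta(M_n,P_n)>\lambda+\epsilon\,\}$ (infinite, by assumption). The role of $r$ is this: whenever a unit $x\in M_n$ has $dist(x,P_n)>\lambda+\epsilon$, Hahn--Banach gives $g\in P_n^{\bot}$ with $\|g\|=1$ and $gx=dist(x,P_n)$, and then $f:=g/gx$ lies in the set $T_n$ of Lemma~\ref{L:absnf} (it satisfies $\|f\|<r$, $f\in P_n^{\bot}$, $fx=1$); hence $T_n\ne\emptyset$ for $n\in\mathbb{N}^{''}$. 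Since $\lambda_{\mathbb{N}^{''}}[M_n,P_n]\le\lambda$, Lemma~\ref{L:absnf} applied over $\mathbb{N}^{''}$ with this $r$ and $\eta$ produces $k\in\mathbb{N}^{''}$ and a basic ${w^{*}}$-neighbourhood $U=\{\,f\in X^{*}:|fv_j|<\delta,\ j=1,\dots,m\,\}$ of $\theta$ with $U\cap V_k=\emptyset$. We then set $Q_{\epsilon}=sp(\{v_1,\dots,v_m\})$, a finite-dimensional subspace, and check that $\delta(M_n,Q_{\epsilon}+P_n)\le\lambda+\epsilon$ for all $n\ge k$: for $n\notin\mathbb{N}^{''}$ this holds because $\delta(M_n,Q_{\epsilon}+P_n)\le\delta(M_n,P_n)\le\lambda+\epsilon$, while for $n\in\mathbb{N}^{''}$ a unit $x\in M_n$ with $dist(x,Q_{\epsilon}+P_n)>\lambda+\epsilon$ would, by the same Hahn--Banach step applied now to $(Q_{\epsilon}+P_n)^{\bot}$, yield $f\in T_n\subseteq V_k$ with $f\in Q_{\epsilon}^{\bot}$, i.e. $fv_j=0<\delta$ for all $j$, contradicting $U\cap V_k=\emptyset$. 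Passing to $\varlimsup_{n\in\mathbb{N}^{'}}$ gives \eqref{191}.

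Parts 2 and 3 are then immediate: choose $\epsilon$ with $\lambda_{\mathbb{N}^{'}}[M_n,P_n]+\epsilon<1$ and the finite-dimensional $Q_{\epsilon}$ from part 1, so that $\delta(M_n,Q_{\epsilon}+P_n)<1$ for all large $n$. Proposition 2 from the Small Gap Theorem gives $\dim M_n\le\dim(Q_{\epsilon}+P_n)\le\dim Q_{\epsilon}+\dim P_n$, and proposition 3 gives $\dim X/(Q_{\epsilon}+P_n)\le\dim X/M_n$, whence $\dim X/P_n\le\dim X/(Q_{\epsilon}+P_n)+\dim Q_{\epsilon}\le\dim X/M_n+\dim Q_{\epsilon}$; in both cases $L=\dim Q_{\epsilon}<\infty$ is the constant wanted.

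For part 4, (b)$\Rightarrow$(a) is trivial, and (a)$\Rightarrow X=Q+P_n$ for all large $n$ (with a fixed finite-dimensional $Q$) follows from part 1 applied to $M_n=X$ together with proposition 4 from the Small Gap Theorem. What remains is to produce the uniform bound on the projections $Pr_n:Q^{\bot}\oplus P_n^{\bot}\rightarrow Q^{\bot}$ in (d), to deduce (c), and to close the cycle at (b). The plan here is: choosing $R_n\subseteq Q$ with $Q=(Q\cap P_n)\oplus R_n$ we get $X=R_n\oplus P_n$, and duality reduces $\|Pr_n\|$ to the norm of the projection $\pi_n:X\rightarrow R_n$ along $P_n$; since the $R_n$ all sit inside the fixed finite-dimensional $Q$, a compactness argument in $Q$ using the defining inequality of uniform $\lambda$-adjustment (equivalently, proposition 5 from the Small Uniform Adjustment Theorem~\ref{T:suat} applied in $X^{*}$ to the finite-dimensional subspaces $P_n^{\bot}$) bounds $\|\pi_n\|$ uniformly. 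A uniformly bounded decomposition $x_n=q_n+p_n$ ($q_n\in Q$ bounded, $p_n\in P_n$) of any bounded $(x_n)_{\mathbb{N}^{'}}$ then exists, and since $Q$ is finite-dimensional $(x_n-p_n)_{\mathbb{N}^{'}}=(q_n)_{\mathbb{N}^{'}}$ is relatively compact, which is (c); conversely (c) plus a diagonalisation over $\eta\downarrow 0$, exactly as in the proof of proposition 2 of Theorem~\ref{solua}, forces $\lambda_{\mathbb{N}^{'}}[X,P_n]=0$, i.e. (b). The hard part will be this uniform projection bound: it is the step that genuinely needs the $BFU$ property (through part 1, and in the dual space through the constant being $1$ rather than $1/2$) and that must circumvent the failure of $\lambda$-adjustment to be symmetric under passage to annihilators, Example~\ref{Ex:3}.
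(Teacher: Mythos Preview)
Your treatment of parts 1, 2 and 3 is correct and coincides with the paper's proof: the same partition into the indices with $\delta(M_n,P_n)\le\lambda+\epsilon$ and those without, the same Hahn--Banach production of functionals in $T_n$, the same appeal to Lemma~\ref{L:absnf}, and the same extraction of $Q_\epsilon$ from the separating $w^*$-neighbourhood. Parts 2 and 3 are identical one-line corollaries via the Small Gap Theorem.

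The gap is in part 4, precisely at the step you flag as ``the hard part'': the uniform bound on $\|Pr_n\|$ in (a)$\Rightarrow$(d). Neither of your two suggestions works. Proposition~5 of Theorem~\ref{T:suat} applied in $X^*$ to $(P_n^\bot)$ requires $\lambda_{\mathbb{N}'}[P_n^\bot,\{\theta\}]<1/2$ in $X^*$, and you have no such bound: the only hypothesis is $\lambda_{\mathbb{N}'}[X,P_n]<1$ in $X$, and Example~\ref{Ex:3}---which you yourself invoke---shows that $\lambda$-adjustment in $X$ says nothing about $\lambda$-adjustment of annihilators in $X^*$. The alternative ``compactness argument in $Q$'' also stalls: if $\|\pi_n\|\to\infty$ you do get unit $u_n\in Q$ with $u_n\to u$ and $v_n\in P_n$ with $v_n\to u$, but the existence of a $P_n$-sequence converging in norm to a point of $Q$ contradicts nothing in the definition of $\lambda_{\mathbb{N}'}[X,P_n]$.

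The paper's fix is not to use part~1 as a black box for (a)$\Rightarrow$(d), but to reapply Lemma~\ref{L:absnf} directly with $T_n=S_n$ the unit sphere of $P_n^\bot$ (and $r=1$, $M_n=X$). The resulting $w^*$-neighbourhood $W=\{f:|fq_j|<\nu,\ j\le k\}$ gives both $Q=sp(q_1,\dots,q_k)$ with $P_n^\bot\cap Q^\bot=\{\theta\}$ \emph{and} the quantitative datum $\nu>0$: every unit $f\in P_n^\bot$ satisfies $|fq_j|\ge\nu$ for some $j$. It is this $\nu$ that kills unbounded projections, since $\|Pr_n\|\to\infty$ would produce unit $f_n\in P_n^\bot$ with $\|f_n-r_n\|\to 0$ for $r_n\in Q^\bot$, whence along a subsequence with fixed $j$ one has $\nu\le|f_nq_j|=|(f_n-r_n)q_j|\to 0$. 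Your route through part~1 discards exactly this $\nu$, and that is why the argument cannot be closed from part~1 alone.
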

\begin{proof}
Denote ${\lambda = \lambda_{\mathbb{N}^{'}}[M_{n}, P_{n}]}$. If ${\lambda = 1}$ then set ${Q_{\epsilon} = \{\theta\}}$ for every ${\epsilon > 0}$. Since the gap between any two subspaces does not exceed 1, the following is true
\[
\varlimsup_{n \in \mathbb{N}^{'}} \delta(M_{n}, Q_{\epsilon} + P_{n})\ \leq\ 1\ <\ 1 + \epsilon\ =\ \lambda + \epsilon\ =\  \lambda_{\mathbb{N}^{'}}[M_{n}, P_{n}] + \epsilon
\]
which proves proposition 1. Therefore, we only need to consider the case when ${\lambda < 1}$.
For any ${\epsilon \in (0, 1 - \lambda)}$ partition set ${\mathbb{N}^{'}}$ into two subsets ${\mathbb{N}_{0}^{''}, \mathbb{N}_{1}^{''}}$ defined like this:
\begin{multline}
\notag
\begin{aligned}
& \mathbb{N}_{0}^{''}\ =\ \{n \in \mathbb{N}^{''} \mid \delta(M_{n}, P_{n})\ \leq\ \lambda + \epsilon\},\\
& \mathbb{N}_{1}^{''}\ =\ \{n \in \mathbb{N}^{''} \mid \delta(M_{n}, P_{n})\ >\ \lambda + \epsilon\}\ =\ \mathbb{N}^{'} \setminus \mathbb{N}_{0}^{''}.
\end{aligned}
\end{multline}
Suppose that the set ${\mathbb{N}_{1}^{''}}$ is finite. In this case all large enough ${n \in \mathbb{N}^{'}}$ belong to ${\mathbb{N}_{0}^{''}}$ so we can set ${Q_{\epsilon} = \{\theta\}}$ and this will prove proposition 1. Thus we only need to consider the case when ${\mathbb{N}_{1}^{''}}$ is infinite. Since ${\delta(M_{n}, P_{n}) > \lambda + \epsilon}$ for all ${n \in \mathbb{N}_{1}^{''}}$, then for every ${n \in \mathbb{N}_{1}^{''}}$ there exists a vector ${x_{n} \in M_{n}}$ such that ${\left\|x_{n}\right\| = 1}$ and ${dist(x_{n}, P_{n}) > \lambda + \epsilon}$. Applying Hahn-Banach theorem to ${x_{n}}$ and ${P_{n}}$ we can find a functional ${f_{n} \in P_{n}^{\bot} \subset X^{*}}$ such that ${f_{n}x_{n} = 1}$ and ${\left\|f_{n}\right\| < \frac{1}{\lambda + \epsilon}}$. Therefore, denoting by ${S_{n}}$ the unit sphere from ${M_{n}}$, we may define for every ${n \in \mathbb{N}_{1}^{''}}$ a non-empty set
\begin{equation}\label{E:111}
T_{n} = \{f \in P_{n}^{\bot}\ \bigm| \ \left\|f\right\| \leq \frac{1}{\lambda + \epsilon}\ and \ \sup \{|fx|\ \bigm| \ x \in S_{n} \subset M_{n}\} \geq 1 \}.
\end{equation} 
Let us consider a sequence ${(V_{n})_{\mathbb{N}^{''}}}$ of unions of sets ${T_{i}}$ defined for every ${n \in \mathbb{N}_{1}^{''}}$:
\begin{equation}\label{E:93}
V_{n} = \bigcup_{i \geq n, i \in \mathbb{N}_{1}^{''}} T_{i}.
\end{equation}
Set ${r = \frac{1}{\lambda + \epsilon}}$ and ${\eta = \frac{\epsilon}{2}}$ -- obviously ${r \times (\lambda + \eta) < 1}$. Then we have all the conditions of the Null Functional Separation Lemma \ref{L:absnf} from which it follows that ${\theta \notin \overline{V_{n}}^{w^{*}}}$ for large enough ${n \in \mathbb{N}_{1}^{''}}$. Therefore, there exists a base neighborhood ${W \in w^{*}}$ of ${\theta \in X^{*}}$ and a number ${M \in \mathbb{N}}$ such that for ${n > M}$
\begin{equation}\label{E:97}
V_{n} \cap W = \emptyset.
\end{equation}
According to definition of the weak topology ${w^{*}}$ the base neighborhood $W$ is determined by a real number ${\nu > 0}$ and by a finite number of vectors ${v_{1}, ..., v_{k}}$ from $X$ such that ${f \in W}$ if and only if ${|fv_{i}| < \nu}$ for all ${i \in \{1,...,k\}}$. Therefore, due to \eqref{E:97}, for every ${n > M}$ and for every ${f \in V_{n}}$ there always exists some ${i \in \{1,...,k\}}$ such that ${|fv_{i}| \geq \nu > 0}$. Hence, defining a finite-dimensional space ${V = sp(\{v_{1},...,v_{k}\})}$, we conclude that ${V_{n} \cap V^{\bot} = \emptyset}$ for ${n > M}$, ${n \in \mathbb{N}_{1}^{''}}$. Therefore, due to \eqref{E:93}, for the same ${n}$
\begin{equation}\label{E:98}
T_{n} \cap V^{\bot} = \emptyset.
\end{equation}
Our goal now is to prove that ${\delta (M_{n}, V + P_{n}) \leq \lambda + \epsilon}$ for all ${n > M}$, ${n \in \mathbb{N}_{1}^{''}}$. Suppose the opposite -- then there shall exist a real number ${\gamma \in (0,\ 1 -(\lambda + \epsilon))}$, a natural number ${j > M}$, ${j \in \mathbb{N}_{1}^{''}}$ and a unit vector ${x_{j} \in M_{j}}$ such that
\begin{equation}\label{E:110}
d_{j} = dist(x_{j}, V + P_{j}) \geq \lambda + \epsilon + \gamma.
\end{equation}
Applying the Hahn-Banach theorem construct a functional ${f_{j}}$ such that ${f_{j}x_{j} = 1}$, ${\left\|f_{j}\right\| = \frac{1}{d_{j}}}$ and ${f_{j} \in (V + P_{j})^{\bot}}$. Obviously ${(V + P_{j})^{\bot} \subset V^{\bot}}$, therefore ${f_{j} \in V^{\bot}}$. Thus ${f_{j} \notin T_{j}}$ due to \eqref{E:98}. However, by our construction\linebreak[4] ${f_{j}x_{j} = 1}$ and ${f_{j} \in (V + P_{j})^{\bot} \subset P_{j}^{\bot}}$. Therefore, according to definition of ${T_{j}}$ from \eqref{E:111} the only way ${f_{j}}$ can be outside of ${T_{j}}$ is if ${\left\|f_{j}\right\| > \frac{1}{\lambda + \epsilon}}$, but it is impossible since by ${f_{j}}$ construction and by \eqref{E:110} the opposite is true: 
\[
\left\|f_{j}\right\| = \frac{1}{d_{j}} \leq \frac{1}{\lambda + \epsilon + \gamma} < \frac{1}{\lambda + \epsilon}.
\]
Thus we must conclude that our assumption is wrong and therefore\linebreak[4] ${\delta (M_{n}, V + P_{n}) \leq \lambda + \epsilon}$ for all ${n > M}$, ${n \in \mathbb{N}_{1}^{''}}$. Recall that ${V = sp(\{v_{1},...,v_{k}\})}$, therefore ${\dim V < \infty}$. Denoting ${Q_{\epsilon} = V}$, we conclude that for a given ${\epsilon > 0}$ a finite dimensional space ${Q_{\epsilon}}$ has been found such that for the sequence ${\mathbb{N}_{1}^{''}}$
\begin{equation}\label{E:115}
\lambda_{\mathbb{N}^{'}}[M_{n}, P_{n}] + \epsilon\ =\ \lambda + \epsilon\ \geq\ \varlimsup_{n \in \mathbb{N}_{1}^{''}} \delta(M_{n}, V + P_{n})\ =\ \varlimsup_{n \in \mathbb{N}_{1}^{''}} \delta(M_{n}, Q_{\epsilon} + P_{n}).
\end{equation}
Now recall that in the beginning of the proof we partitioned ${\mathbb{N}^{'}}$ into two subsets ${\mathbb{N}_{0}^{''}, \mathbb{N}_{1}^{''} \subset \mathbb{N}^{'}}$:
\begin{multline}\label{E:112}
\begin{aligned}
& \mathbb{N}_{0}^{''}\ =\ \{n \in \mathbb{N}^{''} \mid \delta(M_{n}, P_{n})\ \leq\ \lambda + \epsilon\},\\
& \mathbb{N}_{1}^{''}\ =\ \{n \in \mathbb{N}^{''} \mid \delta(M_{n}, P_{n})\ >\ \lambda + \epsilon\}\ =\ \mathbb{N}^{'} \setminus \mathbb{N}_{0}^{''}
\end{aligned}
\end{multline}
If ${\mathbb{N}_{0}^{''}}$ is finite, then inequality \eqref{E:115} can be rewritten by replacing ${\mathbb{N}_{1}^{''}}$ with ${\mathbb{N}^{'}}$
\[
\lambda_{\mathbb{N}^{'}}[M_{n}, P_{n}] + \epsilon\ \geq\ \varlimsup_{n \in \mathbb{N}^{'}} \delta(M_{n}, Q_{\epsilon} + P_{n})
\]
which obviously proves proposition 1. Now suppose that ${\mathbb{N}_{0}^{''}}$ is an infinite set. Note that ${\delta(M_{n}, P_{n})\ \geq\ \delta(M_{n}, Q_{\epsilon} + P_{n})}$ since ${P_{n} \subset Q_{\epsilon} + P_{n}}$. Therefore, the first line from \eqref{E:112} can be rewritten for the sequence ${\mathbb{N}_{0}^{''}}$
\begin{equation}\label{E:114}
\lambda_{\mathbb{N}^{'}}[M_{n}, P_{n}] + \epsilon\ =\ \lambda + \epsilon\ \geq\ \varlimsup_{n \in \mathbb{N}_{0}^{''}} \delta(M_{n}, P_{n})\ \geq\ \varlimsup_{n \in \mathbb{N}_{0}^{''}} \delta(M_{n}, Q_{\epsilon} + P_{n}).
\end{equation}
Therefore, as inequality \eqref{E:114} contains only ${\mathbb{N}_{0}^{''}}$ while inequality \eqref{E:115} contains only ${\mathbb{N}_{1}^{''}}$, we can combine inequalities \eqref{E:114} and \eqref{E:115} into a single inequality containing the whole set ${\mathbb{N}^{'}}$:
\[
\lambda_{\mathbb{N}^{'}}[M_{n}, P_{n}] + \epsilon\ \geq\ \varlimsup_{n \in \mathbb{N}^{'}} \delta(M_{n}, Q_{\epsilon} + P_{n}),
\]
which concludes our proof of the proposition 1.\\

In order to prove proposition 2 suppose that ${\lambda_{\mathbb{N}^{'}}[M_{n}, P_{n}] < 1}$; then, according to proposition 1, for any ${\epsilon \in (\lambda_{\mathbb{N}^{'}}[M_{n}, P_{n}],\ 1)}$ there exists a finite dimensional space ${Q_{\epsilon}}$ such that
\begin{equation}\label{193}
1\ >\ \lambda_{\mathbb{N}^{'}}[M_{n}, P_{n}] + \epsilon\ \geq\ \varlimsup_{n \in \mathbb{N}^{'}} \delta(M_{n}, Q_{\epsilon} + P_{n}).
\end{equation}
Therefore, it follows from the proposition 2 of the Small Gap Theorem that for large enough $n$
\[
\dim M_{n}\ \leq\ \dim (Q_{\epsilon} + P_{n})\ \leq\ \dim Q_{\epsilon} + \dim P_{n}.
\]
Denoting ${L = \dim Q_{\epsilon} < \infty}$ concludes the proof of the proposition 2.\\

In order to prove proposition 3 note that due to \eqref{193} and due to the proposition 3 of the Small Gap Theorem for large enough $n$
\[
\dim X / (Q_{\epsilon} + P_{n})\ \leq\ \dim X / M_{n}.
\]
Denoting ${L = \dim Q_{\epsilon} < \infty}$ we can rewrite the above inequality for the same large enough $n$:
\[
\dim X / P_{n}\ -\ L\ \leq\ \dim X / (Q_{\epsilon} + P_{n})\ \leq\ \dim X / M_{n},
\]
which proves proposition 3.\\

In order to prove proposition 4 note that implications ${(c) \Rightarrow (b) \Rightarrow (a)}$ are trivial so we only need to prove ${(a) \Rightarrow (d) \Rightarrow (c)}$.\\

Suppose that ${(a)}$ is true -- we need to prove ${(d)}$. Let ${S_{n}}$ be a unit sphere in ${P_{n}^{\bot}}$ for every ${n \in \mathbb{N}^{'}}$. Define
\begin{equation}\label{E:200}
T_{n} = S_{n},\ \ \ \ V_{n} = \bigcup_{i \geq n, i \in \mathbb{N}^{'}} T_{i}.
\end{equation} 
Also set ${r = 1}$ and choose some ${\eta \in (0, 1 - \lambda)}$ -- obviously ${r \times (\lambda + \eta) < 1}$. Then, since ${M_{n} = X}$ for all ${n \in \mathbb{N}^{'}}$, we can easily verify that 
\[
T_{n} = S_{n} = \{f \in P_{n}^{\bot}\ \bigm|\ \left\|f\right\| \leq r\ and\ \sup \{|fx|\ \bigm|\ x \in M_{n} = X, \left\|x\right\| = 1\} \geq 1 \}.
\]
Therefore all conditions of the Null Functional Separation Lemma \ref{L:absnf} are true, thus  it follows that ${\theta \notin \overline{V_{n}}^{w^{*}}}$ for large enough ${n \in \mathbb{N}^{'}}$. Therefore, there exists a base neighborhood ${W \in w^{*}}$ of ${\theta \in X^{*}}$ and a number ${M \in \mathbb{N}}$ such that for ${n > M, n \in \mathbb{N}^{'}}$
\begin{equation}\label{E:208}
V_{n} \cap W = \emptyset.
\end{equation}
According to definition of the weak topology ${w^{*}}$ the base neighborhood $W$ is determined by a real number ${\eta > 0}$ and by a finite number of vectors ${q_{1}, ..., q_{k}}$ from $X$ such that ${f \in W}$ if and only if ${|fq_{i}| < \eta}$ for all ${i \in \{1,...,k\}}$. Therefore, due to \eqref{E:208}, for every ${n > M}$ and for every ${f \in V_{n}}$ there always exists some ${i \in \{1,...,k\}}$ such that ${|fq_{i}| \geq \eta > 0}$. Hence, defining a finite-dimensional space ${Q = sp(\{q_{1},...,q_{k}\})}$, we conclude that ${V_{n} \cap Q^{\bot} = \emptyset}$ for ${n > M}$, ${n \in \mathbb{N}^{'}}$. Therefore, due to \eqref{E:200}, for the same ${n}$
\[
S_{n} \cap Q^{\bot} = \emptyset.
\]
Since ${S_{n}}$ are unit spheres from ${P_{n}}$, we shall conclude from the previous equation that ${P_{n}^{\bot} \cap Q^{\bot} = \{\theta\}}$ for ${n > M, n \in \mathbb{N}^{'}}$. Therefore ${X = Q + P_{n}}$ for the same $n$ -- since ${\dim Q \leq k}$ this proves the first part of the proposition ${(d)}$.

In order to prove the second part note that ${P_{n}^{\bot} \cap Q^{\bot} = \{\theta\}}$ for large enough $n$, hence
\[
\dim P_{n}^{\bot} \leq \dim X^{*}/Q^{\bot} = \dim Q \leq k < \infty.
\]
Therefore, all spaces ${Q^{\bot}}$, ${P_{n}^{\bot}}$ and ${Q^{\bot} \oplus P_{n}^{\bot}}$ are closed, the natural projections ${Pr_{n} : Q^{\bot} \oplus P_{n}^{\bot} \rightarrow Q^{\bot}}$ all exist and are continuous. Now suppose that the norms of projections ${Pr_{n}}$ are unbounded. Then there exists a subsequence of projections ${(Pr_{n})_{\mathbb{N}^{''}}}$ such that ${(\left\|Pr_{n}\right\|)_{\mathbb{N}^{''}} \rightarrow \infty}$. Therefore, there exists a sequence of unit functionals ${(f_{n})_{\mathbb{N}^{''}} \triangleleft (P_{n}^{\bot})_{\mathbb{N}^{''}}}$ such that ${dist(f_{n}, Q^{\bot}) \rightarrow 0}$ or, equivalently, that
\begin{equation}\label{E:777}
\left\|f_{n} - r_{n}\right\| \rightarrow 0
\end{equation}
for some sequence of functionals ${(r_{n})_{\mathbb{N}^{''}} \subset Q^{\bot}}$. Recall that we have proved that there exists some ${\nu > 0}$ such that for each ${f_{n}}$ from a unit sphere ${S_{n} \subset P_{n}^{\bot}}$ for large enough $n$ there always exists some ${k_{n}}$ such that ${|f_{n}q_{k_{n}}| \geq \nu > 0}$. Therefore, since there is only a finite number of vectors ${q_{1},...,q_{k}}$, we can choose a subsequence ${\mathbb{N}^{'''}}$ and some vector ${q_{j}}$ such that
\[
\varliminf_{n \in \mathbb{N}^{'''}} |f_{n}q_{j}| \geq \nu > 0.
\]
Now combining the previous inequality with \eqref{E:777} and recalling that ${r_{n}q_{j} = 0}$ since ${(r_{n})_{\mathbb{N}^{''}} \subset Q^{\bot}}$ we obtain a contradiction:
\begin{multline}
\notag
\begin{aligned}
0\ <\ \nu\ \leq\ \varliminf_{n \in \mathbb{N}^{'''}} |f_{n}q_{j}| &\ =\ \varliminf_{n \in \mathbb{N}^{'''}} |f_{n}q_{j} - r_{n}q_{j}|\\
                                                                  &\ \leq\ \varliminf_{n \in \mathbb{N}^{'''}} (\left\|f_{n} - r_{n}\right\| \times \left\|q_{j}\right\|)\\
                                                                  &\ =\ \left\|q_{j}\right\| \times \varliminf_{n \in \mathbb{N}^{'''}} (\left\|f_{n} - r_{n}\right\|)\\
                                                                  &\ =\ \left\|q_{j}\right\| \times 0\ =\ 0.
\end{aligned}
\end{multline}
This concludes our proof of the implication ${(a) \Rightarrow (d)}$.\\

Now let us prove implication ${(d) \Rightarrow (c)}$. We can safely assume that ${(d)}$ is true for all ${n \in \mathbb{N}^{'}}$. Since the norms of the projections ${Pr_{n} : Q^{\bot} \oplus P_{n}^{\bot} \rightarrow Q^{\bot}}$ are all bounded by some ${K < \infty}$ and since ${\dim X^{*}/Q^{\bot} = k < \infty}$, using the Hahn-Banach theorem up to $k$ times one can extend each projection\linebreak[4] ${Pr_{n} : {Q^{\bot} \oplus P_{n}^{\bot}\rightarrow Q^{\bot}}}$ toward a projection ${Pr_{n}^{'} : X^{*} \rightarrow Q^{\bot}}$ so that the norms of the extended projections ${Pr_{n}^{'}}$ are still bounded by ${K \times k}$. Let ${G_{n}}$ be kernels of ${Pr_{n}^{'}}$ -- define subspaces from $X$ like this
\[
H_{n} = \ ^{\bot}G_{n} = \bigcap_{f \in G_{n}} Ker(f).
\]
Obviously each ${H_{n}}$ is a closed subspace and ${H_{n} \subset P_{n}}$, also ${X = H_{n} \oplus Q}$ and the norms of natural projections ${W_{n} : X = H_{n} \oplus Q \rightarrow Q}$ are bounded by the same constant ${K \times k}$.

Now let ${(x_{n})_{\mathbb{N}^{'}}}$ be a bounded sequence of vectors from $X$. Decompose each ${x_{n} = W_{n}x_{n} + (I - W_{n})x_{n}}$ -- the sequence ${(W_{n}x_{n})_{\mathbb{N}^{'}} \subset Q}$ is bounded and therefore is relatively compact since ${\dim Q < \infty}$, while ${((I - W_{n})x_{n})_{\mathbb{N}^{'}} \triangleleft (P_{n})_{\mathbb{N}^{'}}}$ since ${((I - W_{n})x_{n})_{\mathbb{N}^{'}} \triangleleft (H_{n})_{\mathbb{N}^{'}}}$ and ${H_{n} \subset P_{n}}$. Denoting ${y_{n} = (I - W_{n})x_{n} \in P_{n}}$ we infer that ${(x_{n} - y_{n})_{\mathbb{N}^{'}}}$ is relatively compact. This concludes the proof of the implication ${(d) \Rightarrow (c)}$.
\end{proof}
Note that with some additional effort one can prove the following theorem that augments the $BFU$ Small Uniform Adjustment Theorem with stability of finite co-dimensions under simultaneous upper and lower uniform $\lambda-$adjustment of nested subspaces:
\begin{theorem}[Co-Dimensions of Nested Subspaces are Bounded]\label{cdnsb}
Let X be a $BFU-$space and ${P \subset Q}$ are two closed subspaces of $X$ such that ${\dim Q/P < \infty}$. Suppose that two sequences ${(P_{n})_{\mathbb{N}^{'}}}$ and ${(Q_{n})_{\mathbb{N}^{'}}}$  of closed subspaces from $X$ are such that ${P_{n} \subset Q_{n}}$ for all ${n \in \mathbb{N}^{'}}$; also suppose that uniform $\lambda-$adjustment numbers ${\lambda_{\mathbb{N}^{'}}[Q_{n}, Q]}$ and ${\lambda_{\mathbb{N}^{'}}[P, P_{n}]}$ are small enough. Then there exists a number ${K \in \mathbb{N}}$ such that ${\dim Q_{n} / P_{n} < K}$ for large enough ${n \in \mathbb{N}^{'}}$. 
\end{theorem}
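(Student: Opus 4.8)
The plan is to bound $Q_{n}$ against $P_{n}$ enlarged by a \emph{fixed} finite-dimensional space, and then feed this into the $BFU$ Small Uniform Adjustment Theorem \ref{T:bfusuat} and the Small Gap Theorem \ref{T:sgp}. We may assume $P \neq \{\theta\}$, $Q \neq X$ and $Q_{n} \neq \{\theta\}$ for all $n$: if $Q_{n} = \{\theta\}$ then $P_{n} = Q_{n}$ and $\dim Q_{n}/P_{n} = 0$; if $P = \{\theta\}$ then $\dim Q = d < \infty$, the hypothesis on $\lambda_{\mathbb{N}^{'}}[P, P_{n}]$ is vacuous, and proposition 2 of Theorem \ref{T:bfusuat} applied to the pair $((Q_{n}), (Q))$ already gives $\dim Q_{n} \leq L + d$, hence $\dim Q_{n}/P_{n} \leq L + d$; and $Q = X$ only forces $\lambda_{\mathbb{N}^{'}}[Q_{n}, Q] = 0$, which simplifies matters. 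Fix a finite-dimensional complement $F$ of $P$ in $Q$, so $Q = P \oplus F$ with $\dim F = \dim Q/P = d$, and let $\pi_{P}, \pi_{F}$ be the continuous projections of $Q$ onto $P$ and $F$.

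The key step is to prove that $\lambda_{\mathbb{N}^{'}}[Q_{n}, P_{n} + F] < 1$ once the two given adjustment numbers are small enough --- for instance when $\lambda_{\mathbb{N}^{'}}[P, P_{n}] = 0$ and $\lambda_{\mathbb{N}^{'}}[Q_{n}, Q] < 1$. Given $\eta > 0$ and a unit subsequence $(x_{n})_{\mathbb{N}^{''}} \triangleleft (Q_{n})_{\mathbb{N}^{''}}$, I would first use $\lambda_{\mathbb{N}^{'}}[Q_{n}, Q]$ to obtain vectors $y_{n} \in Q$ (along a subsequence $\mathbb{N}^{'''}$) and $z \in X$ with $\varlimsup \|x_{n} - y_{n} - z\| \leq \lambda_{\mathbb{N}^{'}}[Q_{n}, Q] + \eta$. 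Since $\|x_{n}\| = 1$ the sequence $(y_{n})$ is bounded, hence so is $p_{n} := \pi_{P} y_{n} \in P$, say $\|p_{n}\| \leq R$; put $f_{n} := \pi_{F} y_{n} \in F$, so $y_{n} = p_{n} + f_{n}$. Applying $\lambda_{\mathbb{N}^{'}}[P, P_{n}]$ to the unit vectors $p_{n}/\|p_{n}\|$ with adjustment parameter $\eta/R$ and rescaling by $\|p_{n}\|$ produces $v_{n} \in P_{n}$ and a single vector $w \in X$ with $\varlimsup \|p_{n} - v_{n} - w\| \leq R\,\lambda_{\mathbb{N}^{'}}[P, P_{n}] + \eta$ (the case $\inf\|p_{n}\| = 0$ being handled by $v_{n} = \theta$; the rescaling device is the one used at the end of the proof of proposition 2 of Theorem \ref{solua}). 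Since $x_{n} = (x_{n} - y_{n} - z) + p_{n} + f_{n} + z$ and $v_{n} + f_{n} \in P_{n} + F$, the fixed vector $w + z$ then satisfies $\varlimsup \|x_{n} - (v_{n} + f_{n}) - (w + z)\| \leq \lambda_{\mathbb{N}^{'}}[Q_{n}, Q] + R\,\lambda_{\mathbb{N}^{'}}[P, P_{n}] + 2\eta$, which is $< 1$ under the smallness assumption; letting $\eta \downarrow 0$ yields $\lambda_{\mathbb{N}^{'}}[Q_{n}, P_{n} + F] < 1$.

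Given the key step, the theorem follows quickly. Pick $\epsilon > 0$ with $\lambda_{\mathbb{N}^{'}}[Q_{n}, P_{n} + F] + \epsilon < 1$ and apply proposition 1 of Theorem \ref{T:bfusuat} to the pair of sequences $((Q_{n}), (P_{n} + F))$: there is a finite-dimensional subspace $G \subset X$ with $\varlimsup_{n} \delta(Q_{n}, G + P_{n} + F) \leq \lambda_{\mathbb{N}^{'}}[Q_{n}, P_{n} + F] + \epsilon < 1$, so $\delta(Q_{n}, G' + P_{n}) < 1$ for all large $n$, where $G' := G + F$ is finite-dimensional. Passing to the Banach space $X/P_{n}$ (legitimate since $P_{n} \subset Q_{n}$ and $P_{n} \subset G' + P_{n}$, and $\delta(Q_{n}/P_{n},\, (G' + P_{n})/P_{n}) \leq \delta(Q_{n}, G' + P_{n})$), we get $\delta(Q_{n}/P_{n},\, (G' + P_{n})/P_{n}) < 1$ while $\dim (G' + P_{n})/P_{n} \leq \dim G'$. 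Proposition 2 of the Small Gap Theorem \ref{T:sgp} then forces $\dim Q_{n}/P_{n} \leq \dim G' < \infty$ for all large $n$, so $K := \dim G' + 1$ works.

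The hard part is the norm bookkeeping in the key step. The centering vector $z$ supplied by $\lambda_{\mathbb{N}^{'}}[Q_{n}, Q]$ carries no a priori bound, so the constant $R$ bounding $\|p_{n}\|$ --- and hence the coefficient of $\lambda_{\mathbb{N}^{'}}[P, P_{n}]$ in the final estimate --- depends on the particular unit sequence rather than on the fixed data $P$, $Q$, $F$ alone. This is precisely why $\lambda_{\mathbb{N}^{'}}[P, P_{n}]$ must be taken genuinely small (zero being the clean choice), or why ``small enough'' has to be made quantitative in terms of $\|\pi_{P}\|$, $\|\pi_{F}\|$ and the geometry of the spaces $G$ delivered by Theorem \ref{T:bfusuat}; pinning down a sharp such threshold is presumably the ``additional effort'' the statement alludes to. Everything downstream of the key step is routine.
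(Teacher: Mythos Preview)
The paper states Theorem~\ref{cdnsb} without proof (it is introduced with ``with some additional effort one can prove\ldots''), so there is no authorial argument to compare against; I am assessing your proposal on its own merits.

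Your overall strategy is sound and the downstream part is correct: once you know $\lambda_{\mathbb{N}^{'}}[Q_{n}, P_{n}+F] < 1$, proposition~1 of Theorem~\ref{T:bfusuat} produces a fixed finite-dimensional $G$ with $\varlimsup\delta(Q_{n},\,G+F+P_{n}) < 1$, and passing to $X/P_{n}$ and invoking the Small Gap Theorem~\ref{T:sgp} finishes the job exactly as you say.

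The gap you yourself flag in the key step is genuine, but it is not the ``additional effort'' of finding a sharp threshold --- it is fixable by the same \emph{differencing trick} the paper uses repeatedly (proof of proposition~1 in Theorem~\ref{T:suat}, and inequalities \eqref{E:55}--\eqref{E:53} in Theorem~\ref{lsfps}). Instead of working with the uncontrolled centering vector $z$, fix a large index $m$ and estimate
\[
\|x_{n} - y_{n} - (x_{m} - y_{m})\|\ \leq\ \|x_{n}-y_{n}-z\| + \|x_{m}-y_{m}-z\|\ \leq\ 2(\lambda_{\mathbb{N}^{'}}[Q_{n},Q]+\eta).
\]
Now $\|y_{n}-y_{m}\| \leq \|x_{n}\|+\|x_{m}\| + 2(\lambda_{\mathbb{N}^{'}}[Q_{n},Q]+\eta) < 4$, so $\|p_{n}-p_{m}\| \leq 4\|\pi_{P}\|$ --- a bound depending only on the fixed decomposition $Q = P \oplus F$, \emph{not} on $z$ or on the particular unit sequence. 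Applying $\lambda_{\mathbb{N}^{'}}[P,P_{n}]$ to the normalized sequence $(p_{n}-p_{m})/\|p_{n}-p_{m}\|$ (passing to a subsequence where the norms converge, as in Theorem~\ref{solua}) then yields $v_{n}\in P_{n}$ and a fixed $w\in X$ with
\[
\varlimsup\|(p_{n}-p_{m}) - v_{n} - w\|\ \leq\ 4\|\pi_{P}\|\,(\lambda_{\mathbb{N}^{'}}[P,P_{n}]+\eta').
\]
Combining, with $v_{n} + (f_{n}-f_{m}) \in P_{n}+F$ and the fixed vector $x_{m}+w$, gives
\[
\lambda_{\mathbb{N}^{'}}[Q_{n},\,P_{n}+F]\ \leq\ 2\,\lambda_{\mathbb{N}^{'}}[Q_{n},Q]\ +\ 4\|\pi_{P}\|\,\lambda_{\mathbb{N}^{'}}[P,P_{n}],
\]
which is $< 1$ under an explicit smallness condition of the type used in Theorem~\ref{lsfps}. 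This closes the gap and turns your outline into a complete proof covering genuinely positive adjustment numbers.
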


\subsection[Upper Semi--Fredholm Stability of Subspaces in $BFU-$Spaces]{Upper Semi--Fredholm Stability of Subspaces in $BFU-$Spaces}\label{S:bfust}
Our previous Example \ref{Ex:4} had shown that there is no stability of any of the Fredholm properties under upper uniform $\lambda-$adjustment in ${l_{\infty}}$ even if ${\lambda = 0}$. However, our next theorem shows that in $BFU-$spaces upper semi--Fredholm pair of closed subspaces is stable under upper $\lambda-$adjustment for small $\lambda$.\\

In order to formulate that theorem consider some preliminary concepts. Let $M$, $N$ be two closed subspaces in a Banach space $X$ such that\linebreak[3] ${M + N = \overline{M + N}}$, ${X = \overline(M+N) \oplus S}$ for some closed subspace ${S \subset X}$. Define a natural mapping
\[
\Psi\ :\ \Omega\ =\ M\ \times\ N\ \times\ S\ \rightarrow\ X,
\]
\[
\Psi\ :\ (a, b, c)\ \rightarrow\ a + b + c.
\]
Also, define a complete norm on ${\Omega}$
\[
\left\|(a, b, c)\right\| = \max\{\left\|a\right\|, \left\|b\right\|, \left\|c\right\|\}.
\]
Then, since
\[
X\ =\ \overline{M + N}\ \oplus\ S\ =\ (M + N)\ \oplus\ S,
\]
it is clear that $\Psi$ is a continuous surjection from the Banach space ${\Omega}$ onto the Banach space ${X}$. Therefore, according to the open mapping theorem operator ${\Psi}$ is an open operator and there exists a real number ${\gamma > 0}$ such that any vector ${x \in X}$ can be decomposed into vectors ${a \in M}$, ${b \in N}$ and ${c \in S}$ so that
\[
\left\|x\right\| \times \gamma \ \geq\ \max\{\left\|a\right\|, \left\|b\right\|, \left\|c\right\|\}.
\]
Denote ${\psi_{S}(M, N)}$ the $\inf$ of such numbers $\gamma$.\\

Note that for an upper semi--Fredholm pair there always exist many closed subspaces $S$ such that ${X = (M+N) \oplus S}$ if ${\beta(M,N) = \dim X/(M+N) < \infty}$. Therefore, denote
\[
\psi(M, N)\ :=\ \inf \{ \psi_{S}(M, N)\ \mid\ X\ =\ (M+N) \oplus S\}.
\]
We are now ready to formulate our next stability theorem: 
\begin{theorem}[Upper Semi--Fredholm Pairs from $BFU-$Spaces are Stable]
Let ${(M,N)}$ be an upper semi--Fredholm pair of closed subspaces in a $BFU-$space $X$. Let ${(M_{n})_{\mathbb{N}^{'}}}$, ${(N_{n})_{\mathbb{N}^{'}}}$ are two sequences of closed subspaces from $X$, and set
\[
\lambda_{M} = \lambda_{\mathbb{N}^{'}}[M, M_{n}],\ \ \ \lambda_{N} = \lambda_{\mathbb{N}^{'}}[N, N_{n}];
\]
then the following propositions are true
\begin{enumerate}
  \item Define a real number
\[
\omega = \psi(M, N) \times (\lambda_{M} + \lambda_{N}).
\]  
If ${\omega < 1}$, then for large enough ${n \in \mathbb{N}^{'}}$ pairs ${(M_{n},N_{n})}$ are also upper semi--Fredholm and
\[
\varlimsup_{n \in\mathbb{N}^{'}} \beta (M_{n}, N_{n})\ <\ \infty.
\]
  \item Suppose that ${X = (M + N) \oplus S}$ and define a real number
\[
\omega_{S}(M) = \psi_{S}(M, N) \times (\lambda_{M} + \lambda_{N}).
\]  
If ${\omega_{S}(M) < 1}$ and ${(W_{n})_{\mathbb{N^{''}}}}$ is a sequence of non-null closed subspaces from ${X^{*}}$ such that ${W_{n} \subset M_{n}^{\bot}}$ for all ${n \in \mathbb{N}^{'}}$ and ${\delta(W_{n}, N_{n}^{\bot}) \rightarrow 0}$, then there exists ${K \in \mathbb{N}}$ such that ${\dim W_{n} \leq K < \infty}$ for large enough ${n \in \mathbb{N}^{'}}$
\end{enumerate}
\end{theorem}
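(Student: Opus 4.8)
I would follow the pattern of Theorem~\ref{lsfps}: prove the quantitative statement~2 first and deduce statement~1 from it, but carry the whole argument into the dual space $X^{*}$, where the relevant machinery is that of \emph{lower} semi--Fredholm pairs and of the \emph{gap} (annihilators turn upper defects into lower ones). The starting observation is that a pair of closed subspaces $(M_{n},N_{n})$ is upper semi--Fredholm precisely when $\dim(M_{n}^{\bot}\cap N_{n}^{\bot})=\dim(M_{n}+N_{n})^{\bot}<\infty$ (closedness of $M_{n}+N_{n}$ then being automatic from the open mapping theorem, a sum of two closed subspaces having finite algebraic codimension being complete in the induced norm; one may instead force it by the Kato--type argument of Theorem~\ref{lsfps}), and that $(M+N)^{\bot}=M^{\bot}\cap N^{\bot}$ is finite--dimensional because $(M,N)$ is upper semi--Fredholm. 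For statement~1, since $\psi(M,N)=\inf_{S}\psi_{S}(M,N)$, I would pick $S$ with $X=(M+N)\oplus S$ and $\omega_{S}(M)=\psi_{S}(M,N)(\lambda_{M}+\lambda_{N})<1$, and then apply statement~2 to the sequence $W_{n}=(M_{n}+N_{n})^{\bot}$: it consists of closed subspaces of $X^{*}$ with $W_{n}\subset M_{n}^{\bot}$ and $\delta(W_{n},N_{n}^{\bot})=0$, so statement~2 yields $\dim W_{n}\le K$ for large $n$, i.e. $\varlimsup_{n}\beta(M_{n},N_{n})<\infty$ (the case $W_{n}=\{\theta\}$ being trivial).

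\textbf{Proof of statement~2.} Fix $S$ with $\omega_{S}(M)<1$ and a sequence $(W_{n})_{\mathbb{N}^{''}}$ as in the hypothesis. The goal is a \emph{gap} estimate: I want to produce a fixed finite--dimensional subspace and a bound $<1$ for $\delta(W_{n},\,(\text{that subspace})+M^{\bot}\cap N^{\bot})$ for large $n$; then proposition~2 of the Small Gap Theorem~\ref{T:sgp} forces $\dim W_{n}\le\dim(\text{that subspace})+\dim(M^{\bot}\cap N^{\bot})<\infty$. To build the estimate I first apply proposition~1 of the $BFU$ Small Uniform Adjustment Theorem~\ref{T:bfusuat} to the pairs $(M,M_{n})$ and $(N,N_{n})$: for each $\epsilon>0$ there is a finite--dimensional $Q\subset X$ with $\delta(M,Q+M_{n})\le\lambda_{M}+\epsilon$ and $\delta(N,Q+N_{n})\le\lambda_{N}+\epsilon$ for large $n$. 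Dualizing through the identity $\delta(A,B)=\delta(B^{\bot},A^{\bot})$ gives $\delta((Q+M_{n})^{\bot},M^{\bot})\le\lambda_{M}+\epsilon$ and $\delta((Q+N_{n})^{\bot},N^{\bot})\le\lambda_{N}+\epsilon$, i.e. every unit functional of $(Q+M_{n})^{\bot}$ is within $\lambda_{M}+\epsilon$ of $M^{\bot}$, and likewise for $N$. After controlling the finite--codimensional discrepancy between $M_{n}^{\bot}$ and $(Q+M_{n})^{\bot}$ (and between $N_{n}^{\bot}$ and $(Q+N_{n})^{\bot}$) — which is where uniformly bounded projections onto subspaces of finite codimension enter, exactly as in the proof of proposition~3 of Theorem~\ref{solua} — I may assume that a unit functional $f\in W_{n}$ lies in $(Q+M_{n})^{\bot}$ and its near point for $\delta(W_{n},N_{n}^{\bot})\to0$ lies in $(Q+N_{n})^{\bot}$. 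Then there are $\phi\in M^{\bot}$ with $\|f-\phi\|\lesssim\lambda_{M}+\epsilon$ and $\psi\in N^{\bot}$ with $\|f-\psi\|\lesssim\lambda_{N}+\epsilon$. Using $M^{\bot}=(M^{\bot}\cap S^{\bot})\oplus(M+N)^{\bot}$ write $\phi=\alpha+h$ with $\alpha\in M^{\bot}\cap S^{\bot}=(M+S)^{\bot}$ and $h\in M^{\bot}\cap N^{\bot}$; then $f-h\approx\alpha$, and $\alpha\approx\psi-h\in N^{\bot}$, so $\alpha$ lies simultaneously in $(M+S)^{\bot}$ and within $\|\phi-\psi\|\lesssim\lambda_{M}+\lambda_{N}+\epsilon$ of $N^{\bot}$. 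Since $(M+S)+N=X$, the pair $((M+S)^{\bot},N^{\bot})$ is in good position with $(M+S)^{\bot}\cap N^{\bot}=\{\theta\}$ and a fixed constant comparable with $\psi_{S}(M,N)$, whence $\|\alpha\|\lesssim\psi_{S}(M,N)(\lambda_{M}+\lambda_{N})+\epsilon=\omega_{S}(M)+\epsilon$. Therefore $\operatorname{dist}(f,M^{\bot}\cap N^{\bot})\le\|f-h\|\le\omega_{S}(M)+O(\epsilon)<1$ for $\epsilon$ small, so $\delta(W_{n},M^{\bot}\cap N^{\bot})<1$ on the trimmed part and $\dim W_{n}$ is bounded by $\dim(M^{\bot}\cap N^{\bot})$ plus the finite number of dimensions lost to the correction by $Q$.

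\textbf{Main obstacle.} The conceptual skeleton above is clean, but the real work — and the place I expect to grind — is the finite--codimensional bookkeeping: replacing $M_{n}^{\bot},N_{n}^{\bot}$ by $(Q+M_{n})^{\bot},(Q+N_{n})^{\bot}$, transporting the near point of $f$ in $N_{n}^{\bot}$ into $(Q+N_{n})^{\bot}$, and tracking all the error terms so that the final constant is genuinely $\omega_{S}(M)$ and nothing larger (in particular keeping the projection norms uniform in $n$, which rests on $Q$ being \emph{fixed}). A second, smaller point to discharge is the passage from a bound on $\dim(M_{n}+N_{n})^{\bot}$ to the closedness of $M_{n}+N_{n}$, and hence to ``$(M_{n},N_{n})$ is upper semi--Fredholm'' in the sense of the paper. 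Finally, it is worth stressing in the write--up that the $BFU$ hypothesis is indispensable here: Example~\ref{Ex:3} shows that in a general Banach space the dual gap identity has no useful counterpart for uniform $\lambda$--adjustment, so the above transfer of hypotheses to $X^{*}$ only goes through because $X$ is a $BFU$--space (this is exactly what lets the $BFU$ Small Uniform Adjustment Theorem replace the weaker estimate of the plain Theorem~\ref{T:suat}, and what makes the threshold $1$ rather than $1/2$).
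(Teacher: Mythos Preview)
Your route is genuinely different from the paper's, and while the skeleton is sensible, it does not quite reach the statement as written.

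\textbf{What the paper does.} The paper never converts the $\lambda$--adjustment hypotheses into gap estimates and never passes through Theorem~\ref{T:bfusuat}. For proposition~2 it argues by contradiction: if $\dim W_{n}$ is unbounded, form $V_{m}=\bigcup_{n\ge m}S_{n}$ (unit spheres of $W_{n}$) and assume $\theta\in\overline{V_{m}}^{w^{*}}$ for all $m$. Lemmas~\ref{L:css} and~\ref{L:ccus} then produce $f_{n}\in S_{n}$ with $f_{n}\xrightarrow{w^{*}}\theta$. Pick unit $x_{n}$ with $f_{n}x_{n}=1$ and decompose $x_{n}=a_{n}+b_{n}+c_{n}$ with $a_{n}\in M$, $b_{n}\in N$, $c_{n}\in S$ and $\max(\|a_{n}\|,\|b_{n}\|,\|c_{n}\|)\le\psi_{S}(M,N)$. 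Now apply the \emph{raw} definition of $\lambda$--adjustment to the (normalized) sequences $(a_{n})\subset M$ and $(b_{n})\subset N$: one gets $u_{n}\in M_{n}$, $v_{n}\in N_{n}$ and fixed vectors $u,v\in X$ with $\|a_{n}-\alpha u_{n}-\alpha u\|\lesssim\psi_{S}(\lambda_{M}+\eta)$ and $\|b_{n}-\beta v_{n}-\beta v\|\lesssim\psi_{S}(\lambda_{N}+\eta)$. Then $1=f_{n}x_{n}$ is estimated term by term: $f_{n}u_{n}=0$ since $f_{n}\in M_{n}^{\bot}$; $f_{n}v_{n}\to0$ since $\delta(W_{n},N_{n}^{\bot})\to0$; $f_{n}c_{n}\to0$ since $(c_{n})$ lives in finite--dimensional $S$; and, crucially, $f_{n}(\alpha u+\beta v)\to0$ because $f_{n}\xrightarrow{w^{*}}\theta$. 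The residual is $<\psi_{S}(\lambda_{M}+\lambda_{N}+2\eta)<1$, a contradiction. Hence $\theta\notin\overline{V_{m}}^{w^{*}}$ for some $m$, a $w^{*}$--neighbourhood separates, and $W_{n}\cap Q^{\bot}=\{\theta\}$ for a fixed finite--dimensional $Q$, bounding $\dim W_{n}$.

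\textbf{Where your argument falls short.} Two concrete points. First, your displayed estimate $\operatorname{dist}(f,M^{\bot}\cap N^{\bot})\le\|f-h\|\le\omega_{S}(M)+O(\epsilon)$ is not what your decomposition gives: $f-h=(f-\phi)+\alpha$, so $\|f-h\|\le\|f-\phi\|+\|\alpha\|\le(\lambda_{M}+\epsilon)+\psi_{S}(\lambda_{M}+\lambda_{N})+O(\epsilon)$, i.e.\ an extra $\lambda_{M}$ appears and the threshold you obtain is $\lambda_{M}+\omega_{S}(M)<1$, not $\omega_{S}(M)<1$. (This is repairable: estimate $\operatorname{dist}(f,(M+N)^{\bot})=\|f|_{M+N}\|$ directly using $|f(a)|\le\|f-\phi\|\|a\|$ and $|f(b)|\le\|f-\psi\|\|b\|$ together with the openness bound $\|a\|,\|b\|\le\psi_{S}$.) Second, and more seriously, the bookkeeping you flag is not just a grind. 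Theorem~\ref{T:bfusuat} gives $\delta(N,Q'+N_{n})\le\lambda_{N}+\epsilon$, which dualizes to control of $(Q'+N_{n})^{\bot}$, not of $N_{n}^{\bot}$. Your near point $g\in N_{n}^{\bot}$ need not lie in $(Q'+N_{n})^{\bot}$, and projecting it there requires a projection $N_{n}^{\bot}\to N_{n}^{\bot}\cap(Q')^{\bot}$ whose norm depends on $n$ with no uniform bound available. Equivalently, on the primal side, writing $b=q'+v$ with $q'\in Q'$, $v\in N_{n}$ gives no uniform control on $\|v\|$, so the term $|f(v)|\le\delta(W_{n},N_{n}^{\bot})\|v\|$ need not vanish.

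\textbf{The moral.} The paper's device is precisely to avoid the finite--dimensional correction $Q$ altogether: it keeps the ``fixed vector $z$'' from the definition of $\lambda$--adjustment and kills it with $f_{n}\xrightarrow{w^{*}}\theta$. Your route first buries $z$ inside $Q$ (that is what Theorem~\ref{T:bfusuat} does) and then has to exhume it, which is where the uniform--bound trouble enters. If you want to salvage your approach, the cleanest fix is to abandon the intermediate gap step and evaluate $f_{n}$ directly on the primal decomposition, at which point you will have rediscovered the paper's proof.
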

\begin{proof}
Proposition 2 is the key to the proof of proposition 1, so we start with 2. Let ${S_{n}}$ be a unit sphere from ${W_{n}}$ for each ${n \in \mathbb{N}^{'}}$. Define
\[
V_{n} = \bigcup_{i \geq n} S_{i}.
\]
Our first goal is to prove that ${\theta \notin \overline{V_{n}}^{w^{*}}}$ for some ${n \in \mathbb{N}^{'}}$. Assume the opposite and consider the unit ball ${B_{1} \subset X^{*}}$ centered in ${\theta \in X^{*}}$. Since ${B_{1}}$ is norm-bounded it is compact in ${w^{*}}$ topology by the Banach-Alaoglu theorem. Also, since $X$ is a $BFU-$space, ${(B_{1}, w^{*})}$ is Fr\'{e}chet-Urysohn. Therefore, according to Lemma \ref{L:css} there exists a subsequence of functionals ${(g_{n})_{\mathbb{N}_{1}^{'}} \triangleleft (V_{n})_{\mathbb{N}_{1}^{'}}}$ convergent to $\theta$ in ${w^{*}}$ topology. Then, by Lemma \ref{L:ccus} there exists a subsequence ${(f_{n})_{\mathbb{N}^{''}} \triangleleft (S_{n})_{\mathbb{N}^{''}}}$ convergent to $\theta$ in ${w^{*}}$ topology. Since each ${f_{n}}$ is a unit functional, there exists a unit sequence of vectors ${(x_{n})_{\mathbb{N}^{''}} \subset X}$ such that ${f_{n}x_{n} = 1}$. For each ${x_{n}}$ consider its decomposition
\begin{equation}\label{E:291}
x_{n} = a_{n} + b_{n} + c_{n},\ \ a_{n} \in M_{n},\ b_{n} \in N_{n},\ c_{n} \in S
\end{equation}
such that
\begin{equation}\label{E:290}
\psi_{S}(M, N)\ =\ 1 \times \psi_{S}(M, N)\ =\ \left\|x_{n}\right\| \times \psi_{S}(M, N)\ \geq\ max\{\left\|a_{n}\right\|, \left\|b_{n}\right\|, \left\|c_{n}\right\|\}.
\end{equation}
There are four cases to consider in respect to values ${a_{n}}$ and ${b_{n}}$:
\begin{enumerate}
  \item There exists a subsequence ${\mathbb{N}^{'''}}$ such that all ${\alpha_{n} = \left\|a_{n}\right\|}$ and ${\beta_{n} = \left\|b_{n}\right\|}$ are greater than zero and ${\alpha_{n} \rightarrow \alpha \in \mathbb{R}}$, ${\beta_{n} \rightarrow \beta \in \mathbb{R}}$. Choose a positive number $\eta$ from an open interval
\begin{equation}\label{E:299}
\eta \in (0,\ \frac{1}{2} \times (\frac{1}{\psi_{S}(M, N)} - \lambda_{M} - \lambda_{N}));
\end{equation}
by definition of $\lambda-$adjustment there exist two sequences of vectors\linebreak[4] ${(u_{n})_{\mathbb{N}^{'4}} \triangleleft (M_{n})_{\mathbb{N}^{'4}}}$ and ${(v_{n})_{\mathbb{N}^{'4}} \triangleleft (N_{n})_{\mathbb{N}^{'4}}}$ and vectors ${u, v \in X}$ such that
\begin{multline}
\notag
\begin{aligned}
\varlimsup_{\mathbb{N}^{'4}} \left\|\frac{1}{\alpha_{n}} \times a_{n} - u_{n} - u\right\| & \ \leq\ \lambda_{M} + \eta,\\
\varlimsup_{\mathbb{N}^{'4}} \left\|\frac{1}{\beta_{n}} \times b_{n} - v_{n} - v\right\|  & \ \leq\ \lambda_{N} + \eta.
\end{aligned}
\end{multline}
Multiplying first inequality by ${\alpha_{n}}$ and second by ${\beta_{n}}$, taking into account estimate from \eqref{E:290} and limits ${\alpha_{n} \rightarrow \alpha \in \mathbb{R}}$, ${\beta_{n} \rightarrow \beta \in \mathbb{R}}$ obtain
\begin{multline}\label{E:294}
\begin{aligned}
\varlimsup_{\mathbb{N}^{'4}} \left\|a_{n} - \alpha  u_{n} - \alpha u\right\| &\ \leq\ \alpha \times (\lambda_{M} + \eta)\ &\leq\ \psi_{S}(M, N) \times (\lambda_{M} + \eta),\\
\varlimsup_{\mathbb{N}^{'4}} \left\|b_{n} - \beta v_{n} - \beta v\right\| &\ \leq\ \beta \times (\lambda_{M} + \eta)\ &\leq\ \psi_{S}(M, N) \times (\lambda_{N} + \eta).
\end{aligned}
\end{multline}
  \item For any subsequence ${\mathbb{N}^{'''}}$ such that all ${\alpha_{n} = \left\|a_{n}\right\| > 0}$ the opposite is true for ${\beta_{n}}$: ${\beta_{n} = \left\|b_{n}\right\| = 0}$. Applying definition of $\lambda-$adjustment as before obtain the first inequality from \eqref{E:294}. Since ${\beta_{n} = \left\|b_{n}\right\| = 0}$, the second inequality from \eqref{E:294} trivially holds for any choice of bounded sequence ${(v_{n})_{\mathbb{N}^{'4}}}$ and vector ${v}$.
  \item For any subsequence ${\mathbb{N}^{'''}}$ such that all ${\beta_{n} = \left\|b_{n}\right\| > 0}$ the opposite is true for ${\alpha_{n}}$:  ${\alpha_{n} = \left\|a_{n}\right\| = 0}$. This case is treated as above by symmetry -- applying definition of $\lambda-$adjustment as before obtain the second inequality from \eqref{E:294}. Since ${\alpha_{n} = \left\|a_{n}\right\| = 0}$, the first inequality from \eqref{E:294} trivially holds for any choice of bounded sequence ${(u_{n})_{\mathbb{N}^{'4}}}$ and vector ${u}$.
  \item For large enough ${n \in \mathbb{N}^{'}}$ all elements ${a_{n}}$, and ${b_{n}}$ are null. In this case both inequalities from \eqref{E:294} trivially hold for any choice of bounded sequences ${(u_{n})_{\mathbb{N}^{'4}}}$, ${(v_{n})_{\mathbb{N}^{'4}}}$ and vectors ${u}$, ${v}$.
\end{enumerate}
We have just proved that \eqref{E:294} is true in all possible cases.
Now, taking into account decomposition \eqref{E:291}, the inequalities from \eqref{E:294}, the choice of $\eta$ form \eqref{E:299} and the triangle inequality estimate
\begin{multline}
\notag
\begin{aligned}
& \varlimsup_{\mathbb{N}^{'4}} \left\|x_{n} - c_{n} - \alpha u_{n} - \beta v_{n} - \alpha u - \beta v \right\| =\\
&\ \ \ \ \ \ \ \ \ \ \ \ \ \ \ \ \ \ \ \ \ \ =\ \varlimsup_{\mathbb{N}^{'4}} \left\|a_{n} + b_{n} - \alpha u_{n} - \beta v_{n} - \alpha u - \beta v \right\|\\
&\ \ \ \ \ \ \ \ \ \ \ \ \ \ \ \ \ \ \ \ \ \ =\ \varlimsup_{\mathbb{N}^{'4}} \left\|a_{n} - \alpha u_{n} - \alpha u + b_{n} - \beta v_{n}  - \beta v \right\|\\
&\ \ \ \ \ \ \ \ \ \ \ \ \ \ \ \ \ \ \ \ \ \ \leq\ \varlimsup_{\mathbb{N}^{'4}} \left\|a_{n} - \alpha  u_{n} - \alpha u\right\|\ +\ \left\|b_{n} - \beta v_{n} - \beta v\right\|\\
&\ \ \ \ \ \ \ \ \ \ \ \ \ \ \ \ \ \ \ \ \ \ \leq\ \psi_{S}(M, N) \times (\lambda_{M} + \eta) + \psi_{S}(M, N) \times (\lambda_{N} + \eta)\\
&\ \ \ \ \ \ \ \ \ \ \ \ \ \ \ \ \ \ \ \ \ \ =\ \psi_{S}(M, N) \times (\lambda_{M} + \lambda_{N} + 2 \times \eta)\ <\ 1.
\end{aligned}
\end{multline}
Recall that ${\dim S < \infty}$ and sequence ${(c_{n})_{\mathbb{N}^{'4}}}$ is bounded, so there exists a subsequence ${\mathbb{N}^{'5}}$ such that ${(c_{n})_{\mathbb{N}^{'5}} \rightarrow c \in S}$. Therefore, since ${(f_{n})_{\mathbb{N}^{''}} \triangleleft (S_{n})_{\mathbb{N}^{''}}}$ converges to $\theta$ in ${w^{*}}$ topology, it is clear that ${(f_{n}c_{n})_{\mathbb{N}^{'5}} \rightarrow \lim_{n \in \mathbb{N}^{'5}} f_{n}c = 0}$. Taking this into account, and also recalling that ${f_{n}u_{n} = 0}$ since each ${f_{n}}$ belongs to annihilator of ${M_{n}}$, estimate
\begin{multline}
\notag
\begin{aligned}
& \varlimsup_{\mathbb{N}^{'5}} |f_{n}x_{n} - \beta f_{n} v_{n} - f_{n}(\alpha u + \beta v) | =\\ 
&\ \ \ \ \ \ \ \ \ \ \ =\ \varlimsup_{\mathbb{N}^{'5}} |f_{n}x_{n} - f_{n}c_{n} + f_{n}c_{n} - \alpha f_{n} u_{n} - \beta f_{n} v_{n} - f_{n}(\alpha u + \beta v) |\\
                                                                                            &\ \ \ \ \ \ \ \ \ \ \ =\ \varlimsup_{\mathbb{N}^{'5}} |f_{n}(x_{n} - c_{n} - \alpha u_{n} - \beta v_{n} - \alpha u - \beta v) + f_{n}c_{n}|\\
                                                                                            &\ \ \ \ \ \ \ \ \ \ \ \leq\ \varlimsup_{\mathbb{N}^{'5}} \left\|f_{n}\right\| \times \left\|x_{n} - c_{n} - \alpha u_{n} - \beta v_{n} - \alpha u - \beta v \right\| + \varlimsup_{\mathbb{N}^{'5}} |f_{n}c_{n}|\\
                                                                                            &\ \ \ \ \ \ \ \ \ \ \ =\ 1 \times \varlimsup_{\mathbb{N}^{'5}} \left\|x_{n} - c_{n} - \alpha u_{n} - \beta v_{n} - \alpha u - \beta v \right\| + 0\\
                                                                                            &\ \ \ \ \ \ \ \ \ \ \ <\ 1 \times 1\ =\ 1.
\end{aligned}
\end{multline}
By construction ${f_{n}x_{n} = 1}$, therefore, using the previous inequality and the triangle inequality two times, estimate
\begin{multline}\label{E:293}
\begin{aligned}
1 &\ =\ \varlimsup_{\mathbb{N}^{'5}} f_{n}x_{n}\\
  &\ =\ \varlimsup_{\mathbb{N}^{'5}} |f_{n}x_{n} - \beta f_{n} v_{n} - f_{n}(\alpha u + \beta v) + \beta f_{n} v_{n} + f_{n}(\alpha u + \beta v)|\\
  &\ \leq\ \varlimsup_{\mathbb{N}^{'5}} |f_{n}x_{n} - \beta f_{n} v_{n} - f_{n}(\alpha u + \beta v)| + \varlimsup_{\mathbb{N}^{'5}} |\beta f_{n} v_{n} + f_{n}(\alpha u + \beta v)|\\
  &\ <\ 1 + \varlimsup_{\mathbb{N}^{'5}} |\beta f_{n} v_{n} + f_{n}(\alpha u + \beta v)|\\
  &\ <\ 1 + \varlimsup_{\mathbb{N}^{'5}} |\beta f_{n} v_{n}| + \varlimsup_{\mathbb{N}^{'5}} |f_{n}(\alpha u + \beta v)|
\end{aligned}
\end{multline}
Now recall that ${(f_{n})_{\mathbb{n}^{'5}} \rightarrow \theta}$ in ${w^{*}}$ topology, therefore
\[
\varlimsup_{\mathbb{N}^{'5}} |f_{n}(\alpha u + \beta v)| = 0.
\]
Also recall that ${\delta(W_{n}, N_{n}^{\bot}) \rightarrow 0}$ -- therefore, since ${f_{n}}$ are all unit functionals, there exists a sequence of functionals ${(h_{n})_{\mathbb{N}^{'5}} \triangleleft (N_{n}^{\bot})_{\mathbb{N}^{'5}}}$ such that
\[
\lim_{\mathbb{N}^{'5}} \left\|f_{n} + h_{n}\right\| = 0.
\]
Therefore, recalling from \eqref{E:294} that all ${v_{n}}$ are bounded by some ${K < \infty}$, we can further estimate \eqref{E:293} to an obvious contradiction
\begin{multline}
\notag
\begin{aligned}
1 &\ <\ 1 + \varlimsup_{\mathbb{N}^{'5}} |\beta f_{n} v_{n}| + \varlimsup_{\mathbb{N}^{'5}} |f_{n}(\alpha u + \beta v)|\\
  &\ =\ 1 + \varlimsup_{\mathbb{N}^{'5}} |\beta f_{n} v_{n}| + 0\\
  &\ =\ 1 + \varlimsup_{\mathbb{N}^{'5}} |\beta f_{n} v_{n} + \theta|\\
  &\ =\ 1 + \varlimsup_{\mathbb{N}^{'5}} |\beta f_{n} v_{n} + \beta h_{n} v_{n}|\\
  &\ \leq\ 1 + \varlimsup_{\mathbb{N}^{'5}} |\beta| \times \left\|f_{n} + h_{n}\right\| \times \left\|v_{n}\right\|\\
  &\ \leq\ 1 + \varlimsup_{\mathbb{N}^{'5}} |\beta| \times \left\|f_{n} + h_{n}\right\| \times K\\
  &\ =\ 1 + |\beta| \times K \times \varlimsup_{\mathbb{N}^{'5}} \left\|f_{n} + h_{n}\right\|\\
  &\ \leq\ 1 + |\beta| \times K \times 0\ =\ 1 + 0\ =\ 1.
\end{aligned}
\end{multline}
Therefore, our assumption is not correct and we shall conclude that ${\theta \notin \overline{V_{m}}^{w^{*}}}$ for some ${m \in \mathbb{N}^{'}}$. Hence, there exists a base neighborhood of ${\theta \in X^{*}}$ such that for the same ${m}$
\begin{equation}\label{E:295}
V_{m} \cap W = \emptyset.
\end{equation}
According to definition of the weak topology ${w^{*}}$ the base neighborhood $W$ is determined by a real number ${\eta > 0}$ and by a finite number of vectors ${q_{1}, ..., q_{k}}$ from $X$ such that ${f \in W}$ if and only if ${|fq_{i}| < \nu}$ for all ${i \in \{1,...,k\}}$. Therefore, due to \eqref{E:295}, for every ${f \in V_{m}}$ there always exists some ${i \in \{1,...,k\}}$ such that ${|fq_{i}| \geq \nu > 0}$. Hence, defining a finite-dimensional space ${Q = sp(\{q_{1},...,q_{k}\})}$, we conclude that ${V_{m} \cap Q^{\bot} = \emptyset}$. Therefore, due to \eqref{E:295} and recalling the definition of ${V_{n}}$ obtain for all ${n > m, n \in \mathbb{N}^{'}}$
\[
S_{n} \cap Q^{\bot} = \emptyset.
\]
Since ${S_{n}}$ are unit spheres from ${W_{n}}$, the previous equation yields ${W_{n} \cap Q^{\bot} = \{\theta\}}$ for ${n > m, n \in \mathbb{N}^{'}}$. Since ${\dim X^{*} / Q^{\bot} \leq k < \infty}$ it follows that ${\dim W_{n} \leq k < \infty}$ for the same $n$ which concludes the proof of proposition 2.\\

In order to prove proposition 1 we first prove that there exists a closed finite-dimensional subspace ${S \subset X}$ such that ${X = (M+N) \oplus S}$ and 
\begin{equation}\label{E:297}
\omega_{S} = \psi_{S}(M, N) \times (\lambda_{M} + \lambda_{N}) < 1.
\end{equation}
Consider two cases:
\begin{itemize}
\item ${\beta(M,N) = 0}$. In this case ${X = (M+N) \oplus \{\theta\}}$ and ${\psi_{\{\theta\}}(M, N) = \psi(M, N)}$. Therefore, by the condition of proposition 1
\[
\omega_{\{\theta\}} = \psi_{\{\theta\}}(M, N) \times (\lambda_{M} + \lambda_{N}) = \psi(M, N) \times (\lambda_{M} + \lambda_{N}) = \omega < 1.
\]
\item ${\beta(M,N) > 0}$. In this case, since
\[
\omega = \psi(M, N) \times (\lambda_{M} + \lambda_{N}) < 1,\ \ \ \psi(M, N) < \frac{1}{\lambda_{M} + \lambda_{N}},
\]
by definition of ${\psi(M, N)}$ one can find a closed finite-dimensional subspace ${S \subset X}$ such that ${X = (M+N) \oplus S}$ and ${\psi(M, N) \leq \psi_{S}(M, N) < \frac{1}{\lambda_{M} + \lambda_{N}}}$. Therefore, \eqref{E:297} holds.
\end{itemize}
Thus, in any case there exists a closed finite-dimensional subspace ${S \subset X}$ such that ${X = (M+N) \oplus S}$ and \eqref{E:297} holds.\\

Now suppose that there exists a subsequence ${\mathbb{N}^{''}}$ such that\linebreak[4] ${M_{n}^{\bot} + N_{n}^{\bot} \neq \overline{(M_{n}^{\bot} + N_{n}^{\bot})}}$ for all ${n \in \mathbb{N}^{''}}$. Then, according to Theorem 4.19 from \cite{kato} there exists a sequence of infinite-dimensional closed subspaces ${W_{n} \subset M_{n}^{\bot}}$ such that ${\delta(W_{n}, N_{n}^{\bot}) \rightarrow 0}$. However, according to proposition 2 such subspaces ${W_{n}}$ must have limited finite dimension for large enough ${n \in \mathbb{N}^{''}}$. Therefore, our assumption is incorrect and ${M_{n}^{\bot} + N_{n}^{\bot} = \overline{(M_{n}^{\bot} + N_{n}^{\bot})}}$ for large enough ${n \in \mathbb{N}^{'}}$.

Now consider spaces ${W_{n} = M_{n}^{\bot} \cap N_{n}^{\bot}}$ -- obviously ${W_{n} \subset M_{n}^{\bot}}$, also\linebreak[4] ${\delta(W_{n}, N_{n}^{\bot}) = 0}$ since ${W_{n} \subset N_{n}^{\bot}}$. Again, according to proposition 2 there exists ${K \in \mathbb{N}}$ such that ${\dim W_{n} = \dim M_{n}^{\bot} \cap N_{n}^{\bot} < K < \infty}$ for large enough ${n \in \mathbb{N}^{'}}$.

Thus, we have just proved that pairs ${(M_{n}^{\bot}, N_{n}^{\bot})}$ are lower semi--Fredholm, and ${\alpha(M_{n}^{\bot}, N_{n}^{\bot}) < K < \infty}$  for large enough ${n \in \mathbb{N}^{'}}$. Then according to Theorem 4.8 from \cite{kato} pairs ${(M_{n}, N_{n})}$ are upper semi--Fredholm and ${\beta(M_{n}, N_{n}) < K < \infty}$ for the same ${n \in \mathbb{N}^{'}}$. This concludes the proof of the proposition 1 and of the entire theorem.
\end{proof}

Note that using Theorem \ref{cdnsb} and with some additional effort one can prove the following theorem that extends the $BFU$ semi--Fredholm stability theorems to cases when regular defect numbers $\alpha$ and $\beta$ are infinite:
\begin{theorem}[Extended $BFU$ Stability of (Semi--)Fredholm Pairs]\label{esssfp}
Let X be a $BFU-$space and ${M}$, $N$ are two closed subspaces of $X$ such that their sum is closed ${\overline{M+N} = M+N}$. Let ${(M_{n})_{\mathbb{N}^{'}}}$ and ${(N_{n})_{\mathbb{N}^{'}}}$ be two sequences of closed subspaces from $X$. Then the following propositions are true:
\begin{enumerate}
  \item Suppose that ${Q}$ is a closed subspace from $X$ containing both ${M}$ and $N$ such that ${\dim Q/(M+N) < \infty}$. Also suppose that ${(Q_{n})_{\mathbb{N}^{'}}}$ is a sequence of closed subspaces from $X$ such that $Q_{n}$ contains both ${M_{n}}$ and ${N_{n}}$, i.e. ${\subset M_{n} + N_{n} \subset Q_{n}}$ for all ${n \in \mathbb{N}^{'}}$. Further suppose that uniform $\lambda-$adjustment numbers ${\lambda_{\mathbb{N}^{'}}[Q_{n}, Q]}$, ${\lambda_{\mathbb{N}^{'}}[M, M_{n}]}$ and ${\lambda_{\mathbb{N}^{'}}[N, N_{n}]}$ are small enough. Then for large enough ${n \in \mathbb{N}^{'}}$ the sums of ${M_{n}}$ and ${N_{n}}$ are closed ${\overline{M_{n}+N_{n}} = M_{n}+N_{n}}$ and there exists a number ${K \in \mathbb{N}}$ such that\linebreak[4] ${\dim Q_{n} / (M_{n} + N_{n}) < K}$ for the same $n$.
  \item Suppose that ${P}$ is a closed subspace from $X$ contained in both ${M}$ and $N$ such that ${\dim (M \cap N) /P < \infty}$. Also suppose that ${(P_{n})_{\mathbb{N}^{'}}}$ is a sequence of closed subspaces from $X$ such that $P_{n}$ is contained in both ${M_{n}}$ and ${N_{n}}$, i.e. ${P_{n} \subset M_{n} \cap N_{n}}$ for all ${n \in \mathbb{N}^{'}}$. Further suppose that uniform $\lambda-$adjustment numbers ${\lambda_{\mathbb{N}^{'}}[P, P_{n}]}$, ${\lambda_{\mathbb{N}^{'}}[M_{n}, M]}$ and ${\lambda_{\mathbb{N}^{'}}[N_{n}, N]}$ are small enough. Then for large enough ${n \in \mathbb{N}^{'}}$ the sums of ${M_{n}}$ and ${N_{n}}$ are closed ${\overline{M_{n}+N_{n}} = M_{n}+N_{n}}$ and there exists a number ${K \in \mathbb{N}}$ such that\linebreak[4] ${\dim (M_{n} \cap N_{n}) / P_{n} < K}$ for the same $n$.
\end{enumerate} 
\end{theorem}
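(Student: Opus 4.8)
The plan is to reduce both propositions to Theorem \ref{cdnsb} (co-dimensions of nested subspaces are bounded), which is tailor-made for pairs of \emph{nested} closed-subspace sequences, and to supply the two inputs that Theorem \ref{cdnsb} demands — closedness of the relevant sums and smallness of a derived uniform $\lambda$-adjustment — using the decomposition/recombination techniques already worked out in the Lower and Upper Semi--Fredholm Pairs theorems.

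For Proposition 1, I would apply Theorem \ref{cdnsb} to the nested pairs $M+N \subset Q$ (fixed; finite codimension and closedness of $M+N$ are hypotheses) and $M_n+N_n \subset Q_n$. Since $\lambda_{\mathbb{N}'}[Q_n,Q]$ is assumed small, the only nontrivial point is that $\lambda_{\mathbb{N}'}[M+N, M_n+N_n]$ is small. This is the technical heart: because $M+N$ is closed, the addition map $M\times N \to M+N$ is a continuous surjection onto a Banach space, hence open, so there is a constant $c<\infty$ with every $x\in M+N$ splitting as $x=a+b$, $a\in M$, $b\in N$, $\max\{\|a\|,\|b\|\}\le c\|x\|$. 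Given a unit sequence from $(M+N)$ I would split each vector this way, feed the normalized $M$- and $N$-components into the definitions of $\lambda_{\mathbb{N}'}[M,M_n]$ and $\lambda_{\mathbb{N}'}[N,N_n]$, and recombine the resulting approximants; the bookkeeping is exactly the four-case analysis (both component norms bounded away from zero, one vanishing, the other vanishing, both vanishing) used in the proof of the Upper Semi--Fredholm Pairs theorem, and it yields $\lambda_{\mathbb{N}'}[M+N,M_n+N_n]\le c(\lambda_{\mathbb{N}'}[M,M_n]+\lambda_{\mathbb{N}'}[N,N_n])$, which is $<1$ — indeed arbitrarily small — once the two input adjustments are small enough. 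One must also check that $M_n+N_n$ is closed for large $n$ (so that Theorem \ref{cdnsb} applies and so the stated conclusion holds); this is the usual contradiction argument: if $M_n^{\bot}+N_n^{\bot}$ were non-closed along an infinite subsequence, Theorem 4.19 of \cite{kato} produces infinite-dimensional closed $W_n\subset M_n^{\bot}$ with $\delta(W_n,N_n^{\bot})\to 0$, and the annihilator-side estimate together with the $BFU$ Small Uniform Adjustment Theorem \ref{T:bfusuat} forces $\dim W_n$ to be bounded, a contradiction. Theorem \ref{cdnsb} then gives a uniform bound $\dim Q_n/(M_n+N_n)<K$.

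Proposition 2 is the dual statement and I would prove it in the mirror fashion, applying Theorem \ref{cdnsb} to the nested pairs $P\subset M\cap N$ (finite codimension by hypothesis) and $P_n\subset M_n\cap N_n$. Here $M_n\cap N_n$ is automatically closed and $\lambda_{\mathbb{N}'}[P,P_n]$ is assumed small, so the point is to bound $\lambda_{\mathbb{N}'}[M_n\cap N_n,M\cap N]$ — and this is precisely what Proposition~2 of Theorem \ref{lsfps} provides when one takes $H_n=M_n\cap N_n$ (so $\delta(H_n,N_n)=0$): it gives $\lambda_{\mathbb{N}'}[M_n\cap N_n,M\cap N]\le \omega_S(M)=2(\lambda_{\mathbb{N}'}[M_n,M]+\varphi_S(M,N)(\lambda_{\mathbb{N}'}[M_n,M]+\lambda_{\mathbb{N}'}[N_n,N]))$, which can be made as small as needed by shrinking the two input adjustments and choosing the complement $S$ through $\varphi(M,N)$. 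Closedness of $M_n+N_n$ for large $n$ follows by the same Kato–$4.19$/Small-Uniform-Adjustment contradiction (now with $H_n\subset M_n$, $\delta(H_n,N_n)\to 0$, and Proposition~2 of Theorem \ref{lsfps}), and Theorem \ref{cdnsb} then yields $\dim(M_n\cap N_n)/P_n<K$ uniformly.

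The main obstacle is the interface with the $\varphi_S$ / $\psi_S$ machinery in the infinite-defect regime: Proposition~2 of Theorem \ref{lsfps} presupposes that $M\cap N$ has a \emph{closed} complement $S$ in $M+N$, which is not automatic once $\alpha(M,N)=\dim(M\cap N)$ is infinite, and symmetrically on the annihilator side in Proposition~1 when $\beta(M,N)$ is infinite. The fix I would pursue is to localize through the quotients that are \emph{assumed} finite — $Q/(M+N)$ in Proposition~1 and $(M\cap N)/P$ in Proposition~2 — e.g. pass to the $BFU$-space $X/P$, where $\overline M\cap\overline N=(M\cap N)/P$ is finite-dimensional and therefore complemented, run the stability argument there for the modified sequences $M_n+P$, $N_n+P$ while controlling how the $\lambda$-adjustments are perturbed by adding $P$, and transfer the bound back. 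Making these transfers respect the quantitative "small enough" thresholds — tracking how $c$, $\varphi_S$, $\psi_S$ propagate through the quotient and how small each input adjustment must be — is where the "additional effort" mentioned after the statement actually goes.
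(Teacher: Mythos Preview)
The paper does not actually supply a proof of Theorem~\ref{esssfp}; it only remarks, immediately before the statement, that ``using Theorem~\ref{cdnsb} and with some additional effort one can prove the following theorem,'' and Theorem~\ref{cdnsb} itself is likewise stated without proof. Your proposal follows exactly this hint: you reduce both propositions to Theorem~\ref{cdnsb} applied to the nested pairs $M+N\subset Q$ versus $M_n+N_n\subset Q_n$ (Proposition~1) and $P\subset M\cap N$ versus $P_n\subset M_n\cap N_n$ (Proposition~2), and you manufacture the missing $\lambda$-adjustment inputs by the splitting/recombination technique from the Upper Semi--Fredholm theorem (for sums) and by Proposition~2 of Theorem~\ref{lsfps} (for intersections). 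This is the intended route, and the derived bounds $\lambda_{\mathbb{N}'}[M+N,M_n+N_n]\le c(\lambda_{\mathbb{N}'}[M,M_n]+\lambda_{\mathbb{N}'}[N,N_n])$ and $\lambda_{\mathbb{N}'}[M_n\cap N_n,M\cap N]\le\omega_S(M)$ are correct.

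You have also correctly located where the ``additional effort'' lies: both the $\varphi_S$-machinery of Theorem~\ref{lsfps} and the $\psi_S$-machinery of the Upper Semi--Fredholm theorem are formulated under the standing assumption that $M\cap N$ (respectively a complement of $M+N$) is finite-dimensional, which fails here. Your quotient-by-$P$ workaround for Proposition~2 is the natural move and works cleanly since $X/P$ is again a $BFU$-space (Lemma~\ref{eobfus}) and the images of $M,N,M_n,N_n$ there have finite $\alpha$; the only bookkeeping is that the relevant $\lambda$-adjustments in $X/P$ are controlled by those in $X$ together with $\lambda_{\mathbb{N}'}[P,P_n]$. The analogous localization for Proposition~1 (working inside $Q$ or passing to annihilators modulo $Q^\bot$) is slightly more delicate because closed subspaces of $BFU$-spaces are not asserted to be $BFU$ in Lemma~\ref{eobfus}, so you should route the argument through quotients rather than restrictions; once that is done, the closedness of $M_n+N_n$ and the uniform bound on $\dim Q_n/(M_n+N_n)$ follow as you outline.
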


\subsection[Upper Stability of Index of Continuous Operators in $BFU-$spaces]{Upper Stability of Index of Continuous Operators in $BFU-$spaces}\label{soioclobfu}
It is easy to see that index of Fredholm pair of subspaces is not stable upper uniform $\lambda-$adjustment -- any two pairs of subspaces in a finite-dimensional space are uniformly $0-$adjusted yet their indices may obviously differ. However, for linear operators in $BFU-$spaces their index is stable under perturbation by upper uniformly $0-$adjusted continuous operators. In order to prove that we first establish a technical lemma that uncovers the structure of upper uniform $0-$adjusted operators in $BFU-$spaces.

\begin{lemma}[The Structure of Upper Uniform $0-$Adjustment]\label{L:suua}
Let $X$ and $Y$ be two Banach spaces, ${(A_{n})_{\mathbb{N}^{'}} \subset \mathcal{B}(X,Y)}$ a sequence of continuous operators and ${\lambda_{\mathbb{N}^{'}}[\theta, A_{n}] = 0}$. Then the following propositions are true:
\begin{enumerate}
  \item Suppose that two sequences of functionals ${(f_{n})_{\mathbb{N}^{'}} \subset Y^{*}}$ and ${(f_{n}A_{n})_{\mathbb{N}^{'}} \subset X^{*}}$ are both point wise converging to zero. Then ${\left\|f_{n}A_{n}\right\| \rightarrow 0}$.
  \item Suppose that ${X \times Y}$ is a $BFU-$space. Then for every ${\epsilon \in (0, 1)}$ there exists a closed finite co-dimensional subspace ${R \subset Y^{*}}$, ${\dim Y^{*} / R < \infty}$ such that for large enough ${n \in \mathbb{N}^{'}}$
\[
\left\|A^{*}_{n} \mid {}_{R} \right\| \leq \epsilon.
\]
  \item ${\left\|A_{n}\right\|_{q} \rightarrow 0}$.
\end{enumerate}
\end{lemma}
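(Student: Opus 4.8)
The plan is to treat the three parts in the natural order, as parts 1 and 2 feed into part 3. For part 1, I would argue by contradiction in the spirit of the separation lemmas already developed: if $\left\|f_n A_n\right\| \not\to 0$, then along a subsequence there are unit vectors $x_n \in X$ with $|f_n A_n x_n| \geq c > 0$. Since $\lambda_{\mathbb{N}^{'}}[\theta, A_n] = 0$, apply the definition of upper uniform $\lambda$-adjustment to the sequence of graphs: the sequence $(\theta,\theta)$ (viewed as a unit-normalized element of the trivial graph, or more precisely using that $\theta$ sits inside $G_\theta$) forces, for each $\eta$, the existence of elements $(y_n, A_n y_n)$ in the graphs $G_{A_n}$ and a vector $(t, s) \in X \times Y$ with $\varlimsup \left\| (x_n, A_n x_n) - (y_n, A_n y_n) - (t,s)\right\| \leq \eta$ — wait, the roles are reversed: here $\theta$ is \emph{lower} adjusted with $A_n$, so it is the unit sequences \emph{from the graphs $G_{A_n}$} that must be approximated by elements of $G_\theta = X \times \{\theta\}$ together with a shift. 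Thus for a unit sequence $(x_n, A_n x_n)$ we get $(y_n, \theta) \in G_\theta$ and $(t,s)$ with $\varlimsup\left\|(x_n, A_n x_n) - (y_n,\theta) - (t,s)\right\| = 0$; projecting on $Y$ gives $A_n x_n \to s$. Then $f_n A_n x_n = f_n(A_n x_n - s) + f_n s \to 0$ since $\left\|A_n x_n - s\right\| \to 0$, $\left\|f_n\right\|$ is bounded, and $f_n \to 0$ pointwise so $f_n s \to 0$. This contradicts $|f_n A_n x_n| \geq c$. (One must first normalize: the sequence $(x_n, A_n x_n)$ need not be unit, but boundedness of $A_n$ — they lie in $\mathcal B(X,Y)$ — keeps norms under control, and if the infimum of norms is $0$ one passes to a subsequence converging to $(\theta,\theta)$ and concludes directly, exactly as in the proof of Theorem \ref{solua}, proposition 2.)

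For part 2, I would mimic the proof of proposition 1 of the $BFU$ Small Uniform Adjustment Theorem \ref{T:bfusuat}, applied in the product space $X \times Y$ with the subspaces $M_n = X \times \{\theta\} = G_\theta$ and $P_n = G_{A_n}$. Since $\lambda_{\mathbb{N}^{'}}[\theta, A_n] = 0 < 1$, proposition 4 (the equivalences, or directly proposition 1) of Theorem \ref{T:bfusuat} yields a finite-dimensional $Q_\epsilon \subset X \times Y$ and the gap estimate $\varlimsup_n \delta(G_\theta, Q_\epsilon + G_{A_n}) \leq \epsilon$; dually — using that in a $BFU$-space annihilators behave well and that $X \times Y$ is $BFU$ by hypothesis — this transfers to a finite-\emph{co}dimensional closed subspace of $(X\times Y)^*$ on which the relevant projection has small norm. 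Translating back through the identification $(X\times Y)^* \cong X^* \times Y^*$ and the fact that $G_{A_n}^{\bot} = \{(-A_n^* g, g) : g \in Y^*\}$, the condition "$\delta$ small" becomes exactly a bound $\left\|A_n^*|_R\right\| \leq \epsilon$ on a closed subspace $R \subset Y^*$ with $\dim Y^*/R < \infty$, for large $n$. This is the cleanest route; the bookkeeping of which dual factor carries the finite codimension is the only delicate point.

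For part 3, I would use part 2 together with the same decomposition technique as in proposition 3 and 4 of Theorem \ref{solua}. Given $\epsilon \in (0,1)$, take $R \subset Y^*$ from part 2 with $\dim Y^*/R < \infty$ and $\left\|A_n^*|_R\right\| \leq \epsilon$ for large $n$; let $R_\perp = {}^{\bot}R \subset Y$, so $\dim R_\perp < \infty$ — no, rather $R$ finite-codimensional in $Y^*$ corresponds (since $R$ need not be weak$^*$-closed) to needing a little care, but the analogue of the argument gives a closed finite-codimensional $Y_\epsilon \subset Y$ with $\left\|$the part of $A_n$ landing outside a finite-dimensional complement$\right\|$ controlled. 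Concretely: decompose $A_n = C_n + F_n$ with $\dim F_n(X)$ uniformly finite and $\left\|C_n\right\| \leq \epsilon$ (eventually), exactly paralleling Theorem \ref{solua} proposition 3 but now getting a \emph{norm} bound rather than $K_2$-approximation because part 2 gives an operator-norm estimate on a cofinite piece. Then $\left\|A_n\right\|_q \leq \left\|C_n\right\|_q + \left\|F_n\right\|_q \leq \left\|C_n\right\| + 0 \leq \epsilon$ for large $n$ (using $\left\|F_n\right\|_q = 0$ since $F_n$ has finite rank, and $\left\|\cdot\right\|_q$ subadditive). Since $\epsilon$ is arbitrary, $\left\|A_n\right\|_q \to 0$.

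The main obstacle is part 2: correctly transporting the gap/separation machinery of Theorem \ref{T:bfusuat} through the graph correspondence and the identification of duals, and in particular keeping straight which of $X^*$, $Y^*$ inherits the finite-codimensional subspace and verifying that the $BFU$ hypothesis on $X \times Y$ (not merely on $X$ or $Y$ separately) is what licenses the Fréchet--Urysohn argument on the unit ball of $(X\times Y)^*$. Once part 2 is in hand, parts 1 and 3 are routine adaptations of arguments already carried out in the excerpt.
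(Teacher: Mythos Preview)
Your proposal contains a genuine error in part 1 that propagates into parts 2 and 3.

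\textbf{Part 1: the direction of adjustment is reversed.} The hypothesis is $\lambda_{\mathbb{N}'}[\theta,A_n]=0$, i.e.\ $G_\theta = X\times\{\theta\}$ is \emph{lower} uniformly $0$-adjusted with $G_{A_n}$. By Definition~\ref{D:ulass} this means: for any unit subsequence taken \emph{from} $G_\theta$, one finds approximants \emph{in} $G_{A_n}$ together with a shift. Your ``correction'' inverts this: you take a unit sequence $(x_n,A_nx_n)$ from $G_{A_n}$ and claim to get $(y_n,\theta)\in G_\theta$ with $\varlimsup\|(x_n,A_nx_n)-(y_n,\theta)-(t,s)\|=0$, concluding $A_nx_n\to s$. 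That conclusion would follow from $\lambda_{\mathbb{N}'}[A_n,\theta]=0$ (the setting of Theorem~\ref{solua}), not from the present hypothesis. The paper's argument goes the correct way: starting from the unit sequence $(x_n,\theta)\in G_\theta$, it produces $(y_n,A_ny_n)\in G_{A_n}$ and a shift $(z,t)$ with both components small; then one applies $f_nA_n$ to the first component (using $f_nA_n z\to 0$ pointwise) and $f_n$ to the second (using $f_n t\to 0$), and combines via the triangle inequality to bound $|f_nA_nx_n|$. Your route does not establish $A_nx_n\to s$ and so does not reach a contradiction.

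\textbf{Part 2: the gap route gives an $n$-dependent subspace, and you bypass part 1.} Applying proposition 1 of Theorem~\ref{T:bfusuat} yields $\delta(G_\theta, Q_\epsilon + G_{A_n})\le\epsilon$; dualizing via $\delta(M,P)=\delta(P^\perp,M^\perp)$ and $G_{A_n}^\perp=\{(-A_n^*g,g):g\in Y^*\}$ gives $\|A_n^*g\|\le\epsilon\|g\|$ only for those $g$ with $(-A_n^*g,g)\in Q_\epsilon^\perp$, i.e.\ $g$ annihilating the vectors $v_i - A_n u_i$ (writing $Q_\epsilon=\mathrm{span}\{(u_i,v_i)\}$). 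This subspace of $Y^*$ depends on $n$, whereas the statement demands a \emph{fixed} $R$. The paper instead runs the Null Functional Separation argument directly on sets $T_n=\{(fA_n,f):\|f\|=1,\ \|fA_n\|>\epsilon\}\subset (X\times Y)^*$ and uses part~1 at the crucial step: if a normalized subsequence $(f_nA_n/\|f_nA_n\|,\ f_n/\|f_nA_n\|)$ converged weak$^*$ to $(\theta,\theta)$, part~1 would force $\|f_nA_n/\|f_nA_n\|\|\to 0$, absurd. The resulting separating neighborhood then yields a single finite-dimensional $Q\subset X\times Y$, and $R=P(Q^\perp)$ (with $P:(X\times Y)^*\to Y^*$ the projection) is the fixed finite-codimensional subspace.

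\textbf{Part 3: the predual passage is not available.} Your decomposition $A_n=C_n+F_n$ would need a finite-codimensional subspace of $Y$, but from $R\subset Y^*$ you get ${}^\perp R\subset Y$, whose dimension need not be finite unless $R$ is weak$^*$-closed (you note this yourself). The paper avoids this entirely: part~2 says precisely $\|A_n^*\|_\lambda\to 0$; by Sedaev's identity $\|B\|_\lambda=\|B^*\|_q$ one gets $\|A_n^{**}\|_q\to 0$, and then $\|A_n\|_q\le 2\|A_n^{**}\|_q\to 0$.
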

\begin{proof}
In order to prove proposition 1 assume the opposite -- then there exists a unit sequence ${(x_{n})_{\mathbb{N}^{''}} \subset X}$ and some ${\epsilon > 0}$ such that ${\left\|f_{n}A_{n}x_{n}\right\| > \epsilon}$ for all ${n \in \mathbb{N}^{''}}$. Due to the point wise convergence condition, norms of all functionals ${(f_{n})_{\mathbb{N}^{'}} \subset Y^{*}}$ and ${(f_{n}A_{n})_{\mathbb{N}^{'}} \subset X^{*}}$ are limited by some ${K \in \mathbb{R}}$. 
By definition of $\lambda-$adjustment there exists a sequence ${(y_{n})_{\mathbb{N}^{'''}} \subset X}$ and vectors ${t \in X}$, ${u \in Y}$ such that
\[
\varlimsup_{\mathbb{N}^{'''}} \left\| (x_{n} - y_{n} - z,\ A_{n}y_{n} - t) \right\| \leq \frac{\epsilon}{2 \times K}
\]
Applying functionals ${f_{n}A_{n}}$ to the first component and taking into account that ${f_{n}A_{n}z \rightarrow 0}$ obtain
\begin{multline}
\notag
\begin{aligned}
\varlimsup_{\mathbb{N}^{'''}} \left\|f_{n}A_{n}x_{n} - f_{n}A_{n}y_{n}\right\| &\ =\ \varlimsup_{\mathbb{N}^{'''}} \left\|f_{n}A_{n}x_{n} - f_{n}A_{n}y_{n} - f_{n}A_{n}z + f_{n}A_{n}z\right\|\\
                                                                               &\ \leq\ \varlimsup_{\mathbb{N}^{'''}} \left\|f_{n}A_{n}x_{n} - f_{n}A_{n}y_{n} - f_{n}A_{n}z\right\| + \varlimsup_{\mathbb{N}^{'''}} \left\|f_{n}A_{n}z\right\|\\
                                                                               &\ =\ \varlimsup_{\mathbb{N}^{'''}} \left\|f_{n}A_{n}x_{n} - f_{n}A_{n}y_{n} - f_{n}A_{n}z\right\| + 0\\
                                                                               &\ =\ \varlimsup_{\mathbb{N}^{'''}} \left\|f_{n}A_{n}(x_{n} - y_{n} - z)\right\|\\
                                                                               &\ \leq\  \varlimsup_{\mathbb{N}^{'''}} (\left\|f_{n}A_{n}\right\| \times \left\|x_{n} - y_{n} - z\right\|)\\
                                                                               &\ \leq\ \varlimsup_{\mathbb{N}^{'''}} \left\|f_{n}A_{n}\right\| \times \frac{\epsilon}{2 \times K}\ \leq\ K \times \frac{\epsilon}{2 \times K}\ =\ \frac{\epsilon}{2}.
\end{aligned}
\end{multline}
Applying functionals ${f_{n}}$ to the second component and taking into account that ${f_{n}t \rightarrow 0}$ estimate
\begin{multline}
\notag
\begin{aligned}
\varlimsup_{\mathbb{N}^{'''}} \left\|f_{n}A_{n}y_{n}\right\| &\ =\ \varlimsup_{\mathbb{N}^{'''}} \left\|f_{n}A_{n}y_{n} - f_{n}t + f_{n}t\right\|\\
                                                             &\ \leq\ \varlimsup_{\mathbb{N}^{'''}} \left\|f_{n}A_{n}y_{n} - f_{n}t\right\| + \varlimsup_{\mathbb{N}^{'''}} \left\|f_{n}t\right\|\\
                                                             &\ =\ \varlimsup_{\mathbb{N}^{'''}} \left\|f_{n}A_{n}y_{n} - f_{n}t\right\| + 0\\
                                                             &\ \leq\  \varlimsup_{\mathbb{N}^{'''}} (\left\|f_{n}\right\| \times \left\|A_{n}y_{n} - t\right\|)\\
                                                             &\ \leq\ \varlimsup_{\mathbb{N}^{'''}} \left\|f_{n}\right\| \times \frac{\epsilon}{2 \times K}\ \leq\ K \times \frac{\epsilon}{2 \times K}\ =\ \frac{\epsilon}{2}.
\end{aligned}
\end{multline}
Now taking into account the previous two inequalities and using the triangle inequality estimate further
\begin{multline}
\notag
\begin{aligned}
\varlimsup_{\mathbb{N}^{'''}} \left\|f_{n}A_{n}x_{n}\right\| &\ =\ \varlimsup_{\mathbb{N}^{'''}} \left\|f_{n}A_{n}x_{n} - f_{n}A_{n}y_{n} + f_{n}A_{n}y_{n}\right\|\\
                                                             &\ \leq\ \varlimsup_{\mathbb{N}^{'''}} \left\|f_{n}A_{n}x_{n} - f_{n}A_{n}y_{n}\right\| + \left\|f_{n}A_{n}y_{n}\right\|\\
                                                             &\ \leq\ \frac{\epsilon}{2} + \frac{\epsilon}{2}\ =\ \epsilon
\end{aligned}
\end{multline}
which contradicts our assumption that ${\left\|f_{n}A_{n}x_{n}\right\| > \epsilon}$ for all ${n \in \mathbb{N}^{''}}$. Therefore, ${\left\|f_{n}A_{n}\right\| \rightarrow 0}$ which concludes the proof of the proposition 1.\\

In order to prove proposition 2 choose ${\epsilon \in (0, 1)}$ and define sets of functionals for every ${n \in \mathbb{N}^{'}}$ like this:
\[
T_{n}\ =\ \{(fA_{n}, f) \in (X \times Y)^{*}\ \bigm|\ \left\|f\right\|\ =\ 1,\ \left\|fA_{n}\right\|\ >\ \epsilon\}
\]
Suppose that for large enough ${n \in \mathbb{N}^{'}}$ sets ${T_{n}}$ are empty. Then proposition 2 is true as we can choose ${Q}$ being the whole space ${Y^{*}}$. We are now left with the case when there is a subsequence ${\mathbb{N}^{''}}$ such that ${T_{n}}$ is not empty for every ${n \in \mathbb{N}^{''}}$. In this case consider the following sets for the same $n$:
\[
G_{n}\ =\ \{(\frac{fA_{n}}{\left\|fA_{n}\right\|}, \frac{f}{\left\|fA_{n}\right\|}) \in (X \times Y)^{*}\ \bigm|\  \left\|f\right\|\ =\ 1,\ \left\|fA_{n}\right\|\ >\ \epsilon\}.
\] 
Sets ${G_{n}}$ are not empty since sets ${T_{n}}$ are not empty. Now for each ${n \in \mathbb{N}^{''}}$ define sets
\[
V_{n}\ =\ \bigcup_{i \geq n} G_{i}.
\]
Denote by ${(\theta, \theta)}$ the null element from ${(X \times Y)^{*}}$ and suppose that ${(\theta, \theta) \in {\overline{V_{n}}}^{w^{*}}}$ for an infinite number of $n$. Then note that all sets ${G_{n}}$ and ${V_{n}}$ are contained in a ball ${B_{\frac{1}{\epsilon}} \subset (X \times Y)^{*}}$ of radius ${\frac{1}{\epsilon}}$ centered in ${(\theta, \theta) \in (X \times Y)^{*}}$ which is a bounded set and therefore is a compact space in ${w^{*}}$ topology of ${(X \times Y)^{*}}$ by the Banach-Alaoglu theorem. Also, it is a Fr\'{e}chet-Urysohn space since ${X \times Y}$ is a $BFU-$space. Therefore, applying first Lemma \ref{L:css}, then Lemma \ref{L:ccus} we can find a subsequence  ${(g_{n})_{\mathbb{N}^{'''}} \triangleleft (G_{n})_{\mathbb{N}^{'''}}}$ such that ${(g_{n})_{\mathbb{N}^{'''}} \rightarrow (\theta, \theta)}$ in the ${w^{*}}$ topology. By construction each ${g_{n}}$ can be rewritten as
\[
g_{n} = (\frac{f_{n}A_{n}}{\left\|f_{n}A_{n}\right\|}, \frac{f_{n}}{\left\|f_{n}A_{n}\right\|}).
\]
However, by proposition 1
\[
\left\| \frac{f_{n}A_{n}}{\left\|f_{n}A_{n}\right\|} \right\| \rightarrow 0
\]
which is impossible as each of these norms are obviously equal to 1. This means that our assumption is wrong and ${(\theta, \theta) \notin {\overline{V_{n}}}^{w^{*}}}$ for some ${m \in \mathbb{N}^{'}}$. Hence, there exists a base neighborhood of ${(\theta, \theta) \in (X \times Y)^{*}}$ such that for the same ${m}$
\begin{equation}\label{E:300}
V_{m} \cap W = \emptyset.
\end{equation}
According to definition of the weak topology ${w^{*}}$ the base neighborhood $W$ is determined by a real number ${\eta > 0}$ and by a finite number of vectors ${q_{1}, ..., q_{k}}$ from $X \times Y$ such that ${f \in W}$ if and only if ${|fq_{i}| < \nu}$ for all ${i \in \{1,...,k\}}$. Therefore, due to \eqref{E:300}, for every ${f \in V_{m}}$ there always exists some ${i \in \{1,...,k\}}$ such that ${|fq_{i}| \geq \nu > 0}$. Hence, defining a finite-dimensional space ${Q = sp(\{q_{1},...,q_{k}\})}$, we conclude that ${V_{m} \cap Q^{\bot} = \emptyset}$. Therefore, recalling the definition of ${V_{n}}$ obtain for all ${n > m, n \in \mathbb{N}^{'}}$
\[
G_{n} \cap Q^{\bot} = \emptyset.
\]
However, each ${G_{n}}$ is a scaled image of ${T_{n}}$ while ${Q^{\bot}}$ is a linear subspace, therefore for the same $n$
\begin{equation}\label{E:301}
T_{n} \cap Q^{\bot} = \emptyset.
\end{equation}
If ${P : (X \times Y)^{*} \rightarrow Y^{*}}$ is a natural projection, then define ${R = P(Q^{\bot})}$. It is clear that ${\dim Y^{*} / R < \infty}$ since ${\dim (X \times Y)^{*} / Q^{\bot} < \infty}$. Also, from \eqref{E:301} and from construction of ${T_{n}}$ it is clear that 
\[
\left\|A^{*}_{n} \mid {}_{R} \right\| \leq \epsilon.
\]
which concludes the proof of the proposition 2.\\

In order to prove proposition 3 recall that $\lambda-$norm of a continuous operator ${A \in \mathcal{B}(X,Y)}$ is defined as $\inf$ of such ${\epsilon > 0}$ for which there exists a closed subspace ${L_{\epsilon} = \overline{L_{\epsilon}} \subset X}$ such that ${\dim X / L_{\epsilon} < \infty}$ and ${\left\|A\mid{}_{L_{\epsilon}}\right\| \leq \epsilon}$. Also recall from \cite{sedaev} that for any continuous operator ${A \in \mathcal{B}(X,Y)}$
\[
\left\|A\right\|_{\lambda} = \left\|A^{*}\right\|_{q}.
\]
Therefore, since proposition 2 essentially states that ${\left\|A^{*}_{n}\right\|_{\lambda} \rightarrow 0}$, we conclude that ${\left\|A^{**}_{n}\right\|_{q} \rightarrow 0}$. But it is easy to see that ${\left\|A\right\|_{q} \leq 2 \times \left\|A^{**}\right\|_{q}}$ for any\linebreak[4] ${A \in \mathcal{B}(X,Y)}$. Therefore ${\left\|A_{n}\right\|_{q} \rightarrow 0}$ which concludes the proof of the proposition 3 and of the entire lemma.
\end{proof}

We are now ready to prove the final stability theorem of this section:
\begin{theorem}[(Semi--)Fredholm Operators are Stable under Upper $0-$Adjustment]\label{sfosua}
Let $X$ and $Y$ be two Banach spaces and ${X \times Y}$ is a $BFU-$space, ${A \in \mathcal{B}(X,Y)}$ and ${(A_{n})_{\mathbb{N}^{'}} \subset \mathcal{B}(X,Y)}$. Suppose that ${\lambda_{\mathbb{N}^{'}}[\theta, A_{n}] = 0}$. Then the following propositions are true:
\begin{enumerate}
  \item If $A$ is an upper semi--Fredholm operator, then for large enough ${n \in \mathbb{N}^{'}}$ operators ${A + A_{n}}$ are also upper semi--Fredholm and there exists ${L \in \mathbb{N}}$ such that for the same $n$
\[
\beta(A + A_{n}) \leq L.
\]
  \item If $A$ is a Fredholm operator, then for large enough ${n \in \mathbb{N}^{'}}$ operators ${A + A_{n}}$ are also Fredholm and ${ind(A + A_{n}) = ind(A)}$ and there exists ${R \in \mathbb{N}}$ such that for the same $n$
\[
\alpha(A + A_{n}) \leq R.
\]
\end{enumerate}
\end{theorem}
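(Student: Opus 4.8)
The plan is to follow the template of Theorem~\ref{foasucla}: derive Proposition~1 from the $BFU$ stability theorem for upper semi--Fredholm pairs via the graph trick, and derive Proposition~2 by collapsing the perturbation to one of small $q$--norm.

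For Proposition~1, work inside $X \times Y$ and put $M = M_n = X \times \{\theta\}$, $N = G_A$, $N_n = G_{A + A_n}$. Since $A$ is upper semi--Fredholm, $R(A)$ is closed, so $M + N = X \times R(A)$ is closed and $\beta(M, N) = \dim (X \times Y)/(X \times R(A)) = \dim Y/R(A) = \beta(A) < \infty$; thus $(M, N)$ is an upper semi--Fredholm pair in the $BFU$--space $X \times Y$. Clearly $\lambda_{\mathbb{N}^{'}}[M, M_n] = 0$, and $\lambda_{\mathbb{N}^{'}}[N, N_n] = \lambda_{\mathbb{N}^{'}}[A, A + A_n] = 0$ by Theorem~\ref{nua}, Proposition~2, applied with $C = A$ and $B_n = A_n$ (using the hypothesis $\lambda_{\mathbb{N}^{'}}[\theta, A_n] = 0$). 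Hence $\omega := \psi(M,N) \times (\lambda_{\mathbb{N}^{'}}[M, M_n] + \lambda_{\mathbb{N}^{'}}[N, N_n]) = 0 < 1$, and the preceding theorem on stability of upper semi--Fredholm pairs in $BFU$--spaces applies: for large $n$ the pairs $(M_n, N_n)$ are upper semi--Fredholm with $\varlimsup_{n} \beta(M_n, N_n) < \infty$. Translating back --- $M_n + N_n = X \times R(A + A_n)$, so $R(A + A_n)$ is closed and $\beta(A + A_n) = \beta(M_n, N_n)$ --- gives that $A + A_n$ is upper semi--Fredholm with $\beta(A + A_n) \le L$ for some $L \in \mathbb{N}$ and all large $n$, which is Proposition~1.

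For Proposition~2, observe that a Fredholm $A$ is in particular upper semi--Fredholm, so Proposition~1 already supplies $L$ with $\beta(A + A_n) \le L$ for large $n$. Since $X \times Y$ is a $BFU$--space and $\lambda_{\mathbb{N}^{'}}[\theta, A_n] = 0$, Lemma~\ref{L:suua}, Proposition~3, yields $\left\|A_n\right\|_q \to 0$. Therefore, for all sufficiently large $n$ the $q$--norm $\left\|A_n\right\|_q$ lies below the stability radius of the Fredholm operator $A$, and by the classical stability of Fredholm operators under perturbations of small measure of non--compactness (\cite{sedaev,nussbaum,sadovskii}) the operators $A + A_n$ are Fredholm with $ind(A + A_n) = ind(A)$. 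Then $\alpha(A + A_n) = ind(A + A_n) + \beta(A + A_n) = ind(A) + \beta(A + A_n) \le ind(A) + L$, so $\alpha(A + A_n) \le R$ with, say, $R = |ind(A)| + L$, which finishes Proposition~2.

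The genuinely substantial inputs --- Theorem~\ref{nua} (Proposition~2), the $BFU$ upper semi--Fredholm stability theorem, and Lemma~\ref{L:suua} (Proposition~3) --- are already in hand, so the remaining work is bookkeeping: identifying $\alpha$ and $\beta$ of the graph pairs with the operator defect numbers and, above all, matching the \emph{orientation} of each uniform adjustment number (upper versus lower) against the hypotheses of the theorem invoked, since here only $\lambda_{\mathbb{N}^{'}}[\theta, A_n] = 0$ is given. The subtlety to watch is that Lemma~\ref{L:suua} controls only $\left\|A_n\right\|_q$ and not $\left\|A_n\right\|$, so Proposition~2 must be routed through $q$--norm stability rather than norm stability; this is also why the full index equality, and not merely boundedness of $\alpha$, is needed in order to convert the $\beta$--bound of Proposition~1 into the asserted $\alpha$--bound.
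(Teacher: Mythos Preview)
Your proof is correct and follows essentially the same route as the paper: the graph trick with $M=M_n=X\times\{\theta\}$, $N=G_A$, $N_n=G_{A+A_n}$ together with Theorem~\ref{nua}(2) and the $BFU$ upper semi--Fredholm stability theorem for Proposition~1, then Lemma~\ref{L:suua}(3) plus classical $q$--norm Fredholm stability for Proposition~2. In fact you cite the correct upper semi--Fredholm pair theorem, whereas the paper's own proof contains what appears to be a mislabeled cross-reference (to Theorem~\ref{foasucla}) at that step.
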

\begin{proof}
In order to prove proposition 1 apply a well known technique defining subspaces from ${X \times Y}$ (see \cite{kato})
\[
M\ =\ M_{n}\ =\ X \times \{\theta\},\ \ \ N\ =\ G_{A},\ \ \ N_{n}\ =\ G_{A + A_{n}};
\]
obviously ${(M, N)}$ is an upper semi--Fredholm pair of closed subspaces in a $BFU-$space ${X \times Y}$, also it is clear that ${\lambda_{\mathbb{N}^{'}}[M, M_{n}] = 0}$. Let us prove that ${\lambda_{\mathbb{N}^{'}}[N, N_{n}] = 0}$. By definition ${\lambda_{\mathbb{N}^{'}}[N, N_{n}] = \lambda_{\mathbb{N}^{'}}[A, A + A_{n}]}$. Since ${A \in \mathcal{B}(X, Y)}$ and ${\lambda_{\mathbb{N}^{'}}[\theta, A_{n}] = 0}$ it follows from Theorem \ref{nua} proposition 2 that\linebreak[4] ${\lambda_{\mathbb{N}^{'}}[A, A + A_{n}] = 0}$. Therefore, ${\lambda_{\mathbb{N}^{'}}[N, N_{n}] = 0}$. Thus, by Theorem \ref{foasucla} there exists a number ${K_{1} \in \mathbb{N}}$ such that pairs ${(M_{n}, N_{n})}$ are upper semi--Fredholm and values of ${\beta(M_{n}, N_{n})}$ are limited by some ${L \in \mathbb{N}}$ a finite number for ${n > K_{1}}$. However it is obvious that\linebreak[4] ${\beta(M_{n}, N_{n}) = \beta(A + A_{n})}$, so the values of ${\beta(A + A_{n})}$ are limited by the same finite number $L$ for the same $n$.
  
In order to prove proposition 2 notice that if operator $A$ is Fredholm, then according to \cite{goldenstein_markus,sadovskii} operator ${A + C}$ is also Fredholm and ${ind(A + C) = ind(A)}$ for any ${C \in \mathcal{B}(X,Y)}$ with the small enough $q-$norm ${\left\|C\right\|_{q}}$. Then notice that ${\left\|A_{n}\right\|_{q} \rightarrow 0}$ by proposition 3 from lemma 8. Therefore, ${A + A_{n}}$ is also Fredholm and ${ind(A + A_{n}) = ind(A)}$ for large enough $n$.  
  
Finally, recall that by definition
\[
\alpha(A + A_{n})\ =\ ind(A + A_{n})\ +\ \beta(A + A_{n}),
\]
and by the current proof there exists ${K_{2} \in \mathbb{N}}$ such that
\[
ind(A + A_{n})\ =\ ind(A)
\]
for ${n > K_{2}}$; therefore values of ${\alpha(A + A_{n})}$ are limited by the finite number
\[
R\ =\ ind(A + A_{n})\ +\ L
\]
for ${n > \max\{K_{1}, K_{2}\}}$ where ${K_{1}}$ is from the previous step.
\end{proof}

\newpage
\section[Uniform $\mu-$Approximation]{Uniform $\mu-$Approximation in Banach Spaces}\label{S:umabs}

\subsection{Motivation and Definition}\label{S:mode}
The concept of uniform $\lambda-$adjustment appears to be rather broad as it captures in one sweep both gap-close and compact-close subspaces and operators. Thus, there may be no surprise that some stability theorems break-down for $\lambda-$adjustment in general Banach spaces. Moreover, $\lambda-$adjustment may increase dimension of Fredholm operator's kernel or co-dimension of its range similar to the well known fact that a compact perturbation may increase the dimension of the kernel and co-dimension of the range. However, it is also a well known fact that small gap perturbation never do that -- kernel dimension and range co-dimension always remain the same or decrease (see \cite{kato}).

This motivates one to seek an intermediate kind of perturbation -- the one that captures gap-small perturbations and compact convergence, but leaves\linebreak[4] $K_{2}-$approximation and compact perturbations aside. It appears that a condition expressing some sort of point wise convergence is needed, and, indeed, the following two definitions do the job:

\begin{definition}[Uniform $\mu-$Approximation of Sequences of Subspaces]\label{D:umass}
Let $(M_{n})_{\mathbb{N}^{'}}$ and $(P_{n})_{\mathbb{N}^{'}}$ be a pair of sequences of closed subspaces from a Banach space $X$, ${M_{n} \neq \{\theta\}}$ for all ${n \in \mathbb{N}^{'}}$ and a vector ${\mu = (\mu_{1}, \mu_{2}, \mu_{3}) \in \mathbb{R}^{3}}$ such that ${\mu_{1} \geq 0, \mu_{2} \geq 0, \mu_{3} \geq 0}$. We say that $(M_{n})_{\mathbb{N}^{'}}$ is lower $uniformly\ \mu-approximating$  $(P_{n})_{\mathbb{N}^{'}}$ (equivalently -- $(P_{n})_{\mathbb{N}^{'}}$ is upper $uniformly\ \mu-approximating$  $(M_{n})_{\mathbb{N}^{'}}$) if the following two conditions hold
\begin{enumerate}
  \item The sequence $(M_{n})_{\mathbb{N}^{'}}$ is uniformly ${\mu_{1}-}$adjusted with the sequence $(P_{n})_{\mathbb{N}^{'}}$, i.e. ${\lambda_{\mathbb{N}^{'}}[M_{n}, P_{n}] = \mu_{1}}$,
  \item If ${(x_{n})_{\mathbb{N}^{''}} \triangleleft (M_{n})_{\mathbb{N}^{''}}}$ is a unit sequence of vectors such that\linebreak[4] ${\varlimsup_{\mathbb{N}^{''}}\left\|x_{n} - z \right\| \leq \mu_{2}}$ for some vector ${z \in X}$, then there exists a subsequence ${\mathbb{N}^{'''} \subset \mathbb{N}^{''}}$ and a sequence of vectors ${(t_{n})_{\mathbb{N}^{'''}} \triangleleft (P_{n})_{\mathbb{N}^{'''}}}$ such that ${\varlimsup_{\mathbb{N}^{'''}}\left\|t_{n} - z \right\| \leq \mu_{3}}$.
\end{enumerate}

The uniform $\mu-$proximity between $(M_{n})_{\mathbb{N}^{'}}$ and $(P_{n})_{\mathbb{N}^{'}}$ is any ${\mathbb{R}^{3}}$ vector ${\mu = (\mu_{1}, \mu_{2}, \mu_{3})}$ such that $(M_{n})_{\mathbb{N}^{'}}$ $\mu-$approximates $(P_{n})_{\mathbb{N}^{'}}$ -- this vector is denoted by
\[
\mu_{\mathbb{N}^{'}}[M_{n}, P_{n}].
\]
\end{definition}

\begin{definition}[Uniform $\mu-$Approximation between a Subspace and a Sequence of Subspaces]\label{D:umasss}
Similarly to the uniform approximation of a pair of sequences of subspaces, we define uniform  approximation between a subspace $M$ and a sequence of subspaces $(P_{n})_{\mathbb{N}^{'}}$, as well as uniform approximation between a sequence of subspaces $(M_{n})_{\mathbb{N}^{'}}$ and a subspace $P$, denoting uniform $\mu-$proximities
\[
\begin{aligned}
&\mu_{\mathbb{N}^{'}}[M, P_{n}] := \mu_{\mathbb{N}^{'}}[M_{n}, P_{n}]\ \ where\ \ M = M_{n}\ for\ n \in {N}^{'},\\
&\mu_{\mathbb{N}^{'}}[M_{n}, P] := \mu_{\mathbb{N}^{'}}[M_{n}, P_{n}]\ \ where\ \ P = P_{n}\ for\ n \in {N}^{'}.
\end{aligned}
\]
\end{definition}

The concept of $\mu-$approximation is well defined as it is obvious that any two sequences of subspaces  $\mu-$approximate each other with the $\mu-$proximity vector ${(1, 0, 1) \in \mathbb{R}^{3}}$. 

Also, it is not hard to see that when space $P$ is finite dimensional, then lower $\mu-$approximation degenerates to the convergence in the gap topology if $\mu = (0,0,0)$. Similarly, when space $P$ has a final co-dimension, then upper $\mu-$approximation degenerates to the convergence in the gap topology if $\mu = (0,0,0)$.

Once $\mu-$approximation is defined for closed subspaces, it is trivially applied to closed operators -- two sequences of closed operators $\mu-$approximate each other if their graphs $\mu-$approximate each other in the product space. 

As a simple but interesting fact note that if $Y$ is a finite-dimensional Banach space, then a sequence of operators ${(A_{n})_{\mathbb{N}}}$ point wise converges to the null operator ${\theta \in \mathcal{B}(X, Y)}$ if and only if ${(A_{n})_{\mathbb{N}}}$ $(0,0,0)-$approximates the null operator ${\theta}$. In particular, taking ${Y = \mathbb{K}}$ where ${\mathbb{K}}$ is a field of $Y$, we conclude that a sequence of functionals weakly converges to the null-functional if and only if that sequence $(0,0,0)-$approximates that null functional.

It is not hard to see that $\mu-$approximation is a weaker concept than small gap, small norm and compact convergence. Also, when the Banach space $X$ is finite-dimensional, $\mu-$approximation degenerates to small-gap. 

At the same time $\mu-$approximation captures cases that did not use to fit into any of the previous schemes -- for example, if ${(A_{n})_{\mathbb{N}} \subset \mathcal{B}(X,Y)}$ compactly converges to the null operator ${\theta}$ and ${\left\|C_{n}\right\| \leq \epsilon}$ for all ${C_{n} \in \mathcal{B}(X, Y)}$, then ${(A_{n} + C_{n})_{\mathbb{N}}}$ $\mu-$approximates $\theta$ for ${\mu = (\epsilon, \epsilon, \epsilon) \in \mathbb{R}^{3}}$, yet it is clear that ${(A_{n} + C_{n})_{\mathbb{N}}}$ does not compactly converge to  $\theta$, neither norms of ${A_{n} + C_{n}}$ may be limited by $\epsilon$.

\subsection[Fredholm Stability under $\mu-$Approximation]{Fredholm Stability under $\mu-$Approximation}\label{S:fruma}
The concept of $\mu-$approximation allows for full range of stability theorems in the setting of general Banach spaces (without any additional constraints on the structure of $X$) -- they encompass both the stability facts for (semi--)Fredholm pairs of subspaces and for (semi--)Fredholm linear operators. Here is an example of such a theorem:
\begin{theorem}[(semi--)Fredholm Pairs are Stable under $\mu-$Approximation]\label{sfpsma}
Let ${X}$ be a Banach space, ${M, N}$ are two closed subspaces from $X$. Let ${(M_{n})_{\mathbb{N}^{'}}}$ and ${(N_{n})_{\mathbb{N}^{'}}}$ are two sequences of closed subspaces from $X$. Then the following propositions are true:
\begin{enumerate}
  \item Suppose that pair ${(M,N)}$ is lower semi--Fredholm and that the sequence ${(M_{n})_{\mathbb{N}^{'}}}$ lower uniformly $\mu-$approximates space $M$ for some ${\mu = \mu_{M}}$ while the sequence ${(N_{n})_{\mathbb{N}^{'}}}$ lower uniformly $\mu-$approximates space $N$ for some ${\mu = \mu_{N}}$. Then, if ${\mu_{M}}$ and ${\mu_{N}}$ are small enough, the pairs ${(M_{n},N_{n})}$ are also lower semi--Fredholm for large enough ${n \in \mathbb{N}^{'}}$, while\linebreak[4] ${\alpha(M_{n}, N_{n}) \leq \alpha(M,N)}$ for the same $n$.
  \item Suppose that pair ${(M,N)}$ is upper semi--Fredholm and that the sequence ${(M_{n})_{\mathbb{N}^{'}}}$ upper uniformly $\mu-$approximates space $M$ for some ${\mu = \mu_{M}}$ while the sequence ${(N_{n})_{\mathbb{N}^{'}}}$ upper uniformly $\mu-$approximates space $N$ for some ${\mu = \mu_{N}}$. Then, if ${\mu_{M}}$ and ${\mu_{N}}$ are small enough, the pairs ${M_{n},N_{n})}$ are also upper semi--Fredholm for large enough ${n \in \mathbb{N}^{'}}$, while ${\beta(M_{n}, N_{n}) \leq \beta(M,N)}$ for the same $n$.
  \item Suppose that pair ${(M,N)}$ is Fredholm and both lower and upper\linebreak[4] $\mu-$approximation conditions from the above two cases are true -- that is sequences ${(M_{n})_{\mathbb{N}^{'}}}$ and ${(N_{n})_{\mathbb{N}^{'}}}$ upper and lower $\mu-$approximate respective spaces $M$ and $N$. Then when all the proximities are small enough, the pairs ${(M_{n},N_{n})}$ are also Fredholm for large enough ${n \in \mathbb{N}^{'}}$, while
\begin{multline}
\notag
\begin{aligned}
\ \ \ \ \alpha(M_{n}, N_{n}) &\ \leq\ \alpha(M,N),\\
\ \ \ \ \beta(M_{n}, N_{n})  &\ \leq\ \beta(M,N),\\
\ \ \ \ ind(M_{n}, N_{n})    &\ =\ ind(M,N).
\end{aligned}
\end{multline}
for the same $n$.
\end{enumerate}
\end{theorem}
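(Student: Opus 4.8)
The plan is to deduce Proposition 3 from Propositions 1 and 2 together with an index--constancy argument, and to prove Propositions 1 and 2 by first isolating the plain $\lambda$--adjustment content (handled by the earlier stability theorems) and then sharpening the defect bounds using the second, genuinely new, condition of $\mu$--approximation.

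For Proposition 1, condition 1 of Definition \ref{D:umass} says precisely that $\lambda_{\mathbb{N}^{'}}[M_{n},M]$ and $\lambda_{\mathbb{N}^{'}}[N_{n},N]$ equal the (small) first components of $\mu_{M}$ and $\mu_{N}$, so Theorem \ref{lsfps} applies verbatim: for large $n$ the sums $M_{n}+N_{n}$ are closed, the pairs $(M_{n},N_{n})$ are lower semi--Fredholm, and $\varlimsup_{\mathbb{N}^{'}}\alpha(M_{n},N_{n})<\infty$. To upgrade this finite bound to the sharp inequality I would first prove a sub--lemma: if $(H_{n})_{\mathbb{N}^{'}}$ lower uniformly $\mu$--approximates a finite--dimensional subspace $P$ with a sufficiently small proximity vector, then $\dim H_{n}\le\dim P$ for large $n$. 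Its proof follows the scalar pattern of the Small Gap and Small Uniform Adjustment theorems: take unit $x_{n}\in H_{n}$ nearly realizing $\delta(H_{n},P)$; by condition 1 there are (after passing to a subsequence) $p_{n}\in P$ and $w\in X$ with $\varlimsup\|x_{n}-p_{n}-w\|$ small, and since $\dim P<\infty$ the $p_{n}$ are bounded, hence may be taken convergent to some $p\in P$, so $(x_{n})$ is close to the fixed vector $z=p+w$; condition 2 then yields $t_{n}\in P$ with $\varlimsup\|t_{n}-z\|$ small, finiteness of $\dim P$ again gives $t_{n}\to t\in P$, whence $\varlimsup\,\mathrm{dist}(x_{n},P)$ is small, and part 2 of the Small Gap Theorem \ref{T:sgp} finishes it (the case $P=\{\theta\}$ is absorbed uniformly, since then smallness of $\varlimsup\|x_{n}\|$ contradicts $\|x_{n}\|=1$). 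It then remains to verify that $(M_{n}\cap N_{n})_{\mathbb{N}^{'}}$ lower uniformly $\mu$--approximates $M\cap N$ with small proximity: condition 1 is exactly part 2 of Theorem \ref{lsfps} with $H_{n}=M_{n}\cap N_{n}$ (so $\delta(H_{n},N_{n})=0$), while condition 2 comes from the decomposition technique of that theorem -- for a unit sequence from $(M_{n}\cap N_{n})$ close to a fixed $z$, condition 2 for $(M_{n})$ relative to $M$ and for $(N_{n})$ relative to $N$ place $z$ within a small distance of $M$ and of $N$, and the open--mapping constant $\varphi_{S}(M,N)$ then pushes the corresponding points into $M\cap N$, producing the required approximating sequence. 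Combining the sub--lemma with these two checks gives $\alpha(M_{n},N_{n})=\dim(M_{n}\cap N_{n})\le\dim(M\cap N)=\alpha(M,N)$.

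Proposition 2 is the hard part: the upper semi--Fredholm case fails for plain $\lambda$--adjustment in arbitrary spaces (Example \ref{Ex:4}) and was recovered in Section \ref{sibfus} only under the $BFU$ hypothesis, so here condition 2 of $\mu$--approximation must take over the role the Fr\'echet--Urysohn property of $X^{*}$ played there. I would mirror the proof of the $BFU$ upper--stability theorem: work with unit spheres $S_{n}$ of $M_{n}^{\bot}\cap N_{n}^{\bot}$ and the tail unions $V_{n}=\bigcup_{i\ge n}S_{i}$, and aim to show $\theta\notin\overline{V_{n}}^{w^{*}}$ for some $n$; but in place of extracting a weak$^{*}$--convergent subsequence via Lemma \ref{L:css} (which needs $BFU$), I would test a putative weak$^{*}$--limit $\theta$ of functionals $g_{n}\in M_{n}^{\bot}\cap N_{n}^{\bot}$ against $M$ and $N$ directly through condition 2: applied to the constant sequence $x_{k}\equiv y$ for a unit vector $y\in M$ (respectively $y\in N$), condition 2 produces $t_{n}\triangleleft(M_{n})$ (respectively $\triangleleft(N_{n})$) with $\varlimsup\|t_{n}-y\|$ small, and since $g_{n}t_{n}=0$ this forces $|g_{n}y|$ eventually small; combined with the first $\mu$--condition and the constant $\psi_{S}(M,N)$ this contradicts $\|g_{n}\|=1$. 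Once $\theta$ is separated from some $V_{n}$ one extracts a finite--dimensional $Q$ with $V_{n}\cap Q^{\bot}=\emptyset$, hence $\dim(M_{n}^{\bot}\cap N_{n}^{\bot})\le\dim Q$, and Theorems 4.8 and 4.19 of \cite{kato} translate this back into: $(M_{n},N_{n})$ is upper semi--Fredholm with $\beta(M_{n},N_{n})\le\beta(M,N)$ for large $n$. The point I expect to be the main obstacle is precisely the uniformity in this step: condition 2 only controls $|g_{n}y|$ for each fixed $y$, so converting the resulting pointwise smallness on the unit spheres of $M$ and $N$ into a genuine weak$^{*}$--separation of $\theta$ from $\overline{V_{n}}^{w^{*}}$ requires a careful simultaneous use of the small $\lambda$--adjustment and of $\psi_{S}$, exactly where the $BFU$ argument could instead invoke compactness.

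Finally, Proposition 3: Propositions 1 and 2 already give that $(M_{n},N_{n})$ is Fredholm with $\alpha(M_{n},N_{n})\le\alpha(M,N)$ and $\beta(M_{n},N_{n})\le\beta(M,N)$ for large $n$, so only $\mathrm{ind}(M_{n},N_{n})=\mathrm{ind}(M,N)$ is left. I would obtain this by passing to graph subspaces in a product space so that the pair $(M,N)$ becomes the pair canonically associated with a Fredholm operator, observe that small $\mu$--approximation of the subspaces yields small $\lambda$--adjustment (in fact $0$--adjustment in the relevant directions) of the associated operators, and then invoke the index stability already proved for operators in Theorems \ref{foasucla} and \ref{sfosua}; alternatively, since the proximity vectors may be taken arbitrarily small, a routine perturbation argument shows the integer $\mathrm{ind}(M_{n},N_{n})$ cannot jump and hence equals $\mathrm{ind}(M,N)$.
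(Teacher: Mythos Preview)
The paper states Theorem~\ref{sfpsma} without proof, so there is no argument in the text to compare your proposal against; what follows is an assessment of the proposal on its own merits.

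Your treatment of Proposition~1 is essentially sound: Theorem~\ref{lsfps} supplies lower semi--Fredholmness and the finite bound, and the sub--lemma together with the verification that $(M_{n}\cap N_{n})$ lower $\mu$--approximates $M\cap N$ (via the decomposition governed by $\varphi_{S}$) gives the sharp inequality. One point you gloss over is the compatibility of the three components of $\mu$: to invoke condition~2 after condition~1 you need the output of the $\lambda$--adjustment step to fall below the threshold $\mu_{2}$, so the smallness hypothesis must be read as a relation among $\mu_{1},\mu_{2},\mu_{3}$, not merely as each being small; this is fixable but should be stated.

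Proposition~2 is where the proposal has a genuine gap, and it is more serious than the ``obstacle'' you flag. Your argument still hinges on having functionals $g_{n}\in M_{n}^{\bot}\cap N_{n}^{\bot}$ that converge weak$^{*}$ to $\theta$; without the $BFU$ hypothesis there is no mechanism to produce such a sequence from $\theta\in\overline{V_{n}}^{w^{*}}$, and your ``test against $M$ and $N$ through condition~2'' does not avoid this. Concretely, condition~2 applied to a constant sequence $y\in M$ yields $t_{n}\in M_{n}$ with $\Vert t_{n}-y\Vert\le\mu_{3}$ only along a subsequence that depends on $y$; hence the implication ``$|g_{n}y|$ eventually small for every $y\in M\cup N$, so $\Vert g_{n}\Vert$ eventually small'' fails, because the subsequences need not intersect. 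Moreover, even in the $BFU$ setting the separation argument you cite yields only $\beta(M_{n},N_{n})\le\dim Q$ for some uncontrolled finite--dimensional $Q$, not the sharp bound $\beta(M_{n},N_{n})\le\beta(M,N)$; the paper's $BFU$ theorem claims only $\varlimsup\beta(M_{n},N_{n})<\infty$. So both the existence of a contradiction and the sharpness of the resulting bound are unaccounted for. A different idea is needed here---most likely a direct primal argument showing $X=(M_{n}+N_{n})+S$ and closedness of $M_{n}+N_{n}$, where condition~2 is used not pointwise but on genuinely varying unit sequences in $M$ and $N$ that condition~1 first localises near a fixed vector.

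For Proposition~3, the appeal to Theorems~\ref{foasucla} and~\ref{sfosua} is problematic: the former requires $\lambda_{\mathbb{N}^{'}}[A_{n},\theta]=0$ exactly (not merely small), and the latter requires $X\times Y$ to be a $BFU$--space, which is not assumed here. The alternative ``routine perturbation argument'' is not an argument; index constancy under $\mu$--approximation does not follow from a continuity principle without an underlying homotopy or path of Fredholm pairs, which $\mu$--approximation does not supply. Once Propositions~1 and~2 are correctly proved with the sharp defect bounds, one still needs an independent step to rule out simultaneous strict drops in $\alpha$ and $\beta$.
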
 
There are more stability facts in this area in addition to the above theorem. For example, the reverse stability is true:
\begin{theorem}[Reverse Fredholm Stability]\label{rfs}
If ${M}$, $N$, ${(M_{n})_{\mathbb{N}^{'}}}$ and ${(N_{n})_{\mathbb{N}^{'}}}$ are closed subspaces from a Banach space $X$ then the following propositions are true:
\begin{enumerate}
  \item If all pairs ${(M_{n},N_{n})_{\mathbb{N}^{'}}}$ are lower semi--Fredholm so that their coefficients ${\varphi(M_{n},N_{n})}$ are bounded from above, then the pair ${(M,N)}$ is also lower semi--Fredholm for small enough upper uniform proximities ${\mu_{\mathbb{N}^{'}}[M, M_{n}]}$ and  ${\mu_{\mathbb{N}^{'}}[N, N_{n}]}$; moreover, ${\alpha(M, N) \geq \alpha(M_{n}, N_{n})}$ for large enough $n$.
  \item If all pairs ${(M_{n},N_{n})_{\mathbb{N}^{'}}}$ are upper semi--Fredholm so that their coefficients ${\psi(M_{n},N_{n})}$ are bounded from above, then the pair ${(M,N)}$ is also upper semi--Fredholm for small enough lower uniform proximities ${\mu_{\mathbb{N}^{'}}[M_{n}, M]}$ and  ${\mu_{\mathbb{N}^{'}}[N_{n}, N]}$; moreover, ${\beta(M, N) \geq \beta(M_{n}, N_{n})}$ for large enough $n$.
  \item If all pairs ${(M_{n},N_{n})_{\mathbb{N}^{'}}}$ are Fredholm so that their coefficients ${\varphi(M_{n},N_{n})}$ and ${\psi(M_{n},N_{n})}$ are bounded, then the pair ${(M,N)}$ is also Fredholm for small enough uniform proximities ${\mu_{\mathbb{N}^{'}}[M, M_{n}]}$, ${\mu_{\mathbb{N}^{'}}[N, N_{n}]}$, ${\mu_{\mathbb{N}^{'}}[M_{n}, M]}$ and  ${\mu_{\mathbb{N}^{'}}[N_{n}, N]}$; moreover, ${ind(M_{n}, N_{n}) = ind(M, N)}$ for large enough $n$.
\end{enumerate}
\end{theorem}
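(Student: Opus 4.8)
The plan is to reduce each proposition to the forward stability results already proved, namely Theorem \ref{sfpsma} together with Theorem \ref{lsfps} and the Small Uniform Adjustment Theorem, by exploiting a duality/symmetry in the definition of uniform $\mu$-approximation. The key observation is that for sequences of subspaces, the relation ``$(M_n)_{\mathbb{N}^{'}}$ lower $\mu$-approximates $M$'' and ``$(M_n)_{\mathbb{N}^{'}}$ upper $\mu'$-approximates $M$'' are mutually exchangeable with the roles of the constant sequence and the variable sequence swapped: that is, $\mu_{\mathbb{N}^{'}}[M, M_n]$ being small is precisely the hypothesis that lets one transport finite-dimensionality facts \emph{from} the $(M_n)$ side \emph{to} the $M$ side. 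So the strategy for proposition 1 is: assume each $(M_n,N_n)$ is lower semi--Fredholm with $\varphi(M_n,N_n)$ bounded by some constant $C$, and that $\mu_{\mathbb{N}^{'}}[M,M_n]$, $\mu_{\mathbb{N}^{'}}[N,N_n]$ are small. First I would show $\overline{M+N}=M+N$: if not, by Theorem 4.19 of \cite{kato} there is an infinite-dimensional closed $H\subset M$ with $\delta(H,N)$ arbitrarily small, and I would use the $\lambda$-adjustment component of the $\mu$-approximation hypothesis (that $\lambda_{\mathbb{N}^{'}}[M,M_n]$ and $\lambda_{\mathbb{N}^{'}}[N,N_n]$ are small) together with an analogue of Theorem \ref{lsfps} proposition 2 --- now run with $H_n\subset M_n$ chosen so $\delta(H_n,N_n)\to 0$ and the adjustment pointing the other way --- to force $\dim H$ finite, a contradiction. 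The uniform bound on $\varphi(M_n,N_n)$ is exactly what makes the constant $\omega$ in that estimate uniformly small across $n$.

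Next, for the defect bound, I would consider $H=M\cap N$, which is closed with $\delta(H,N)=0$. Using the second ($\mu_2,\mu_3$) component of the $\mu$-approximation hypothesis, a unit sequence in $M\cap N$ that clusters near a vector $z$ can be pushed into $(M_n)$ and into $(N_n)$ simultaneously near $z$ (applying the approximation condition for $M$ and for $N$ in turn), so that $M\cap N$ is, in a quantitative sense, ``lower $\lambda$-adjusted'' with the sequence $(M_n\cap N_n)$ --- more precisely one shows $\lambda_{\mathbb{N}^{'}}[M\cap N, M_n\cap N_n]$ is controlled by $\omega_S(M)$ built from $\varphi_S(M_n,N_n)$ and the proximities, using the decomposition argument of Theorem \ref{lsfps} proposition 2 run in reverse. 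Then the Small Uniform Adjustment Theorem \ref{T:suat} (proposition 3, the co-dimension/dimension version applied to the finite-dimensional targets $M_n\cap N_n$ versus $M\cap N$) yields $\dim(M\cap N)\ge \dim(M_n\cap N_n)$ for large $n$, i.e. $\alpha(M,N)\ge\alpha(M_n,N_n)$, and in particular $\alpha(M,N)<\infty$. Proposition 2 is the annihilator-dual of proposition 1: passing to $(M_n^{\bot},N_n^{\bot})$ and $(M^{\bot},N^{\bot})$ in $X^{*}$, upper semi--Fredholmness of $(M_n,N_n)$ with bounded $\psi(M_n,N_n)$ becomes lower semi--Fredholmness with bounded $\varphi$ of the annihilator pairs (Theorems 4.8 and 4.19 of \cite{kato}), and one checks that lower $\mu$-approximation $\mu_{\mathbb{N}^{'}}[M_n,M]$ small translates, after annihilation, into the hypothesis needed to apply proposition 1 to the dual pairs; then $\beta(M,N)=\alpha(M^{\bot},N^{\bot})\ge\alpha(M_n^{\bot},N_n^{\bot})=\beta(M_n,N_n)$. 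Proposition 3 follows by combining 1 and 2: both $\alpha$ and $\beta$ of $(M,N)$ are finite so $(M,N)$ is Fredholm, and for the index one invokes the forward direction --- Theorem \ref{sfpsma} proposition 3 applied to the Fredholm pair $(M,N)$ and the sequences $(M_n),(N_n)$, whose proximities the other way are also small by hypothesis --- giving $\mathrm{ind}(M_n,N_n)=\mathrm{ind}(M,N)$ for large $n$ directly, so no separate index computation is needed beyond citing \ref{sfpsma}.

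The main obstacle I anticipate is the ``reverse'' bookkeeping of the $\mu$-approximation data: the definition of $\mu$-approximation is asymmetric (a unit sequence from $(M_n)$ clustering near $z$ is pushed toward $P=M$ near $z$), and to run the Theorem \ref{lsfps} argument backwards I need, in effect, the opposite implication --- a unit sequence from the fixed space $M\cap N$ clustering near $z$ pushed toward $(M_n\cap N_n)$ near $z$. This is supplied by the hypotheses $\mu_{\mathbb{N}^{'}}[M,M_n]$ and $\mu_{\mathbb{N}^{'}}[N,N_n]$ being small (note the argument order in the bracket: $M$ is the \emph{source} here), but one must be careful that combining the push into $(M_n)$ and the push into $(N_n)$ respects a common subsequence and a common limiting vector, and that the resulting vectors actually land in $M_n\cap N_n$ rather than merely in $M_n+N_n$ --- this is where the bounded coefficients $\varphi_S(M_n,N_n)$ enter, exactly as $\varphi_S(M,N)$ entered the decomposition step $u_n=a_n+b_n$, $v_n=d_n+e_n$ in the proof of Theorem \ref{lsfps}. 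Making the quantitative constants line up (so that ``small enough proximities'' is a genuine, checkable smallness condition uniform in $n$) is the only delicate point; everything else is a mechanical transcription of the forward proofs with source and target interchanged and, for proposition 2, an application of the standard annihilator duality for Fredholm pairs.
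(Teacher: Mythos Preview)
The paper states Theorem \ref{rfs} but does not supply a proof, so there is no argument in the text to compare your proposal against. I can only assess your outline on its own merits.

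Your plan for proposition 1 is reasonable in spirit: running the decomposition machinery of Theorem \ref{lsfps} with the roles of source and target reversed, using the uniform bound on $\varphi(M_n,N_n)$ to control the splitting constants, and then invoking the Small Uniform Adjustment Theorem to bound $\dim(M\cap N)$. The delicate point you flag --- pushing a unit sequence from $M\cap N$ simultaneously into $M_n$ and into $N_n$ near a common limit, and then using the bounded $\varphi_S(M_n,N_n)$ to land in $M_n\cap N_n$ rather than merely in $M_n+N_n$ --- is real, and your identification of it is correct. This step is doable but requires care; it is not a mere transcription of the forward proof. Proposition 3 is fine once 1 and 2 are in hand, since the hypotheses include all four proximities and Theorem \ref{sfpsma}(3) applies directly.

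The genuine gap is in your treatment of proposition 2. You propose to pass to annihilators and reduce to proposition 1 in $X^{*}$. But the paper's own Example \ref{Ex:3} shows explicitly that uniform $\lambda$-adjustment does \emph{not} transfer to annihilators in general Banach spaces: one can have $\lambda_{\mathbb{N}}[M_n,\{\theta\}]=1/2$ while $\lambda_{\mathbb{N}}[\{\theta\}^{\bot},M_n^{\bot}]=1$. You give no argument that the stronger $\mu$-approximation hypothesis repairs this, and there is no result in the paper asserting that $\mu$-proximity passes to annihilators. Since Theorem \ref{rfs} is stated for an arbitrary Banach space (not a $BFU$-space), you cannot appeal to any weak$^{*}$ sequential compactness to rescue the duality step. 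A direct argument for proposition 2 --- parallel to what you sketch for proposition 1 but using the bounded $\psi(M_n,N_n)$ and the decomposition $X=(M_n+N_n)\oplus S_n$ --- is needed instead.
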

As another example one can cite a theorem of a complemented kernel:
\begin{theorem}[Complemented Kernels and Sums are Stable]\label{ckas}
If ${M}$, $N$, ${(M_{n})_{\mathbb{N}^{'}}}$ and ${(N_{n})_{\mathbb{N}^{'}}}$ are closed subspaces from a Banach space $X$ then the following propositions are true:
\begin{enumerate}
  \item When the kernel ${M \cap N}$ of an upper semi--Fredholm pair ${(M, N)}$ is complemented, then so will be complemented the kernels ${M_{n} \cap N_{n}}$ of the approximating pairs ${(M_{n}, N_{n})}$ for small enough uniform proximities\linebreak[4] ${\mu_{\mathbb{N}^{'}}[M_{n}, M]}$, ${\mu_{\mathbb{N}^{'}}[M, M_{n}]}$, ${\mu_{\mathbb{N}^{'}}[N_{n}, N]}$ and ${\mu_{\mathbb{N}^{'}}[N, N_{n}]}$.
  \item When the sum ${M + N}$ of a lower semi--Fredholm pair ${(M, N)}$ is complemented, then so will be complemented the sums ${M_{n} + N_{n}}$ of the approximating pairs ${(M_{n}, N_{n})}$ for small enough uniform proximities ${\mu_{\mathbb{N}^{'}}[M_{n}, M]}$, ${\mu_{\mathbb{N}^{'}}[M, M_{n}]}$, ${\mu_{\mathbb{N}^{'}}[N_{n}, N]}$ and ${\mu_{\mathbb{N}^{'}}[N, N_{n}]}$.
\end{enumerate}
\end{theorem}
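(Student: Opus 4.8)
The plan is to bootstrap from the $\mu$-approximation stability results already in hand --- chiefly Theorem \ref{sfpsma} and Theorem \ref{rfs} --- and to reduce everything to one ``transfer of complementation'' statement whose proof turns on the second, point-wise clause in Definition \ref{D:umass} (the feature that distinguishes $\mu$-approximation from mere $\lambda$-adjustment, and without which Examples \ref{Ex:1}--\ref{Ex:5} already show the conclusion can fail).

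I would first dispose of the skeleton. In part~1, once the four proximities are small, Theorem \ref{sfpsma}(2) makes each $(M_n,N_n)$ upper semi--Fredholm with closed sum and $\beta(M_n,N_n)\le\beta(M,N)$ for large $n$, while the $\omega_S$-estimate of Theorem \ref{lsfps}(2), applied with $H_n=M_n\cap N_n$ (so $\delta(H_n,N_n)=0$) and its mirror image, shows that $M_n\cap N_n$ and the fixed subspace $M\cap N$ are lower uniformly $\lambda$-adjusted \emph{in both directions} with small constants; the point-wise clauses of the four input $\mu$-proximities then combine into a point-wise clause relating $M_n\cap N_n$ and $M\cap N$. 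Thus $M_n\cap N_n$ and $M\cap N$ mutually $\mu$-approximate with arbitrarily small proximities. Part~2 is arranged symmetrically, with $M_n+N_n$ and $M+N$ in place of the kernels: Theorems \ref{sfpsma}(1) and \ref{rfs}(1) keep the sums closed and lower-defect-bounded, and one checks straight from Definition \ref{D:umass} that $(M_n+N_n)$ and $M+N$ mutually $\mu$-approximate with small proximities.

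The remaining, and essential, point is: if $H\subset X$ is closed and complemented, $X=H\oplus K$ with bounded projection $P$ onto $H$, and a sequence of closed subspaces $H_n$ --- each already the kernel (resp.\ sum) of a semi--Fredholm pair --- mutually $\mu$-approximates $H$ with small enough proximities, then $H_n$ is complemented for large $n$. I would build the complement of $H_n$ as a finite-dimensional correction of the fixed $K$. Applying $I-P$ to the $\lambda$-estimates shows that $\lambda_{\mathbb{N}'}[H_n\cap K,\{\theta\}]$ is small --- a unit vector of $H_n\cap K$ lands, under $I-P$, within a small multiple of that constant of the fixed shift vector $(I-P)z$ that Definition \ref{D:ulass} allows --- so $\dim(H_n\cap K)\le L<\infty$ for large $n$ by the Small Uniform Adjustment Theorem \ref{T:suat}(1). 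Applying the point-wise clause to the constant sequences $h\in H$ shows $H_n$ comes uniformly $\mu_3$-close to every unit vector of $H$, hence $\delta(X,H_n+K)$ is small; together with $\dim(H_n\cap K)<\infty$ and the semi--Fredholm structure of $(M_n,N_n)$ this forces $H_n+K$ to be closed of finite codimension in $X$. Finite-dimensional corrections finish the job: write $K=K_1\oplus(H_n\cap K)$, absorb the finite codimension of the closed subspace $H_n\oplus K_1$ by a finite-dimensional $E_n$, and set $K_n=K_1\oplus E_n$, obtaining $X=H_n\oplus K_n$.

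For part~2 I would run the same lemma with $H=M+N$ and $H_n=M_n+N_n$; one could instead invoke Kato's duality between upper and lower semi--Fredholm pairs (\cite{kato}, Theorems~4.8 and~4.19), but since Example \ref{Ex:3} warns that annihilators of $\lambda$-adjusted subspaces need not be $\lambda$-adjusted, the direct route is the safe one. I expect the main obstacle to be exactly the closedness-and-codimension step inside the lemma: finite algebraic codimension of $H_n+K$ does not by itself give closedness, and pinning $\dim X/(H_n+K)$ down requires marrying the gap bound $\delta(X,H_n+K)<1$ (Small Gap Theorem \ref{T:sgp}(4),(3)) to the finiteness of $\dim(H_n\cap K)$ and to the fact that $H_n$ sits as a defect subspace of a semi--Fredholm pair --- and, to keep every gap that appears genuinely below $1$, one must track carefully which of the four proximities $\mu_{\mathbb{N}'}[M_n,M]$, $\mu_{\mathbb{N}'}[M,M_n]$, $\mu_{\mathbb{N}'}[N_n,N]$, $\mu_{\mathbb{N}'}[N,N_n]$ governs which inequality.
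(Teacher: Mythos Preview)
The paper states Theorem~\ref{ckas} without proof: it appears in Section~\ref{S:fruma} as one of several $\mu$-approximation stability results (alongside Theorems~\ref{sfpsma}, \ref{rfs}, \ref{esossfp}) that are announced but not argued. So there is no proof in the paper to compare against; I can only assess your plan on its own merits.

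Your overall architecture is sound. The reduction to a transfer-of-complementation lemma is the right move, and once mutual $\mu$-approximation between $H_n$ and $H$ is in hand, the transfer itself is essentially Theorem~\ref{sfpsma}(3) applied to the Fredholm pair $(H,K)$ with $\alpha=\beta=0$: you get $H_n\cap K=\{\theta\}$ and $H_n+K=X$, so $K$ itself complements $H_n$, and your finite-dimensional surgery is not even needed. Your use of Theorem~\ref{lsfps}(2) with $H_n=M_n\cap N_n$ for the forward $\lambda$-direction, and of the projection $I-P$ to bound $\lambda_{\mathbb{N}'}[H_n\cap K,\{\theta\}]$, are both correct.

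The one place I would press you is the reverse direction $\mu_{\mathbb{N}'}[M\cap N,\,M_n\cap N_n]$. You invoke ``its mirror image'' of Theorem~\ref{lsfps}(2), but that mirror requires the coefficients $\varphi(M_n,N_n)$ to be uniformly bounded, and Theorem~\ref{rfs}(1) \emph{assumes} such a bound rather than supplying it. You will need to extract this bound from the forward stability (Theorem~\ref{sfpsma}) --- roughly, once $(M_n,N_n)$ is upper semi-Fredholm with $\beta(M_n,N_n)\le\beta(M,N)$ and the sums are close, the open-mapping constants cannot degenerate --- or else route around it by arguing the codimension of $H_n+K$ directly from the point-wise clause of $\mu_{\mathbb{N}'}[M,M_n]$ and $\mu_{\mathbb{N}'}[N,N_n]$ together with the $\psi$-structure of the upper semi-Fredholm pair $(M,N)$. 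This is exactly the step you yourself flag as the main obstacle, and it is where the real work lies; the rest of your outline goes through.
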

As another interesting fact we can mention that Fredholm pairs remain stable when considering them in subspaces of the enclosing space $X$ -- note that this theorem is a stronger version of the previous Theorem \ref{sfpsma}:
\begin{theorem}[Extended Stability of (Semi--)Fredholm Pairs]\label{esossfp}
Let X be a Banach space and ${M}$, $N$ are two closed subspaces of $X$ such that their sum is closed ${\overline{M+N} = M+N}$. Let ${(M_{n})_{\mathbb{N}^{'}}}$ and ${(N_{n})_{\mathbb{N}^{'}}}$ be two sequences of closed subspaces from $X$. Then the following propositions are true:
\begin{enumerate}
  \item Suppose that ${Q}$ is a closed subspace from $X$ containing both ${M}$ and $N$ such that ${\beta_{Q}(M,N) = \dim Q/(M+N) < \infty}$. Also suppose that ${(Q_{n})_{\mathbb{N}^{'}}}$ is a sequence of closed subspaces from $X$ such that $Q_{n}$ contains both ${M_{n}}$ and ${N_{n}}$, i.e. ${M_{n} + N_{n} \subset Q_{n}}$ for all ${n \in \mathbb{N}^{'}}$. Further suppose that uniform $\mu-$proximity vectors ${\mu_{\mathbb{N}^{'}}[Q_{n}, Q]}$, ${\mu_{\mathbb{N}^{'}}[M, M_{n}]}$ and ${\mu_{\mathbb{N}^{'}}[N, N_{n}]}$ are small enough. Then for large enough ${n \in \mathbb{N}^{'}}$ the sums of ${M_{n}}$ and ${N_{n}}$ are closed ${\overline{M_{n}+N_{n}} = M_{n}+N_{n}}$ and
\[
\beta_{Q_{n}}(M_{n},N_{n})\ =\ \dim Q_{n} / (M_{n} + N_{n})\ \leq\ \beta_{Q}(M,N).
\]
  \item Suppose that ${P}$ is a closed subspace from $X$ contained in both ${M}$ and $N$ such that ${\alpha_{P}(M,N) = \dim (M \cap N) /P < \infty}$. Also suppose that ${(P_{n})_{\mathbb{N}^{'}}}$ is a sequence of closed subspaces from $X$ such that $P_{n}$ is contained in both ${M_{n}}$ and ${N_{n}}$, i.e. ${P_{n} \subset M_{n} \cap N_{n}}$ for all ${n \in \mathbb{N}^{'}}$. Further suppose that uniform $\mu-$proximity vectors ${\mu_{\mathbb{N}^{'}}[P, P_{n}]}$, ${\mu_{\mathbb{N}^{'}}[M_{n}, M]}$ and ${\mu_{\mathbb{N}^{'}}[N_{n}, N]}$ are small enough. Then for large enough ${n \in \mathbb{N}^{'}}$ the sums of ${M_{n}}$ and ${N_{n}}$ are closed ${\overline{M_{n}+N_{n}} = M_{n}+N_{n}}$ and
\[
\alpha_{P_{n}}(M_{n}, N_{n})\ =\ \dim (M_{n} \cap N_{n}) / P_{n}\ \leq\ \alpha_{P}(M,N).
\]
  \item Suppose that all the previous conditions are satisfied and in addition uniform $\mu-$proximity vectors ${\mu_{\mathbb{N}^{'}}[Q, Q_{n}]}$ and ${\mu_{\mathbb{N}^{'}}[P_{n}, P]}$ are small enough. Define
\begin{multline}
\notag
\begin{aligned}
ind_{(P,Q)}(M,N) &\ =\ \alpha_{(P,Q)}(M,N)\ -\ \beta_{(P,Q)}(M,N),\\
ind_{(P_{n},Q_{n})}(M_{n},N_{n}) &\ =\ \alpha_{(P_{n},Q_{n})}(M_{n},N_{n})\ -\  \beta_{(P_{n},Q_{n})}(M_{n},N_{n});
\end{aligned}
\end{multline}
  Then for large enough ${n \in \mathbb{N}^{'}}$
\[
ind_{(P_{n},Q_{n})}(M_{n},N_{n})\ =\ ind_{(P,Q)}(M,N).
\] 
\end{enumerate}
\end{theorem}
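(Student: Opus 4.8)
The plan is to reduce all three parts to the basic $\mu$-approximation stability result, Theorem~\ref{sfpsma} (together with its reverse form, Theorem~\ref{rfs}), by moving the pairs into suitable quotient spaces. For part~2, since $P\subset M\cap N$ is closed and $P\subset M$, $P\subset N$, the images $M/P$, $N/P$ are closed subspaces of $X/P$ with $(M/P)\cap(N/P)=(M\cap N)/P$ of dimension $\alpha_{P}(M,N)<\infty$, so $(M/P,N/P)$ is lower semi--Fredholm there; symmetrically, for part~1 one fixes a finite--dimensional complement of $M+N$ in $Q$ and works with $(M,N)$ as a pair Fredholm relative to $Q$. The first technical step I would establish is a transfer lemma: \emph{$\mu$-approximation is inherited by the quotient construction}, i.e.\ if $\mu_{\mathbb{N}'}[M_n,M]$ is small and $P_n\subset M_n$, $P\subset M$ with $\mu_{\mathbb{N}'}[P,P_n]$ small, then the induced sequence of images in the relevant quotient $\mu$-approximates the image of $M$, with proximity controlled by the original proximities. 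This is exactly where the hypotheses on $\mu_{\mathbb{N}'}[P,P_n]$ and $\mu_{\mathbb{N}'}[Q_n,Q]$ enter: they keep the varying reference subspaces $P_n$ and $Q_n$ from drifting away from $P$ and $Q$, which would otherwise make the passage to quotients meaningless.

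Granting the transfer lemma, part~2 runs as follows. First one shows $\overline{M_n+N_n}=M_n+N_n$ for large $n$: otherwise, by Theorem~4.19 of \cite{kato}, there are infinite--dimensional closed $H_n\subset M_n$ with $\delta(H_n,N_n)\to 0$, and combining the transfer of $\mu$-approximation with Theorem~\ref{sfpsma} (or directly with the Small Uniform Adjustment Theorem~\ref{T:suat} worked inside $X/P$) forces $\dim H_n$ bounded for large $n$, a contradiction. Then in $X/P$ the image pair $\mu$-approximates the lower semi--Fredholm pair $(M/P,N/P)$, so by Theorem~\ref{sfpsma} its lower defect is $\le\alpha_{P}(M,N)$ for large $n$; unravelling the identification (using $P_n\subset M_n\cap N_n$ and the transfer lemma to control the finite corrections $\dim P_n/(P_n\cap P)$) gives $\alpha_{P_n}(M_n,N_n)=\dim(M_n\cap N_n)/P_n\le\alpha_{P}(M,N)$. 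Part~1 is the dual statement, proved the same way after fixing a complement of $M+N$ in $Q$ and bounding $\dim Q_n/(M_n+N_n)$ by $\beta_{Q}(M,N)$ via Theorem~\ref{sfpsma} applied to $(M_n,N_n)$ inside $Q_n$. Note that, in contrast to the $BFU$ setting of Theorem~\ref{esssfp}, one cannot here route the upper case through annihilators: Example~\ref{Ex:3} shows that $\lambda$-adjustment of subspaces need not pass to annihilators, so this must be carried out directly.

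For part~3, I would introduce the \emph{difference operators}: let $\bar T_n$ be the bounded operator from $(M_n\oplus N_n)/\Delta(P_n)$ to $Q_n$ induced by $(a,b)\mapsto a-b$, where $\Delta(P_n)=\{(p,p):p\in P_n\}$, and similarly $\bar T$ from $(M\oplus N)/\Delta(P)$ to $Q$. One checks that $\dim\ker\bar T_n=\dim(M_n\cap N_n)/P_n=\alpha_{P_n}(M_n,N_n)$ and $\dim Q_n/R(\bar T_n)=\dim Q_n/(M_n+N_n)=\beta_{Q_n}(M_n,N_n)$, with $R(\bar T_n)$ closed by parts~1--2, so that $ind\,\bar T_n=ind_{(P_n,Q_n)}(M_n,N_n)$ and likewise $ind\,\bar T=ind_{(P,Q)}(M,N)$. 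The two--sided proximity hypotheses added in part~3 ($\mu_{\mathbb{N}'}[P,P_n]$ together with $\mu_{\mathbb{N}'}[P_n,P]$, $\mu_{\mathbb{N}'}[Q_n,Q]$ together with $\mu_{\mathbb{N}'}[Q,Q_n]$, and the two--sided data for $M,N$) are precisely what the transfer lemma needs to conclude that the graph subspaces $G_{\bar T_n}$ $\mu$-approximate $G_{\bar T}$ in both directions. Applying Theorem~\ref{sfpsma} to the graph pairs (its operator form), one obtains $ind\,\bar T_n=ind\,\bar T$ for large $n$, which is the assertion; this is the genuine mechanism forcing index stability, since $\mu$-approximation is compatible with a \emph{decrease} of both $\alpha$ and $\beta$ and so the $\le$ inequalities of parts~1--2 alone do not suffice.

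The main obstacle I expect is the bookkeeping around the varying finite--parameter subspaces $P_n$ and $Q_n$: making precise, and uniform in $n$, the claim that $\mu$-approximation survives the quotient by $P$ (respectively the passage to $Q$) when the reference subspaces themselves only approximate $P$ (respectively $Q$), and controlling the finite correction terms $\dim P_n/(P\cap P_n)$, $\dim P/(P\cap P_n)$ and their $Q$-side analogues that appear when a bound obtained in the quotient space is converted back to a bound on $\alpha_{P_n}(M_n,N_n)$ or $\beta_{Q_n}(M_n,N_n)$. Everything else is a routine adaptation of the arguments already carried out for Theorems~\ref{lsfps} and~\ref{sfpsma}.
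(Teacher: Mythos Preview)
The paper does not provide a proof of Theorem~\ref{esossfp}: like the other results in Section~\ref{S:fruma} (Theorems~\ref{sfpsma}, \ref{rfs}, \ref{ckas}, \ref{sro}), it is stated as a fact with no argument, the only hint being the remark that it ``is a stronger version of the previous Theorem~\ref{sfpsma}.'' There is therefore no paper proof against which to compare your proposal.

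Your outline is reasonable in spirit. The difference--operator construction you introduce for part~3 is in fact the natural vehicle for all three parts: with $\bar T:(M\oplus N)/\Delta(P)\to Q$ one has $\ker\bar T\cong(M\cap N)/P$ and $Q/R(\bar T)\cong Q/(M+N)$, so parts~1--3 are exactly the upper, lower, and full Fredholm stability of $\bar T$ under the graph--pair version of Theorem~\ref{sfpsma}. Routing parts~1 and~2 through a single fixed quotient $X/P$ (respectively a fixed ambient $Q$) is more awkward than you suggest, because in general $P\not\subset M_{n}$ and $M_{n}\not\subset Q$, so the ``images'' $M_{n}/P$ or the inclusion $M_{n}\subset Q$ need not exist; your transfer lemma would really have to compare the varying quotients $X/P_{n}$ and ambients $Q_{n}$ to the fixed ones, which is essentially the graph--operator argument in disguise. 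The honest obstacle you flag---controlling the finite corrections coming from $P_{n}$ versus $P$ and $Q_{n}$ versus $Q$---is genuine and is exactly where the additional two--sided hypotheses in part~3 are consumed.
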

\subsection[Essentially Kato Stability under $\mu-$Approximation]{Essentially Kato Stability under $\mu-$Approximation}\label{S:ruma}
The concept of uniform $\mu-$approximation also allows for strengthening previously known stability facts about Riesz kernels and ranges for essentially Kato continuous operators disturbed by small commuting continuous operators proved in \cite{goldman_krachkovskii2,goldman_krachkovskii22,goldman_krachkovskii3,forster_muller,aiena}. That is, Riesz kernel and range remains stable for non-continuous essentially Kato closed operators under perturbations which are not necessarily continuous:
\begin{theorem}[Stability of Riesz Kernel and Range]\label{sro}
Let $X$ be a Banach space and ${A \in \mathcal{C}(X,X)}$ is a closed operator. Define its Riesz Range and Riesz Kernel respectively 
\[
\mathfrak{M}(A)\ :=\ \bigcap_{n=1}^{\infty} R(A^{n}),\ \ \ \mathfrak{N}(A)\ :=\  \bigcup_{n=1}^{\infty}Ker(A^{n}).
\]
Suppose that the range of $A$ is closed ${\overline{R(A)} = R(A)}$ and the following dimensions are finite
\begin{multline}
\notag
\begin{aligned}
\epsilon(A)\ =\ & \dim\ (N(A) + \mathfrak{M}(A))\ /\ \mathfrak{M}(A)\ <\ \infty,\\
\gamma(A)\ =\ & \dim\ (\mathfrak{N}(A) + \mathfrak{M}(A))\ /\ \mathfrak{M}(A)\ <\ \infty.
\end{aligned}
\end{multline}
Operators of this kind are called essentially Kato operators (see \cite{forster_muller}). Let there be a continuous operator ${C \in \mathcal{BC}(X,X)}$ commuting with $A$, i.e. such that both ${ACx}$ and ${CAx}$ are defined and ${ACx = CAx}$ for each ${x \in dom(A)}$; suppose also that there exists a continuous reverse operator ${(A + C)^{-1} \in \mathcal{B}(X, X)}$. Finally, suppose that ${(B_{n})_{\mathbb{N}}}$ is a sequence of operators (continuous or not) all commuting with both $A$ and $C$ such that the sequence ${(A + B_{n})_{\mathbb{N}} \subset \mathcal{C}(X,X)}$ uniformly lower and upper $\mu-$approximates operator $A$ with ${\mu = (0,0,0)}$. Then for large enough ${n \in  \mathbb{N}}$
\begin{enumerate}
  \item ${\mathfrak{M}(A + B_{n}) \supset \mathfrak{M}(A)}$,
  \item ${\overline{R(A + B_{n})} = R(A + B_{n})}$,
  \item ${\epsilon(A + B_{n}) \leq \epsilon(A)}$,
  \item ${\gamma(A + B_{n}) \leq \gamma(A)}$,
  \item If $X$ can be decomposed ${X = R(A) \oplus S}$, then there exist closed subspaces ${S_{n} \subset X}$ such that ${X = R(A_{n}) \oplus S_{n}}$; if ${\epsilon(A) = 0}$ then ${S_{n} = S}$,
  \item ${\mathfrak{M}(A + B_{n}) \subset \mathfrak{M}(A) + \mathfrak{N}(A)}$,
  \item ${(A + B_{n})^{-1}(\overline{\mathfrak{N}(A)}) \subset \overline{\mathfrak{N}(A)}}$,
  \item ${\overline{\mathfrak{N}(A + B_{n})} \subset \overline{\mathfrak{N}(A)}}$
  \item If $X$ can be decomposed ${X = Ker(A) \oplus S}$, then there exist closed subspaces ${S_{n} \subset X}$ such that ${X = Ker(A_{n}) \oplus S_{n}}$; if ${\epsilon(A) = 0}$ then ${S_{n} = S}$,
  \item If ${\overline{\mathfrak{N}(A)}}$ is a $BFU-$space then ${\overline{\mathfrak{N}(A + B_{n})} \supset \overline{\mathfrak{N}(A)} \cap \mathfrak{M}(A)}$,
  \item If ${\delta(A + B_{n}, A) \rightarrow 0}$ then ${\overline{\mathfrak{N}(A + B_{n})} \supset \overline{\mathfrak{N}(A)} \cap \mathfrak{M}(A)}$.
\end{enumerate}
\end{theorem}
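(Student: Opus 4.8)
The plan is to translate each of the eleven conclusions into a semi--Fredholm statement about an explicit pair of closed subspaces of $X\times X$ (or of $\overline{\mathfrak{N}(A)}$) and then to invoke the $\mu$--approximation stability results already proved, namely Theorems \ref{sfpsma}, \ref{rfs}, \ref{ckas} and \ref{esossfp}, together with the $BFU$ stability results of Section \ref{sibfus}. The encoding is the standard one: for each $k$ the graph $G_{A^{k}}\subset X\times X$ is closed, $\operatorname{Ker}(A^{k})\times\{\theta\}=G_{A^{k}}\cap(X\times\{\theta\})$, and $\{\theta\}\times R(A^{k})=(\{\theta\}\times X)\cap(G_{A^{k}}+X\times\{\theta\})$, so that ``$R(A^{k})$ is closed with $\dim X/R(A^{k})<\infty$'' is the statement that the pair $(G_{A^{k}},X\times\{\theta\})$ is upper semi--Fredholm, while ``$\dim\operatorname{Ker}(A^{k})<\infty$'' is the statement that it is lower semi--Fredholm. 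The technical engine of the whole argument is the preliminary lemma asserting that for every fixed $k\in\mathbb{N}$ the sequence $((A+B_{n})^{k})_{\mathbb{N}}$ lower and upper uniformly $\mu$--approximates $A^{k}$ with $\mu=(0,0,0)$; granting this, since $X\times\{\theta\}$ trivially $\mu$--approximates itself, the pairs $(G_{(A+B_{n})^{k}},X\times\{\theta\})$ $\mu$--approximate $(G_{A^{k}},X\times\{\theta\})$ and the cited pair--stability theorems apply verbatim.

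For the hyperrange conclusions (roughly 1--5) one first fixes a generalized Kato decomposition $X=M\oplus N$ of $A$ into closed $A$--invariant subspaces with $\dim N<\infty$, $A|_{N}$ nilpotent and $A|_{M}$ semi--regular; the hypotheses $\overline{R(A)}=R(A)$, $\epsilon(A)<\infty$, $\gamma(A)<\infty$ guarantee that one finite power $p$ already captures $\mathfrak{M}(A)$ and $\overline{\mathfrak{N}(A)}$ up to finite dimension, so that the infinite intersection/union defining $\mathfrak{M},\mathfrak{N}$ is truncated at level $p$ and only finitely many pair--stability statements are needed. Conclusions 2, 3, 4 then follow from Theorem \ref{sfpsma}(2) and Theorem \ref{esossfp}(1) applied to $(G_{(A+B_{n})^{k}},X\times\{\theta\})$ for $k\le p$, conclusion 5 is the complemented--sum statement Theorem \ref{ckas}(2) applied to $(G_{A+B_{n}},X\times\{\theta\})$ (with $S_{n}=S$ when $\epsilon(A)=0$), and conclusion 1, $\mathfrak{M}(A+B_{n})\supset\mathfrak{M}(A)$, is handled directly: $B_{n}$ and $C$ commute with $A$, hence preserve every $R(A^{k})$ and so $\mathfrak{M}(A)$; since $A$ maps $\mathfrak{M}(A)$ onto $\mathfrak{M}(A)$ (a standard essentially Kato fact) and $A+C$ is invertible, the $\mu$--approximation forces $(A+B_{n})|_{\mathfrak{M}(A)}$ to be surjective for large $n$, whence $\mathfrak{M}(A)=\bigcap_{k}(A+B_{n})^{k}\mathfrak{M}(A)\subset\mathfrak{M}(A+B_{n})$.

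The hyperkernel conclusions 6--9 are dual: $A+B_{n}$ commutes with $A$, so it carries each $\operatorname{Ker}(A^{k})$ into itself and hence $(A+B_{n})^{-1}(\overline{\mathfrak{N}(A)})\subset\overline{\mathfrak{N}(A)}$ (conclusion 7), while $\overline{\mathfrak{N}(A+B_{n})}\subset\overline{\mathfrak{N}(A)}$ (conclusion 8) and $\mathfrak{M}(A+B_{n})\subset\mathfrak{M}(A)+\mathfrak{N}(A)$ (conclusion 6) come from Theorem \ref{sfpsma}(1) and Theorem \ref{esossfp}(2) applied at level $p$, and conclusion 9 is Theorem \ref{ckas}(1). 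Conclusions 10 and 11 amount to the single reverse inclusion $\overline{\mathfrak{N}(A+B_{n})}\supset\overline{\mathfrak{N}(A)}\cap\mathfrak{M}(A)$: under the hypothesis that $\overline{\mathfrak{N}(A)}$ is a $BFU$--space one works inside $\overline{\mathfrak{N}(A)}$ and applies the upper semi--Fredholm $BFU$ stability of Section \ref{sibfus} to the restricted pairs, whereas under the stronger hypothesis $\delta(A+B_{n},A)\to0$ one restricts to $\overline{\mathfrak{N}(A)}$ and uses classical gap perturbation theory, with no $BFU$ assumption.

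The lemma on powers is where the real work lies, and it is the step I expect to be the main obstacle. Since $\mathfrak{M}$ and $\mathfrak{N}$ are infinitary objects one cannot avoid propagating the $(0,0,0)$--$\mu$--approximation through all finite powers with uniform control up to the fixed truncation level $p$; the difficulty is that composing the graph relations $G_{A+B_{n}}\circ\cdots\circ G_{A+B_{n}}$ requires the intermediate vectors $(A+B_{n})^{j}x_{n}$ for $0<j<k$ to remain bounded along the relevant unit and bounded sequences, which is not automatic, and this is precisely where the commutativity of $B_{n}$ with $A$ and $C$ (permitting the binomial identity $(A+B_{n})^{k}=\sum_{j}\binom{k}{j}A^{k-j}B_{n}^{j}$ on $dom(A^{k})$, cf.\ Theorem \ref{nua}) together with the essentially Kato structure of $A$ must be used. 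Once the lemma and the truncation are in place, each of the eleven conclusions reduces mechanically to an application of one of the stability theorems already available.
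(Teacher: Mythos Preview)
The paper does not actually supply a proof of Theorem~\ref{sro}; the whole of Section~\ref{S:umabs} on $\mu$--approximation states its results (Theorems~\ref{sfpsma}, \ref{rfs}, \ref{ckas}, \ref{esossfp}, \ref{sro}) as announcements without proofs. So there is no argument in the paper to compare your proposal against.

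That said, your outline is a plausible strategy and correctly identifies the architecture: encode everything via graph pairs in $X\times X$, prove a ``powers lemma'' that $(A+B_{n})^{k}$ $\mu$--approximates $A^{k}$ for each fixed $k$, exploit the essentially Kato structure to truncate the infinite intersection/union defining $\mathfrak{M}$ and $\mathfrak{N}$ at a finite level $p$, and then invoke the pair--stability theorems of Section~\ref{S:umabs}. Two points deserve more care than you give them. First, the hypothesis that $(A+C)^{-1}\in\mathcal{B}(X,X)$ for some commuting $C$ is not decorative: in the classical Gol'dman--Kra\v{c}kovski\u{\i} and F\"{o}rster--M\"{u}ller arguments it is precisely what lets one rewrite $A+B_{n}=(A+C)+(B_{n}-C)$ as a perturbation of an \emph{invertible} operator, which is how the restriction arguments on $\mathfrak{M}(A)$ and $\overline{\mathfrak{N}(A)}$ are made to work and how domains are controlled; you invoke it only in passing for conclusion~1, but it is almost certainly needed throughout, in particular inside the powers lemma to bound the intermediate vectors $(A+B_{n})^{j}x_{n}$. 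Second, the binomial identity $(A+B_{n})^{k}=\sum_{j}\binom{k}{j}A^{k-j}B_{n}^{j}$ is delicate when $A$ is unbounded and $B_{n}$ is not assumed continuous: equality on $dom(A^{k})$ requires each summand to be defined there, hence $B_{n}(dom(A^{k}))\subset dom(A^{k})$, which is a nontrivial consequence of commutativity that must be argued explicitly. With these two ingredients made precise your plan is reasonable, but as written both are genuine gaps rather than routine details.
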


\newpage
\section[Tuples and Complexes]{Tuples and Complexes of Closed Subspaces}\label{S:tccs}

While traditionally Fredholm stability theorems for subspaces were limited to pairs of subspaces, Fredholm stability theorems for continuous operators were considered in recent years for commuting tuples and for complexes of continuous linear operators. In this section we extend the concept of (semi--)Fredholm pair of subspaces to the concepts of (semi--)Fredholm tuple and Fredholm complex of closed subspaces. We also present Fredholm style stability theorems for tuples and complexes of subspaces.

\subsection[Tuple of Closed Subspaces]{Tuple of Closed Subspaces}\label{S:tcs}
We define the concept of an upper semi--Fredholm tuple of closed subspaces in a straightforward way; the concept of a lower semi--Fredholm is less obvious (its rationale will be revealed in the next lemma), while the concept of Fredholm tuple is quite clear:

\begin{definition}[(Semi--)Fredholm Tuples]\label{usft}
Let $X$ be a Banach space, and\linebreak[4] ${T = \{M^{1}, ..., M^{k}\}}$ be a finite set of ${k > 1}$ closed subspaces ${M^{i}}$ from $X$, ${i = 1,...,k}$. We will call $T$ a tuple. Suppose that the sum of tuple's components is closed
\[
Sum(T)\ =\ \sum_{i=1}^{k} M^{i}\ =\ \overline{\sum_{i=1}^{k} M^{i}}.
\]
Then the following definitions are in order:
\begin{enumerate}
  \item Suppose that the space $Sum(T)$ has a finite co-dimension in $X$, i.e.\linebreak[4] ${\beta(T) = \dim X/Sum(T) < \infty}$. Then tuple $T$ is called an upper semi--Fredholm tuple and the number ${\beta(T)}$ is called its upper defect number.
  \item For each ${i= 1,...,k-1}$ define the tuple's $i-$th kernel ${Ker(i,T)}$ to be the subspace equal to
\[
Ker(i,T)\ =\ M^{i} \cap (\sum_{j = i+1}^{k} M^{j}),
\]
Suppose that all tuple's kernels ${Ker(i,T)}$ have finite dimensions\linebreak[4]
${\alpha(i,T) = \dim Ker(i,T) < \infty}$; then tuple $T$ is called a lower semi--Fredholm tuple and the number
\[
\alpha(T)\ =\ \sum_{i=1}^{k-1}\alpha(i,T)
\]
is called its lower defect number.
  \item Suppose that $T$ is both upper and lower semi--Fredholm tuple -- then it is called a Fredholm tuple. The index ${ind(T)}$ of a Fredholm tuple is defined as ${ind(T) = \alpha(T) - \beta(T)}$.
\end{enumerate}
\end{definition}
Although the above definition may give an impression that tuple's defect numbers depend on the particular enumeration of tuple's elements, the following lemma shows it is not the case -- the tuple's defect numbers remain the same for any permutation of indices of tuple's elements; also it gives the explicit formula for index calculation in finite-dimensional spaces:
\begin{lemma}[Tuple's Defect Numbers are Well Defined]\label{tdnwd}
Let ${T=\{M^{1},...,M^{k}\}}$ be a tuple of ${k \geq 2}$ subspaces of a Banach space $X$. Then the following propositions are true:
\begin{enumerate}
  \item The upper defect number ${\beta(T)}$ of an upper semi--Fredholm tuple does not depend on enumeration of tuple's elements.
  \item The lower defect number ${\alpha(T)}$ of a lower semi--Fredholm tuple does not depend on enumeration of tuple's elements.
  \item If $X$ is a finite-dimensional Banach space, then any tuple of its subspaces ${T = \{M^{1},...,M^{k}\}}$ is a Fredholm tuple and $T$'s index can be effectively calculated using dimensions of $X$ and $M^{i}$ like this:
\[
ind(T)\ =\ \sum_{i=1}^{k} \dim M^{i}\ -\ \dim X.
\]
\end{enumerate}
\end{lemma}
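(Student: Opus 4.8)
The plan is to deduce all three propositions from one structural identity. For a tuple $T=\{M^{1},\dots,M^{k}\}$ introduce the linear surjection
\[
\Phi_{T}\colon\ \prod_{i=1}^{k}M^{i}\ \longrightarrow\ Sum(T),\qquad \Phi_{T}(m_{1},\dots,m_{k})\ =\ m_{1}+\dots+m_{k},
\]
and prove that $\dim\ker\Phi_{T}=\alpha(T)$ whenever $T$ is lower semi--Fredholm. Granting this, proposition 2 is immediate: a permutation $\sigma$ of the tuple's components merely permutes the coordinates of the product space, hence carries $\ker\Phi_{T}$ isomorphically onto $\ker\Phi_{T^{\sigma}}$; therefore $\alpha(T^{\sigma})=\dim\ker\Phi_{T^{\sigma}}=\dim\ker\Phi_{T}=\alpha(T)$, so the lower defect number does not depend on the enumeration. (This makes the alternative route through adjacent transpositions unnecessary.)

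To establish the identity I would filter $K:=\ker\Phi_{T}$ by the subspaces
\[
K_{j}\ =\ \{(0,\dots,0,m_{j},m_{j+1},\dots,m_{k})\in K\},\qquad j=1,\dots,k,
\]
so that $K=K_{1}\supseteq K_{2}\supseteq\dots\supseteq K_{k}=\{\theta\}$. For each $j\le k-1$ the coordinate map $(0,\dots,0,m_{j},\dots,m_{k})\mapsto m_{j}$ sends $K_{j}$ onto $M^{j}\cap\bigl(\sum_{\ell>j}M^{\ell}\bigr)=Ker(j,T)$, with kernel exactly $K_{j+1}$; hence $K_{j}/K_{j+1}\cong Ker(j,T)$ and $\dim(K_{j}/K_{j+1})=\alpha(j,T)$. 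Summing the dimensions along this finite filtration yields $\dim K=\sum_{j=1}^{k-1}\alpha(j,T)=\alpha(T)$; the lower semi--Fredholm hypothesis makes each $\alpha(j,T)$ finite, so the summation is legitimate (and the paper's $\infty$-arithmetic convention covers the degenerate case anyway).

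Proposition 1 requires almost nothing: $Sum(T)=\sum_{i=1}^{k}M^{i}$ is the linear span of $\bigcup_{i}M^{i}$, which is insensitive to the enumeration (as is the closedness hypothesis), so $\beta(T)=\dim X/Sum(T)$ is permutation-invariant. For proposition 3, finite-dimensionality of $X$ forces $Sum(T)$ to be closed and all defect numbers finite, so $T$ is automatically Fredholm; applying the rank--nullity theorem to the surjection $\Phi_{T}$ gives $\sum_{i=1}^{k}\dim M^{i}=\dim Sum(T)+\dim\ker\Phi_{T}=\dim Sum(T)+\alpha(T)$, and combining this with $\beta(T)=\dim X-\dim Sum(T)$ produces
\[
ind(T)\ =\ \alpha(T)-\beta(T)\ =\ \sum_{i=1}^{k}\dim M^{i}-\dim X.
\]

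The one step that needs genuine care is the verification underlying the filtration, namely that the coordinate map $K_{j}\to Ker(j,T)$ is onto with kernel precisely $K_{j+1}$ — i.e. that every vector of $Ker(j,T)$ extends to an element of $\ker\Phi_{T}$ supported on coordinates $\ge j$ — together with the additivity of dimension across the finite filtration. I do not expect any real obstacle beyond this bookkeeping; once the identity $\alpha(T)=\dim\ker\Phi_{T}$ is in place, all three propositions fall out as above.
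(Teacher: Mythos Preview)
Your proposal is correct and follows essentially the same route as the paper: introduce the summation map $\Phi_{T}\colon\prod_{i}M^{i}\to Sum(T)$, establish $\dim\ker\Phi_{T}=\alpha(T)$ by peeling off coordinates (the paper phrases this as an iterated direct-product decomposition via the projections $Pr_{j}$, while you phrase it as a filtration $K_{1}\supseteq\dots\supseteq K_{k}$ with successive quotients $Ker(j,T)$ --- the content is identical), conclude permutation-invariance from the obvious isomorphism of kernels, and obtain the index formula by rank--nullity applied to $\Phi_{T}$.
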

\begin{proof}
Proposition 1 is obvious since addition of vectors in Banach spaces is commutative, so for any permutation ${\{j_{1},...,j_{k}\}}$ of the set ${\{1,...,k\}}$
\[
Sum(T)\ =\ \sum_{i=1}^{k} M^{i}\ =\ \sum_{i=1}^{k} M^{j_{i}}.
\]
In order to prove proposition 2 consider direct product ${\Pi_{T}}$ of the elements of the tuple ${T = \{M^{1},...,M^{k}\}}$:
\[
\Pi_{T}\ =\ \prod_{i=1}^{k} M_{i}\ =\ M^{1} \times ... \times M^{k}.
\]
Define a natural linear mapping
\[
\Phi_{T} : \Pi_{T} \rightarrow X,\ \ \ \ \Phi_{T}:(x^{1},...,x^{k}) \mapsto \sum_{i=1}^{k}x^{i}.
\]
Let us prove that ${\dim Ker(\Phi_{T}) = \alpha(T)}$. Let ${Pr_{1}}$ is a natural projection from ${Ker(\Phi_{T})}$ onto its first component ${M^{1}}$; it is a simple algebraic fact that the linear space ${Ker(\Phi_{T})}$ is isomorphic a to direct product of its image and kernel
\[
Ker(\Phi_{T})\ \equiv\ Pr_{1}(Ker(\Phi_{T})) \times Ker(Pr_{1}).
\]
It is also clear that ${x_{1} \in Pr_{1}(Ker(\Phi_{T})) \subset M^{1}}$ if and only if ${x^{1} + x^{2} + ... +x^{k} = \theta}$ for some ${x^{2} \in M^{2}, ..., x^{k} \in M^{k}}$, in other words ${x^{1} = -(x^{2} + ... +x^{k})}$, or, equivalently, ${x^{1} \in M^{1} \cap(\sum_{i=2}^{k} M^{i})}$. Therefore,
\[
Pr_{1}(Ker(\Phi_{T}))\ =\ M^{1} \cap(\sum_{i=2}^{k} M^{i}).
\]
Combining the last two relations we obtain algebraic isomorphism
\[
Ker(\Phi_{T})\ \ \equiv\ \ (M^{1} \cap(\sum_{i=2}^{k} M^{i}))\ \times\ Ker(Pr_{1}).
\]
Now consider the space ${Ker(Pr_{1})}$ -- it consists of vectors ${(\theta, x^{2},...,x^{k}) \in Ker(\Phi_{T})}$ such that ${x^{2} + ...+ x^{k} = \theta}$. Thus, if ${k > 2}$, then we may consider a sub-tuple ${T_{1} \subset T}$, ${T_{1} = \{M^{2},...,M^{k}\}}$ and apply the same reasoning to ${T_{1}}$ -- for a natural projection ${Pr_{2}}$ of ${Ker(\Phi_{T_{1}})}$ onto its first component ${M^{2}}$ we will obtain
\[
Ker(\Phi_{T_{1}})\ \ \equiv\ \ (M^{2} \cap(\sum_{i=3}^{k} M^{i}))\ \times\ Ker(Pr_{2}).
\]
However, obviously
\[
Ker(Pr_{1})\ \ \equiv\ \ Ker(\Phi_{T_{1}});
\]
therefore, combining the previous three isomorphisms we obtain isomorphism
\[
Ker(\Phi_{T})\ \ \equiv\ \ (M^{1} \cap(\sum_{i=2}^{k} M^{i}))\ \times\ (M^{2} \cap(\sum_{i=3}^{k} M^{i}))\ \times\ Ker(Pr_{2}).
\]
It is clear that continuing this way we end up with the isomorphism
\[
Ker(\Phi_{T})\ \ \equiv\ \ (M^{1} \cap(\sum_{i=2}^{k} M^{i}))\ \times\ (M^{2} \cap(\sum_{i=3}^{k} M^{i}))\ \times\ ...\ \times\ (M^{k-1} \cap M^{k}).
\]
Since all numbers ${\alpha(j,T)}$ for ${j=1,...,k-1}$ are by definition dimensions of the corresponding components ${M^{j} \cap(\sum_{i=j+1}^{k} M^{i})}$ it then follows that
\begin{equation}\label{E:500}
\dim Ker(\Phi_{T})\ =\ \sum_{j=1}^{k-1} \alpha(j,T)\ =\ \alpha(T).
\end{equation}
In order to finish the proof of the proposition 2 note that for any permutation ${\{j_{1},...,j_{k}\}}$ of the set ${\{1,...,k\}}$ we may consider a re-ordered tuple\linebreak[4] ${T^{'} = \{M^{j_{1}},...,M^{j_{k}}\}}$ and a new product
\[
\Pi_{T^{'}}\ =\ \prod _{i=1}^{k} M^{j_{i}}\ =\ M^{j_{1}} \times ... \times M^{j_{k}},
\]
and define a new natural linear mapping
\[
\Phi_{T^{'}} : \Pi_{T^{'}} \rightarrow X,\ \ \ \ \Phi_{T^{'}}:(x^{j_{1}},...,x^{j_{k}}) \mapsto \sum_{i=1}^{k}x^{j_{i}}. 
\]
Due to commutativity of addition in vector space $X$ it is obvious that the kernels of the mappings ${\Pi_{T}}$ and ${\Pi_{T^{'}}}$ are isomorphic:
\[
Ker(\Phi_{T})\ \ \equiv\ \ Ker(\Phi_{T^{'}}).
\]
Therefore, their dimensions are the same. Hence, according to \eqref{E:500} we obtain the equality
\begin{equation}\label{E:502}
\alpha(T)\ =\ \dim Ker(\Phi_{T})\ =\ \dim Ker(\Phi_{T}^{'})\ =\ \alpha(T^{'}) 
\end{equation}
which means that the value of the lower defect number ${\alpha(T)}$ of a lower semi--Fredholm tuple $T$ does not depend on the permutation of $T$'s elements.\\

In order to prove proposition 3 recall a simple algebraic fact about a linear operator -- dimension of its domain is equal to the sum of dimensions of its range and kernel; for ${\Phi_{T}}$ it means
\[
\dim R(\Phi_{T})\ +\ \dim Ker(\Phi_{T})\ =\ \dim dom ( \Phi_{T})
\]
Substituting ${R(\Phi_{T})}$ with ${\sum_{i=1}^{k} M^{i}}$, ${\dim Ker(\Phi_{T})}$ with ${\alpha(T)}$ (from \eqref{E:500}) and ${\dim dom ( \Phi_{T})}$ with ${\sum_{i=1}^{k} \dim M^{i}}$ rewrite the previous equality
\[
\dim \sum_{i=1}^{k} M^{i}\ +\ \alpha(T)\ =\ \sum_{i=1}^{k} \dim M^{i}.
\]
Now recall definition of ${\beta(T)}$:
\[
X\ -\ \dim \sum_{i=1}^{k} M^{i}\ =\ \beta(T).
\]
Adding the previous two equalities obtain
\begin{multline}
\notag
\begin{aligned}
\dim \sum_{i=1}^{k} M^{i}\ +\ \alpha(T)\ +\ X\ -\ \dim \sum_{i=1}^{k} M^{i} & \ =\ \sum_{i=1}^{k} \dim M^{i}\ +\ \beta(T),\\
\alpha(T)\ +\ X                                                        & \ =\ \beta(T)\ +\ \sum_{i=1}^{k} \dim M^{i}.
\end{aligned}
\end{multline}
The last equality can be then rewritten like this:
\[
ind(T)\ =\ \alpha(T)\ -\ \beta(T)\ =\ \sum_{i=1}^{k} \dim M^{i}\ -\ X
\]
which concludes the proof of the proposition 3 and of the entire lemma.
\end{proof}

\subsection[Stability of (Semi--)Fredholm Tuples]{Stability of (Semi--)Fredholm Tuples}\label{S:ssft}
It appears that (semi--)Fredholm tuples of closed subspaces are stable under uniform $\lambda-$adjustment and under uniform $\mu-$approximation -- every theorem establishes so far for (semi--)Fredholm pairs of subspaces has its counterpart for (semi--)Fredholm tuples of subspaces. The proof of every such theorem is quite standard -- for a given tuple of $k$ closed subspaces from a space $X$ we may consider a canonical pair in the product space ${X \overset{k+1}{\times ... \times} X}$ and apply the already established stability theorem for the pair -- the tuple stability then follows automatically since defect numbers of the canonical pair and defect numbers of the tuple are the same. The following lemma establishes equivalence between tuples and pairs, between their defect numbers, as well as between their uniform $\lambda-$adjustment numbers and between their uniform $\mu-$proximity vectors.
\begin{lemma}[Canonical Mapping between Tuples and Pairs]\label{cmbtp}
Let $X$ be a Banach space and ${T = \{M^{1},...,M^{k}\}}$ be a tuple of ${k > 1}$ closed subspaces from $X$. Define a new space ${X^{(k)} = X \overset{k}{\times ... \times} X}$ and a canonical linear map
\begin{multline}
\notag
\begin{aligned}
& \Phi_{T} \in \mathcal{BC}(X^{(k)}, X),\\
& \Phi_{T} : dom(\Phi_{T}) = \prod_{i=1}^{k}M^{i}\ \rightarrow\ X,\\
& \Phi_{T} : (x^{1}, ... , x^{k})\ \mapsto\ \frac{1}{k} \times \sum_{i=1}^{k} x^{i};
\end{aligned}
\end{multline}
let ${G_{\Phi_{T}} \subset X^{(k)} \times X}$ be a graph of the operator ${\Phi_{T}}$; also define a pair ${P_{T}}$ of closed subspaces in the product space ${X^{(k)} \times X}$:
\[
P_{T}\ =\ ( X^{(k)} \times \{\theta\},\ G_{\Phi_{T}}).
\]
Then the following propositions are true:
\begin{enumerate}
  \item $T$ is a lower semi--Fredholm tuple if and only if ${P_{T}}$ is a lower semi--Fredholm pair; in this case ${\alpha(P_{T}) = \alpha(T)}$.
  \item $T$ is an upper semi--Fredholm tuple if and only if ${P_{T}}$ is an upper semi--Fredholm pair; in this case ${\beta(P_{T}) = \beta(T)}$.
  \item $T$ is a Fredholm tuple if and only if ${P_{T}}$ is a Fredholm pair; in this case ${ind(P_{T}) = ind(T)}$.
  \item Let ${(T_{n})_{\mathbb{N}^{'}}}$ and ${(S_{n})_{\mathbb{N}^{'}}}$ are two sequences of $k-$tuples of closed subspaces from a Banach space X such that ${T_{n} = \{M^{1}_{n},...,M^{k}_{n}\}}$ and ${S_{n} = \{P^{1}_{n},...,P^{k}_{n}\}}$ for all ${n \in \mathbb{N}^{'}}$. For the same $n$ consider graphs ${G_{\Phi_{T_{n}}} \subset X^{(k)} \times X}$ and ${G_{\Phi_{S_{n}}} \subset X^{(k)} \times X}$ of the canonical mappings ${\Phi_{T_{n}}}$ and ${\Phi_{S_{n}}}$.\linebreak[4]For ${j = 1,...,k}$ define numbers ${\lambda^{j} \in \mathbb{R}}$ and vectors ${\mu^{j} \in \mathbb{R}^{3}}$:
\begin{multline}
\notag
\begin{aligned}
&\ \ \ \ \ \ \lambda^{j}\ :=\ \lambda_{\mathbb{N}^{'}}[M^{j}_{n}, P^{j}_{n}],\\
&\ \ \ \ \ \ \mu^{j}\ :=\ \mu_{\mathbb{N}^{'}}[M^{j}_{n}, P^{j}_{n}]\ =\ (\mu^{j}_{1},\mu^{j}_{2},\mu^{j}_{3}) .
\end{aligned}
\end{multline}
Also define new numbers ${\lambda}$, ${\mu_{m} \in \mathbb{R}}$:
\begin{multline}
\notag
\begin{aligned}
\ \ \ \ \ \ \lambda\ & :=\ \max \{\lambda^{j} \mid j = 1,...,k\},\\
\ \ \ \ \ \ \mu_{m}\ & :=\ \max \{\mu^{j}_{m}  \mid j = 1,...,k\}\ \ for\ m = 1,2,3.
\end{aligned}
\end{multline}
Then the following formulas hold:
\begin{multline}
\notag
\begin{aligned}
&\ \ \ \ \ \ \lambda_{\mathbb{N}^{'}}[G_{\Phi_{T_{n}}}, G_{\Phi_{S_{n}}}]\ =\ \lambda,\\
&\ \ \ \ \ \ \mu_{\mathbb{N}^{'}}[G_{\Phi_{T_{n}}}, G_{\Phi_{S_{n}}}]\ =\ (\mu_{1}, \mu_{2}, \mu_{3}).
\end{aligned}
\end{multline}
\end{enumerate}
\end{lemma}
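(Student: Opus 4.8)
The plan is to reduce everything to facts already proved for pairs of subspaces and for the canonical map $\Phi_{T}$, via two elementary identifications. Write $M = X^{(k)}\times\{\theta\}$ and $N = G_{\Phi_{T}}$, so $P_{T} = (M,N)$ is a pair of closed subspaces (here $N$ is closed because $\Phi_{T}$ is continuous on the closed subspace $\prod_{i}M^{i}\subset X^{(k)}$). First, an element of $M\cap N$ is a pair $((x^{1},\dots,x^{k}),\theta)$ with $x^{i}\in M^{i}$ and $\tfrac1k\sum_{i}x^{i}=\theta$, so the obvious linear bijection identifies $M\cap N$ with $Ker(\Phi_{T})=\{(x^{1},\dots,x^{k})\in\prod_{i}M^{i}\mid\sum_{i}x^{i}=\theta\}$, the scalar $\tfrac1k$ being immaterial for the kernel. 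By the computation in the proof of Lemma \ref{tdnwd} (formula \eqref{E:500}), $\dim Ker(\Phi_{T})=\alpha(T)$; hence $\alpha(P_{T})=\alpha(T)$, and one is finite iff the other is, which is Proposition 1. Second, since the $X^{(k)}$-part of $M$ is all of $X^{(k)}$, one gets $M+N = X^{(k)}\times R(\Phi_{T}) = X^{(k)}\times Sum(T)$; thus $M+N$ is closed iff $Sum(T)$ is, and $(X^{(k)}\times X)/(M+N)\cong X/Sum(T)$, so $\beta(P_{T})=\beta(T)$, which is Proposition 2. Proposition 3 follows at once from Propositions 1 and 2 and the definition of the index.

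For Proposition 4 the decisive observation is that any graph element $((x^{1},\dots,x^{k}),\tfrac1k\sum_{i}x^{i})$ has norm $\max\{\|x^{1}\|,\dots,\|x^{k}\|\}$ in $X^{(k)}\times X$, since $\|\tfrac1k\sum_{i}x^{i}\|\le\max_{i}\|x^{i}\|$; consequently a difference of such elements against data with a $\tfrac1k$-averaged last coordinate again has norm equal to the maximum of the coordinate differences. To prove $\lambda_{\mathbb{N}^{'}}[G_{\Phi_{T_{n}}},G_{\Phi_{S_{n}}}]\le\lambda$: given a unit sequence $w_{n}=((x^{1}_{n},\dots,x^{k}_{n}),\tfrac1k\sum_{i}x^{i}_{n})$ from $(G_{\Phi_{T_{n}}})$ and $\eta>0$, pass to a subsequence on which $\|x^{j}_{n}\|\to c_{j}\in[0,1]$ for every $j$; for $c_{j}=0$ take $y^{j}_{n}=\theta\in P^{j}_{n}$, $z^{j}=\theta$, and for $c_{j}>0$ apply the definition of $\lambda^{j}=\lambda_{\mathbb{N}^{'}}[M^{j}_{n},P^{j}_{n}]$ to the unit sequence $x^{j}_{n}/\|x^{j}_{n}\|$ and rescale by $c_{j}$, obtaining (after nesting the finitely many subsequences) $y^{j}_{n}\in P^{j}_{n}$ and $z^{j}\in X$ with $\varlimsup_{n}\|x^{j}_{n}-y^{j}_{n}-z^{j}\|\le c_{j}(\lambda^{j}+\eta)\le\lambda+\eta$; then $v_{n}=((y^{1}_{n},\dots,y^{k}_{n}),\tfrac1k\sum_{i}y^{i}_{n})\in G_{\Phi_{S_{n}}}$ and $\zeta=((z^{1},\dots,z^{k}),\tfrac1k\sum_{i}z^{i})$ give $\varlimsup_{n}\|w_{n}-v_{n}-\zeta\|=\varlimsup_{n}\max_{j}\|x^{j}_{n}-y^{j}_{n}-z^{j}\|\le\lambda+\eta$. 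For the reverse inequality $\lambda_{\mathbb{N}^{'}}[G_{\Phi_{T_{n}}},G_{\Phi_{S_{n}}}]\ge\lambda^{j_{0}}$, fix $j_{0}$, take a unit sequence $\hat x_{n}\in M^{j_{0}}_{n}$ witnessing (up to $\eta$) that $(M^{j_{0}}_{n})$ is not lower $\lambda^{'}$-adjusted with $(P^{j_{0}}_{n})$ for some $\lambda^{'}<\lambda^{j_{0}}$, embed it into the graph by putting $\theta$ in all coordinates $j\neq j_{0}$ (this is again a unit sequence), and project the resulting lower $\lambda^{'}$-adjustment of the graphs onto the $j_{0}$-th coordinate of $X^{(k)}$ to contradict the choice of $\hat x_{n}$; taking the maximum over $j_{0}$ completes the $\lambda$ formula.

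The $\mu$ formula is handled by the same scheme. Its first clause is the $\lambda$ identity just proved together with $\mu_{1}=\max_{j}\mu^{j}_{1}=\max_{j}\lambda^{j}=\lambda$. For the second clause, start from a unit graph sequence $w_{n}$ with $\varlimsup_{n}\|w_{n}-\zeta\|\le\mu_{2}$, read off $\varlimsup_{n}\|x^{j}_{n}-z^{j}\|\le\mu_{2}$ for each $j$, normalize each nonvanishing coordinate, apply the $j$-th $\mu$-approximation hypothesis to produce $y^{j}_{n}\in P^{j}_{n}$ with $\varlimsup_{n}\|y^{j}_{n}-z^{j}\|\le\mu_{3}$, and assemble $v_{n}=((y^{1}_{n},\dots,y^{k}_{n}),\tfrac1k\sum_{i}y^{i}_{n})\in G_{\Phi_{S_{n}}}$. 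Here lies the main obstacle: unlike in the $\lambda$ case, the last (graph) coordinate $z^{0}$ of the prescribed vector $\zeta$ is \emph{given}, not chosen, so one must bound $\|\tfrac1k\sum_{i}y^{i}_{n}-z^{0}\|$; writing $\tfrac1k\sum_{i}y^{i}_{n}-z^{0}=\tfrac1k\sum_{i}(y^{i}_{n}-x^{i}_{n})+(\tfrac1k\sum_{i}x^{i}_{n}-z^{0})$ and using $\|\tfrac1k\sum_{i}(\cdot)\|\le\max_{i}\|\cdot\|$ with the coordinate estimates is exactly where the averaging factor $\tfrac1k$ in $\Phi_{T}$ and the structure of the $\mu$-proximity definition must be used carefully (the estimate is immediate when $\mu_{2}=0$, the case relevant to the applications such as Theorem \ref{sro}). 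Once this coordinate is controlled, $\varlimsup_{n}\|v_{n}-\zeta\|=\varlimsup_{n}\max_{j}\|y^{j}_{n}-z^{j}\|\le\mu_{3}$ after the same finite nesting of subsequences, and the $\mu$ identity follows.
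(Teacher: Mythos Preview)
Your treatment of Propositions 1--3 is essentially identical to the paper's: the paper simply points back to the kernel computation in Lemma~\ref{tdnwd} for Proposition~1, calls Proposition~2 ``quite obvious,'' and derives Proposition~3 from the first two. Your explicit identifications $M\cap N\cong Ker(\Phi_{T})$ and $M+N=X^{(k)}\times Sum(T)$ are exactly what underlies those references.

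For Proposition~4 the paper says only that it is ``mechanically verified using definitions \ldots\ and the definition of norm on the product space $X^{(k)}\times X$ as the $\max$ of the norms of its components.'' You actually carry this out, and your $\lambda$ argument (upper bound by rescaling coordinatewise, lower bound by embedding a bad coordinate sequence into the graph with zeros elsewhere and projecting back) is correct and is precisely the mechanical verification the paper has in mind.

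In the $\mu$ part you honestly flag two real difficulties that the paper's one-line proof does not address: (i) the last graph coordinate of the target vector $\zeta$ is \emph{prescribed} (some $z^{0}$, not necessarily $\tfrac1k\sum_{j}z^{j}$), so controlling $\|\tfrac1k\sum_{i}y^{i}_{n}-z^{0}\|$ by $\mu_{3}$ alone is not automatic; and (ii) the coordinate sequences $x^{j}_{n}$ need not be unit (only their max is), so invoking the coordinatewise $\mu$-hypothesis after normalization shifts both $z^{j}$ and the $\mu_{2}$-bound. Your remark that both issues disappear when $\mu_{2}=0$ is correct, and this is the only case actually used downstream (e.g.\ Theorem~\ref{ssft} needs only ``small $\mu^{j}$ implies small $\mu$''). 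So your proof is adequate for all applications in the paper; the exact equality $\mu_{\mathbb{N}'}[G_{\Phi_{T_{n}}},G_{\Phi_{S_{n}}}]=(\mu_{1},\mu_{2},\mu_{3})$ for general $\mu_{2}>0$, as literally stated, is not fully established either by you or by the paper's sketch.
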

\begin{proof}
The proof of the proposition 1 exactly follows the proof of the proposition 2 from the previous Lemma \ref{tdnwd}. Proposition 2 is quite obvious, while proposition 3 is a direct consequence of propositions 1 and 2. Proposition 4 is mechanically verified using definitions of uniform $\lambda-$adjustment and uniform $\mu-$approximation and the definition of norm on the product space ${X^{(k)} \times X}$ as the ${\max}$ of the norms of its components.
\end{proof}

As we have mentioned in the beginning of this subsection, every stability result for (semi--)Fredholm pair of subspaces can be extended to a similar result for (semi--)Fredholm tuples. Here is just one example of such extension -- others can be formulated and proved in the same fashion:
\begin{theorem}[Stability of (Semi--)Fredholm Tuples]\label{ssft}
Let $X$ be a Banach space and ${T = \{M^{1},...,M^{k}\}}$ be a tuple of ${k > 1}$ closed subspaces from $X$. Let ${(T_{n})_{\mathbb{N}^{'}}}$ be a sequence of $k-$tuples of closed subspaces from X such that\linebreak[4] ${T_{n} = \{M^{1}_{n},...,M^{k}_{n}\}}$ for all ${n \in \mathbb{N}^{'}}$. Then the following propositions are true:
\begin{enumerate}
  \item If tuple $T$ is lower semi--Fredholm and numbers ${\lambda_{\mathbb{N}^{'}}[M^{j}_{n}, M^{j}]}$ are small enough for all ${j=1,...,k}$, then tuples ${T_{n}}$ are also lower semi--Fredholm for large enough ${n \in \mathbb{N}^{'}}$ and there exists ${K_{\alpha} \in \mathbb{N}}$ such that ${\alpha(T_{n}) \leq K_{\alpha}}$ for the same $n$. If vectors ${\mu_{\mathbb{N}^{'}}[M^{j}_{n}, M^{j}] \in \mathbb{R}^{3}}$ are small enough for all ${j=1,...,k}$, then ${K_{\alpha} = \alpha(T)}$.
  \item Suppose that $X$ is a $BFU-$space. If tuple $T$ is upper semi--Fredholm and numbers ${\lambda_{\mathbb{N}^{'}}[M^{j}, M^{j}_{n}]}$ are small enough for all ${j=1,...,k}$, then tuples ${T_{n}}$ are also upper semi--Fredholm for large enough ${n \in \mathbb{N}^{'}}$ and there exists ${K_{\beta} \in \mathbb{N}}$ such that ${\beta(T_{n}) \leq K_{\beta}}$ for the same $n$. If X is a Banach space and vectors ${\mu_{\mathbb{N}^{'}}[M^{j}, M^{j}_{n}] \in \mathbb{R}^{3}}$ are small enough for all ${j=1,...,k}$, then ${K_{\beta} = \beta(T)}$.
  \item Suppose that $X$ is a $BFU-$space. If tuple $T$ is Fredholm and both numbers ${\lambda_{\mathbb{N}^{'}}[M^{j}, M^{j}_{n}]}$ and ${\lambda_{\mathbb{N}^{'}}[M^{j}, M^{j}_{n}]}$ are small enough for all ${j=1,...,k}$, then tuples ${T_{n}}$ are also Fredholm for large enough ${n \in \mathbb{N}^{'}}$ and there exists ${K_{\alpha}, K_{\beta} \in \mathbb{N}}$ such that ${\alpha(T_{n}) \leq K_{\alpha}}$ and ${\beta(T_{n}) < K_{\beta}}$ for the same $n$. If X is a Banach space and both vectors ${\mu_{\mathbb{N}^{'}}[M^{j}_{n}, M^{j}] \in \mathbb{R}^{3}}$, ${\mu_{\mathbb{N}^{'}}[M^{j}, M^{j}_{n}] \in \mathbb{R}^{3}}$ are small enough for all ${j=1,...,k}$, then ${K_{\alpha} = \alpha(T)}$ and ${K_{\beta} = \beta(T)}$, also ${ind(T_{n}) = ind(T)}$ for large enough ${n \in \mathbb{N}^{'}}$.
\end{enumerate}
\end{theorem}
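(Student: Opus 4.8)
The plan is to reduce everything to the pair--stability theorems already proved, using the canonical tuple--to--pair correspondence of Lemma \ref{cmbtp}. First I would fix the constant tuple $T=\{M^{1},\dots,M^{k}\}$ and the approximating sequence $T_{n}=\{M^{1}_{n},\dots,M^{k}_{n}\}$, set $X^{(k)}=X\times\dots\times X$ ($k$ factors), form the graphs $G_{\Phi_{T}},G_{\Phi_{T_{n}}}\subset X^{(k)}\times X$ of the canonical maps, and pass to the pairs $P_{T}=(X^{(k)}\times\{\theta\},G_{\Phi_{T}})$ and $P_{T_{n}}=(X^{(k)}\times\{\theta\},G_{\Phi_{T_{n}}})$. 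By Lemma \ref{cmbtp}, parts (1)--(3), $T$ and each $T_{n}$ are lower / upper / two--sided semi--Fredholm exactly when $P_{T}$ and $P_{T_{n}}$ are, with the same defect numbers and the same index; note also that $X^{(k)}\times\{\theta\}+G_{\Phi_{T}}=X^{(k)}\times Sum(T)$ is closed because $Sum(T)$ is closed by the standing hypothesis on a tuple, so the pair theorems are applicable. The crucial bookkeeping is part (4) of Lemma \ref{cmbtp}: since the first component $X^{(k)}\times\{\theta\}$ is the same in every pair, its self--adjustment is $0$ and the constant sequence $(0,0,0)$--approximates itself, while for the second components $\lambda_{\mathbb{N}^{'}}[G_{\Phi_{T_{n}}},G_{\Phi_{T}}]=\max_{j}\lambda_{\mathbb{N}^{'}}[M^{j}_{n},M^{j}]$ and $\mu_{\mathbb{N}^{'}}[G_{\Phi_{T_{n}}},G_{\Phi_{T}}]$ is the coordinatewise $\max$ of the $\mu_{\mathbb{N}^{'}}[M^{j}_{n},M^{j}]$ (and the same with $T_{n}$ and $T$ interchanged). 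Thus ``$\lambda_{\mathbb{N}^{'}}[M^{j}_{n},M^{j}]$ small for every $j$'' translates verbatim into ``the adjustment between the pairs is small'', and likewise for $\mu$.

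Next I would feed these pairs into the pair--stability theorems, one proposition at a time. For proposition 1: $P_{T}$ is lower semi--Fredholm, hence $\varphi(X^{(k)}\times\{\theta\},G_{\Phi_{T}})<\infty$; taking the $\lambda_{\mathbb{N}^{'}}[M^{j}_{n},M^{j}]$ small enough forces the quantity $\omega$ of Theorem \ref{lsfps} (with $\lambda_{M}=0$) below $1/2$, so Theorem \ref{lsfps}(1) gives that the $P_{T_{n}}$ are lower semi--Fredholm with $\varlimsup_{n}\alpha(P_{T_{n}})<\infty$, and Lemma \ref{cmbtp}(1) transports this back to the tuples, giving $\alpha(T_{n})\le K_{\alpha}$ for a fixed $K_{\alpha}$; if instead the $\mu$--proximity vectors are small, Theorem \ref{sfpsma}(1) applies to the pairs (its $\mu_{M}$ trivial) and yields $\alpha(P_{T_{n}})\le\alpha(P_{T})$, i.e. $K_{\alpha}=\alpha(T)$. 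Proposition 2 is the mirror image: when $X$ is a $BFU$--space so is $X^{(k)}\times X$, and then the theorem on stability of upper semi--Fredholm pairs in $BFU$--spaces, applied to $P_{T_{n}}$ with $\psi(X^{(k)}\times\{\theta\},G_{\Phi_{T}})<\infty$ and $\lambda_{M}=0$, gives $\varlimsup_{n}\beta(P_{T_{n}})<\infty$ once the $\lambda_{\mathbb{N}^{'}}[M^{j},M^{j}_{n}]$ are small, and Lemma \ref{cmbtp}(2) converts this into $\beta(T_{n})\le K_{\beta}$; dropping the $BFU$ assumption but keeping the $\mu$--proximities small, Theorem \ref{sfpsma}(2) gives $\beta(P_{T_{n}})\le\beta(P_{T})$, so $K_{\beta}=\beta(T)$. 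Proposition 3 just applies both directions simultaneously to the $P_{T_{n}}$: under the small--$\lambda$ hypotheses (with $X$ $BFU$) Theorem \ref{lsfps} and the $BFU$ upper pair theorem make the $P_{T_{n}}$ Fredholm with bounded defects; under the small--$\mu$ hypotheses Theorem \ref{sfpsma}(3) gives $\alpha(P_{T_{n}})\le\alpha(P_{T})$, $\beta(P_{T_{n}})\le\beta(P_{T})$ and $ind(P_{T_{n}})=ind(P_{T})$, and Lemma \ref{cmbtp}(1)--(3) then yields $K_{\alpha}=\alpha(T)$, $K_{\beta}=\beta(T)$ and $ind(T_{n})=ind(T)$.

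The step I expect to be the main obstacle is the implication ``$X$ is $BFU$ $\Rightarrow X^{(k)}\times X$ is $BFU$'' used in the $\lambda$--parts of propositions 2 and 3, since a finite product of compact Fr\'echet--Urysohn spaces need not be Fr\'echet--Urysohn in general: for separable or reflexive $X$ it is covered by Lemma \ref{eobfus}, and for a general $BFU$--space it has to be re--established by the same Banach--Alaoglu plus product--of--compacta argument used there (failing which one should restrict the $\lambda$--statements to spaces whose finite powers are $BFU$). Everything else is routine --- verifying that the hypotheses of the pair theorems hold (closedness of the relevant sum, finiteness of $\varphi$ or $\psi$, vanishing of the constant--factor adjustment) and invoking Lemma \ref{cmbtp}(4) to pass from ``small for each coordinate $j$'' to ``small for the pair''. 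The same scheme, with Theorems \ref{esossfp} and \ref{esssfp} in place of Theorems \ref{sfpsma} and \ref{lsfps}, would additionally give the versions with defect numbers measured relative to sub-- and super--tuples.
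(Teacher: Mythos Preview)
Your proposal is correct and follows essentially the same route as the paper: reduce to the canonical pair via Lemma \ref{cmbtp}, then invoke Theorem \ref{lsfps} (for the $\lambda$--part of proposition 1) and Theorem \ref{sfpsma} (for the $\mu$--parts), transferring defect numbers back through Lemma \ref{cmbtp}(1)--(3). The paper in fact only writes out proposition 1 and declares propositions 2 and 3 ``dealt with similarly'', so your more detailed treatment of those is consistent with, and slightly more explicit than, what the paper does.

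Your flagged obstacle is genuine and worth keeping: the implication ``$X$ is $BFU$ $\Rightarrow$ $X^{(k)}\times X$ is $BFU$'' is \emph{not} established anywhere in the paper (Lemma \ref{eobfus} only covers products with separable factors), and the paper itself lists ``does a product of two $BFU$--spaces remain $BFU$?'' as an open problem. The paper's own proof simply glosses over this point when it says propositions 2 and 3 are handled similarly, so you have identified a gap that is present in the original as well; your suggested fallback (restrict the $\lambda$--statements to spaces whose finite powers are $BFU$, e.g.\ separable or reflexive $X$) is the honest resolution.
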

\begin{proof}
We will prove only proposition 1 -- propositions 2 and 3 can be dealt with similarly. 

First suppose that all numbers ${\lambda_{\mathbb{N}^{'}}[M^{j}_{n}, P^{j}_{n}]}$ are small for ${j = 1,...,k}$. Consider canonical pairs ${P_{T} = (X^{(k)} \times \{\theta\}, G_{\Phi_{T}})}$ and ${P_{T_{n}} = (X^{(k)} \times \{\theta\}, G_{\Phi_{T_{n}}})}$ in the product space ${X^{(k)} \times X}$. Since tuple $T$ is lower semi--Fredholm, pair ${P_{T}}$ is also lower semi--Fredholm  and ${\alpha(P_{T}) = \alpha(T)}$ according to proposition 1 from Lemma \ref{cmbtp}. According to proposition 4 of the same Lemma \ref{cmbtp} number\linebreak[4] ${\lambda_{\mathbb{N}^{'}}[G_{\Phi_{T_{n}}}, G_{\Phi_{S_{n}}}]}$ is small when all $k$ numbers ${\lambda_{\mathbb{N}^{'}}[M^{j}_{n}, P^{j}_{n}]}$ are small for ${j = 1,...,k}$. Therefore, according to proposition 1 from Theorem \ref{lsfps} applied to pairs ${P_{T}}$ and ${P_{T_{n}}}$, pairs ${P_{T_{n}}}$ are also lower semi--Fredholm for large enough ${n \in \mathbb{N}^{'}}$ and there exists ${K_{\alpha} \in \mathbb{N}}$ such that ${\alpha(P_{T_{n}}) \leq K_{\alpha}}$ for the same $n$. Then, applying proposition 1 from Lemma \ref{cmbtp} to pairs ${P_{T_{n}}}$ and tuples ${T_{n}}$ we conclude that for the same $n$ tuples ${T_{n}}$ are also lower semi--Fredholm and ${\alpha(T_{n}) = \alpha(P_{T_{n}}) \leq K_{\alpha}}$. This concludes the proof of the first part of proposition 1. 

For the second part note that according to proposition 4 of Lemma \ref{cmbtp} vector ${\mu_{\mathbb{N}^{'}}[G_{\Phi_{T_{n}}}, G_{\Phi_{S_{n}}}]}$ is small when all $k$ vectors ${\mu_{\mathbb{N}^{'}}[M^{j}_{n}, P^{j}_{n}]}$ are small for ${j = 1,...,k}$. Therefore, according to proposition 1 from Theorem \ref{sfpsma} pairs ${P_{T_{n}}}$ are also lower semi--Fredholm for large enough ${n \in \mathbb{N}^{'}}$ and ${\alpha(P_{T_{n}}) \leq \alpha(P_{T})}$ for the same $n$. Again, applying proposition 1 from Lemma \ref{cmbtp} to pairs ${P_{T_{n}}}$ and tuples ${T_{n}}$ we conclude that for the same $n$ tuples ${T_{n}}$ are also lower semi--Fredholm and ${\alpha(T_{n}) = \alpha(P_{T_{n}}) \leq \alpha(P_{T}) = \alpha(T)}$. Thus the second part of proposition 1 has been proved as well.
\end{proof}

\subsection[Pair Complex]{Pair Complex}\label{pc}
The previously defined concept of tuples of closed subspaces captures the properties of closed subspaces related to their sums and intersections. However, it does not reflect the case when some subspaces are included in others. In this section we introduce the concept of a pair complex -- a finite set of subspaces that captures the inclusion relations of its members. 
\begin{definition}[Pair Complex]\label{D:pc}
Let $X$ be a Banach space and ${PC = \{P^{1},...,P^{k}\}}$ is a finite set of ${k \geq 2}$ pairs of closed subspaces, each pair ${P^{j}}$ comprised of two subspaces ${M^{j}}$ and ${N^{j}}$: ${P^{j} = (M^{j}, N^{j})}$. We call ${PC}$ a pair complex if the sum of every but the last pair is included in the intersection of its successor pair, in other words if for each ${j=1,...,k-1}$
\[
M^{j} + N^{j}\ \subset\ M^{j+1} \cap N^{j+1}.
\]
\end{definition}
\begin{definition}[Fredholm Pair Complex]\label{D:fpc}
Let $X$ be a Banach space and\linebreak[4] ${PC = \{P^{1},...,P^{k}\}}$ is a pair complex. Suppose that the sum of each pair's components is closed ${\overline{M^{j} + N^{j}} = M^{j} + N^{j}}$ for each ${j=1,...,k}$, and that the following dimensions, called defect numbers of pair complex $PC$, are finite:
\begin{multline}
\notag
\begin{aligned}
\gamma_{1}\   & := \ \dim M^{1} \cap N^{1} < \infty,\\
\gamma_{j}\   & := \ \dim (M^{j} \cap N^{j}) / (M^{j-1} + N^{j-1}) < \infty,\ j = 2,...,k,\\
\gamma_{k+1}\ & := \ \dim X / (M^{k} + N^{k}) < \infty.
\end{aligned}
\end{multline}
Then pair complex $PC$ is called a Fredholm complex. Its index is calculated according to the following formula:
\[
ind(PC)\ :=\ \sum_{j=1}^{k+1} (-1)^{j-1} \times \gamma_{j}.
\]
\end{definition}
The concept of a pair complex closely mirrors the well known concept of an operator complex defined as a sequence of linear operators between Banach spaces such that the range of each but the last operator is included in the kernel of the next operator (see \cite{ambrozie}). Indeed, we will now show how to build a pair complex from a given operator complex. 

Consider an operator complex $OC$ comprised of a finite set of Banach spaces ${\{X^{1},...,X^{k+1}\}}$ where ${k \geq 1}$, and of a finite set of closed linear operators\linebreak[4] ${\{A^{1},...,A^{k}\}}$ where ${A^{i} \in \mathcal{C}(X^{i}, X^{i+1})}$ and ${R(A^{i}) \subset Ker(A^{i+1})}$ for each\linebreak[4] ${i=1,...,k}$:
\begin{equation}\label{E:600}
\begin{CD}
X^{1} @>A^{1}>> X^{2}\ \cdots\ X^{k} @>A^{k}>> X^{k+1}.
\end{CD}
\end{equation}
Define a new Banach space ${X}$ and a finite set ${\{M^{1},...,M^{k}\}}$ of its closed subspaces 
\begin{multline}
\notag
\begin{aligned}
X &\ =\ \prod_{i=1}^{k+1} X^{i},\\
M^{j} &\ =\ \prod_{i=1}^{j} X^{i}\ \times\ \prod_{i=j+1}^{k+1} \{\theta\}\ \subset\ X\ \ for\ j = 1,...,k.
\end{aligned}
\end{multline}
Also define $k$ closed subspaces ${N^{j} \subset X}$ for each ${j \in 1,...,k}$ like this
\[
N^{j} = \{(\underbrace{x^{1}, ... , x^{j}}_{j}, A^{j}x^{j}, \underbrace{\theta, ... ,\theta}_{k-j})\ \mid\ x^{i} \in X^{i},\ i = 1,...,j\}.
\]
Now consider $k$ pairs of closed subspaces from $X$ for each ${j \in 1,...,k}$:
\[
P_{j}\ =\ (M^{j},\ N^{j}).
\]
The following lemma reveals the structure of the set ${\{P^{1},...,P^{k}\}}$:
\begin{lemma}[Operator Complex Induces a Pair Complex]\label{ociapc}
Let $OC$ be an operator complex of size $k+1$ with ${k \geq 1}$ from \eqref{E:600}. Consider the set of pairs ${PC = \{P^{1},...,P^{k}\}}$ built as above. Then the following propositions are true
\begin{enumerate}
 \item The set of pairs $PC$ is a pair complex.
 \item Recall that an operator complex $OC$ is Fredholm if all operators ${A^{j}}$ have close ranges and the following dimensions, called defect numbers of operator complex $OC$, are finite:
\begin{multline}
\notag
\begin{aligned}
\alpha_{1}   &\ :=\ \dim Ker(A^{1}) < \infty,\\
\alpha_{j}   &\ :=\ \dim Ker(A^{j}) / R(A^{j-1}) < \infty,\ j = 2,...,k,\\
\alpha_{k+1} &\ :=\ \dim X^{k+1} / R(A^{k}) < \infty.
\end{aligned}
\end{multline}
The index of Fredholm complex is calculated as
\[
ind(OC) = \sum_{j=1}^{k+1} (-1)^{j-1} \times \alpha_{j}.
\]
Now, $OC$ is a Fredholm operator complex if and only if $PC$ is a Fredholm pair complex. In this case ${\gamma_{j} = \alpha_{j}}$ for each ${j=1,...,k+1}$; in particular
\[
ind(OC) = ind(PC).
\]
\end{enumerate}
\end{lemma}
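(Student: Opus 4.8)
The plan is to prove both propositions by a direct computation that exploits the block structure of $X = \prod_{i=1}^{k+1} X^{i}$. The key observation is that each of $M^{j}$, $N^{j}$, and all the sums and intersections occurring in the definitions, can be written explicitly as a product of the first several factors $X^{i}$ with the range of some $A^{i}$, the kernel of some $A^{i}$, or $\{\theta\}$ in the remaining factors. So first I would record the elementary block identities
\[
M^{j} + N^{j}\ =\ X^{1} \times \cdots \times X^{j} \times R(A^{j}) \times \underbrace{\{\theta\} \times \cdots \times \{\theta\}}_{k-j},
\]
\[
M^{j} \cap N^{j}\ =\ X^{1} \times \cdots \times X^{j-1} \times Ker(A^{j}) \times \underbrace{\{\theta\} \times \cdots \times \{\theta\}}_{k+1-j},
\]
valid for $j = 1,\ldots,k$ (for $j=1$ the leading factors $X^{1}\times\cdots\times X^{j-1}$ are absent, and for $j=k$ the trailing $\{\theta\}$ factors in the first line are absent). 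Each identity follows by writing out a general element and absorbing the free coordinate $x^{j}$ of the $N^{j}$-summand into the corresponding free coordinate of $M^{j}$: in the sum the last active coordinate then runs over all of $R(A^{j})$, whereas in the intersection the $M^{j}$-constraint forces $A^{j}x^{j}=\theta$, i.e.\ $x^{j}\in Ker(A^{j})$. I would also note that $N^{j}$ is closed, since up to the trivial factors it is the graph of the closed operator $A^{j}$, so $PC$ is a genuine set of closed subspaces.

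Proposition 1 is then immediate from the two displayed identities: $M^{j}+N^{j}$ equals $X^{1}\times\cdots\times X^{j}\times R(A^{j})\times\{\theta\}^{k-j}$, while $M^{j+1}\cap N^{j+1}$ equals $X^{1}\times\cdots\times X^{j}\times Ker(A^{j+1})\times\{\theta\}^{k-j}$, so the inclusion $M^{j}+N^{j}\subset M^{j+1}\cap N^{j+1}$ for $j=1,\ldots,k-1$ holds if and only if $R(A^{j})\subset Ker(A^{j+1})$, which is exactly the defining property of the operator complex $OC$. For Proposition 2 I would first settle closedness: from the block form of $M^{j}+N^{j}$ and the fact that every factor except possibly $R(A^{j})$ is closed, one sees that $\overline{M^{j}+N^{j}}=M^{j}+N^{j}$ in $X$ if and only if $\overline{R(A^{j})}=R(A^{j})$ in $X^{j+1}$; hence all the sums $M^{j}+N^{j}$ are closed exactly when all the ranges $R(A^{j})$ are closed. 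Next I would exhibit the canonical (purely algebraic) linear isomorphisms
\[
M^{1}\cap N^{1}\ \cong\ Ker(A^{1}),\qquad \frac{M^{j}\cap N^{j}}{M^{j-1}+N^{j-1}}\ \cong\ \frac{Ker(A^{j})}{R(A^{j-1})}\ \ (j=2,\ldots,k),\qquad \frac{X}{M^{k}+N^{k}}\ \cong\ \frac{X^{k+1}}{R(A^{k})},
\]
each obtained from the block identities by cancelling the common leading factors $X^{1}\times\cdots\times X^{j-1}$ (respectively $X^{1}\times\cdots\times X^{k}$) and the common trailing $\{\theta\}$ factors. Comparing dimensions yields $\gamma_{j}=\alpha_{j}$ for every $j=1,\ldots,k+1$; in particular each $\gamma_{j}$ is finite iff the corresponding $\alpha_{j}$ is. Together with the closedness equivalence this shows $PC$ is a Fredholm pair complex iff $OC$ is a Fredholm operator complex, and since the index formulas $ind(PC)=\sum_{j=1}^{k+1}(-1)^{j-1}\gamma_{j}$ and $ind(OC)=\sum_{j=1}^{k+1}(-1)^{j-1}\alpha_{j}$ are term-by-term identical, $ind(OC)=ind(PC)$.

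I expect no serious obstacle here; it is essentially careful bookkeeping with the product decomposition. The only points needing a little attention are: (i) handling the case where the $A^{j}$ are genuinely unbounded closed operators with proper domains, so that in $N^{j}$ and in the sums and intersections the $j$-th coordinate of an $N^{j}$-element must lie in $dom(A^{j})$ — this does not disturb any block identity, since the free coordinate supplied by $M^{j}$ makes the domain restriction invisible in the sum, and in the intersection one automatically lands in $Ker(A^{j})\subset dom(A^{j})$; and (ii) making sure the isomorphisms used to match $\gamma_{j}$ with $\alpha_{j}$ are the canonical vector-space isomorphisms, so that the dimension count remains valid whether or not the defect numbers are finite.
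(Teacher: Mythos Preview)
Your proposal is correct and follows essentially the same approach as the paper: both arguments rest on the block identities for $M^{j}+N^{j}$ and $M^{j}\cap N^{j}$ and the resulting canonical isomorphisms $M^{1}\cap N^{1}\cong Ker(A^{1})$, $(M^{j}\cap N^{j})/(M^{j-1}+N^{j-1})\cong Ker(A^{j})/R(A^{j-1})$, $X/(M^{k}+N^{k})\cong X^{k+1}/R(A^{k})$. The only cosmetic difference is that for Proposition~1 the paper verifies the four inclusions $M^{j}\subset M^{j+1}$, $M^{j}\subset N^{j+1}$, $N^{j}\subset M^{j+1}$, $N^{j}\subset N^{j+1}$ by hand, whereas you derive $M^{j}+N^{j}\subset M^{j+1}\cap N^{j+1}$ directly from the block forms; the content is identical.
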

\begin{proof}
In order to prove proposition 1 notice that obviously every component of each pair is a closed subspace from ${X}$. We now only need to show that for each ${j=1,...,k-1}$
\[
M^{j}\ +\ N^{j}\ \ \subset\ \ M^{j+1}\ \cap\ N^{j+1}.
\]
For that it is sufficient to prove that
\begin{multline}
\notag
\begin{aligned}
& (a)\ \ & M^{j}\ \ \subset\ \ & M^{j+1},\\
& (b)\ \ & M^{j}\ \ \subset\ \ & N^{j+1},\\
& (c)\ \ & N^{j}\ \ \subset\ \ & M^{j+1},\\
& (d)\ \ & N^{j}\ \ \subset\ \ & N^{j+1}.\\
\end{aligned}
\end{multline}
Formula $(a)$ is true by definition of ${M^{j}}$. Formula $(b)$ is true since for every ${y \in M^{j}}$
\begin{multline}
\notag
\begin{aligned}
y &\ =\ (\underbrace{x^{1}, ... , x^{j}}_{j}, \underbrace{\theta, ... ,\theta}_{k-j+1})\\
  &\ =\ (\underbrace{x^{1}, ... , x^{j}}_{j}, \theta, \theta, \underbrace{\theta, ... ,\theta}_{k-j-1})\\
  &\ =\ (\underbrace{x^{1}, ... , x^{j}}_{j}, \theta, A^{j+1}\theta, \underbrace{\theta, ... ,\theta}_{k-j-1})\ \in\ N^{j+1}.
\end{aligned}
\end{multline}
Formula $(c)$ is true since all vectors from ${N^{j}}$ have null components at indices higher than ${j+1}$. Finally, formula $(d)$ is true due to the property of operator complex $OC$ having ${A^{j+1} \circ A^{j} = \theta}$; that is, for every ${z \in N^{j}}$:
\begin{multline}
\notag
\begin{aligned}
z &\ =\ (\underbrace{x^{1}, ... , x^{j}}_{j}, A^{j}x^{j}, \underbrace{\theta, ... ,\theta}_{k-j})\\
  &\ =\ (\underbrace{x^{1}, ... , x^{j}}_{j}, A^{j}x^{j}, \theta, \underbrace{\theta, ... ,\theta}_{k-j-1})\\
  &\ =\ (\underbrace{x^{1}, ... , x^{j}}_{j}, A^{j}x^{j}, A^{j+1}A^{j}x^{j}, \underbrace{\theta, ... ,\theta}_{k-j-1})\ \in\ N^{j+1}.
\end{aligned}
\end{multline}

In order to prove proposition 2 assume that operator complex $OC$ is Fredholm. We will prove that pair complex $PC$ is Fredholm as well. First notice that from definitions of ${M^{j}}$ and ${N^{j}}$ it follows that
\begin{equation}\label{E:601}
M^{j} + N^{j}\ =\ \prod_{i=1}^{j} X^{i}\ \times\ R(A^{j}) \times \prod_{i=j+2}^{k+1} \{\theta\},\ j=1,...,k. 
\end{equation}
Also notice that ranges ${R(A^{j})}$ of all operators ${A^{j}}$ are closed since operator complex $OC$ is Fredholm. Thus, it follows from the above formula that\linebreak[4] ${\overline{M^{j} + N^{j}} = M^{j} + N^{j}}$ for all ${j=1,...,k}$. 

Now let us show that ${\gamma_{j} = \alpha_{j}}$ for each ${j=1,...,k}$. It is obvious that these equalities are true due to the following easily verifiable isomorphisms:
\begin{multline}\label{E:602}
\begin{aligned}
M^{1} \cap N^{1} &\ \equiv\ Ker(A^{1}),\\
(M^{j} \cap N^{j}) / (M^{j-1} + N^{j-1}) &\ \equiv\ Ker(A^{j})/R(A^{j-1}),\ j=2,...,k,\\
X / (M^{k} + N^{k}) &\ \equiv\ X^{k+1} / R(A^{k}).
\end{aligned}
\end{multline}
From the fact that ${\gamma_{j} = \alpha_{j}}$ for each ${j=1,...,k}$ it then clearly follows that
\[
ind(OC)\ =\ \sum_{j=1}^{k+1} (-1)^{j-1} \times \alpha_{j}\ =\ \sum_{j=1}^{k+1} (-1)^{j-1} \times \gamma_{j}\ =\ ind(PC).
\]
Now assume that pair complex $PC$ is Fredholm and let us prove that operator complex $OC$ is Fredholm as well. First notice, that since ${\overline{M^{j} + N^{j}} = M^{j} + N^{j}}$ for all ${j=1,...,k}$, then it follows from \eqref{E:601} that the ranges of operators ${A^{j}}$ are closed ${\overline{R(A^{j})} = R(A^{j})}$ for the same $j$. Also, from the inclusion property of a Fredholm pair complex ${M^{j-1} + N^{j-1} \subset M^{j} \cap N^{j}}$, from \eqref{E:601} and from the following obvious formula
\[
M^{j} \cap N^{j}\ =\ \prod_{i=1}^{j-1} X^{i}\ \times\ Ker(A^{j}) \times \prod_{i=j+1}^{k+1} \{\theta\},\ 
j=1,...,k. 
\]
it follows that ${R(A^{j-1}) \subset Ker(A^{j})}$ for all ${j=2,...,k}$. Finally, from the isomorphisms of \eqref{E:602} it also follows that ${\gamma_{j} = \alpha_{j}}$ for each ${j=1,...,k}$; consequently ${ind(OC) = ind(PC)}$. This concludes the proof of the proposition 2 and of the entire lemma.
\end{proof}
While the previous lemma demonstrates how to construct a pair complex from an operator complex, the next lemma shows the opposite by constructing an operator complex from the pair complex:
\begin{lemma}[Building Operator Complex from a Pair Complex]\label{bocpc}
Let $X$ be a Banach space and ${PC = \{P^{1},...,P^{k}\}}$ be a pair complex from $X$ with ${k \geq 1}$ pairs ${P^{j} = (M^{j},N^{j})}$ for each ${j=1,...,k}$. Define the set of ${k+1}$ Banach spaces ${\{X^{1}, ..., X^{k}, X^{k+1}\}}$ and the set of $k$ subspaces ${D^{j} \subset X^{j}}$ for ${j=1,...,k}$:
\begin{multline}
\notag
\begin{aligned}
X^{j}   &\ :=\ X \times X,\ D^{j}\ :=\ M^{j} \times N^{j} \subset X^{j},\ for\ j=1,...,k,\\
X^{k+1} &\ :=\ X.
\end{aligned}
\end{multline}
Also define the set of $k-1$ operators ${\{A^{1},...,A^{k-1}\}}$:
\begin{multline}
\notag
\begin{aligned}
& A^{j}\ \in\ \mathcal{BC}(X^{j}, X^{j+1}),\ \ dom(A^{j}) = D^{j},\ \ A^{j}\ :\ (x, y)\ \mapsto\ (x+y,\ -(x+y)),
\end{aligned}
\end{multline}
and operator $A^{k}$
\begin{multline}
\notag
\begin{aligned}
& A^{k}\ \in\ \mathcal{BC}(X^{k}, X^{k+1}),\ \ dom(A^{k}) = D^{k},\ \ A^{k}\ :\ (x, y)\ \mapsto\ x+y.
\end{aligned}
\end{multline}
Then the following propositions are true:
\begin{enumerate}
  \item The set of spaces ${\{X^{1}, ..., X^{k}, X^{k+1}\}}$ and the set of operators ${\{A^{1},...,A^{k}\}}$ constitute an operator complex $OC$:
\begin{equation}\label{E:700}
\begin{CD}
X^{1} @>A^{1}>> X^{2}\ \cdots\ X^{k} @>A^{k}>> X^{k+1}.
\end{CD}
\end{equation}
  \item Operator complex $OC$ is Fredholm if and only if pair complex $PC$ is Fredholm. In this case ${\alpha_{j}(OC) = \gamma_{j}(PC)}$ for each ${j=1,...,k+1}$ and consequently ${ind(OC) = ind(PC)}$.
\end{enumerate}
\end{lemma}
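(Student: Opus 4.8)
The plan is to prove both propositions by direct computation, organized around a single linear embedding $\iota\colon X \to X \times X$, $\iota(z) = (z,-z)$, which is a topological isomorphism of $X$ onto a closed subspace of $X\times X$; this $\iota$ will identify the kernels and ranges of the $A^j$ with exactly the intersections and sums that occur in the definition of a Fredholm pair complex.

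First I would establish Proposition~1, i.e.\ that $R(A^j) \subset Ker(A^{j+1})$ for each $j = 1,\dots,k-1$. Given $(x,y) \in D^j = M^j \times N^j$, we have $A^j(x,y) = (x+y,-(x+y))$, and the first coordinate satisfies $x+y \in M^j + N^j$. By the defining inclusion of a pair complex, $M^j + N^j \subset M^{j+1}\cap N^{j+1}$, so $(x+y,-(x+y)) \in M^{j+1}\times N^{j+1} = dom(A^{j+1})$; thus $A^{j+1}\circ A^j$ is defined on all of $D^j$. Applying $A^{j+1}$ (or $A^k$ when $j+1 = k$) the two coordinates cancel, giving $A^{j+1}(x+y,-(x+y)) = (\theta,\theta)$ (respectively $A^k(x+y,-(x+y)) = \theta$). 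Hence $R(A^j) \subset Ker(A^{j+1})$ and the diagram \eqref{E:700} is an operator complex. Note this argument is uniform in $k$ and is vacuous when $k = 1$.

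For Proposition~2 I would compute, for $j = 1,\dots,k-1$, that $Ker(A^j) = \iota(M^j \cap N^j)$ and $R(A^j) = \iota(M^j + N^j)$, while for the last operator $Ker(A^k) = \iota(M^k \cap N^k) \subset X^k$ and $R(A^k) = M^k + N^k \subset X^{k+1}$. Since $\iota$ is a linear homeomorphism onto a closed subspace, $R(A^j)$ is closed in $X^{j+1}$ exactly when $M^j + N^j$ is closed in $X$, so the range-closedness hypotheses of the two Fredholm notions coincide verbatim. Moreover the pair-complex inclusion $M^{j-1}+N^{j-1}\subset M^j\cap N^j$ is carried by $\iota$ to $R(A^{j-1}) \subset Ker(A^j)$, and $\iota$ induces isomorphisms of quotients
\[
Ker(A^j)/R(A^{j-1})\ \cong\ (M^j\cap N^j)/(M^{j-1}+N^{j-1}),\qquad j = 2,\dots,k.
\]
Together with $Ker(A^1) \cong M^1\cap N^1$ and $X^{k+1}/R(A^k) = X/(M^k+N^k)$, this gives $\alpha_j(OC) = \gamma_j(PC)$ for every $j = 1,\dots,k+1$. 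Consequently all $\alpha_j$ are finite precisely when all $\gamma_j$ are, which is the claimed equivalence, and in that case
\[
ind(OC) = \sum_{j=1}^{k+1}(-1)^{j-1}\alpha_j = \sum_{j=1}^{k+1}(-1)^{j-1}\gamma_j = ind(PC).
\]

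I do not expect a serious obstacle; the content is essentially bookkeeping, and the main points needing care are topological rather than deep: (i) checking that $\iota$ genuinely is a topological embedding with closed range, so that ``$A^j$ has closed range'' is equivalent to ``$M^j + N^j$ closed'' and not merely an algebraic statement; (ii) handling the boundary indices $j = 1$, $j = k$, $j = k+1$, where the shapes of domains and codomains differ (in particular $A^k$ maps into $X$, not $X\times X$), and the degenerate case $k = 1$; and (iii) keeping the identifications $dom(A^j) = D^j$ consistent so that each quotient above is formed inside the correct ambient space $X^j$.
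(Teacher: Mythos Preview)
Your proposal is correct and follows essentially the same approach as the paper: both verify $A^{j+1}\circ A^j = \theta$ by direct cancellation and then identify $Ker(A^j)$ with $M^j\cap N^j$ and $R(A^j)$ with $M^j+N^j$ to match the defect numbers. Your explicit use of the embedding $\iota(z)=(z,-z)$ makes the isomorphisms and the transfer of closedness more transparent than the paper's ``it is easy to establish,'' but the underlying argument is identical.
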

\begin{proof}
In order to prove proposition 1 first notice that all operators $A^{j}$ for\linebreak[4] ${j=1,...,k-1}$ are well defined -- since $PC$ is a pair complex then
\[
M^{j} + N^{j} \subset M^{j+1} \cap N^{j+1},
\]
and therefore $A^{j}$ maps Banach space ${D^{j} = M^{j} \times N^{j} \subset X^{j}}$ into Banach space space
\begin{multline}
\notag
\begin{aligned}
(M^{j} + N^{j})\ \times (M^{j} + N^{j}) &\ \subset\ (M^{j+1} \cap N^{j+1}) \times (M^{j+1} \cap N^{j+1})\\
                                        &\ \subset\ M^{j+1} \times N^{j+1}\\
                                        &\ \subset\ X \times X\\
                                        &\ =\ X^{j+1}.
\end{aligned}
\end{multline}
Also, by construction for ${j < k-1}$
\begin{multline}
\notag
\begin{aligned}
A^{j+1} \circ A^{j} (x, y) &\ =\ A^{j+1}(x+y,\ -(x+y))\\
                           &\ =\ ((x+y) +(-(x+y)),\ -((x+y) +(-(x+y))))\ =\ (\theta,\ \theta),
\end{aligned}
\end{multline}
and for the last operator $A^{k}$
\begin{multline}
\notag
\begin{aligned}
A^{k} \circ A^{k-1} (x, y) &\ =\ A^{k}(x+y,\ -(x+y))\\
                           &\ =\ (x+y) +(-(x+y))\ =\ \theta.
\end{aligned}
\end{multline}
Therefore ${A^{j+1} \circ A^{j} = \theta}$ which proves that the set of Banach spaces\linebreak[4] ${\{X^{1}, ..., X^{k}, X^{k+1}\}}$ and the set of operators ${\{A^{1},...,A^{k}\}}$ constitute an operator complex $OC$ from \eqref{E:700}.

In order to prove proposition 2 notice that by construction of operators $A^{j}$ their ranges are closed ${\overline{R(A^{j})} = R(A^{j})}$ if and only if the sums of respective subspaces are closed ${\overline{M^{j} + N^{j}} = M^{j} + N^{j}}$. Also, from the same construction it is easy to establish the following isomorphisms for each ${j=1,...,k}$:
\[
Ker(A^{j})\ \equiv\ M^{j} \cap N^{j},\ \ \ \ R(A^{j})\ \equiv\ M^{j} + N^{j}.
\]
Hence
\begin{multline}
\notag
\begin{aligned}
\alpha_{1}(OC) &\ =\ \dim Ker(A^{1})\ =\ \dim (M^{1} \cap N^{1})\ =\ \gamma_{1}(PC),\\
\alpha_{k+1}(OC) &\ =\ \dim X / R(A^{k})\ =\ \dim X / (M^{k} + N^{k})\ =\ \gamma_{k+1}(PC),
\end{aligned}
\end{multline}
and for each ${j=2,...,k}$
\begin{multline}
\notag
\begin{aligned}
\alpha_{j}(OC) &\ =\ \dim Ker(A^{j})/R(A^{j-1})\ =\ \dim (M^{j} \cap N^{j}) / (M^{j-1} + N^{j-1})\ =\ \gamma_{j}(PC).
\end{aligned}
\end{multline}
Therefore
\[
ind(OC)\ =\ \sum_{j=1}^{k+1} (-1)^{j-1} \times \alpha_{j}\ =\ \sum_{j=1}^{k+1} (-1)^{j-1} \times \gamma_{j}\ =\ ind(PC).
\]
\end{proof}
It is worth noticing that in a finite-dimensional space $X$ the index of a pair complex can be calculated directly from the dimensions of all pair's components and from the dimension of $X$. Similarly, the index of an operator complex in finite-dimensional spaces can be calculated from the dimensions of its spaces:
\begin{lemma}[Index of a Pair Complex]\label{ipc}
Then the following propositions are true:
\begin{enumerate}
  \item Let $X$ be a finite-dimensional Banach space and ${PC = \{P^{1},...,P^{k}\}}$ be a pair complex from $X$ with ${k \geq 1}$ pairs ${P^{j} = (M^{j},N^{j})}$ for each ${j=1,...,k}$. Then the following formula holds:
\[
ind(PC)\ =\ \sum_{j=1}^{k}(-1)^{j-1} \times (\dim M^{j} + \dim N^{j})\ +\ (-1)^k \times \dim X.
\]
  \item Let $OC$ be an operator complex
\[
\begin{CD}
X^{1} @>A^{1}>> X^{2}\ \cdots\ X^{k} @>A^{k}>> X^{k+1}.
\end{CD}
\]
Suppose that ${\dim X^{j} < \infty}$ and ${dom(A^{j}) = X^{j}}$ for each ${j=1,...,k}$. Then the following formula holds:
\[
ind(OC)\ =\ \sum_{j=1}^{k+1}(-1)^{j-1} \times \dim X^{j}.
\]
\end{enumerate}
\end{lemma}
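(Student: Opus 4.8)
The plan is to prove both propositions by the same elementary bookkeeping: expand the alternating sum defining the index in terms of the dimensions of intersections and sums (respectively kernels and ranges), substitute the finite-dimensional identity relating these dimensions, reindex, and telescope. For proposition 1 I would start from Definition \ref{D:fpc}, which gives $ind(PC) = \sum_{j=1}^{k+1}(-1)^{j-1}\gamma_j$ with $\gamma_1 = \dim(M^1 \cap N^1)$, $\gamma_j = \dim(M^j \cap N^j) - \dim(M^{j-1}+N^{j-1})$ for $2 \le j \le k$, and $\gamma_{k+1} = \dim X - \dim(M^k+N^k)$. The quotient-dimension formula for $\gamma_j$ is legitimate precisely because $PC$ is a pair complex, so $M^{j-1}+N^{j-1} \subset M^j \cap N^j$, and because $X$ is finite-dimensional all the spaces involved are finite-dimensional.

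Next I would write $a_j := \dim(M^j \cap N^j)$ and $s_j := \dim(M^j + N^j)$, and invoke the classical identity $a_j + s_j = \dim M^j + \dim N^j$, valid in finite dimensions. Substituting the $\gamma_j$ into the alternating sum and splitting it, the terms $-(-1)^{j-1}s_{j-1}$ (for $j=2,\dots,k$) reindex to $\sum_{j=1}^{k-1}(-1)^{j-1}s_j$, and together with the contribution $-(-1)^k s_k = (-1)^{k-1}s_k$ coming from $\gamma_{k+1}$ they assemble into $\sum_{j=1}^{k}(-1)^{j-1}s_j$. Combining this with $\sum_{j=1}^{k}(-1)^{j-1}a_j$ yields $\sum_{j=1}^{k}(-1)^{j-1}(a_j+s_j) + (-1)^k \dim X$, which is exactly $\sum_{j=1}^{k}(-1)^{j-1}(\dim M^j + \dim N^j) + (-1)^k \dim X$, proving proposition 1.

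For proposition 2 the argument is verbatim the same, with $Ker(A^j)$ in place of $M^j \cap N^j$, $R(A^j)$ in place of $M^j + N^j$, the rank--nullity identity $\dim Ker(A^j) + \dim R(A^j) = \dim X^j$ (applicable since $dom(A^j) = X^j$ and $\dim X^j < \infty$) in place of the intersection--sum identity, and the inclusion $R(A^{j-1}) \subset Ker(A^j)$ required for the quotient dimensions being exactly the defining property of an operator complex. (Alternatively, proposition 2 can be deduced from proposition 1 through Lemma \ref{ociapc}, since the pair complex induced by $OC$ has $\dim M^j = \dim N^j = \sum_{i=1}^{j}\dim X^i$ and $\dim X = \sum_{i=1}^{k+1}\dim X^i$; but that route then demands verifying a purely numerical identity, so the direct rank--nullity computation is cleaner.) There is no genuine obstacle here; the only point requiring care is the sign bookkeeping in the reindexing of the $s_{j-1}$ (resp. $R(A^{j-1})$) terms and the boundary contributions at $j=1$ and $j=k+1$, and I would write the computation so that it is visibly uniform in $k \ge 1$, with the middle sum $\sum_{j=2}^{k}$ understood to be empty when $k=1$.
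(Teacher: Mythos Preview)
Your proposal is correct and takes essentially the same approach as the paper: both proofs expand the alternating sum of defect numbers, invoke the finite-dimensional identity $\dim(M\cap N)+\dim(M+N)=\dim M+\dim N$ (respectively rank--nullity), and telescope. The only cosmetic difference is that the paper substitutes $\dim(M^{j-1}+N^{j-1})=\dim M^{j-1}+\dim N^{j-1}-\dim(M^{j-1}\cap N^{j-1})$ inside each $\gamma_j$ so that the intersection terms visibly cancel between consecutive summands, whereas you keep $a_j$ and $s_j$ separate, reindex the $s$-sum, and combine $a_j+s_j$ at the end; the content is identical.
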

\begin{proof}
In order to prove proposition 1 first recall definition of $\gamma_{1}$
\[
\gamma_{1}\ =\ \dim (M^{1} \cap N^{1}).
\]
Then note that for any two subspaces $M$ and $N$ from a finite-dimensional space $X$ the following obvious formula is true:
\[
\dim(M+N)\ =\ \dim M + \dim N - \dim M \cap N.
\]
Applying this formula to each component $\gamma_{j}$ for ${j=2,...,k}$ recalculate ${\gamma_{j}}$ like this
\begin{multline}
\notag
\begin{aligned}
\gamma_{j} &\ =\ \dim (M^{j} \cap N^{j})/(M^{j-1} + N^{j-1})\\
           &\ =\ \dim (M^{j} \cap N^{j})\ -\ \dim(M^{j-1} + N^{j-1})\\
           &\ =\ \dim (M^{j} \cap N^{j})\ -\ (\dim M^{j-1} + \dim N^{j-1}\ -\ \dim (M^{j-1} \cap N^{j-1}))\\
           &\ =\ \dim (M^{j} \cap N^{j})\ -\ (\dim M^{j-1} + \dim N^{j-1})\ +\ \dim (M^{j-1} \cap N^{j-1}))\\
           &\ =\ \dim (M^{j-1} \cap N^{j-1})\ +\ \dim (M^{j} \cap N^{j})\ -\ (\dim M^{j-1} + \dim N^{j-1}).
\end{aligned}
\end{multline}
Applying the same formula to $\gamma_{k+1}$ obtain
\begin{multline}
\notag
\begin{aligned}
\gamma_{k+1} &\ =\ \dim X/(M^{k} + N^{k})\\
             &\ =\ \dim X\ -\ \dim(M^{k} + N^{k})\\
             &\ =\ \dim X\ -\ (\dim M^{k}\ +\ \dim N^{k}\ -\ \dim (M^{k} \cap N^{k}))\\
             &\ =\ \dim (M^{k} \cap N^{k})\ -\ (\dim M^{k}\ +\ \dim N^{k})\ +\ \dim X. 
\end{aligned}
\end{multline}
It is clear now that the components ${\dim (M^{j} \cap N^{j})}$ will cancel each other out in the index formula since each next additive member $\gamma_{j}$ of that formula has the sign opposite to the previous member $\gamma_{j-1}$ -- the only members left will be ${\dim M^{j} + \dim N^{j}}$ and ${\dim X}$:
\[
ind(PC)\ =\ \sum_{j=1}^{k+1}(-1)^{j-1} \times \gamma_{j}\ =\ \sum_{j=1}^{k}(-1)^{j-1} \times (\dim M^{j} + \dim N^{j})\ +\ (-1)^k \times \dim X.
\]
\\

In order to prove proposition 2 recall definition of ${\alpha_{1}}$
\[
\alpha_{1}\ =\ \dim Ker(A^{1}).
\]
Then recall for each ${j=2,...,k}$ the simple algebraic fact
\[
\alpha_{j}\ =\ \dim Ker(A^{j+1})/R(A^{j})\ =\ \dim Ker(A^{j+1})\ -\ (\dim X^{j}\ -\ \dim Ker(A^{j})).
\]
Finally
\[
\alpha_{k}\ =\ \dim X^{k+1}\ -\ (\dim X^{k}\ -\ \dim Ker(A^{k})).
\]
It is now clear that in the final sum for ${ind(OC)}$ all members ${\dim Ker(A^{j})}$ cancel each other out, so only dimensions of ${X^{j}}$ will remain:
\[
ind(OC)\ =\ \sum_{j=1}^{k+1}(-1)^{j-1} \times \dim X^{j}.
\]
\end{proof}
\subsection[Stability of Fredholm Pair Complexes]{Stability of Fredholm Pair Complexes}\label{sfpcx}
We are now ready to formulate a Fredholm stability theorem for pair complexes:
\begin{theorem}[Stability of Fredholm Pair Complexes]\label{sfpc}
Let ${PC = \{P^{1},...,P^{k}\}}$ be a Fredholm pair complex in a Banach space $X$ such that ${P^{j} = (M^{j}, N^{j})}$ for each ${j=1,...,k}$. Consider a sequence ${(PC_{n})_{\mathbb{N}^{'}}}$ of pair complexes from $X$, such that ${PC_{n} = \{P^{1}_{n},...,P^{k}_{n}\}}$ for each ${n \in \mathbb{N}^{'}}$ and ${P^{j}_{n} = (M^{j}_{n}, N^{j}_{n})}$ for each ${j=1,...,k}$. Then the following propositions are true:
\begin{enumerate}
  \item Consider the following uniform proximity vectors from ${\mathbb{R}^{3}}$ for each\linebreak[4] ${j=1,...,k}$:
\[
\mu_{\mathbb{N}^{'}}[M^{j}_{n}, M^{j}],\ \ \mu_{\mathbb{N}^{'}}[N^{j}_{n}, N^{j}],\ \ \mu_{\mathbb{N}^{'}}[M^{j}, M^{j}_{n}],\ \  \mu_{\mathbb{N}^{'}}[N^{j}, N^{j}_{n}].
\]
If all of these vectors are small enough, then pair complexes ${PC_{n}}$ are Fredholm for large enough $n$. Moreover, for the same $n$ defect numbers from ${PC_{n}}$ do not exceed respective defect numbers from ${PC}$ while their indices remain the same:
\begin{multline}
\notag
\begin{aligned}
\gamma_{j}(PC_{n}) & \ \leq\ \gamma_{j}(PC),\ j = 1,...,k,\\
ind(PC_{n}) & \ =\ ind(PC). 
\end{aligned}
\end{multline}
  \item Suppose that $X$ is a $BFU-$space. Consider the following uniform adjustment numbers from ${\mathbb{R}}$ for each ${j=1,...,k}$:
\[
\lambda_{\mathbb{N}^{'}}[M^{j}_{n}, M^{j}],\ \ \lambda_{\mathbb{N}^{'}}[N^{j}_{n}, N^{j}],\ \ \lambda_{\mathbb{N}^{'}}[M^{j}, M^{j}_{n}],\ \ \lambda_{\mathbb{N}^{'}}[N^{j}, N^{j}_{n}].
\]
If all of these numbers are small enough, then pair complexes ${PC_{n}}$ are Fredholm for large enough $n$. Moreover, there exists ${K \in \mathbb{N}}$ such that for the same $n$ all defect numbers from ${PC_{n}}$ do not exceed ${K}$ while absolute value of ${ind(PC_{n})}$ does not exceed ${\frac{k+1}{2} \times K}$:
\begin{multline}
\notag
\begin{aligned}
\gamma_{j}(PC_{n}) & \ \leq\ K,\ j = 1,...,k,\\
ind(PC_{n}) & \ \leq\ \frac{k+1}{2} \times K. 
\end{aligned}
\end{multline}
\end{enumerate}
\end{theorem}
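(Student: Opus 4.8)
The plan is to deduce the whole statement from the already-proved stability of \emph{relative} (semi--)Fredholm pairs, using the inclusion structure that defines a pair complex to manufacture, for each index $j$, a pair of subspaces whose relative defect numbers are exactly the $\gamma_{j}$'s. Concretely, adopt the conventions $M^{0}+N^{0}:=\{\theta\}$ and $M^{k+1}\cap N^{k+1}:=X$, and for $j=1,\dots,k$ put
\[
P^{(j)}=M^{j-1}+N^{j-1},\qquad Q^{(j)}=M^{j+1}\cap N^{j+1},
\]
and likewise $P^{(j)}_{n}=M^{j-1}_{n}+N^{j-1}_{n}$, $Q^{(j)}_{n}=M^{j+1}_{n}\cap N^{j+1}_{n}$. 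The complex inclusions give $P^{(j)}\subset M^{j}\cap N^{j}$ and $M^{j}+N^{j}\subset Q^{(j)}$ (and the same with subscript $n$, precisely because every $PC_{n}$ is a pair complex), so in the notation of Theorem~\ref{esossfp} one has $\gamma_{j}=\alpha_{P^{(j)}}(M^{j},N^{j})$ for $j=1,\dots,k$ and $\gamma_{k+1}=\beta_{Q^{(k)}}(M^{k},N^{k})$, with the analogous identities for $PC_{n}$. Thus each $j$ puts us exactly in the setting of the extended pair--stability theorems.

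For part~1 I would, for each $j=1,\dots,k$, apply Theorem~\ref{esossfp} to the pair $(M^{j},N^{j})$ with the data $(P^{(j)},Q^{(j)},P^{(j)}_{n},Q^{(j)}_{n})$: part~2 of that theorem gives $\overline{M^{j}_{n}+N^{j}_{n}}=M^{j}_{n}+N^{j}_{n}$ for large $n$ and $\gamma_{j}(PC_{n})\le\gamma_{j}(PC)$ for $j=1,\dots,k$, part~1 applied at $j=k$ gives $\gamma_{k+1}(PC_{n})\le\gamma_{k+1}(PC)$, and part~3 gives the relative--index identities $ind_{(P^{(j)}_{n},Q^{(j)}_{n})}(M^{j}_{n},N^{j}_{n})=ind_{(P^{(j)},Q^{(j)})}(M^{j},N^{j})$, i.e. $\gamma_{j}(PC_{n})-\gamma_{j+1}(PC_{n})=\gamma_{j}(PC)-\gamma_{j+1}(PC)$. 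Telescoping these over $j$ yields $ind(PC_{n})=ind(PC)$ when $k$ is odd; when $k$ is even the telescope leaves a single endpoint term, and I would pin it down by passing instead through Lemma~\ref{bocpc}, which turns $PC$ and each $PC_{n}$ into operator complexes on $X\times X$ with $ind(OC)=ind(PC)$, and then through the complex--to--pair reduction (the analogue for operator complexes of Lemma~\ref{cmbtp}): the resulting single Fredholm pair has index $ind(PC)$, and a single Fredholm pair has \emph{exactly} stable index under two--sided proximity by Theorems~\ref{sfpsma} and~\ref{rfs}. For part~2 the argument is identical with Theorem~\ref{esssfp} (valid in a $BFU$--space under small $\lambda$--adjustments) replacing Theorem~\ref{esossfp}: it furnishes a single $K\in\mathbb{N}$ with $\gamma_{j}(PC_{n})\le K$ for $j=1,\dots,k$ and all large $n$; since $\gamma_{k+1}(PC_{n})\le K$ follows the same way at $j=k$, all $k+1$ defect numbers of $PC_{n}$ are $\le K$, whence $|ind(PC_{n})|=\bigl|\sum_{j=1}^{k+1}(-1)^{j-1}\gamma_{j}(PC_{n})\bigr|\le\lceil\tfrac{k+1}{2}\rceil K\le\tfrac{k+1}{2}K$ by counting the terms of each sign.

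The technical heart --- and the step I expect to cost the most work --- is verifying that the hypotheses of Theorems~\ref{esossfp}/\ref{esssfp} are actually met: one must know that the proximities $\lambda_{\mathbb{N}^{'}}[P^{(j)}_{n},P^{(j)}]$, $\lambda_{\mathbb{N}^{'}}[Q^{(j)}_{n},Q^{(j)}]$ (and their reverses, and the $\mu$--versions) are small whenever the component proximities are. Unlike gap, norm or compact convergence, uniform $\lambda$--adjustment is not automatically inherited by sums, and not at all by intersections; it survives a sum only with uniform control of the open--mapping decomposition constants $\varphi_{S}$, $\psi_{S}$ along the sequence. I would handle this by induction on $j$: at stage $j$ one already knows (from the reverse--stability Theorem~\ref{rfs}, which also yields boundedness of the relevant $\varphi$/$\psi$ coefficients) that $M^{j-1}_{n}+N^{j-1}_{n}$ is closed with bounded decomposition constant and $\lambda$--close (resp. $\mu$--close) to $M^{j-1}+N^{j-1}$; a split--adjust--reassemble argument of exactly the type used in the proof of Theorem~\ref{lsfps} --- decompose a unit vector of $M^{j}_{n}+N^{j}_{n}$ as $a_{n}+b_{n}$ with $\|a_{n}\|,\|b_{n}\|$ controlled, adjust each summand separately, recombine --- then pushes the proximity to the next sum, while the finite $\gamma_{j+1}$--dimensional extension $M^{j}+N^{j}\subset M^{j+1}\cap N^{j+1}$ is absorbed using finite--dimensional triviality (Lemma~\ref{L:fdt}) together with the bounds already obtained. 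Once this propagation lemma is in hand the rest is the bookkeeping sketched above; the whole scheme is the pair--complex analogue of how Theorem~\ref{ssft} derives tuple stability from pair stability via Lemma~\ref{cmbtp}.
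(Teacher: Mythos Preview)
The paper states Theorem~\ref{sfpc} without proof; it is one of several results in Sections~\ref{S:umabs} and~\ref{S:tccs} that are announced but not demonstrated, so there is no paper argument to compare against. Your route via Theorem~\ref{esossfp} is a natural one and the outline is broadly sound, but two steps need tightening.

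First, the intersection proximity. To invoke part~3 of Theorem~\ref{esossfp} at level $j<k$ you must control $\mu_{\mathbb{N}'}[Q^{(j)}_{n},Q^{(j)}]$ and $\mu_{\mathbb{N}'}[Q^{(j)},Q^{(j)}_{n}]$ with $Q^{(j)}=M^{j+1}\cap N^{j+1}$. Your appeal to ``finite-dimensional triviality (Lemma~\ref{L:fdt})'' does not do this: that lemma concerns subspaces of a finite-dimensional ambient space, not finite-codimensional inclusions inside an infinite-dimensional one. What actually works is the mechanism of proposition~2 of Theorem~\ref{lsfps}: a unit vector of $M^{j+1}_{n}\cap N^{j+1}_{n}$ is in particular $\lambda$--close to each of $M^{j+1}$ and $N^{j+1}$, and the decomposition $M^{j+1}+N^{j+1}=(M^{j+1}\cap N^{j+1})\oplus S$ (available because the pair at level $j+1$ is itself relatively lower semi--Fredholm) forces proximity to the intersection, with a constant controlled by $\varphi_{S}(M^{j+1},N^{j+1})$. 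You should cite that argument rather than Lemma~\ref{L:fdt}.

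Second, the parity problem for the index. Your computation is right: the relative--index identities force $\gamma_{j}(PC)-\gamma_{j}(PC_{n})$ to be a single constant $c$ independent of $j$, and then $ind(PC)-ind(PC_{n})=c\sum_{j=1}^{k+1}(-1)^{j-1}$, which vanishes only for $k$ odd. Your proposed fix for even $k$ invokes a ``complex--to--pair reduction (the analogue for operator complexes of Lemma~\ref{cmbtp})'' that the paper does not contain; building it is essentially as much work as the theorem. A cheaper repair that stays inside the paper's toolkit: append the trivial pair $P^{k+1}=(X,X)$ to both $PC$ and every $PC_{n}$. The inclusion $M^{k}+N^{k}\subset X\cap X$ holds automatically, the old $\gamma_{k+1}$ becomes the new $\gamma'_{k+1}=\dim X/(M^{k}+N^{k})$ unchanged, the new $\gamma'_{k+2}=0$, the index is preserved, no new proximity hypotheses appear, and the parity of $k$ flips. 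With these two corrections your scheme goes through.
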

By converting an operator complex of closed operators to a pair complex as described in the previous Lemma \ref{ociapc} and applying the above Theorem \ref{sfpc} one can prove the following theorem about stability of operator complexes.
\begin{theorem}[Stability of a Fredholm Operator Complex]\label{sofop}
Let $OC$ be a Fredholm operator complex comprised of a finite set of Banach spaces ${\{X^{1},...,X^{k+1}\}}$ where ${k \geq 1}$, and of a finite set of closed linear operators ${\{A^{1},...,A^{k}\}}$ where ${A^{i} \in \mathcal{C}(X^{i}, X^{i+1})}$ and ${R(A^{i}) \subset Ker(A^{i+1})}$ for each ${i=1,...,k}$:
\begin{equation}
\begin{CD}
X^{1} @>A^{1}>> X^{2}\ \cdots\ X^{k} @>A^{k}>> X^{k+1}.
\end{CD}
\end{equation}
Let ${(OC_{n})_{\mathbb{N}^{'}}}$ be a sequence of operator complexes each comprised of a finite set of Banach spaces ${\{X^{1}_{n},...,X^{k+1}_{n}\}}$ where ${k \geq 1}$, and of a finite set of closed linear operators ${\{A^{1}_{n},...,A^{k}_{n}\}}$ where ${A^{i}_{n} \in \mathcal{C}(X^{i}_{n}, X^{i+1}_{n})}$ and ${R(A^{i}_{n}) \subset Ker(A^{i+1}_{n})}$ for each ${i=1,...,k}$ and for each ${n \in \mathbb{N}^{'}}$:
\begin{equation}
\begin{CD}
X^{1}_{n} @>A^{1}_{n}>> X^{2}_{n}\ \cdots\ X^{k}_{n} @>A^{k}_{n}>> X^{k+1}_{n}.
\end{CD}
\end{equation}
then the following propositions are true:
\begin{enumerate}
  \item Consider the following uniform proximity vectors from ${\mathbb{R}^{3}}$ for each\linebreak[4] ${j=1,...,k}$:
\[
\mu_{\mathbb{N}^{'}}[A^{j}_{n}, A^{j}],\ \ \mu_{\mathbb{N}^{'}}[A^{j}, A^{j}_{n}].
\]
If all these vectors are small enough, then operator complexes ${OC_{n}}$ are Fredholm for large enough $n$. Moreover, for the same $n$ defect numbers from ${OC_{n}}$ do not exceed respective defect numbers from ${OC}$ while their indices remain the same:
\begin{multline}
\notag
\begin{aligned}
\alpha_{j}(OC_{n}) & \ \leq\ \alpha_{j}(OC),\ j = 1,...,k,\\
ind(OC_{n}) & \ =\ ind(OC). 
\end{aligned}
\end{multline}
  \item Suppose that the product space ${\prod_{j=1}^{k+1}X^{j}}$ is a $BFU-$space. Consider the following uniform adjustment numbers from ${\mathbb{R}}$ for each ${j=1,...,k}$:
\[
\lambda_{\mathbb{N}^{'}}[A^{j}_{n}, A^{j}],\ \ \lambda_{\mathbb{N}^{'}}[A^{j}, A^{j}_{n}].
\]
If all of these numbers are small enough, then operator complexes ${OC_{n}}$ are Fredholm for large enough $n$. Moreover, there exists ${K \in \mathbb{N}}$ such that for the same $n$ all defect numbers from ${OC_{n}}$ do not exceed ${K}$ while absolute value of ${ind(OC_{n})}$ does not exceed ${\frac{k+1}{2} \times K}$:
\begin{multline}
\notag
\begin{aligned}
\alpha_{j}(OC_{n}) & \ \leq\ K,\ j = 1,...,k,\\
ind(OC_{n}) & \ \leq\ \frac{k+1}{2} \times K. 
\end{aligned}
\end{multline}
\end{enumerate}
\end{theorem}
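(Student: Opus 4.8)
The plan is to reduce everything to the already-established stability theorem for Fredholm pair complexes (Theorem \ref{sfpc}) via the correspondence of Lemma \ref{ociapc}. First I would, for the fixed Fredholm operator complex $OC$, form the Banach space $X = \prod_{i=1}^{k+1} X^{i}$ and the pair complex $PC = \{P^{1},\dots,P^{k}\}$ with $P^{j} = (M^{j},N^{j})$ exactly as in Lemma \ref{ociapc}, where $M^{j} = \prod_{i=1}^{j}X^{i} \times \prod_{i=j+1}^{k+1}\{\theta\}$ and $N^{j} = \{(x^{1},\dots,x^{j},A^{j}x^{j},\theta,\dots,\theta)\}$. By that lemma $PC$ is a Fredholm pair complex and $\gamma_{j}(PC) = \alpha_{j}(OC)$ for all $j$, hence $ind(PC) = ind(OC)$. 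I would do the same construction for each $OC_{n}$, obtaining pair complexes $PC_{n} = \{P^{1}_{n},\dots,P^{k}_{n}\}$ with $P^{j}_{n} = (M^{j}_{n},N^{j}_{n})$; note $M^{j}_{n} = M^{j}$ does not depend on $n$ (it is built from the ambient spaces only), so $\lambda_{\mathbb{N}^{'}}[M^{j}_{n},M^{j}] = \lambda_{\mathbb{N}^{'}}[M^{j},M^{j}_{n}] = 0$ and likewise the $\mu$-proximities $\mu_{\mathbb{N}^{'}}[M^{j}_{n},M^{j}]$, $\mu_{\mathbb{N}^{'}}[M^{j},M^{j}_{n}]$ can be taken to be $(0,0,0)$.

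The substantive step is to show that uniform $\lambda$-adjustment (resp. uniform $\mu$-proximity) between $A^{j}_{n}$ and $A^{j}$ controls the corresponding quantities between $N^{j}_{n}$ and $N^{j}$. Here I would exploit that $N^{j}$ is, up to the obvious isometric identification of $X^{(k+1)}$-coordinates, the graph of the operator $(x^{1},\dots,x^{j}) \mapsto A^{j}x^{j}$ padded with zero components, and the padding is a fixed direct summand common to $N^{j}_{n}$ and $N^{j}$. Since the norm on a finite product is the $\max$ of coordinate norms, a unit sequence from $(N^{j}_{n})$ decomposes into a bounded piece in the unperturbed coordinates $x^{1},\dots,x^{j-1}$ (which I handle by passing to a convergent subsequence after scaling, as in the proofs of Theorem \ref{aoggnsk} and Lemma \ref{cmbtp}) and a graph piece $(x^{j},A^{j}_{n}x^{j})$ to which the definition of $\lambda_{\mathbb{N}^{'}}[A^{j}_{n},A^{j}]$ (Definition \ref{D:ulasclo}) applies directly; assembling the witnesses gives $\lambda_{\mathbb{N}^{'}}[N^{j}_{n},N^{j}] \le \lambda_{\mathbb{N}^{'}}[A^{j}_{n},A^{j}]$, and symmetrically for the upper direction, and the analogous inequality for each component of the $\mu$-proximity vector by repeating the argument with the extra pointwise-closeness clause of Definition \ref{D:umass}. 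Combined with the vanishing of the $M^{j}$ quantities, this shows that smallness of all $\lambda_{\mathbb{N}^{'}}[A^{j}_{n},A^{j}]$, $\lambda_{\mathbb{N}^{'}}[A^{j},A^{j}_{n}]$ (resp. all $\mu_{\mathbb{N}^{'}}[A^{j}_{n},A^{j}]$, $\mu_{\mathbb{N}^{'}}[A^{j},A^{j}_{n}]$) forces smallness of all the hypotheses of Theorem \ref{sfpc} for the pair complexes $PC_{n}$ and $PC$.

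Once this translation is in place, the conclusion follows mechanically. In the $\mu$-case I would apply proposition 1 of Theorem \ref{sfpc}: for large $n$ each $PC_{n}$ is a Fredholm pair complex with $\gamma_{j}(PC_{n}) \le \gamma_{j}(PC)$ and $ind(PC_{n}) = ind(PC)$; pulling back through Lemma \ref{ociapc} applied to $OC_{n}$ gives that $OC_{n}$ is a Fredholm operator complex with $\alpha_{j}(OC_{n}) = \gamma_{j}(PC_{n}) \le \gamma_{j}(PC) = \alpha_{j}(OC)$ and $ind(OC_{n}) = ind(PC_{n}) = ind(PC) = ind(OC)$. In the $BFU$-case, the product space $\prod_{j=1}^{k+1}X^{j}$ being $BFU$ is precisely the hypothesis needed to invoke proposition 2 of Theorem \ref{sfpc}, which yields a single $K \in \mathbb{N}$ bounding all $\gamma_{j}(PC_{n})$ and the bound $|ind(PC_{n})| \le \tfrac{k+1}{2}\times K$ (immediate from $ind = \sum_{j=1}^{k+1}(-1)^{j-1}\gamma_{j}$ and $0 \le \gamma_{j} \le K$); translating back gives the same $K$ bounding $\alpha_{j}(OC_{n})$ and the same estimate for $|ind(OC_{n})|$.

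\textbf{Main obstacle.} The one place requiring genuine care is the inequality $\lambda_{\mathbb{N}^{'}}[N^{j}_{n},N^{j}] \le \lambda_{\mathbb{N}^{'}}[A^{j}_{n},A^{j}]$ (and its $\mu$-analogue): $N^{j}$ is not literally a graph over all of $X^{j} = X\times\dots\times X$ but only over the first $j$ coordinates with $A^{j}$ acting on the $j$-th, so one must peel off the inert coordinates by a compactness/scaling argument without letting their norms blow up, exactly the bookkeeping already used in Lemma \ref{cmbtp} and Theorem \ref{nua}; everything else is a routine translation through Lemmas \ref{ociapc} and the definitions.
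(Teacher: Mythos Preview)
Your proposal is correct and follows exactly the route the paper indicates: the paper does not write out a proof of Theorem \ref{sofop} at all but simply states (in the sentence preceding it) that one converts the operator complex to a pair complex via Lemma \ref{ociapc} and then applies Theorem \ref{sfpc}. Your ``main obstacle'' is in fact milder than you suggest---the inert coordinates $x^{1},\dots,x^{j-1}$ in $N^{j}$ are completely free, so you can simply copy them from the $N^{j}_{n}$ vector into the $N^{j}$ witness with zero difference (no compactness or convergent subsequence needed there); the only genuine normalization issue is the scaling of the graph piece $(x^{j}_{n},A^{j}_{n}x^{j}_{n})$, which is handled exactly as you indicate.
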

\subsection[Tuple Complex]{Tuple Complex}\label{S:ftc}
So far we have extended the basic concept of a pair of closed subspaces in two different directions. On one hand a \emph{tuple of closed subspaces} from subsection \ref{S:tcs} captures the properties of a finite set of closed subspaces related to their sums and intersections and allows for (semi--)Fredholm stability theorems. On the other hand a \emph{pair complex} of closed subspaces from subsection \ref{pc} captures inclusion properties of its member pairs. It is hard not to notice that while a tuple may have any finite number of closed subspaces, a pair complex allows only for two-elements components (although for any finite number of components).

In this section we extend the concept of pair complex to \emph{tuple complex} -- a complex of closed subspaces which elements can be tuples of any finite number of subspaces, not just of pairs of subspaces. Although at first glance the construction of a finite tuple complex may seem like an esoteric  construction, we shall immediately explain that in reality it is a very simple concept as it naturally extends all the previously known concepts -- pairs, tuples, and pair complexes.
\begin{definition}[Tuple Complex]\label{D:tc}
Let $X$ be a Banach space and ${T = \{T^{1},...,T^{k}\}}$ be a finite set of tuples, each tuple $T^{j}$ consisting of ${t_{j} \in \mathbb{N}}$ closed subspaces ${\{M^{j,1},...,M^{j,t_{j}}\}}$ from $X$. Consider the set of $k+1$ Banach spaces\linebreak[4] ${\{X^{1},...,X^{k},X^{k+1}\}}$ defined like this
\begin{multline}
\notag
\begin{aligned}
X^{j}   &\ :=\ \underbrace{X \times \cdots \times X}_{t_{j}},\ \ j = 1,...,k,\\
X^{k+1} &\ :=\ X. 
\end{aligned}
\end{multline}
Also let ${A = \{A^{1},...,A^{k}\}}$ be a set of $k$ continuous linear operators $A^{j}$:
\[
A^{j}\ \in\ \mathcal{BC}(X^{j}, X^{j+1}),\ \ dom(A^{j}) = \prod_{i=1}^{t_{j}} M^{j,i}.
\]
Suppose that $k = 1$, or, if ${k > 1}$, then for each ${j=1,...,k-1}$ the range ${R(A^{j})}$ of operator $A^{j}$ is a subset of the kernel\ ${Ker(A^{j+1})}$ of the next operator ${A^{j+1}}$. In other words, suppose that the spaces $X^{j}$ and operators $A^{j}$ constitute an operator complex $OC$
\begin{equation}
\begin{CD}
X^{1} @>A^{1}>> X^{2}\ \cdots\ X^{k} @>A^{k}>> X^{k+1}.
\end{CD}
\end{equation}
Then the following definitions are in order
\begin{itemize}
  \item The ordered triple ${(X, T, A)}$ is called a tuple complex.
  \item If operator complex $OC$ is a Fredholm operator complex, then tuple complex ${(X, T, A)}$ is called a Fredholm tuple complex; its defect numbers ${\gamma_{j}(X,T,A)}$ and index ${ind(X, T, A)}$ are defined to be equal to the respective defect numbers ${\gamma_{j}(OC)}$ and index of the operator complex ${ind(OC)}$.
  \end{itemize} 
\end{definition}
The following lemma shows that with a suitable choice of operators $A^{j}$ each of the previous objects -- pair of subspaces, tuple of subspaces and pair complex -- can be converted into a tuple complex. Moreover, if any of those objects is Fredholm then its corresponding tuple complex is Fredholm.
\begin{lemma}[Examples of Tuple Complexes]\label{eotc}
Let $X$ be a Banach space. Then the following propositions are true:
\begin{enumerate}
  \item Let $(M^{1}, M^{2})$ be a pair of closed subspaces from $X$. Define subspaces ${M^{1,1} = M^{1} \subset X}$ and ${M^{1,2} = M^{2} \subset X}$ and construct a tuple of subspaces consisting of just two elements ${T^{1} = (M^{1,1}, M^{1,2})}$. Define Banach spaces ${X^{1} = X \times X}$ and ${X^{2} = X}$ and a set ${A = \{A^{1}\}}$ of one linear operator ${A^{1}}$
\[
A^{1} \in \mathcal{BC}(X^{1}, X^{2}),\ \ dom(A^{1}) = M^{1,1} \times M^{1,2},\ \ A^{1}(x,y) = x + y.
\]
Define a set of tuples consisting of just one tuple ${T = \{T^{1}\}}$. Then the triplet ${(X, T, A)}$ is a tuple complex. That tuple complex is Fredholm if and only if pair ${(M{^1}, M^{2})}$ is Fredholm; in this case ${ind(X, T, A) = ind(M^{1}, M^{2})}$.
  \item Let ${T^{1} = \{M^{1},...,M^{k}\}}$ be a tuple of $k$ closed subspaces from $X$. For each ${j= 1,...,k}$ define a subspace ${M^{1,j} = M^{j}} \subset X$. Also define two Banach spaces ${X^{1} = \underbrace{X \times \cdots \times X}_{k}}$ and ${X^{2} = X}$, and a set of tuples consisting of only one tuple ${T = \{T^{1}\}}$. Finally define a set ${A = \{A^{1}\}}$ of one linear operator ${A^{1}}$
\[
A^{1} \in \mathcal{BC}(X^{1}, X^{2}),\ \ dom(A^{1}) = \prod_{i=1}^{k} M^{1,i},\ \ A^{1}(x^{1},...,x^{k}) = \sum_{i=1}^{k}x^{i}.
\]
Then the triplet ${(X, T, A)}$ is a tuple complex. That tuple complex is Fredholm if and only if the original tuple of subspaces ${T^{1} = \{M^{1},...,M^{k}\}}$ is Fredholm; in this case ${ind(X, T, A) = ind(T^{1})}$.
  \item Let ${PC}$ be a pair complex of $k$ pairs of subspaces from $X$
\[
\{(M^{1,1}, M^{1,2}),...,(M^{k,1}, M^{k,2})\}.
\]
Build an operator complex $OC$ from $PC$ as explained in the previous Lemma \ref{bocpc} and define a set ${T = \{T^{1},...,T^{k}\}}$ of $k$ tuples each tuple consisting from the corresponding pair elements ${T^{j} = \{(M^{j,1}, M^{j,2})\}}$ from the pair complex $PC$. Also define a set of operators ${A = \{A^{1},...,A^{k}\}}$ where each ${A^{j}}$ is from the constructed operator complex $OC$. Then the triplet ${(X, T, A)}$ is a tuple complex. That tuple complex is Fredholm if and only if the original pair complex $PC$ is Fredholm; in this case\linebreak[4] ${ind(X, T, A) = ind(PC)}$.
\end{enumerate}
\end{lemma}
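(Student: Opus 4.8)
The plan is to treat each of the three cases by producing an explicit operator complex (or one-step complex) from the given object and then invoking, verbatim, the construction machinery already established: Lemma \ref{tdnwd} for tuples, Lemma \ref{ociapc} and Lemma \ref{bocpc} for pair complexes, together with the fact that a one-term complex $X^{1} \xrightarrow{A^{1}} X^{2}$ has $\alpha_{1} = \dim Ker(A^{1})$, $\alpha_{2} = \dim X^{2}/R(A^{1})$, and $ind = \alpha_{1} - \alpha_{2}$ whenever $R(A^{1})$ is closed. In all three cases the operator complex $OC$ appearing inside the triple $(X, T, A)$ is precisely the one whose Fredholm-ness is by Definition \ref{D:tc} the definition of Fredholm-ness of the tuple complex, so the only real content is to identify $\alpha_{j}(OC)$ with the defect numbers of the original object and to confirm the complex axiom $R(A^{j}) \subset Ker(A^{j+1})$.

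For proposition 1, the operator $A^{1}(x,y) = x+y$ on $M^{1,1} \times M^{1,2}$ has kernel isomorphic to $M^{1} \cap M^{2}$ (via $(x,-x) \leftrightarrow x$) and range $M^{1}+M^{2}$, so $R(A^{1})$ is closed iff $M^{1}+M^{2}$ is closed, and then $\alpha_{1}(OC) = \dim M^{1}\cap M^{2} = \alpha(M^{1},M^{2})$ while $\alpha_{2}(OC) = \dim X/(M^{1}+M^{2}) = \beta(M^{1},M^{2})$; since $k=1$ the complex axiom is vacuous, so $OC$ is Fredholm iff the pair is Fredholm and $ind(X,T,A) = ind(OC) = \alpha_{1}-\alpha_{2} = ind(M^{1},M^{2})$. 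For proposition 2, $A^{1}(x^{1},\dots,x^{k}) = \sum x^{i}$ on $\prod M^{1,i}$ has range $\sum_{i} M^{i}$, so again $R(A^{1})$ closed iff $Sum(T^{1})$ closed, and $\alpha_{2}(OC) = \dim X/Sum(T^{1}) = \beta(T^{1})$; for the kernel I invoke the isomorphism chain already worked out in the proof of proposition 2 of Lemma \ref{tdnwd}, which gives $\dim Ker(\Phi_{T^{1}}) = \sum_{i=1}^{k-1}\alpha(i,T^{1}) = \alpha(T^{1})$, hence $\alpha_{1}(OC) = \alpha(T^{1})$; the complex axiom is again vacuous ($k=1$), so $OC$ is Fredholm iff $T^{1}$ is Fredholm, and $ind(X,T,A) = \alpha(T^{1}) - \beta(T^{1}) = ind(T^{1})$. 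For proposition 3, the spaces $X^{j}$, the subspaces $D^{j} = M^{j,1}\times M^{j,2}$ and the operators $A^{j}$ are exactly those constructed in Lemma \ref{bocpc} from the pair complex $PC$, so that lemma already certifies that they form an operator complex $OC$ (this is proposition 1 of Lemma \ref{bocpc}) and that $OC$ is Fredholm iff $PC$ is Fredholm with $\alpha_{j}(OC) = \gamma_{j}(PC)$ for all $j$ and $ind(OC) = ind(PC)$; since $ind(X,T,A)$ and $\gamma_{j}(X,T,A)$ are defined to equal $ind(OC)$ and $\gamma_{j}(OC)$, the claimed equality $ind(X,T,A) = ind(PC)$ follows at once.

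The bulk of the work is bookkeeping: checking that the domains, codomains and the one relation $R(A^{j}) \subset Ker(A^{j+1})$ match the hypotheses of Definition \ref{D:tc}, and then quoting the index and kernel identifications from the three earlier lemmas rather than re-deriving them. The only step that needs genuine care --- and thus the main (mild) obstacle --- is proposition 2: one must make sure the isomorphism $Ker(\Phi_{T^{1}}) \equiv (M^{1}\cap\sum_{i\ge 2}M^{i}) \times \cdots \times (M^{k-1}\cap M^{k})$ used in Lemma \ref{tdnwd} is genuinely an isomorphism of \emph{Banach} (not merely linear) spaces, or at least that finiteness of $\dim Ker(\Phi_{T^{1}})$ is equivalent to finiteness of each factor, so that the defect-number bookkeeping is legitimate; this is immediate because all the factors are finite-dimensional precisely when the sum is, and the algebraic isomorphism then suffices for counting dimensions. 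Once that is noted, the proof is a direct assembly of Lemma \ref{tdnwd}, Lemma \ref{ociapc}, Lemma \ref{bocpc} and Definition \ref{D:tc}, with no new estimates required.
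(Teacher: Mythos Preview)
Your proposal is correct and follows essentially the same approach as the paper: verifying the tuple-complex axiom is vacuous (cases 1 and 2) or inherited from the pair-complex inclusion (case 3), identifying $Ker(A^{1})$ and $R(A^{1})$ with the intersection and sum of the subspaces, and reading off the defect numbers and index. If anything, your version is slightly tidier in that you explicitly cite Lemma~\ref{tdnwd} for the kernel dimension in case~2 and Lemma~\ref{bocpc} for all of case~3, whereas the paper re-verifies those facts directly; the mention of Lemma~\ref{ociapc} in your plan is superfluous but harmless.
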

\begin{proof}
The proof of being a tuple complex is quite mechanical: in the first two cases the constructed triplets ${(X, T, A)}$ are tuple complexes just because they consist of only one element; in the third case ${(X, T, A)}$ is a tuple complex simply because ${M^{j,1} + M^{j,2} \subset M^{j+1,1} \cap M^{j+1,2}}$ means that ${R(A^{j}) \subset Ker(A^{j+1})}$ for ${j < k}$. 

The Fredholm property is also trivially verified in every case. First, ranges ${R(A^{j})}$ of all ${A^{j}}$ are closed if and only if corresponding sum ${M^{1} + M^{2}}$ for a pair case and sums ${\sum_{i=1}^{k} M^{j,i}}$ for tuple and pair complex cases are closed. 

Then, in the case of a pair of subspaces the dimension of kernel ${Ker(A^{1})}$ and the co-dimension of range ${R(A^{1})}$ can be trivially checked to be the same as the respective dimensions of the common subspace ${M^{1} \cap M^{2}}$ and of the quotient space ${X/(M^{1} + M^{2})}$; therefore, ${ind(X, T, A) = ind(M^{1}, M^{2})}$

In the case of a tuple the dimension of kernel ${Ker(A^{1})}$ and the co-dimension of range ${R(A^{1})}$ can be trivially checked to be the same as the values of ${\alpha(T^{1})}$ from Definition \ref{usft} and of ${\dim X/(\sum_{i=1}^{k} M^{i})}$ respectively; therefore\linebreak[4] ${ind(X, T, A) = ind(T^{1})}$. 

In the case of a pair complex the dimension of kernel ${Ker(A^{1})}$ is equal to the dimension of subspace ${M^{1,1} \cap M^{1,2}}$; the dimensions of ${Ker(A^{j+1})/R(A^{j})}$ can be trivially checked to be the same as the dimension of the quotient spaces\linebreak[4] ${(M^{j+1,1} \cap M^{j+1,2})/(M^{j,1} + M^{j,2})}$ for ${j < k}$; for ${j = k}$ it is also a trivial fact that ${\dim X^{k+1}/R(A^{k}) = \dim X/(M^{k,1}+M^{k,2})}$; therefore ${ind(X, T, A) = ind(PC)}$.
\end{proof}
It is worth noticing from the above lemma and from the previous Lemma \ref{bocpc} that a tuple complex captures the inclusion relation between successive elements of a pair complex due to the same structure of every linear operator ${A^{j}}$ for ${j < k}$ 
\[
A^{j}(x,y)\ \mapsto\ (x+y,\ -(x+y)).
\]
These operators can be defined through a single ${2 \times 2}$ nilpotent matrix:
\[
\mathfrak{A}\ =\  
\left(
\begin{matrix}
+1 & +1\\
-1 & -1
\end{matrix}
\right),\ \ \ \ \mathfrak{A} \times \mathfrak{A}\ =\ \left(
\begin{matrix}
0 & 0\\
0 & 0
\end{matrix}
\right)\ =\ \theta.
\]
Note that since the matrix is nilpotent of degree $2$ it guarantees the main condition of the tuple complex which is ${A^{j+1} \circ A^{j} = \theta}$. 

Obviously, for the tuple complexes which tuples have more than two elements, the inclusion relation can be captured through a variety of matrices of higher degrees: when all the matrices are the same (which may only happen when the ranks of all tuples are the same) these matrices shall be nilpotent of degree $2$; for different matrices (which may happen regardless of the values of the tuple's ranks) the nilpotent condition should be replaced with the condition for two successive matrices ${\mathfrak{A}^{j}}$ and ${\mathfrak{A}^{j+1}}$ having a null product ${\mathfrak{A}^{j+1} \times \mathfrak{A}^{j} = \theta}$. 

For example, in case when the sums of all ${t_{j}}$ subspaces from tuple ${T^{j}}$ belong to the intersection of all ${t_{j+1}}$ subspaces from tuple ${T^{j+1}}$
\[
\sum_{i=1}^{t_{j}} M^{j,i}\ \subset\ \bigcap_{i=1}^{t_{j+1}} M^{j+1,i},
\]
one can use ${t_{j} \times t_{j+1}}$ matrices
\[
\mathfrak{A}^{t_{j} \times t_{j+1}}\ =\  
\left(
\begin{matrix}
+1                  & \dots  & +1\\
-(t_{j+1} - 1)^{-1} & \dots  & -(t_{j+1} - 1)^{-1}\\
\vdots              & \ddots & \vdots\\
-(t_{j+1} - 1)^{-1} & \dots  & -(t_{j+1} - 1)^{-1}
\end{matrix}
\right)
\]
so that
\[
\mathfrak{A}^{t_{j+1} \times t_{j+2}}\ \times\ \mathfrak{A}^{t_{j} \times t_{j+1}}\ =\  
\left(
\begin{matrix}
0      & \dots  & 0\\
\vdots & \ddots & \vdots\\
0      & \dots  & 0
\end{matrix}
\right)\ =\ \theta.
\]
With the help of the above matrices ${\mathfrak{A}^{t_{j} \times t_{j+1}}}$ one can define linear operators ${A^{j} \in \mathcal{BC}(X^{j}, X^{j+1})}$ like this
\begin{multline}
\notag
\begin{aligned}
dom(A^{j}) &\ =\ \prod_{i=1}^{t_{j}} M^{j,i},\\
A^{j}\ :\ (\underbrace{x^{1},x^{2},...,x^{t_{j}}}_{t_{j}}) & \ \mapsto\ (\sum_{i=1}^{t_{j}} x^{i},\ \underbrace{-\frac{\sum_{i=1}^{t_{j}} x^{i}}{t_{j+1} - 1},\ ...\ ,\ -\frac{\sum_{i=1}^{t_{j}} x^{i}}{t_{j+1} - 1}}_{t_{j+1} - 1})\ \in\ \prod_{i=1}^{t_{j+1}} M^{j+1,i}.
\end{aligned}
\end{multline}
Obviously
\[
R(A^{j})\ \subset\ \bigcap_{i=1}^{t_{j+1}} M^{j+1,i}\ =\ dom(A^{j+1})
\]
according to our assumption and ${A^{j+1} \circ A^{j} = \theta}$ since ${\mathfrak{A}^{t_{j+1} \times t_{j+2}} \times \mathfrak{A}^{t_{j} \times t_{j+1}} = \theta}$, or in other words ${R(A^{j}) \subset Ker(A^{j+1})}$.\\

The previous examples show that the concept of a pair of closed subspaces and the concept of a pair complex are just the simplest cases of a tuple complex since their corresponding inclusion matrices are very simple. Obviously, more elaborate examples of inclusion relations can be presented.

However, in general it appears that the concept of a tuple complex captures much more than the inclusion relation between subspaces from successive tuples since the linear operators ${A^{j}}$ do not have to be induced by any ${t_{j} \times t_{j+1}}$ matrix; in fact they can be operators from ${\mathcal{BC}(T^{j}, T^{j+1})}$ of any complexity. Therefore, one can think of a tuple complex having the same generality as an operator complex with an additional twist of having operator's domains being products of closed subspaces from a given Banach space. As such, it is worth noticing that when $X$ is finite-dimensional, then the index of a tuple complex ${(X,T,A)}$ can be calculated directly from Lemma \ref{ipc}:
\begin{lemma}[Index of Finite-dimensional Tuple Complex]\label{iofdtc}
Let $X$ be a finite-dimensional Banach space and ${(X,T,A)}$ be a tuple complex such that\linebreak[4] ${T = \{T^{1},...,T^{k}\}}$ and ${T^{j} = \{M^{j1},...,M^{jt_{j}}\}}$ for each ${j=1,...,t_{k}}$. Then the following formula holds
\[
ind(X,T,A) = \sum_{j=1}^{k} (-1)^{j-1} \times (\sum_{i=1}^{t_{j}} \dim M^{ji})\ +\ (-1)^{k} \times \dim X.
\]
\end{lemma}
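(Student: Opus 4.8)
The plan is to reduce the statement to Lemma \ref{ipc}, proposition 2, by replacing the ambient product spaces $X^j$ attached to the tuple complex with the genuine domains of its operators. First I would recall from Definition \ref{D:tc} that a tuple complex $(X,T,A)$ carries an operator complex
\[
\begin{CD}
X^{1} @>A^{1}>> X^{2}\ \cdots\ X^{k} @>A^{k}>> X^{k+1}
\end{CD}
\]
with $X^{j} = \underbrace{X\times\cdots\times X}_{t_{j}}$ for $j\le k$, $X^{k+1}=X$, and $dom(A^{j}) = \prod_{i=1}^{t_{j}} M^{ji}$, and that by definition $ind(X,T,A) = ind(OC)$ while $\gamma_{j}(X,T,A) = \gamma_{j}(OC) = \alpha_{j}(OC)$. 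The obstruction to applying Lemma \ref{ipc} directly is that $dom(A^{j})$ is in general a proper subspace of $X^{j}$, so its hypothesis $dom(A^{j}) = X^{j}$ fails.

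To remove this obstruction I would introduce a second operator complex $OC'$ with spaces $Y^{j} := dom(A^{j}) = \prod_{i=1}^{t_{j}} M^{ji}$ for $j=1,\dots,k$, $Y^{k+1} := X$, and operators $B^{j} := A^{j}$ now viewed as maps $Y^{j}\rightarrow Y^{j+1}$. One must check this is legitimate: for $j<k$ the tuple-complex hypothesis gives $R(A^{j})\subset Ker(A^{j+1})\subset dom(A^{j+1}) = Y^{j+1}$, and $R(A^{k})\subset X = Y^{k+1}$; moreover $B^{j+1}\circ B^{j} = A^{j+1}\circ A^{j} = \theta$. Since $X$ is finite-dimensional, each $Y^{j}$ is finite-dimensional, every range is closed, and all relevant quotients are finite-dimensional, so $OC'$ is an operator complex to which Lemma \ref{ipc} applies. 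The crucial point is that $B^{j}$ and $A^{j}$ have literally the same kernel and the same range as subspaces of $X^{j}$ and $X^{j+1}$ respectively, so $\alpha_{j}(OC') = \alpha_{j}(OC) = \gamma_{j}(X,T,A)$ for every $j$, whence $ind(OC') = ind(OC) = ind(X,T,A)$.

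Now $OC'$ satisfies $\dim Y^{j} < \infty$ and $dom(B^{j}) = Y^{j}$ for each $j=1,\dots,k$, so Lemma \ref{ipc}, proposition 2, yields
\[
ind(X,T,A)\ =\ ind(OC')\ =\ \sum_{j=1}^{k+1}(-1)^{j-1}\dim Y^{j}\ =\ \sum_{j=1}^{k}(-1)^{j-1}\dim\Big(\prod_{i=1}^{t_{j}} M^{ji}\Big)\ +\ (-1)^{k}\dim X.
\]
Finally, $\dim\big(\prod_{i=1}^{t_{j}} M^{ji}\big) = \sum_{i=1}^{t_{j}}\dim M^{ji}$ gives the claimed formula. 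Alternatively, one could bypass Lemma \ref{ipc} entirely and run the Euler-characteristic computation for the finite-dimensional complex $0\rightarrow Y^{1}\rightarrow\cdots\rightarrow Y^{k}\rightarrow Y^{k+1}\rightarrow 0$ by telescoping the rank--nullity identities $\dim Y^{j} = \dim Ker(B^{j}) + \dim R(B^{j})$, but the reduction above is shorter and reuses machinery already in the paper.

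The main obstacle is therefore the verification that passing from $OC$ to $OC'$ preserves all defect numbers — i.e. that shrinking the target of each $A^{j}$ from $X^{j+1}$ down to $Y^{j+1}$ changes neither $Ker(A^{j})$ nor $R(A^{j})$ nor the quotients $Ker(A^{j})/R(A^{j-1})$. This is exactly where the inclusion $R(A^{j})\subset Ker(A^{j+1})\subset dom(A^{j+1})$ built into the definition of a tuple complex is used; the remaining steps are routine bookkeeping with dimensions of finite products of finite-dimensional spaces.
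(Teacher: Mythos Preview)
Your proposal is correct and follows exactly the route the paper intends: the paper does not give an explicit proof but states that the formula ``can be calculated directly from Lemma \ref{ipc}'', and your argument does precisely that via proposition 2. The one point you add beyond the paper's remark --- replacing the ambient spaces $X^{j}$ by the actual domains $Y^{j}=\prod_{i}M^{ji}$ so that the hypothesis $dom(B^{j})=Y^{j}$ of Lemma \ref{ipc} is met, and checking that this leaves all defect numbers unchanged --- is the right way to make the reduction rigorous.
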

\subsection[Inclusion Complex and Its Stability]{Inclusion Complex and Its Stability}\label{icats}
Since tuple complexes can be considered a special case of operator complexes, the stability of Fredholm tuple complexes can be described by the already presented Theorem \ref{sofop} about the stability of Fredholm operator complexes. That theorem guarantees stability as long as the complex operators   ${(A^{j}_{n})_{\mathbb{N}^{'}}}$ closely $\lambda-$adjust or $\mu-$approximate respective operators $A^{j}$ from the target complex. 

It is worth noticing that since operators from the tuple complex are continuous, their adjustment and approximation necessarily yields adjustment and approximation of their domains. Therefore, since these domains are finite direct products of closed subspaces from the enclosing Banach space, it follows that individual subspaces from the operator complexes have small $\lambda-$adjustment numbers and small $\mu-$approximation vectors as long as those numbers and vectors are small for the complex operators. However, the opposite statement may not be always true -- since operators $A^{j}_{n}$ and $A^{j}$ are not restricted in their nature, their $\lambda-$adjustment numbers and $\mu-$approximation vectors are not always small when the respective numbers and vectors of their domains are small. 

This situation may seen to be unfortunate as in all the previous examples of Fredholm pairs, Fredholm tuples, and Fredholm pair complexes stability theorems could be expressed purely in terms of closeness of the subspaces involved and without even mentioning of any complex operators. In order to remedy this situation we introduce a special class of tuple complexes called \emph{inclusion complexes} -- they  capture the inclusion relation in a tight manner by allowing only operators induced by fixed finite--dimensional matrices. Then it follows that for inclusion complexes the Fredholm stability theorem can be expressed almost purely in terms of the subspaces involved.
\begin{definition}[Inclusion Matrix and Inclusion Operator]\label{D:imao}
Let $X$ be a Banach space over a field $\mathbb{K}$ and ${T^{1} = \{M^{11},...,M^{1k}\}}$, ${T^{2} = \{M^{21},...,M^{2l}\}}$ are two tuples of closed subspaces from $X$. Consider a ${k \times l}$ matrix ${\mathfrak{A}^{k \times l}}$ with elements from $\mathbb{K}$
\[
\mathfrak{A}^{k \times l}\ =\  
\left(
\begin{matrix}
\mathfrak{a}^{11} & \dots  & \mathfrak{a}^{1k}\\
\vdots            & \ddots & \vdots\\
\mathfrak{a}^{l1} & \dots  & \mathfrak{a}^{lk}\\
\end{matrix}
\right).
\]
Define two Banach spaces
\[
X^{k}\ :=\ \underbrace{X \times \cdots \times X}_{k},\ \ \ X^{l}\ :=\ \underbrace{X \times \cdots \times X}_{l}, 
\]
and a continuous linear operator ${A \in \mathcal{BC}(X^{k}, X^{l})}$:
\[
dom(A)\ =\ \prod_{i=1}^{k}M^{1i},\ \ \ A(x^{1},\ ...,\ x^{k})\ \mapsto\ (\sum_{i=1}^{k}\mathfrak{a}^{1i} \times x^{i},\ ...,\ \sum_{i=1}^{k}\mathfrak{a}^{li} \times x^{i}).
\]
Suppose that the image of operator $A$ is contained in the product of subspaces from tuple $T^{2}$:
\[
R(A)\ \subset\ \prod_{i=1}^{k}M^{2i}.
\]
Then matrix ${\mathfrak{A}^{k \times l}}$ is called the inclusion matrix and operator $A$ is called the inclusion operator in respect to tuples $T^{1}$ and $T^{2}$.
\end{definition}
It is obvious now that in the case of a tuple of subspaces the natural mapping ${\Phi_{T}}$ from Lemma \ref{tdnwd} is an inclusion operator related to the sum of the subspaces from the tuple with the inclusion matrix of dimension ${k \times 1}$:
\[
\mathfrak{A}^{k \times 1}\ =\ (\underbrace{1\ \dots\ 1}_{k}).
\]
Also recall from the previous Subsection \ref{S:ftc} that a pair complex can be treated as a tuple complex with tuples of rank $2$ and all but last tuple operators being inclusion operators defined by a single ${2 \times 2}$ nilpotent matrix
\[
\mathfrak{A}^{2 \times 2}\ =\  
\left(
\begin{matrix}
+1 & +1\\
-1 & -1
\end{matrix}
\right),\ \ \ \ \mathfrak{A}^{2 \times 2} \times \mathfrak{A}^{2 \times 2}\ =\ \left(
\begin{matrix}
0 & 0\\
0 & 0
\end{matrix}
\right)\ =\ \theta.
\]
The last operator from a pair complex is defined by a ${2 \times 1}$ inclusion matrix
\[
\mathfrak{A}^{2 \times 1}\ =\ \left(
\begin{matrix}
1 & 1
\end{matrix}
\right).
\]
It is easy to prove the following lemma which establishes dependency between $\lambda-$adjustment numbers and $\mu-$proximity vectors of the tuples' components and inclusion operators.
\begin{lemma}[$\lambda-$Adjustment and $\mu-$Proximity of Inclusion Operators]\label{laampoio}
Let $X$ be a Banach space over a field $\mathbb{K}$ and ${T^{1} = \{M^{11},...,M^{1k}\}}$, ${T^{2} = \{M^{21},...,M^{2l}\}}$ are two tuples of closed subspaces from $X$ with the inclusion ${k \times l}$ matrix ${\mathfrak{A}^{k \times l}}$ that defines inclusion operator $A$. 

Let ${(T^{1}_{n})_{\mathbb{N}^{'}}}$ and ${(T^{2}_{n})_{\mathbb{N}^{'}}}$ be two sequences of tuples so that ${T^{1}_{n} = \{M^{11}_{n},...,M^{1k}_{n}\}}$ and ${T^{2}_{n} = \{M^{21}_{n},...,M^{2l}_{n}\}}$ for each ${n \in \mathbb{N}^{'}}$. 

Also suppose that inclusion ${k \times l}$ matrices ${\mathfrak{A}^{k \times l}_{n}}$ define inclusion operators $A_{n}$ for each ${n \in \mathbb{N}^{'}}$. Identify each inclusion matrix ${\mathfrak{A}^{k \times l}_{n}}$ and ${\mathfrak{A}^{k \times l}}$ with a linear operator from ${\mathcal{B}(\mathbb{K}^{k}, \mathbb{K}^{l})}$ and suppose that norms of their differences approach $0$:
\[
\left\| \mathfrak{A}^{k \times l}_{n}\ -\ \mathfrak{A} \right\|\ \rightarrow\ 0. 
\]
Then the following propositions are true:
\begin{enumerate}
  \item $\lambda-$Adjustment numbers ${\lambda_{\mathbb{N}^{'}}[A_{n},A]}$ and ${\lambda_{\mathbb{N}^{'}}[A,A_{n}]}$ approach number $0$ when $\lambda-$adjustment numbers ${\lambda_{\mathbb{N}^{'}}[M^{1j}_{n},M^{1j}]}$ and ${\lambda_{\mathbb{N}^{'}}[M^{1j},M^{1j}_{n}]}$ approach number $0$.
  \item $\mu-$Proximity vectors ${\mu_{\mathbb{N}^{'}}[A_{n},A]}$ and ${\mu_{\mathbb{N}^{'}}[A,A_{n}]}$ approach vector ${\theta \in \mathbb{R}^{3}}$ when $\mu-$proximity vectors ${\mu_{\mathbb{N}^{'}}[M^{1j}_{n},M^{1j}]}$ and ${\mu_{\mathbb{N}^{'}}[M^{1j},M^{1j}_{n}]}$ approach vector ${\theta \in \mathbb{R}^{3}}$.
\end{enumerate}
\end{lemma}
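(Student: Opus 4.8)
The plan is to push everything down to the graph subspaces and then run one componentwise argument that settles both items, the only genuine work being a homogenization step. By Definition \ref{D:ulasclo} (and the analogous convention for $\mu$-approximation of operators) it suffices to estimate the uniform $\lambda$-adjustment and $\mu$-proximity between the graphs $G_{A_{n}},G_{A}$ inside the product spaces $X^{k}\times X^{l}$ of Definition \ref{D:imao}. A unit element of $G_{A_{n}}$ has the form $g_{n}=((x^{1}_{n},\dots,x^{k}_{n}),\,A_{n}(x^{1}_{n},\dots,x^{k}_{n}))$ with $x^{i}_{n}\in M^{1i}_{n}$, and since the norm on a product of Banach spaces is the maximum of the component norms we automatically get $\|x^{i}_{n}\|\le 1$ for all $i$; the $l$ image coordinates are $\sum_{i=1}^{k}\mathfrak{a}^{pi}_{n}x^{i}_{n}$, $p=1,\dots,l$, and from $\|\mathfrak{A}^{k\times l}_{n}-\mathfrak{A}\|\to 0$ one has $|\mathfrak{a}^{pi}_{n}-\mathfrak{a}^{pi}|\to 0$ for every $p,i$ (all norms on the finite-dimensional matrix space being equivalent).

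For item (1), lower direction, I fix $\eta>0$ and a unit subsequence $(g_{n})_{\mathbb{N}^{''}}\triangleleft(G_{A_{n}})_{\mathbb{N}^{''}}$ and process $i=1,\dots,k$ one at a time, each step refining to a further subsequence along which $\beta^{i}_{n}:=\|x^{i}_{n}\|\to\beta^{i}\in[0,1]$. If $\beta^{i}=0$ put $y^{i}_{n}:=\theta\in M^{1i}$, $z^{i}:=\theta$, so $\varlimsup\|x^{i}_{n}-y^{i}_{n}-z^{i}\|=0$. If $\beta^{i}>0$ then $\hat{x}^{i}_{n}:=x^{i}_{n}/\beta^{i}_{n}$ is a unit sequence in $M^{1i}_{n}$ (on a further subsequence where $\beta^{i}_{n}>0$), and applying the definition of $\lambda_{\mathbb{N}^{'}}[M^{1i}_{n},M^{1i}]$ yields $\hat{y}^{i}_{n}\in M^{1i}$, $\hat{z}^{i}\in X$ with $\varlimsup\|\hat{x}^{i}_{n}-\hat{y}^{i}_{n}-\hat{z}^{i}\|\le\lambda_{\mathbb{N}^{'}}[M^{1i}_{n},M^{1i}]+\eta$; then $y^{i}_{n}:=\beta^{i}_{n}\hat{y}^{i}_{n}\in M^{1i}$, $z^{i}:=\beta^{i}\hat{z}^{i}$ satisfy $\varlimsup\|x^{i}_{n}-y^{i}_{n}-z^{i}\|\le\beta^{i}(\lambda_{\mathbb{N}^{'}}[M^{1i}_{n},M^{1i}]+\eta)\le\lambda_{\mathbb{N}^{'}}[M^{1i}_{n},M^{1i}]+\eta$, using $\beta^{i}_{n}\to\beta^{i}$ and boundedness of $\hat{y}^{i}_{n},\hat{z}^{i}$. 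After $k$ steps, on a common subsequence $\mathbb{N}^{'''}$, I set $w_{n}:=((y^{1}_{n},\dots,y^{k}_{n}),\,A(y^{1}_{n},\dots,y^{k}_{n}))\in G_{A}$ and $\zeta:=((z^{1},\dots,z^{k}),\,(\sum_{i}\mathfrak{a}^{1i}z^{i},\dots,\sum_{i}\mathfrak{a}^{li}z^{i}))$; the first $k$ coordinates of $g_{n}-w_{n}-\zeta$ are $x^{i}_{n}-y^{i}_{n}-z^{i}$, while the $p$-th image coordinate equals $\sum_{i}(\mathfrak{a}^{pi}_{n}-\mathfrak{a}^{pi})x^{i}_{n}+\sum_{i}\mathfrak{a}^{pi}(x^{i}_{n}-y^{i}_{n}-z^{i})$, whose $\varlimsup$ is at most $C_{\mathfrak{A}}(\max_{i}\lambda_{\mathbb{N}^{'}}[M^{1i}_{n},M^{1i}]+\eta)$ with $C_{\mathfrak{A}}:=k\max_{p,i}|\mathfrak{a}^{pi}|$, since $\|x^{i}_{n}\|\le 1$ and $\mathfrak{a}^{pi}_{n}-\mathfrak{a}^{pi}\to 0$. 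Taking the maximum over the $k+l$ coordinates and then $\eta\downarrow 0$ gives $\lambda_{\mathbb{N}^{'}}[A_{n},A]\le(1+C_{\mathfrak{A}})\max_{i}\lambda_{\mathbb{N}^{'}}[M^{1i}_{n},M^{1i}]$; the reverse bound $\lambda_{\mathbb{N}^{'}}[A,A_{n}]\le(1+C_{\mathfrak{A}})\max_{i}\lambda_{\mathbb{N}^{'}}[M^{1i},M^{1i}_{n}]$ follows by the symmetric argument, the extra point being that the adjusting vectors $y^{i}_{n}$ are automatically bounded (from $\|x^{i}_{n}\|\le1$ and finiteness of $\varlimsup\|x^{i}_{n}-y^{i}_{n}-z^{i}\|$), which forces $\sum_{i}(\mathfrak{a}^{pi}-\mathfrak{a}^{pi}_{n})y^{i}_{n}\to\theta$. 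This proves item (1).

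For item (2), condition (1) of Definition \ref{D:umass} for the graphs is exactly the estimate just obtained. For condition (2), suppose $(g_{n})\triangleleft(G_{A_{n}})$ is a unit sequence with $\varlimsup\|g_{n}-\zeta\|\le\nu_{2}$ for $\zeta=((z^{1},\dots,z^{k}),(w^{1},\dots,w^{l}))$; then $\varlimsup\|x^{i}_{n}-z^{i}\|\le\nu_{2}$, and after passing to a subsequence with $\|x^{i}_{n}\|\to\beta^{i}$ I treat small $\beta^{i}$ by $t^{i}_{n}:=\theta\in M^{1i}$ (so $\varlimsup\|t^{i}_{n}-z^{i}\|=\|z^{i}\|\le\beta^{i}+\nu_{2}$ is itself small), and $\beta^{i}$ bounded away from $0$ by scaling to the unit sequence $\hat{x}^{i}_{n}$, which costs only a factor $1/\beta^{i}$ in the bound $\varlimsup\|\hat{x}^{i}_{n}-z^{i}/\beta^{i}\|\le\nu_{2}/\beta^{i}$; hence, with $\nu_{2}$ chosen small enough relative to the threshold and to the second coordinates $\mu^{i}_{2}$ of $\mu_{\mathbb{N}^{'}}[M^{1i}_{n},M^{1i}]=(\mu^{i}_{1},\mu^{i}_{2},\mu^{i}_{3})$, condition (2) for the $i$-th components produces $\hat{t}^{i}_{n}\in M^{1i}$ with $\varlimsup\|\hat{t}^{i}_{n}-z^{i}/\beta^{i}\|\le\mu^{i}_{3}$, and $t^{i}_{n}:=\|x^{i}_{n}\|\hat{t}^{i}_{n}\in M^{1i}$ then satisfies $\varlimsup\|t^{i}_{n}-z^{i}\|\le\mu^{i}_{3}$. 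Assembling $t_{n}:=((t^{1}_{n},\dots,t^{k}_{n}),A(t^{1}_{n},\dots,t^{k}_{n}))\in G_{A}$ and estimating its image coordinates exactly as in item (1) (via $\|x^{i}_{n}\|\le1$, $\mathfrak{a}^{pi}_{n}-\mathfrak{a}^{pi}\to0$, and the extra term $C_{\mathfrak{A}}\varlimsup_{i}\|t^{i}_{n}-x^{i}_{n}\|\le C_{\mathfrak{A}}(\max_{i}\mu^{i}_{3}+2\nu_{2})$), one sees that $\mu_{\mathbb{N}^{'}}[A_{n},A]$ may be taken equal to an explicit vector each of whose coordinates is built from the $\mu^{i}_{m}$ and $C_{\mathfrak{A}}$ and tends to $0$ as all the $\mu^{i}_{m}$ do; the vector $\mu_{\mathbb{N}^{'}}[A,A_{n}]$ is handled symmetrically from $\mu_{\mathbb{N}^{'}}[M^{1i},M^{1i}_{n}]$, proving item (2).

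The main obstacle is precisely this homogenization: uniform $\lambda$-adjustment and uniform $\mu$-approximation are defined only for \emph{unit} sequences, whereas the components $x^{i}_{n}$ of a unit element of a product graph are merely bounded by $1$ and may have norm tending to $0$ or oscillating, so one must first force $\|x^{i}_{n}\|$ to converge along a subsequence and split into a vanishing and a non-vanishing case. For $\lambda$-adjustment this is routine, but for $\mu$-approximation the required proximity bound degrades by the factor $\beta^{i}=\lim\|x^{i}_{n}\|$, which is why the threshold device (treating a small $\beta^{i}$ as the vanishing case, where $\|z^{i}\|$ is itself small) is needed to keep the resulting proximity vector small. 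The remainder is bookkeeping: tracking the $k$ nested subsequences and checking that every constant — notably $C_{\mathfrak{A}}=k\max_{p,i}|\mathfrak{a}^{pi}|$ — depends only on $k$, $l$ and the entries of $\mathfrak{A}$ and not on $n$, so that $\|\mathfrak{A}^{k\times l}_{n}-\mathfrak{A}\|\to0$ genuinely drives the extra error terms to zero.
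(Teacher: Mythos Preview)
The paper does not give a proof of this lemma at all: it merely prefaces the statement with ``It is easy to prove the following lemma'' and moves on. The only closely related place where the paper does anything concrete is Lemma~\ref{cmbtp}, proposition~4, which treats the special case of the canonical summation operator $\Phi_{T}$; there too the paper dismisses the verification as ``mechanically verified using definitions of uniform $\lambda$-adjustment and uniform $\mu$-approximation and the definition of norm on the product space $X^{(k)}\times X$ as the $\max$ of the norms of its components.'' So there is no paper proof to compare against.

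Your argument is correct and is exactly the kind of componentwise verification the paper has in mind, with the added honesty that you isolate the one place where something non-mechanical happens: the definitions of $\lambda$-adjustment and $\mu$-approximation are stated for \emph{unit} sequences, whereas the individual coordinates $x^{i}_{n}$ of a unit graph element need not be unit, so one must pass to a subsequence on which $\|x^{i}_{n}\|\to\beta^{i}$ and split into the vanishing case $\beta^{i}=0$ and the rescaling case $\beta^{i}>0$. Your treatment of item~(1) is clean; the bound $(1+C_{\mathfrak{A}})\max_{i}\lambda_{\mathbb{N}'}[M^{1i}_{n},M^{1i}]$ (and its reverse analogue) is exactly what is needed, and your remark that in the reverse direction the boundedness of the $y^{i}_{n}$ is what makes $\sum_{i}(\mathfrak{a}^{pi}-\mathfrak{a}^{pi}_{n})y^{i}_{n}\to\theta$ is the right observation. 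For item~(2) your threshold device is the correct way to keep the degradation factor $1/\beta^{i}$ under control; one small refinement worth making explicit is that in the ``small $\beta^{i}$'' branch the error bound is $\tau+\nu_{2}$ rather than $\mu^{i}_{3}$, so your final $\nu_{3}$ should read $\max\bigl(\tau+\nu_{2},\ \max_{i}\mu^{i}_{3}\bigr)$ on the first $k$ coordinates, with the image coordinates then picking up $C_{\mathfrak{A}}$ times that plus $\nu_{2}$. Choosing, say, $\tau$ on the order of $\sqrt{\max_{i,m}\mu^{i}_{m}}$ and $\nu_{2}=\tau\cdot\min_{i}\mu^{i}_{2}$ makes all three coordinates of the resulting proximity vector tend to $0$ as the component proximities do, which is precisely the assertion. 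In short, you have supplied the details the paper omits, and your identification of the homogenization step as the only genuine obstacle is accurate.
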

Equipped with the concept of the inclusion matrix and inclusion operator, we may now introduce \emph{inclusion complex}:
\begin{definition}[Inclusion Complex]\label{D:ic}
Let ${(X,T,A)}$ be a tuple complex and each operator ${A^{j}}$ from the set of operators ${A = \{A^{1},...,A^{k}\}}$ is an inclusion operator defined by some inclusion matrix ${\mathfrak{A}^{t_{j} \times t_{j+1}}}$ for every ${j=1,...,k}$ (here ${t_{j}}$ is the count of subspaces in the tuple ${T^{j} = \{M^{1},...,M^{t_{j}}\}}$ for each ${j=1,...,k}$ while ${t_{j+1} = 1}$ if one recalls that the last space ${X^{k+1}}$ from the tuple complex definition is defined to be just $X$). 

Then tuple complex ${(X,T,A)}$ is called an inclusion complex. If the set of its inclusion matrices is denoted as
\[
\mathfrak{A}\ =\ \{\mathfrak{A}^{t_{1} \times t_{2}},\ ...\ ,\mathfrak{A}^{t_{k} \times t_{k+1}}\},
\]
then that inclusion complex is denoted as ${(X,T,\mathfrak{A})}$. An inclusion complex is written schematically using its inclusion matrices instead of its operators:
\[
\begin{CD}
X^{1} @>\mathfrak{A}^{t_{1} \times t_{2}}>> X^{2}\ \cdots\ X^{k} @>\mathfrak{A}^{t_{k} \times t_{k+1}}>> X^{k+1}.
\end{CD}
\]
Inclusion complex ${(X,T,\mathfrak{A})}$ is called a Fredholm inclusion complex if tuple complex ${(X,T,A)}$ is Fredholm -- defect and index numbers from ${(X,T,\mathfrak{A})}$ are defined to be respective defect and index numbers from ${(X,T,A)}$. 
\end{definition}
It follows directly from the above definition that for any two consecutive inclusion matrices from an inclusion complex their product is always a null matrix
\[
\mathfrak{A}^{t_{j+1} \times t_{j+2}}\ \times\ \mathfrak{A}^{t_{j} \times t_{j+1}}\ =\  
\left(
\begin{matrix}
0      & \dots  & 0\\
\vdots & \ddots & \vdots\\
0      & \dots  & 0
\end{matrix}
\right)\ =\ \theta.
\]
Also note that we have already established in the previous subsection \ref{icats} that a pair of subspaces, a tuple of subspaces and a pair complex all can be considered to be inclusion complexes.\\

We are now ready to present the final theorem of this paper
\begin{theorem}[Stability of a Fredholm Inclusion Complex]\label{soafic}
Let $X$ be a Banach space and ${IC=(X,T,\mathfrak{A})}$ be a Fredholm inclusion complex with the set of tuples ${T=\{T^{1},...,T^{k}\}}$ of closed subspaces ${T^{j}=\{M^{j1},...,M^{jt_{k}}\}}$ for each ${j=1,...,k}$ and the set of inclusion matrices ${\mathfrak{A}\ =\ \{\mathfrak{A}^{t_{1} \times t_{2}},\ ...\ ,\mathfrak{A}^{t_{k} \times t_{k+1}}\}}$
\[
\begin{CD}
X^{1} @>\mathfrak{A}^{t_{1} \times t_{2}}>> X^{2}\ \cdots\ X^{k} @>\mathfrak{A}^{t_{k} \times t_{k+1}}>> X^{k+1}.
\end{CD}
\]
Let ${(IC_{n})_{\mathbb{N}^{'}}}$ be a sequence of inclusion complexes from $X$, such that\linebreak[4] ${IC_{n}=(X,T_{n},\mathfrak{A}_{n})}$ with the sets ${T_{n}=\{T^{1}_{n},...,T^{k}_{n}\}}$ of tuples of closed subspaces\linebreak[4] ${T^{j}_{n}=\{M^{j1}_{n},...,M^{jt_{k}}_{n}\}}$ for each ${j=1,...,k}$  and the sets of inclusion matrices\linebreak[4] ${\mathfrak{A}_{n}\ =\ \{\mathfrak{A}^{t_{1} \times t_{2}}_{n},\ ...\ ,\mathfrak{A}^{t_{k} \times t_{k+1}}_{n}\}}$
\[
\begin{CD}
X^{1}_{n} @>\mathfrak{A}^{t_{1} \times t_{2}}_{n}>> X^{2}_{n}\ \cdots\ X^{k}_{n} @>\mathfrak{A}^{t_{k} \times t_{k+1}}_{n}>> X^{k+1}_{n}.
\end{CD}
\]
for each ${n \in \mathbb{N}^{'}}$. Identify each inclusion matrix ${\mathfrak{A}^{t_{j} \times t_{j+1}}}$ and ${\mathfrak{A}^{t_{j} \times t_{j+1}}_{n}}$ with a linear operator from ${\mathcal{B}(\mathbb{K}^{t_{j}}, \mathbb{K}^{t_{j+1}})}$ and suppose that
\[
\left\| \mathfrak{A}^{t_{j} \times t_{j+1}}_{n}\ -\ \mathfrak{A}^{t_{j} \times t_{j+1}}\right\|\ \rightarrow\ 0.
\]
Then the following propositions are true:
\begin{enumerate}
  \item Consider the following proximity vectors from ${\mathbb{R}^{3}}$:
\[
\left\{
\begin{matrix}
\mu_{\mathbb{N}^{'}}[M^{11}, M^{11}_{n}] & \dots  & \mu_{\mathbb{N}^{'}}[M^{1t_{1}}, M^{1t_{1}}_{n}]\\
\vdots                                   & \ddots & \vdots\\
\mu_{\mathbb{N}^{'}}[M^{k1}, M^{k1}_{n}] & \dots  & \mu_{\mathbb{N}^{'}}[M^{kt_{k}}, M^{kt_{k}}_{n}]\\
\\
\hline
\\
\mu_{\mathbb{N}^{'}}[M^{11}_{n}, M^{11}] & \dots  & \mu_{\mathbb{N}^{'}}[M^{1t_{1}}_{n}, M^{1t_{1}}]\\
\vdots                                   & \ddots & \vdots\\
\mu_{\mathbb{N}^{'}}[M^{k1}_{n}, M^{k1}] & \dots  & \mu_{\mathbb{N}^{'}}[M^{kt_{k}}_{n}, M^{kt_{k}}]
\end{matrix}
\right\}
\]
If all these vectors are small enough, then inclusion complexes ${IC_{n}}$ are Fredholm for large enough $n$. Moreover, for the same $n$ defect numbers from ${IC_{n}}$ do not exceed respective defect numbers from ${IC}$ while their indices remain the same:
\begin{multline}
\notag
\begin{aligned}
\gamma_{j}(IC_{n}) & \ \leq\ \gamma_{j}(IC),\ j = 1,...,k,\\
ind(IC_{n}) & \ =\ ind(IC). 
\end{aligned}
\end{multline}
  \item Let $X$ be a $BFU-$space and consider the following adjustment numbers from ${\mathbb{R}}$:
\[
\left\{
\begin{matrix}
\lambda_{\mathbb{N}^{'}}[M^{11}, M^{11}_{n}] & \dots  & \lambda_{\mathbb{N}^{'}}[M^{1t_{1}}, M^{1t_{1}}_{n}]\\
\vdots                                       & \ddots & \vdots\\
\lambda_{\mathbb{N}^{'}}[M^{k1}, M^{k1}_{n}] & \dots  & \lambda_{\mathbb{N}^{'}}[M^{kt_{k}}, M^{kt_{k}}_{n}]\\
\\
\hline
\\
\lambda_{\mathbb{N}^{'}}[M^{11}_{n}, M^{11}] & \dots  & \lambda_{\mathbb{N}^{'}}[M^{1t_{1}}_{n}, M^{1t_{1}}]\\
\vdots                                       & \ddots & \vdots\\
\lambda_{\mathbb{N}^{'}}[M^{k1}_{n}, M^{k1}] & \dots  & \lambda_{\mathbb{N}^{'}}[M^{kt_{k}}_{n}, M^{kt_{k}}]
\end{matrix}
\right\}
\]
If all of these numbers are small enough, then inclusion complexes ${IC_{n}}$ are Fredholm for large enough $n$. Moreover, there exists ${K \in \mathbb{N}}$ such that for the same $n$ all defect numbers from ${IC_{n}}$ do not exceed ${K}$ while absolute value of ${ind(IC_{n})}$ does not exceed ${\frac{k+1}{2} \times K}$:
\begin{multline}
\notag
\begin{aligned}
\gamma_{j}(IC_{n}) & \ \leq\ K,\ j = 1,...,k,\\
ind(IC_{n}) & \ \leq\ \frac{k+1}{2} \times K. 
\end{aligned}
\end{multline}
\end{enumerate}
\end{theorem}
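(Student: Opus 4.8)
## Proof Proposal

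The plan is to reduce the stability of Fredholm inclusion complexes entirely to the stability of Fredholm operator complexes (Theorem \ref{sofop}), which in turn rests on the pair-complex stability Theorem \ref{sfpc} and ultimately on the (semi--)Fredholm pair stability results of Sections \ref{S:ulabs}--\ref{S:umabs}. Since an inclusion complex $IC = (X, T, \mathfrak{A})$ is by Definition \ref{D:ic} nothing but a tuple complex $(X, T, A)$ in which each operator $A^{j}$ is the inclusion operator induced by $\mathfrak{A}^{t_{j} \times t_{j+1}}$, and since a tuple complex is itself a particular operator complex $OC$ (with $\gamma_{j}(X,T,A) = \alpha_{j}(OC)$ and $ind(X,T,A) = ind(OC)$ by Definition \ref{D:tc}), the essential content to verify is that the hypotheses on the closeness of the individual subspaces $M^{ji}_{n}$ to $M^{ji}$, together with $\left\| \mathfrak{A}^{t_{j} \times t_{j+1}}_{n} - \mathfrak{A}^{t_{j} \times t_{j+1}}\right\| \rightarrow 0$, force the complex operators $A^{j}_{n}$ to be close to $A^{j}$ in the appropriate sense ($\mu$-approximation for part 1, $\lambda$-adjustment for part 2).

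First I would invoke Lemma \ref{laampoio}: the inclusion operators $A^{j}_{n}$ are built from the matrices $\mathfrak{A}^{t_{j} \times t_{j+1}}_{n}$ on domains $\prod_{i=1}^{t_{j}} M^{ji}_{n}$, so proposition 1 of that lemma gives $\lambda_{\mathbb{N}^{'}}[A^{j}_{n}, A^{j}] \rightarrow 0$ and $\lambda_{\mathbb{N}^{'}}[A^{j}, A^{j}_{n}] \rightarrow 0$ whenever all the $\lambda_{\mathbb{N}^{'}}[M^{ji}_{n}, M^{ji}]$ and $\lambda_{\mathbb{N}^{'}}[M^{ji}, M^{ji}_{n}]$ are small, and proposition 2 gives the analogous statement for $\mu$-proximity vectors $\mu_{\mathbb{N}^{'}}[A^{j}_{n}, A^{j}]$ and $\mu_{\mathbb{N}^{'}}[A^{j}, A^{j}_{n}]$. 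Here one must be slightly careful: the matrix-closeness hypothesis $\left\| \mathfrak{A}^{t_{j} \times t_{j+1}}_{n} - \mathfrak{A}^{t_{j} \times t_{j+1}}\right\| \rightarrow 0$ is exactly the extra ingredient required by Lemma \ref{laampoio} (the lemma's statement includes it), so all its hypotheses are met. Then for part 1 I would feed the small vectors $\mu_{\mathbb{N}^{'}}[A^{j}_{n}, A^{j}]$ and $\mu_{\mathbb{N}^{'}}[A^{j}, A^{j}_{n}]$ into Theorem \ref{sofop} proposition 1, concluding that the operator complexes $OC_{n}$ underlying $IC_{n}$ are Fredholm for large $n$ with $\alpha_{j}(OC_{n}) \leq \alpha_{j}(OC)$ and $ind(OC_{n}) = ind(OC)$; translating back via the identifications $\gamma_{j}(IC_{n}) = \alpha_{j}(OC_{n})$ and $ind(IC_{n}) = ind(OC_{n})$ yields the claimed $\gamma_{j}(IC_{n}) \leq \gamma_{j}(IC)$ and $ind(IC_{n}) = ind(IC)$.

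For part 2, the Banach space $X$ is assumed to be a $BFU$-space, hence (by Lemma \ref{eobfus}, item 4, since finite products of $BFU$-spaces are $BFU$) the product spaces $X^{j} = X \times \cdots \times X$ and the total product $\prod_{j=1}^{k+1} X^{j}$ are $BFU$-spaces as well; this is exactly the hypothesis demanded by Theorem \ref{sofop} proposition 2. I would then apply that proposition with the small adjustment numbers $\lambda_{\mathbb{N}^{'}}[A^{j}_{n}, A^{j}]$, $\lambda_{\mathbb{N}^{'}}[A^{j}, A^{j}_{n}]$ obtained above, getting $K \in \mathbb{N}$ bounding all $\alpha_{j}(OC_{n})$ and $\left|ind(OC_{n})\right| \leq \frac{k+1}{2} \times K$, which transfers verbatim to the inclusion-complex defect numbers and index. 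The main obstacle, and the step deserving the most care, is the passage through Lemma \ref{laampoio}: one must check that the adjustment/approximation of the \emph{operators} $A^{j}_{n}$ really does follow from that of the \emph{subspaces} plus matrix-closeness — concretely, that when a unit graph-sequence in $G_{A^{j}_{n}}$ is decomposed into its domain coordinates, the nearby vectors chosen in each $M^{ji}$ can be reassembled, with the limit matrix $\mathfrak{A}^{t_{j} \times t_{j+1}}$ applied, into a nearby vector in $G_{A^{j}}$, with the matrix perturbation $\left\| \mathfrak{A}^{t_{j} \times t_{j+1}}_{n} - \mathfrak{A}^{t_{j} \times t_{j+1}}\right\|$ contributing only a vanishing error because the domain sequences stay bounded. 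Once that bookkeeping is in hand, everything else is a mechanical chain of citations to the already-established operator-complex and pair-stability theorems.
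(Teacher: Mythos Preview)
Your approach is exactly the paper's: invoke Lemma \ref{laampoio} to transfer smallness of the subspace $\lambda$-adjustment numbers (respectively $\mu$-proximity vectors) to smallness of the corresponding quantities for the inclusion operators $A^{j}_{n}$ versus $A^{j}$, and then apply Theorem \ref{sofop} directly. The paper's proof is literally two sentences to this effect, and you have reproduced it with more detail.

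One slip worth flagging: in part 2 you justify the $BFU$ hypothesis of Theorem \ref{sofop} by citing Lemma \ref{eobfus} item 4 and asserting that finite products of $BFU$-spaces are $BFU$. Item 4 of that lemma only covers finite products of \emph{reflexive and separable} spaces, not arbitrary $BFU$-spaces; in fact the paper lists ``does a product of any two $BFU$-spaces remain a $BFU$-space?'' as an open problem. The paper's own proof of the present theorem simply does not comment on this point, so you are not worse off than the original, but your explicit citation is incorrect and the claim as stated is not established anywhere in the paper.
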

\begin{proof}
According to Lemma \ref{laampoio} proximity vectors and adjustment numbers of the inclusion operators are small when respective proximity vectors and adjustment numbers of tuples' components are small. Then both of the  above propositions follow from the respective propositions about the stability of operator complexes from Theorem \ref{sofop}.
\end{proof}

\newpage
\section[Open Problems]{Open Problems}
In the conclusion of this work we list remaining open questions related to $\lambda-$adjustment.\\

The first set of  questions relate to the concept of $BFU-$spaces introduced in Section \ref{sibfus}.
\begin{enumerate}
  \item \emph{Does a product of any two ${BFU-}$spaces remains to be a ${BFU-}$space?} Note that in the system of axioms ${ZF + Martin Axiom}$ a product of two Fr\'{e}chet--Urysohn spaces may not remain Fr\'{e}chet--Urysohn (see \cite{malihin_shapirovski,archangelskii2}). 
  \item \emph{Does a product of a ${BFU-}$space and a reflexive space remains to be a\linebreak[4] ${BFU-}$space?}
  \item \emph{Is the property of being a $BFU-$space a $3SP-$property?} (For survey of $3SP-$properties see \cite{castillo_gonzales}).
  \item \emph{Is it true that any uncomplemented closed subspace of a ${BFU-}$space is a ${BFU-}$space?}
  \item The $BFU$ Small Uniform Adjustment Theorem \ref{T:bfusuat} appears to be fundamental for the stability theory of $BFU$ spaces. It differs from the non-$BFU$ Small Uniform Adjustment Theorem \ref{T:suat} by the constant employed -- the $BFU$ version uses $1$ while non-$BFU$ version uses $1/2$. The natural question arises \emph{if the value $1$ of the constant is characteristic of $BFU$ spaces?} Also, proposition 1 of the $BFU$ version is much stronger than proposition 2 from non-$BFU$ version -- \emph{does this proposition 1 characterize $BFU$ spaces?} In summary, \emph{suppose that in a given Banach space $X$ any of the propositions 1,2,3 or 4 from Theorem \ref{T:bfusuat} is true for any two sequences of closed subspaces ${(M_{n})_{\mathbb{N}^{'}}}$ and ${(P_{n})_{\mathbb{N}^{'}}}$ from $X$. Is it true that $X$ is a $BFU-$space?}
  \item If the answer to the previous question is no, then \emph{in what other classes of Banach spaces, aside from $BFU-$spaces, the Small Uniform Adjustment theorem holds for constants between ${1/2}$ and $1$?}
\end{enumerate}

Another question is related to proposition 5 from the Small Uniform Adjustment Theorem \ref{T:suat}. Note that its counterpart is absent from the $BFU$ version in Theorem \ref{T:bfusuat}. Thus the following open question remains: \emph{does proposition 5 from Theorem \ref{T:suat} remain true in $BFU$-spaces if its constant $1/2$ is replaced with constant $1$?}\\

Finally, one can notice that index of Fredholm pairs, tuples and complexes may not be preserved exactly under $\lambda-$adjustment (even in finite-dimensional Banach spaces). Thus the question remains: \emph{under which additional condition the index is preserved exactly under $\lambda-$adjustment?} Obviously this condition shall be such that it is automatically satisfied for continuous operators as their index is preserved under $\lambda-$adjustment.

\newpage
%
%
%
%
%
%
%
%
%
%
%

\addtocontents{toc}{\protect\vspace*{\baselineskip}}




\appendix


\end{document}